\documentclass[12pt, a4paper]{article}

\usepackage[margin=1.3in]{geometry}
\usepackage{rotating}
\usepackage[svgnames]{xcolor}
\usepackage{amsmath,amsthm}
\usepackage[abbrev,lite,nobysame]{amsrefs}
\usepackage{mathtools}
\usepackage{accents}
\usepackage{csquotes}
\usepackage{tikz}
\usepgflibrary{patterns}
\usetikzlibrary{patterns.meta,decorations.pathmorphing
	,decorations.text} 
\usepackage{enumerate}
\usepackage{subcaption}
\usepackage{nicefrac}

\usepackage[colorlinks=true,linkcolor=NavyBlue,
citecolor=NavyBlue,urlcolor=NavyBlue]{hyperref}
\usepackage{cleveref}
\crefname{equation}{}{}

\usepackage{newtxtext}
\usepackage[varg,nosymbolsc,bigdelims]{newtxmath}
\usepackage{bm}

\usepackage[compact]{titlesec}

\numberwithin{equation}{section}

\theoremstyle{plain}

\newtheorem{thrm}{Theorem}[section]
\newtheorem{lmm}[thrm]{Lemma}
\newtheorem{crllr}[thrm]{Corollary}
\newtheorem{prpstn}[thrm]{Proposition}

\theoremstyle{definition}

\newtheorem{xmpl}[thrm]{Example}
\newtheorem{rmrk}[thrm]{Remark}

\theoremstyle{plain}

\newcommand{\xvec}[1]{\bm{#1}}
\newcommand{\xsym}[1]{\bm{#1}}
\newcommand{\xdop}[1]{\bm{\mathrm{#1}}}
\def\xnab{\xdop{\nabla}}
\newcommand{\xmcal}[1]{\bm{\mathcal{#1}}}
\newcommand{\xcurl}[1]{\xdop{\nabla}\times{#1}}
\newcommand{\xwcurl}[1]{\xdop{\nabla}\wedge{#1}}

\newcommand{\xdiv}[1]{\xdop{\nabla}\cdot{#1}}
\newcommand{\xdx}[1]{{{\rm d}#1}}

\def\cf{\emph{cf.\/}}\def\eg{\emph{e.g.\/}}

\def\apriori{\emph{a priori\/}}

\def\etal{\emph{et al.\/}}

\def\@Rref#1{\hbox{\rm \ref{#1}}}
\def\Rref#1{\@Rref{#1}}

\def\xCzero{{\rm C}^{0}}
\def\xCone{{\rm C}^{1}} 
 
\def\xCinfty{{\rm C}^{\infty}} 
\def\xCn#1{{\rm C}^#1}

\def\xH{{\rm H}}

\def\xHone{{\rm H}^{1}}
\def\xHtwo{{\rm H}^{2}} 

\def\xHn#1{{\rm H}^#1}
\def\xW{{\rm W}}

\def\xWn#1{{\rm W}^#1}

\def\xLone{{\rm L}^{1}}
\def\xLtwo{{\rm L}^{2}}

\def\xLinfty{{\rm L}^{\infty}} 
\def\xLn#1{{\rm L}^#1}

\def\xD{{\rm D}}

\def\xY{{\rm Y}}

\def\xdrv#1#2{\frac{{\rm d}#1}{{\rm d}#2}}

\begin{document}\linespread{1.05}\selectfont
	
	\date{}

	\author{Manuel Rissel \thanks{School of Mathematical Sciences, Shanghai Jiao Tong University, Shanghai, 200240, P. R. China;  manuel.rissel@sjtu.edu.cn} \and Ya-Guang Wang \thanks{School of Mathematical Sciences, Center for Applied Mathematics, MOE-LSC, CMA-Shanghai and SHL-MAC, Shanghai Jiao Tong University, Shanghai, 200240, P. R. China; ygwang@sjtu.edu.cn}}

	\title{Small-time global approximate controllability for incompressible MHD with coupled Navier slip boundary conditions}	
	\maketitle

	\begin{abstract}
		We study the small-time global approximate controllability for incompressible magnetohydrodynamic (MHD) flows in smoothly bounded two- or three-dimensional domains. The controls act on arbitrary nonempty open portions of each connected boundary component, while linearly coupled Navier slip-with-friction conditions are imposed along the uncontrolled parts of the boundary. Some choices for the friction coefficients give rise to interacting velocity and magnetic field boundary layers. We obtain sufficient dissipation properties of these layers by a detailed analysis of the corresponding asymptotic expansions. For certain friction coefficients, or if the obtained controls are not compatible with the induction equation, an additional pressure-like term appears. We show that such a term does not exist for problems defined in planar simply-connected domains and various choices of Navier slip-with-friction boundary conditions.
	\end{abstract}

	${}$\\
	\textbf{Keywords and phrases:} {magnetohydrodynamics; global approximate controllability; Navier slip-with-friction boundary conditions; boundary layers	}
	${}$\\\\
	\textbf{AMS Mathematical Subject classification (2020):} {93B05; 93C20; 35Q35; 76D55}
	${}$
	\\
	
	\section{Introduction}\label{section:introduction} 
	Let $\Omega \subset \mathbb{R}^N$ be a bounded domain of dimension $N \in \{2,3\}$ with $\Gamma \coloneqq \partial \Omega$ being smooth and $\xvec{n}$ denoting the outward unit normal vector to $\Omega$ along $\Gamma$. The goal of this article is to steer incompressible magnetohydrodynamic (MHD) flows from a prescribed initial state approximately towards a desired terminal state, without placing restrictions on the distance between these states or on the control time. This is accomplished by acting on the system via boundary controls supported in a possibly small subset $\Gamma_{\operatorname{c}} \subset \Gamma$, the interior of which is assumed to intersect non-trivially with all connected components of $\Gamma$. 
	
	When it comes to nonlinear evolution equations with controls localized in an arbitrary open subset of the boundary or an interior sub-domain, establishing the global approximate controllability usually constitutes a challenging task and few methods for tackling such questions are available. In the context of fluid dynamics, but not limited to, one successful approach is known as the return method, which has first been introduced by Coron in \cite{Coron1992} for the stabilization of certain mechanical systems and shall also be employed here. A comprehensive introduction to the return method and its applications to nonlinear partial differential equations may  be found in \cite[Part 2, Chapter 6]{Coron2007}. In contrast to the Navier--Stokes equations, for which the global approximate controllability has been actively investigated in the past, nothing seems to be known regarding the global approximate controllability of viscous MHD in the presence of physical boundaries. This article therefore aims to initiate further research in this direction. A broader motivation of this topic is provided by a well-known open problem due to J.-L. Lions, asking for the global approximate controllability of the Navier--Stokes equations with the no-slip condition (\cf~\cite{LionsJL1991} and also \cite{CoronMarbachSueur2020,CoronMarbachSueurZhang2019} for recent progress). 
	
	In this article, we focus on incompressible flows of viscosity $\nu_1 > 0$ and resistivity $\nu_2 > 0$, for which the velocity $\xvec{u}\colon\Omega\times(0,T) \longrightarrow \mathbb{R}^N$, the magnetic field $\xvec{B}\colon\Omega\times(0,T) \longrightarrow \mathbb{R}^N$, and the total pressure $p\colon\Omega\times(0,T) \longrightarrow\mathbb{R}$ are described, until a given terminal time $T_{\operatorname{ctrl}} > 0$, as a  solution to the initial boundary value problem
	\begin{equation}\label{equation:MHD00}
		\begin{cases}
			\partial_t \xvec{u} - \nu_1 \Delta \xvec{u} + (\xvec{u} \cdot \xdop{\nabla}) \xvec{u} - \mu(\xvec{B} \cdot \xdop{\nabla})\xvec{B} + \xdop{\nabla} p =  \xvec{0} & \mbox{ in } \Omega \times (0, T_{\operatorname{ctrl}}),\\
			\partial_t \xvec{B} - \nu_2 \Delta \xvec{B} + (\xvec{u} \cdot \xdop{\nabla}) \xvec{B} - (\xvec{B} \cdot \xdop{\nabla}) \xvec{u} = \xvec{0} & \mbox{ in } \Omega \times (0, T_{\operatorname{ctrl}}),\\
			\xdop{\nabla}\cdot\xvec{u} = \xdop{\nabla}\cdot\xvec{B} = 0  & \mbox{ in } \Omega \times (0, T_{\operatorname{ctrl}}),\\
			\xvec{u} \cdot \xvec{n} = \xvec{B} \cdot \xvec{n} = 0  & \mbox{ on } (\Gamma\setminus\Gamma_{\operatorname{c}}) \times (0, T_{\operatorname{ctrl}}),\\
			\xmcal{N}_1(\xvec{u},\xvec{B}) = \xmcal{N}_2(\xvec{u},\xvec{B}) = \xvec{0}  & \mbox{ on } (\Gamma\setminus\Gamma_{\operatorname{c}}) \times (0, T_{\operatorname{ctrl}}),\\
			\xvec{u}(\cdot, 0)  =  \xvec{u}_0,\, \xvec{B}(\cdot, 0)  =  \xvec{B}_0  & \mbox{ in } \Omega,
		\end{cases}
	\end{equation}
	while the underlying state space, for both the velocity and the magnetic field, is taken as
	\[
	\xLtwo_{\operatorname{c}}(\Omega) \coloneqq \left\{ \xvec{f} \in \xLtwo(\Omega;\mathbb{R}^N) \, \Big| \, \xdop{\nabla}\cdot\xvec{f} = 0 \mbox{ in } \Omega, \xvec{f} \cdot \xvec{n} = 0 \mbox{ on } \Gamma\setminus\Gamma_{\operatorname{c}} \right\}.
	\]
	In \eqref{equation:MHD00}, the vector fields $\xvec{u}_0 \in \xLtwo_{\operatorname{c}}(\Omega)$ and $\xvec{B}_0 \in \xLtwo_{\operatorname{c}}(\Omega)$ represent the given initial data. The parameter~$\mu > 0$ quantifies the magnetic permeability. As explained in \Cref{remark:underdetermined} below, the controls are sought in an implicit form and, therefore, no boundary conditions are prescribed along the controlled boundary $\Gamma_{\operatorname{c}}$. Before introducing the general boundary operators $\xmcal{N}_1$ and $\xmcal{N}_2$ acting at~$\Gamma\setminus\Gamma_{\operatorname{c}}$, it is emphasized that they particularly include all boundary conditions of the form
	\begin{equation}\label{equation:smpl}
		(\xcurl{\xvec{u}}) \times \xvec{n} = [\xvec{M}\xvec{u}]_{\operatorname{tan}}, \quad
		(\xcurl{\xvec{B}})\times\xvec{n} = \xsym{0}, \quad \xvec{u} \cdot \xvec{n} = \xvec{B} \cdot \xvec{n} = 0,
	\end{equation}
	with symmetric $\xvec{M}\in \xCinfty(\Gamma \setminus \Gamma_{\operatorname{c}};\mathbb{R}^{N\times N})$ and $[\cdot]_{\operatorname{tan}}$ denoting the tangential part. 
	
	\begin{rmrk}
		The common three-dimensional notations for the cross product and curl operator are employed whenever $N=3$ is possible. If considering the case $N = 2$, one has to replace~$\xcurl{\xvec{h}}$ by~$\xwcurl{\xvec{h}} \coloneqq \partial_1 h_2 - \partial_2 h_1$, while the curl of a scalar function $h$ refers to $\xnab^{\perp} h \coloneqq [\partial_2 h,-\partial_1h]^{\top}$. Moreover, the cross product $\xvec{h} \times \xvec{g}$ in two-dimensions becomes $\xvec{h} \wedge \xvec{g} \coloneqq h_1g_2 - g_2h_1$. Consequentially, for planar configurations, some objects denoted here as vectors might be scalars and $(\xcurl{\xvec{h}})\times\xvec{n}$ means $(\xwcurl{\xvec{h}}) [n_{2},-n_{1}]^{\top}$.
	\end{rmrk}
	\paragraph{The Navier slip-with-friction boundary conditions.} Let $\mathcal{D} \subset \mathbb{R}^N$ represent any smoothly bounded domain with outward unit normal field $\xvec{n}_{\mathcal{D}}\colon \partial\mathcal{D}\longrightarrow \mathbb{R}^N$. The tangent space of $\partial \mathcal{D}$ at a point $\xvec{x}$ is denoted by ${\rm T}_{\xvec{x}}$.
	Given any $\xvec{x} \in \partial \mathcal{D}$, we introduce the Weingarten map (or shape operator)
	\[
	\xvec{W}_{\mathcal{D}}(\xvec{x})\colon {\rm T}_{\xvec{x}} \longrightarrow {\rm T}_{\xvec{x}}, \quad \xsym{\tau} \longmapsto \xvec{W}_{\mathcal{D}}(\xvec{x})\xsym{\tau} \coloneqq \xdop{\nabla}_{\xsym{\tau}} \xvec{n}_{\mathcal{D}}. 
	\] 
	Then, for $\xvec{h}_1$, $\xvec{h}_2\colon \overline{\Omega} \to \mathbb{R}^N$ and friction coefficient matrices
	\begin{equation}\label{equation:fop}
		\xvec{L}_1, \xvec{L}_2, \xvec{M}_1, \xvec{M}_2 \in \xCinfty(\Gamma\setminus\Gamma_{\operatorname{c}};\mathbb{R}^{N\times N}),
	\end{equation}
	the linearly coupled Navier slip-with-friction operators in \eqref{equation:MHD00} are defined as
	\begin{equation}\label{equation:bc2}
		\begin{gathered}
			\xmcal{N}_i(\xvec{h}_1,\xvec{h}_2) \coloneqq \left[\xdop{D}(\xvec{h}_i)\xvec{n}(\xvec{x}) + \xvec{W}_{\Omega} \xvec{h}_i + \xvec{M}_i(\xvec{x})\xvec{h}_1 + \xvec{L}_i \xvec{h}_2 \right]_{\operatorname{tan}}, \quad i=1,2,
		\end{gathered}
	\end{equation}
	where the tangential part and the symmetrized gradient are respectively denoted by
	\[
	[\xvec{h}]_{\operatorname{tan}} \coloneqq \xvec{h} - \left(\xvec{h} \cdot \xvec{n}_{\mathcal{D}}\right) \xvec{n}_{\mathcal{D}}, \quad \xdop{D}(\xvec{h}) \coloneqq \frac{1}{2}[\xdop{\nabla}\xvec{h} + (\xdop{\nabla}\xvec{h})^{\top}].
	\]
	The Weingarten map $\xvec{W}_{\mathcal{D}}$ is smooth, \cf~\cite[Lemma 1]{CoronMarbachSueur2020} and \cite{ClopeauMikelicRobert1998,GieKelliher2012}, and when $\xvec{h}$ is tangential to $\partial \mathcal{D}$ one has the relation
	\begin{equation}\label{equation:wgtf}
		[\xdop{D}(\xvec{h}(\xvec{x}, t))\xvec{n}_{\mathcal{D}}(\xvec{x}) + \xvec{W}_{\mathcal{D}}(\xvec{x})\, \xvec{h}(\xvec{x}, t)]_{\operatorname{tan}} = - \frac{1}{2}\left(\xdop{\nabla}\times\xvec{h}(\xvec{x}, t)\right)\times\xvec{n}_{\mathcal{D}}(\xvec{x}).
	\end{equation}
	As a consequence of \eqref{equation:wgtf}, the boundary conditions prescribed in \eqref{equation:MHD00} along $(\Gamma\setminus\Gamma_{\operatorname{c}}) \times (0, T_{\operatorname{ctrl}})$ are equivalent to
	\begin{equation}\label{equation:rhobc}
		\begin{aligned}
			(\xcurl{\xvec{u}}) \times \xvec{n} & =  \xsym{\rho}_1(\xvec{u},\xvec{B}) \coloneqq 2\left[\xvec{M}_1(\xvec{x})\xvec{u} + \xvec{L}_1(\xvec{x})\xvec{B}\right]_{\operatorname{tan}}, & \xvec{u} \cdot \xvec{n} = 0,\\
			(\xcurl{\xvec{B}}) \times \xvec{n} & = \xsym{\rho}_2(\xvec{u},\xvec{B}) \coloneqq 2\left[\xvec{M}_2(\xvec{x})\xvec{u} + \xvec{L}_2(\xvec{x})\xvec{B}\right]_{\operatorname{tan}}, & \xvec{B} \cdot \xvec{n} = 0.
		\end{aligned}	
	\end{equation}
	Our main motivation is to treat simply-connected domains and the boundary conditions \eqref{equation:smpl}. This already constitutes a general setup, which has not been studied in terms of controllability but recently attracted increased attention in view of inviscid limit problems. However, the constructions of the controls naturally extend, at least in parts, also to the case of more general domains and allow the prescription of boundary conditions of the form \eqref{equation:rhobc}. Attention shall be paid to situations where $\xvec{M}_2 \neq \xvec{0}$ or $\xvec{L}_2 \neq \xvec{0}$ holds, since such configurations lead to interesting challenges that are not fully resolved here. 
	
	The Navier slip-with-friction boundary conditions, as already proposed by Navier \cite{Navier1823} two centuries ago, are relevant to a range of applications, thus have been studied in the context of the Navier--Stokes equations from various points of view. For instance, in the absence of magnetic fields, inviscid limit problems are treated in \cite{IftimieSueur2011,ClopeauMikelicRobert1998,Kelliher2006,XiaoXin2013}, regularity questions are investigated in \cite{AmrouchePenelSeloula2013,AlBabaAmroucheEscobedo2017,Shibata2007,Shimada2007,AlBaba2019} and controllability problems are tackled in \cite{CoronMarbachSueur2020,Guerrero2006,LionsZuazua1998,Coron1996}. Concerning the situation of incompressible viscous MHD, several singular limit problems involving uncoupled Navier slip-with-friction boundary conditions are addressed in \cite{GuoWang2016,XiaoXinWu2009,MengWang2016}; comparing with these references, the here employed boundary conditions are more general in that the shear stresses of the velocity and the magnetic field at the boundary are linearly coupled with tangential velocity and magnetic field contributions. While~\eqref{equation:rhobc} includes the classical Navier slip condition for the velocity, it can capture also more complex interactions in the presence of magnetic fields. 
	
	From the global approximate controllability point of view, several difficulties appear however when the magnetic shear stress is coupled with the tangential velocity: a magnetic field boundary layer potentially enters the analysis of \Cref{section:approxres2}. This in turn challenges the construction of magnetic field boundary controls without generating a pressure gradient term or additional control forces in the induction equation.

	\subsection{Main results}
	The statements of the main theorems anticipate the notion of weak controlled trajectories, as introduced later in \Cref{subsection:wct}. Briefly speaking, a weak controlled trajectory will be defined as the restriction to $\Omega$ of a Leray--Hopf weak solution to a version of the problem \eqref{equation:MHD00}, posed in an enlarged domain and driven by interior controls.
	
	\begin{thrm}\label{theorem:main1} Assume that $\Omega \subset \mathbb{R}^2$ is simply-connected, that $\xvec{M}_2 = \xvec{L}_2 = \xvec{0}$, and that $\Gamma_{\operatorname{c}}\subset\Gamma$ is connected. Then, for arbitrarily fixed $T_{\operatorname{ctrl}} > 0$, $\delta > 0$, and $\xvec{u}_0, \xvec{B}_0, \xvec{u}_1, \xvec{B}_1 \in \xLtwo_{\operatorname{c}}(\Omega)$, 
		there exists at least one weak controlled trajectory
		\[
		(\xvec{u}, \xvec{B}) \in 	\left[\xCn{0}_{w}([0,T_{\operatorname{ctrl}}];\xLn{2}_{\operatorname{c}}(\Omega)) \cap \xLn{2}((0,T_{\operatorname{ctrl}});\xHn{1}(\Omega))\right]^2
		\]
		to the MHD equations \eqref{equation:MHD00} which obeys the terminal condition
		\begin{equation}\label{equation:endcondition_introapprox00}
			\|\xvec{u}(\cdot, T_{\operatorname{ctrl}}) - \xvec{u}_1\|_{\xLtwo(\Omega)} + 	\|\xvec{B}(\cdot, T_{\operatorname{ctrl}}) - \xvec{B}_1\|_{\xLtwo(\Omega)} < \delta.
		\end{equation}	
	\end{thrm}
	
	\begin{figure}[ht!]
		\centering
		\resizebox{0.45\textwidth}{!}{
			\begin{tikzpicture}
				\clip(0.35,-1.1) rectangle (6.8,4.2);
				
				\draw [line width=0.5mm, color=black, fill=RoyalBlue!5] plot[smooth, tension=1] coordinates {(1,0.5) (0.65,1.9) (2.2,3.8) (6.4,2.18) (5,0.8) (5.8,-0.2) (3.2,-1) (2.35,0) (1.5,0.2) (1,0.5)};

				\draw [line width=1.6mm, color=white]  plot[smooth, tension=0.8] coordinates {(2.58,-0.5)  (2.34,0.035) (1.52,0.2) (0.98,0.5) (0.55,1.3) (0.72,2) (0.8,2.55)};
				
				\draw [dashed,line width=1.6mm, color=black]  plot[smooth, tension=0.8] coordinates {(2.58,-0.5)  (2.34,0.035) (1.52,0.2) (0.98,0.5) (0.55,1.3) (0.72,2) (0.8,2.55)};

				\coordinate[label=left:$\Omega$] (A) at (3.5,1.8);	
				\coordinate[label=left:$\Gamma_{\operatorname{c}}$] (A) at (1,0.4);											
			\end{tikzpicture}
		}
		\caption{Sketch of a smoothly bounded simply-connected domain $\Omega \subset \mathbb{R}^2$ with connected controlled boundary $\Gamma_{\operatorname{c}}$, which is indicated by a dashed line. }
		\label{Figure:MainthrmFig}
	\end{figure}

	\begin{rmrk}
		Under additional assumptions on the normal traces at $\Gamma_{\operatorname{c}}$ of the initial data in \Cref{theorem:main1}, one can choose $\Gamma_{\operatorname{c}}$ as any open subset of $\Gamma$, see \Cref{remark:rca}.
	\end{rmrk}

	The next theorem holds for $\xvec{L}_1, \xvec{L}_2, \xvec{M}_1, \xvec{M}_2 \in \xCinfty(\Gamma\setminus\Gamma_{\operatorname{c}};\mathbb{R}^{N\times N})$, but it involves a pressure-like unknown~$q$ and a control $\xsym{\zeta}$, which however satisfies $\xsym{\zeta} \equiv \xsym{0}$ when~$\xvec{M}_2 = \xsym{0}$. When~$N = 3$, we need additional assumptions, since, to our knowledge, there is currently no literature providing $\xLinfty((0,T);\xHone(\mathcal{E}))$ and $\xLinfty((0,T);\xHtwo(\mathcal{E}))$ strong solutions for MHD under general (non-symmetric) Navier slip-with-friction conditions; this prevents us to prove \Cref{lemma:reg} in the general case. Thus, when $N = 3$, we introduce the following class of the initial data.
	\paragraph{The class $\mathbf{S}$.} When $\xvec{M}_1$, $\xvec{L}_2$ are symmetric, $\xvec{L}_1 = \xvec{M}_2 = \xvec{0}$, and the domain $\Omega \subset \mathbb{R}^3$ is simply-connected, then $\mathbf{S} = \xLtwo_{\operatorname{c}}(\Omega)^2$. Otherwise, the class $\mathbf{S}$ consists of all $(\xvec{u}_0, \xvec{B}_0) \in \xLtwo_{\operatorname{c}}(\Omega)^2$ which are restrictions of divergence-free vector fields $\widetilde{\xvec{u}}_0, \widetilde{\xvec{B}}_0 \in \xHn{3}(\mathcal{E})$ that are tangential to $\partial \mathcal{E}$, where $\mathcal{E} \subset \mathbb{R}^3$ is a smoothly bounded domain extension for $\Omega \subset \mathbb{R}^3$ of the type introduced in \Cref{subsection:domainextensions}.

	\begin{xmpl}
		All states $\xvec{u}_0, \xvec{B}_0 \in \xLtwo_{\operatorname{c}}(\Omega)\cap \xHn{3}(\Omega)$ which vanish at $\Gamma_{\operatorname{c}}$ and have vanishing normal derivatives up to the second order at $\Gamma_{\operatorname{c}}$ belong to $\mathbf{S}$.
	\end{xmpl}
	
	\begin{thrm}\label{theorem:main}
		For any given time $T_{\operatorname{ctrl}} > 0$, fixed initial states $\xvec{u}_0, \xvec{B}_0 \in \xLtwo_{\operatorname{c}}(\Omega)$, belonging to the class $\mathbf{S}$ when $N = 3$, target states $\xvec{u}_1, \xvec{B}_1 \in \xLn{2}_{\operatorname{c}}(\Omega)$, and $\delta > 0$, there exists a smooth function $\xsym{\zeta}\colon \overline{\Omega}\times[0,T_{\operatorname{ctrl}}] \longrightarrow \mathbb{R}^N$, with $\xsym{\zeta} \equiv \xvec{0}$ when $\xvec{M}_2 = \xvec{0}$, such that the MHD system
		\begin{equation}\label{equation:MHD00withq}
			\begin{cases}
				\partial_t \xvec{u} - \nu_1 \Delta \xvec{u} + (\xvec{u} \cdot \xdop{\nabla}) \xvec{u} - \mu(\xvec{B} \cdot \xdop{\nabla})\xvec{B} + \xdop{\nabla} p =  \xvec{0} & \mbox{ in } \Omega \times (0, T_{\operatorname{ctrl}}),\\
				\partial_t \xvec{B} - \nu_2 \Delta \xvec{B} + (\xvec{u} \cdot \xdop{\nabla}) \xvec{B} - (\xvec{B} \cdot \xdop{\nabla}) \xvec{u} = \xdop{\nabla} q + \xsym{\zeta} & \mbox{ in } \Omega \times (0, T_{\operatorname{ctrl}}),\\
				\xdop{\nabla}\cdot\xvec{u} = \xdop{\nabla}\cdot\xvec{B} = 0  & \mbox{ in } \Omega \times (0, T_{\operatorname{ctrl}}),\\
				\xvec{u} \cdot \xvec{n} = \xvec{B} \cdot \xvec{n} = 0  & \mbox{ on } (\Gamma\setminus\Gamma_{\operatorname{c}}) \times (0, T_{\operatorname{ctrl}}),\\
				\xmcal{N}_1(\xvec{u},\xvec{B}) = \xmcal{N}_2(\xvec{u},\xvec{B}) = \xvec{0}  & \mbox{ on } (\Gamma\setminus\Gamma_{\operatorname{c}}) \times (0, T_{\operatorname{ctrl}}),\\
				\xvec{u}(\cdot, 0)  =  \xvec{u}_0,\, \xvec{B}(\cdot, 0)  =  \xvec{B}_0  & \mbox{ in } \Omega
			\end{cases}
		\end{equation}
		admits at least one weak controlled trajectory
		\[
		(\xvec{u}, \xvec{B}) \in \left[\xCn{0}_{w}([0,T_{\operatorname{ctrl}}];\xLn{2}_{\operatorname{c}}(\Omega)) \cap \xLn{2}((0,T_{\operatorname{ctrl}});\xHn{1}(\Omega))\right]^2
		\]
		obeying the terminal condition
		\begin{equation}\label{equation:termcondth}
			\|\xvec{u}(\cdot, T_{\operatorname{ctrl}}) - \xvec{u}_1\|_{\xLtwo(\Omega)} + \|\xvec{B}(\cdot, T_{\operatorname{ctrl}}) - \xvec{B}_1\|_{\xLtwo(\Omega)} < \delta.
		\end{equation}
	\end{thrm}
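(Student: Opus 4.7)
The plan is to implement the return method of Coron in the spirit of the analogous treatment of the Navier--Stokes equations in \cite{CoronMarbachSueur2020}, adapted to the coupled incompressible MHD system with the linearly coupled Navier slip-with-friction conditions \eqref{equation:rhobc}. Exploiting the scaling invariance of the MHD system, one reduces the problem to an effectively vanishing viscosity and resistivity regime on a fixed time interval, where the bulk dynamics is governed by ideal MHD while viscous and resistive dissipation is concentrated in thin boundary layers near $\Gamma \setminus \Gamma_{\operatorname{c}}$. Working on a smooth domain extension $\mathcal{E} \supset \Omega$ as in \Cref{subsection:domainextensions}, the boundary controls along $\Gamma_{\operatorname{c}}$ are absorbed into the ambient problem on $\mathcal{E}$, so that it suffices to build a solution on $\mathcal{E}$ whose restriction to $\Omega$ satisfies \eqref{equation:MHD00withq} and \eqref{equation:termcondth}.

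Once reduced to $\mathcal{E}$, I would follow the return method by constructing a reference trajectory $(\bar{\xvec{u}},\bar{\xvec{B}}) = (\xnab\theta,\xvec{0})$, with $\theta = \theta(\xvec{x},t)$ a time-dependent harmonic potential. By a suitable choice of $\theta$, the flow map of $\bar{\xvec{u}}$ flushes every point of $\overline{\Omega}$ out through $\Gamma_{\operatorname{c}}$ in time strictly less than one. Around this reference I would set up the asymptotic expansion
\[
\xvec{u}^\varepsilon = \bar{\xvec{u}} + \sqrt{\varepsilon}\,\xvec{v} + \varepsilon\,\xvec{u}^1 + \dots, \qquad \xvec{B}^\varepsilon = \sqrt{\varepsilon}\,\xvec{w} + \varepsilon\,\xvec{B}^1 + \dots,
\]
where $(\xvec{v},\xvec{w})$ denotes the pair of coupled boundary-layer profiles depending on the stretched normal variable $z = \operatorname{dist}(\cdot,\Gamma)/\sqrt{\varepsilon}$. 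Because $\bar{\xvec{B}} \equiv \xvec{0}$, the magnetic layer $\xvec{w}$ satisfies a linear parabolic--transport equation in the inner variable driven by $\bar{\xvec{u}}$, while $\xvec{v}$ satisfies the usual Navier-slip layer equation; the friction operators $\xvec{L}_1,\xvec{L}_2,\xvec{M}_1,\xvec{M}_2$ impose at $z=0$ a linear coupled trace condition that links $(\xvec{v},\xvec{w})$. The dissipation of these coupled profiles along the flushing trajectory, which is the key ingredient borrowed from \Cref{section:approxres2}, ensures that $\|(\xvec{v},\xvec{w})(\cdot,1)\|_{\xLtwo}$ can be made arbitrarily small. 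The auxiliary pressure $q$ and the corrector $\xsym{\zeta}$ in \eqref{equation:MHD00withq} are introduced precisely to absorb the solenoidal defect of the magnetic-field boundary contribution that arises when $\xvec{M}_2 \not\equiv \xvec{0}$; when $\xvec{M}_2 = \xvec{0}$ the magnetic ansatz remains automatically divergence-free and one can take $\xsym{\zeta} \equiv \xvec{0}$.

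Finally, I would close the argument by solving \eqref{equation:MHD00withq} with the controls produced by the construction, and estimating the remainder $(\xvec{u}-\xvec{u}^\varepsilon,\xvec{B}-\xvec{B}^\varepsilon)$ in $\xLtwo$ via energy inequalities. To hit a generic target $(\xvec{u}_1,\xvec{B}_1)$ rather than only the zero state, I would split $[0,T_{\operatorname{ctrl}}]$ into a first flushing phase that drives the state close to $\xvec{0}$, and a short final phase in which a smooth ideal MHD trajectory on $\mathcal{E}$, followed again by a boundary-layer-corrected viscous solve, brings the state within $\delta$ of $(\xvec{u}_1,\xvec{B}_1)$ in $\xLtwo$. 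The main obstacle is the rigorous justification of the expansion: controlling the nonlinear interactions $(\xvec{u}\cdot\xnab)\xvec{B}$ and $(\xvec{B}\cdot\xnab)\xvec{u}$ together with the magnetic Lorentz coupling $(\xvec{B}\cdot\xnab)\xvec{B}$ in the presence of the coupled boundary layers requires a delicate use of layer dissipation, of the well-preparedness of the magnetic control, and of the specific algebraic structure of the friction operators. In dimension three the regularization step of \Cref{lemma:reg} is not known to hold for arbitrary coupled Navier friction conditions, which forces the class $\mathbf{S}$ hypothesis on the initial data.
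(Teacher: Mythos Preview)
Your proposal follows essentially the same strategy as the paper: domain extension, return method with a potential Euler flow $\xvec{z}^0$ having the flushing property, asymptotic expansion with boundary layer correctors, and a regularization step governed by \Cref{lemma:reg} (whence the class $\mathbf{S}$ restriction in three dimensions). A few points of divergence are worth noting.

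First, the paper works systematically in the Elsasser variables $\xvec{z}^{\pm} = \xvec{u} \pm \sqrt{\mu}\,\xvec{B}$, which symmetrize the nonlinearity and decouple the boundary layer equations more cleanly than your $(\xvec{v},\xvec{w})$ splitting; the velocity and magnetic layers are then $\tfrac{1}{2}(\xvec{v}^+ + \xvec{v}^-)$ and $\tfrac{1}{2\sqrt{\mu}}(\xvec{v}^+ - \xvec{v}^-)$. Second, to reach a nonzero target the paper does \emph{not} pass through the zero state: instead it modifies the first-order profiles $\xvec{z}^{\pm,1}$ so that they solve the linear controllability problem \eqref{equation:MHD_ElsaesserExt_Ovarepstraj} with terminal condition $\overline{\xvec{z}}^{\pm}_1$, which is handled in one pass by the flushing argument of \Cref{lemma:flushing}. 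Your two-phase scheme would work but is less economical. Third, a key ingredient you omit is the \emph{well-prepared dissipation} method (\Cref{lemma:blcxi}): the boundary layer controls $\xsym{\mu}^{\pm}$ are chosen on $[0,T]$ to enforce vanishing moment conditions, so that on the long interval $[T,T/\epsilon]$ the layers decay like $|\ln(2+t)/(2+t)|^{\alpha}$; without this, the remainder estimates of \Cref{subsection:remestwbctrl} would not close with the required $O(\epsilon^{1/4})$ gain. Finally, your description of $\xsym{\zeta}$ is slightly off: it is not a divergence corrector per se but absorbs the terms $(\xnab(\theta^{\mp,\epsilon}-\theta^{\pm,\epsilon})\cdot\xnab)\xvec{z}^0$ arising from the pressure-type correctors $\theta^{\pm,\epsilon}$, which are nontrivial precisely when the magnetic boundary layer (equivalently $\xvec{M}_2 \neq \xvec{0}$) forces $\theta^{+,\epsilon} \neq \theta^{-,\epsilon}$.
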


	\begin{rmrk}\label{remark:cylm2}
		When $\xvec{M}_2 \neq \xvec{0}$, the control $\xsym{\zeta}$ may enter \eqref{equation:MHD00withq} if the magnetic field boundary layer described in \Cref{subsubsection:DefinitionsBl} is not divergence-free. In order to illustrate that this statement is not sharp, we consider, as in \Cref{Figure:DomainExample4}, a cylinder $\Omega \coloneqq (a,b) \times D$, for $-\infty~<~a~<~b~<~+\infty$ and a smoothly bounded connected open set $D \subset \mathbb{R}^2$, with controlled part $\Gamma_{\operatorname{c}} \coloneqq \{a,b\} \times D$. In this case, \Cref{theorem:main} is valid for all $\xvec{L}_1, \xvec{L}_2, \xvec{M}_1, \xvec{M}_2 \in \xCinfty(\Gamma\setminus\Gamma_{\operatorname{c}};\mathbb{R}^{N\times N})$ with $\xsym{\zeta} = \xvec{0}$. This will be illustrated by means of \Cref{example:cylinder} combined with the discussion in \Cref{subsubsection:technicalprofiles}.
	\end{rmrk}

	\begin{figure}[ht!]
		\centering
		\begin{subfigure}[b]{0.48\textwidth}\centering
			\resizebox{0.8\textwidth}{!}{
				\begin{tikzpicture}
					\clip(0.5,-1.1) rectangle (6.8,4.2);
					
					\draw [line width=0.5mm, color=black, fill=RoyalBlue!5] plot[smooth, tension=1] coordinates {(1,0.5) (0.65,1.9) (2.2,3.8) (6.4,2.18) (5,0.8) (5.8,-0.2) (3.2,-1) (2.35,0) (1.5,0.2) (1,0.5)};

					\draw [line width=1mm, color=white]  plot[smooth, tension=0.8] coordinates {(2.58,-0.5)  (2.34,0.035) (1.52,0.2) (0.98,0.5) (0.55,1.3) (0.72,2) (0.8,2.55)};
					
					\draw [dashed,line width=1.4mm, color=black]  plot[smooth, tension=0.8] coordinates {(2.58,-0.5)  (2.34,0.035) (1.52,0.2) (0.98,0.5) (0.55,1.3) (0.72,2) (0.8,2.55)};

					\draw [line width=0.5mm, color=black, fill=white]  plot[smooth, tension=0.8] coordinates {(2,0.5) (1.2,1.6) (2.5,3) (3.2,2.6) (4.5,2.8) (4,1.5) (4.6,0.5) (3.5,0)  (2,0.5)};

					\draw [line width=1mm, color=white]  plot[smooth, tension=0.8] coordinates { (4.03,1.7) (4,1.4) (4.15,1.1) (4.6,0.45) (3.5,0) (2,0.5)};
					\draw [dashed,line width=1.4mm, color=black]  plot[smooth, tension=0.8] coordinates { (4.03,1.7) (4,1.4) (4.15,1.1) (4.6,0.45) (3.5,0) (2,0.5)};				
					
				\end{tikzpicture}
			}
			\caption{A general multiply-connected domain as considered in \Cref{theorem:main}. The sketch is two-dimensional only for simplicity.}
			\label{Figure:DomainExample2}
		\end{subfigure}
		\quad
		\begin{subfigure}[b]{0.48\textwidth}\centering
			\resizebox{0.8\textwidth}{!}{
				\begin{tikzpicture}
					\clip[rotate=0](-0.7,-1) rectangle (8.1,4.6);
					\draw [line width=0.4mm, color=black!200, fill=RoyalBlue!4] 	plot[smooth cycle] (0.2,0) rectangle ++(7,4.5);
					
					\draw [line width=0.4mm, preaction={fill=RoyalBlue!50}] 	(1,2.25) arc (0:360:0.8cm and 2.25cm);
					
					\draw [line width=0.4mm](7,4.5)--(7.2,4.5);
					\draw [line width=0.4mm](7,0)--(7.2,0);
					\draw [line width=0.4mm, preaction={fill=RoyalBlue!50}] 	(8,2.25) arc (0:360:0.8cm and 2.25cm);

					\draw [line width=0.4mm, color=black, fill=white, dashed] plot[smooth cycle] (0.2,1) rectangle ++(7,2.5);
					
					\draw [line width=0.4mm, color=black, fill=white, dashed] (0.6,2.25) arc (0:360:0.4cm and 1.25cm);
					
					\draw [line width=0.4mm, dashed](7,3.5)--(7.2,3.5);
					\draw [line width=0.4mm, dashed](7,1)--(7.2,1);
					\draw [line width=0.4mm, color=black, fill=white, dashed] (7.6,2.25) arc (0:360:0.4cm and 1.25cm);
					\draw [line width=0.4mm] 	(8,2.25) arc (0:360:0.8cm and 2.25cm);
					\draw [line width=0.4mm, preaction={fill=RoyalBlue!50}] 	(8,2.25) arc (0:360:0.8cm and 2.25cm);
					\draw [line width=0.4mm, color=black, fill=white, dashed] (7.6,2.25) arc (0:360:0.4cm and 1.25cm);
					\draw [line width=0.4mm, dashed](6.6,1)--(7,1);
					\draw [line width=0.4mm, dashed](6.6,3.5)--(7,3.5);
				\end{tikzpicture}
			}
			\caption{A multiply-connected cylindrical domain as in \Cref{remark:cylm2}, with controls at the base faces. In this case, one can take $\xsym{\zeta} = \xsym{0}$.}
			\label{Figure:DomainExample4}
		\end{subfigure}
		\caption{Two exemplary domains that are covered by \Cref{theorem:main}.}
		\label{Figure:MainthrmFig2}
	\end{figure}
	
	\begin{rmrk}\label{remark:underdetermined}
		The systems \cref{equation:MHD00,equation:MHD00withq} are under-determined since no boundary condition is prescribed along $\Gamma_{\operatorname{c}}$. Once a weak controlled trajectory is found via Theorem~\Rref{theorem:main1} or \Rref{theorem:main}, one obtains explicit boundary controls by taking traces along $\Gamma_{\operatorname{c}}$, see also \cite{Coron1996,CoronMarbachSueur2020,Fernandez-CaraSantosSouza2016,Glass2000}.
	\end{rmrk}
	
	\begin{rmrk}
		Since the proofs for Theorems~\Rref{theorem:main1} and \Rref{theorem:main} will be carried out in a certain extended domain, one can allow the interior of $\Gamma_{\operatorname{c}}$ to be part of a Lipschitz continuous Jordan curve. Moreover, the controlled boundary $\Gamma_{\operatorname{c}}$ is allowed to meet $\Gamma\setminus\Gamma_{\operatorname{c}}$ in a non-smooth way, as long as one can define domain extensions in the sense of \Cref{subsection:domainextensions}.
	\end{rmrk}

	\subsection{Related literature and organization of the article}
	The global approximate controllability for viscous- and resistive MHD in non-periodic domains has to our knowledge not been studied, neither for incompressible- nor for compressible models. Therefore, the present work constitutes a first step in this direction. As a possible continuation, it would be interesting to generalize \Cref{theorem:main1} for arbitrary~$N~\in~\{2,3\}$ and $\xvec{M}_2, \xvec{L}_2 \in \xCinfty(\Gamma\setminus\Gamma_{\operatorname{c}};\mathbb{R}^{N\times N})$ without additional interior control. Also, the question of global exact controllability to zero or towards trajectories remains open.
	
	Concerning local exact controllability for MHD, where the initial state lies in the vicinity of the target trajectory, there have been some interesting works when the velocity satisfies the no-slip boundary condition. For incompressible viscous MHD, Badra obtained in \cite{Badra2014} the local exact controllability to trajectories, while maintaining truly localized and solenoidal interior controls. However, since the boundary conditions are different from those employed here, one cannot deduce the small-time global exact controllability towards trajectories by combining the approaches given in \cite{Badra2014} with our global approximate results.  A variety of previous local exact controllability results may also be found in \cite{BarbuHavarneanuPopaSritharan2005} by Barbu \etal{} and in \cite{HavarneanuPopaSritharan2006,HavarneanuPopaSritharan2007} by Hav\^{a}rneanu \etal{}, while approximate interior controllability for certain toroidal configurations without boundary has been investigated by Galan in \cite{Galan2013}. Moreover, Anh and Toi studied in \cite{AnhToi2017} the local exact controllability to trajectories for magneto-micropolar fluids, while Tao considered the local exact controllability for planar compressible MHD in the recent work \cite{Tao2018}. 
	Recently, we have studied the global exact controllability for the ideal incompressible MHD in \cite{RisselWang2021}, in which the small-time global exact controllability in rectangular channels is obtained in the presence of a harmonic unknown $q$ as in \eqref{equation:MHD00withq}.
	Subsequently, Kukavica \etal~demonstrated in \cite{KukavicaNovackVicol2022}, likewise restricted to a rectangular domain controlled at two opposing walls, how to find boundary controls such that~$\xnab q$ either vanishes or is explicitly characterized. 
	
	Aside of various MHD specific constructions, this article combines the return method and the well-prepared dissipation method as described by Coron \etal{} in \cite{CoronMarbachSueur2020}, where the small-time global exact controllability to trajectories has been studied for incompressible Navier--Stokes equations in two- and three-dimensional domains with Navier slip-with-friction conditions. Meanwhile, we shall also extend certain asymptotic expansions, obtained by Iftimie and Sueur in \cite{IftimieSueur2011} for the incompressible Navier--Stokes equations, to the present MHD system. Due to the structure of the induction equation, the return method has to be carefully implemented in order to avoid generating pressure-like and additional forcing terms in the induction equation. To this end, under the assumptions of \Cref{theorem:main1}, we modify the return method trajectory from \cite{CoronMarbachSueur2020} to be everywhere divergence-free, but allow a nonzero curl in the control region; this approach seems new and might be useful for further studies on the controllability of the ideal MHD equations. 
	
	Let us also mention other recent works on global controllability problems for fluids that employ the return- and well-prepared dissipation methods. For instance, an incompressible Boussinesq system with Navier slip-with-friction boundary conditions for the velocity is considered by Chaves-Silva \etal~in \cite{ChavesSilva2020SmalltimeGE}. Moreover, the question of smooth controllability for the Navier--Stokes equations with Navier slip-with-friction boundary conditions is investigated in~\cite{LiaoSueurZhang2022}. Further, Coron \etal~obtain in \cite{CoronMarbachSueurZhang2019} global exact controllability results for the Navier--Stokes equations under the no-slip condition in a rectangular domain.

	\paragraph{Organization of this article.} \Cref{section:extentsions} collects several preliminaries and defines notions of weak controlled trajectories. In \Cref{section:approxres2}, the global approximate controllability from sufficiently regular initial data towards smooth states is shown. The main theorems are concluded in \Cref{section:conclth}. In Appendices~\Rref{appendix:higherorderestimates} and \Rref{appendix:proofreg}, boundary layer estimates and a proof of \Cref{lemma:reg} are provided. 
	
	\section{Preliminaries}\label{section:extentsions}
	
	A domain extension~$\mathcal{E}$ for~$\Omega$ is introduced in \Cref{subsection:domainextensions}, several function spaces and norms are defined in \Cref{subsection:FunctionSpaces}, initial data extensions to $\mathcal{E}$ are discussed in \Cref{subsection:initialDataExtensions}, notions of weak controlled trajectories for \eqref{equation:MHD00} and \eqref{equation:MHD00withq} are discussed in~\Cref{subsection:wct}. Finally, \Cref{subsection:Description} briefly outlines the strategy of the paper. Throughout, if not indicated otherwise, constants of the form~$C > 0$ are generic and can change from line to line during the estimates.
	
	\subsection{Domain extensions}\label{subsection:domainextensions}
	In what follows, the sets $\Gamma^1, \dots, \Gamma^{K(\Omega)}$ denote the connected components of $\Gamma$ and $\Gamma_{\operatorname{c}}^1, \dots, \Gamma_{\operatorname{c}}^{{K(\Omega)}}$ stand for the respective intersections $\Gamma^1\cap\Gamma_c,\dots,\Gamma^{K(\Omega)}\cap\Gamma_c$, hence
	\begin{alignat*}{2}
		\Gamma = \mathbin{\dot{\bigcup_{i\in\{1,\dots,K(\Omega)\}}}} \Gamma^i,  \quad  \Gamma_{\operatorname{c}} = \mathbin{\dot{\bigcup_{i\in\{1,\dots,K(\Omega)\}}}} \Gamma_{\operatorname{c}}^i.
	\end{alignat*}
	Let $\mathcal{E} \subset \mathbb{R}^N$ be a smoothly bounded domain, which is an extension of $\Omega$ as shown in Figure \ref{Figure:extension}, satisfying
	\[
	\Omega \subset \mathcal{E}, \quad \Gamma^i_{\operatorname{c}} \subset \overline{\mathcal{E}}, \quad \Gamma\setminus\Gamma_{\operatorname{c}} \subset \partial\mathcal{E}, \quad \Gamma_{\operatorname{c}}^i \cap \mathcal{E} \neq \emptyset, \quad i \in \{1,\dots,K(\Omega)\}.
	\]
	Such an extension exists by the requirements on $\Omega$. Throughout, the outward unit normal at $\partial\mathcal{E}$ is denoted by $\xvec{n}_{\partial\mathcal{E}}$, or simply by $\xvec{n}$ if no confusion can arise. We also make the following assumptions:
	\begin{itemize}
		\item the extension $\mathcal{E}$ is selected such that $\xvec{u}_0$ and $\xvec{B}_0$ are tangential at $\partial\Omega \cap \partial \mathcal{E}$;
		\item for the sake of simplifying the notations, to each connected component of~$\Gamma_{\operatorname{c}}$ at most one connected component of $\mathcal{E}\setminus\Omega$ is attached.
	\end{itemize}
	Moreover, given~$T > 0$, we denote
	\[
		\mathcal{E}_T \coloneqq \mathcal{E} \times (0, T), \quad \Sigma_T \coloneqq \partial\mathcal{E} 	\times (0, T). 
	\]
	
	When $\mathcal{E}$ is a multiply-connected domain, there is a number $L(\mathcal{E}) \in \mathbb{N}$ of smooth $(N-1)$-dimensional and mutually disjoint cuts $\mathcal{C}_1, \dots, \mathcal{C}_{L(\mathcal{E})} \subset \mathcal{E}$, which meet $\partial\mathcal{E}$ transversely, such that one obtains a simply-connected set via $\ring{\mathcal{E}} \coloneqq \mathcal{E} \setminus (\mathcal{C}_1 \cup \dots \cup \mathcal{C}_{L(\mathcal{E})})$; see, \eg, \cite[Appendix I]{Temam2001}. 	Next, for each $i \in \{1,\dots,L(\mathcal{E})\}$, a unit normal field to $\mathcal{C}_i$ is denoted by $\widetilde{\xvec{n}}^i$. When $\mathcal{E}$ is simply-connected, we set $L(\mathcal{E}) \coloneqq 0$.
	
	\begin{figure}[ht!]
		\centering
		\resizebox{0.6\textwidth}{!}{
			\begin{tikzpicture}
				\clip(-2.4,-1.5) rectangle (7.2,4.4);

				\draw [line width=0mm, color=white, fill=red, pattern=dots] plot[smooth, tension=1] coordinates {(-2.2,0.8) (-2,3.2) (0.4,4.2) (0.7,3.5) (2.2,4.2) (7,2.2) (5.8,0.8) (6.8,-0.2) (3.5,-1.5) (2,-0.8) (-1,-1) (-2.2,0.8)};
				
				\draw [line width=0mm, color=white, fill=white] plot[smooth, tension=1] coordinates {(-1.8,0.8) (-1.5,3) (0,3.8) (0.5,1.8) (2,3.8) (6.4,2.2) (5,0.8) (5.8,-0.2) (3.2,-1) (2,0) (-0.4,-0.9) (-1.8,0.8)};
				
				\draw [line width=0mm, color=RoyalBlue!20, fill=RoyalBlue!5] plot[smooth, tension=1] coordinates {(1,0.5) (0.65,1.9) (2.2,3.8) (6.4,2.18) (5,0.8) (5.8,-0.2) (3.2,-1) (2.35,0) (1.5,0.2) (1,0.5)};
				
				\draw [line width=0.3mm, color=black, fill=white, fill opacity=0] plot[smooth, tension=1] coordinates {(-1.8,0.8) (-1.5,3) (0,3.8) (0.47,1.8) (2,3.8) (6.4,2.2) (5,0.8) (5.8,-0.2) (3.2,-1) (2,0) (-0.4,-0.9) (-1.8,0.8)};

				\draw [dashed,line width=0.8mm, color=black]  plot[smooth, tension=0.8] coordinates {(2.58,-0.5)  (2.34,0.035) (1.52,0.2) (0.98,0.5) (0.55,1.3) (0.72,2) (0.8,2.55)};
				

				\draw [line width=0mm, color=white, fill=white]  plot[smooth, tension=0.8] coordinates {(2,0.5) (1.2,1.6) (2.5,3) (3.2,2.6) (4.5,2.8) (4,1.5) (4.6,0.5) (3.5,0)  (2,0.5)};
				\draw [line width=0.3mm, color=black, fill=white,pattern=dots]  plot[smooth, tension=0.8] coordinates {(2,0.5) (1.2,1.6)  (2.5,3) (3.2,2.6) (4.5,2.82) (4.06,1.8) (4,1.2) (3.8,1.2) (3.4,1.6) (2.2,2) (2.6,0.5) (2,0.5)};(
				
				\draw [dashed,line width=0.8mm, color=black]  plot[smooth, tension=0.8] coordinates { (4.03,1.7) (4,1.4) (4.15,1.1) (4.6,0.45) (3.5,0) (2,0.5)};
				
				\draw [line width=0.5mm]  plot[smooth, tension=0.8] coordinates {(3.4,2.6) (3.38,3) (3.51,3.672)};			
				
				
				\coordinate[label=left:\footnotesize$\mathcal{E} \cap \Omega$] (A) at (5.8,1.8);
				\coordinate[label=left:\footnotesize$\mathcal{E} \setminus \Omega$] (A) at (-0.2,2);
				\coordinate[label=left:\footnotesize$\mathcal{E}\setminus \Omega$] (A) at (3.9,0.8);
				\coordinate[label=below:\footnotesize$\Gamma_{\operatorname{c}}^1$] (A) at (-1.6,-0.8);
				\coordinate[label=right:$\quad$] (B) at (0.64,0.84);
				\draw[line width=0.8mm,-stealth] (A) -- (B);
				\coordinate[label=left:\footnotesize$\Gamma_{\operatorname{c}}^2$] (AA) at (2,-1.2);
				\coordinate[label=left:$\quad$] (BB) at (3,0);
				\draw[line width=0.8mm,-stealth] (AA) -- (BB);
				
				\coordinate[label=right:\footnotesize$\mathcal{C}_1$] (AAA) at (5.5,3.5);
				\coordinate[label=left:$\quad$] (BBB) at (3.48,3.2);
				\draw[line width=0.8mm,-stealth] (AAA) -- (BBB);			
			\end{tikzpicture}
		}
		\caption{A multiply-connected domain $\Omega \subset \mathbb{R}^2$ with two controlled boundary components and extension $\mathcal{E}$. The dashed lines mark the controlled boundaries.}
		\label{Figure:extension}
	\end{figure}

	The following Korn and Poincar\'e type inequalities for possibly multiply-connected domains are well-known. 
	\begin{lmm} There exists a constant $C > 0$ such that for any $\xvec{h} \in \xHone(\mathcal{E})$, one has the estimate
		\begin{equation}\label{equation:sKem}
			\begin{aligned}
				\|\xvec{h}\|_{\xHone(\mathcal{E})}
				& \leq 
				C\left(\|\xdiv{\xvec{h}}\|_{\xLtwo(\mathcal{E})} + \|\xcurl{\xvec{h}}\|_{\xLtwo(\mathcal{E})} + \|\xvec{h}\cdot \xvec{n}\|_{\xHn{{1/2}}(\partial\mathcal{E})}\right) \\
				& \quad  + C \sum\limits_{i = 1}^{L(\mathcal{E})}\left| \int_{\mathcal{C}_i} \xvec{h} \cdot \widetilde{\xvec{n}}^i \, \xdx{\mathcal{C}_i} \right|\\
				& \leq 
				C\left(\|\xdiv{\xvec{h}}\|_{\xLtwo(\mathcal{E})} + \|\xcurl{\xvec{h}}\|_{\xLtwo(\mathcal{E})} + \|\xvec{h}\cdot \xvec{n}\|_{\xHn{{1/2}}(\partial\mathcal{E})} + \|\xvec{h}\|_{\xLtwo(\mathcal{E})}\right). \\
			\end{aligned}
		\end{equation}
	\end{lmm}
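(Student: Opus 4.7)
The plan is to prove the first bound via a Helmholtz-type decomposition that reduces the problem to a classical Friedrichs/Foias--Temam inequality for divergence-free tangential fields on the multiply-connected set $\mathcal{E}$, and then to obtain the second bound from the first by a short compactness argument.

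For the first inequality I would begin by reducing to the case of tangential fields. Using the surjectivity of the normal trace $\xHone(\mathcal{E}) \to \xHn{1/2}(\partial\mathcal{E})$, pick a lift $\xvec{g} \in \xHone(\mathcal{E})$ with $\xvec{g} \cdot \xvec{n} = \xvec{h} \cdot \xvec{n}$ on $\partial\mathcal{E}$ and $\|\xvec{g}\|_{\xHone(\mathcal{E})} \leq C\|\xvec{h}\cdot\xvec{n}\|_{\xHn{1/2}(\partial\mathcal{E})}$, and put $\xvec{k} \coloneq \xvec{h} - \xvec{g}$. Since $\xvec{k}$ is tangential, the compatibility condition $\int_{\mathcal{E}}\xdiv{\xvec{k}} = \int_{\partial\mathcal{E}} \xvec{k} \cdot \xvec{n} = 0$ holds, so the Neumann problem $\Delta\phi = \xdiv{\xvec{k}}$ in $\mathcal{E}$ with $\partial_{\xvec{n}}\phi = 0$ on $\partial\mathcal{E}$ yields $\phi \in \xHn{2}(\mathcal{E})$ with $\|\phi\|_{\xHn{2}(\mathcal{E})} \leq C\|\xdiv{\xvec{k}}\|_{\xLtwo(\mathcal{E})}$ by standard elliptic regularity. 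The residual $\xvec{w} \coloneq \xvec{k} - \xdop{\nabla}\phi$ is divergence-free and tangential with $\xcurl{\xvec{w}} = \xcurl{\xvec{h}} - \xcurl{\xvec{g}}$, and the divergence theorem applied on subdomains adjacent to each $\Sigma_i$ shows that its cut fluxes differ from those of $\xvec{h}$ only by terms of size $C(\|\xdiv{\xvec{h}}\|_{\xLtwo(\mathcal{E})} + \|\xvec{h}\cdot\xvec{n}\|_{\xHn{1/2}(\partial\mathcal{E})})$. The central ingredient is then the classical Friedrichs/Foias--Temam inequality
\[
\|\xvec{w}\|_{\xHone(\mathcal{E})} \leq C\left(\|\xcurl{\xvec{w}}\|_{\xLtwo(\mathcal{E})} + \sum_{i=1}^{L(\mathcal{E})} \left|\int_{\Sigma_i} \xvec{w}\cdot \widetilde{\xvec{n}}^i \, \xdx{\Sigma_i}\right|\right),
\]
see \cite[Appendix I]{Temam2001}, which rests on the fact that the harmonic Neumann fields on $\mathcal{E}$ form an $L(\mathcal{E})$-dimensional space placed in bijective correspondence with its cut fluxes by the very functionals on the right-hand side. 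Combining this with the decomposition and the triangle inequality $\|\xvec{h}\|_{\xHone(\mathcal{E})} \leq \|\xvec{w}\|_{\xHone(\mathcal{E})} + \|\xdop{\nabla}\phi\|_{\xHone(\mathcal{E})} + \|\xvec{g}\|_{\xHone(\mathcal{E})}$ delivers the first claimed estimate.

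For the second estimate, which absorbs the cut fluxes into $\|\xvec{h}\|_{\xLtwo(\mathcal{E})}$, I would argue by contradiction: a failing sequence $\xvec{h}_n$ with $\|\xvec{h}_n\|_{\xHone(\mathcal{E})} = 1$ and vanishing right-hand side is bounded in $\xHone(\mathcal{E})$ and hence precompact in $\xHn{s}(\mathcal{E})$ for every $s < 1$; picking $1/2 < s < 1$ gives $\xvec{h}_n \to 0$ in $\xLtwo(\Sigma_i)$ by the trace theorem, so the cut fluxes also vanish in the limit, and the first inequality then forces $\|\xvec{h}_n\|_{\xHone(\mathcal{E})} \to 0$, contradicting $\|\xvec{h}_n\|_{\xHone(\mathcal{E})} = 1$. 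The main technical obstacle throughout is the bookkeeping of how the normal-trace lift $\xvec{g}$ and the gradient corrector $\xdop{\nabla}\phi$ perturb the cut fluxes $\int_{\Sigma_i}\xvec{h}\cdot\widetilde{\xvec{n}}^i$, but this reduces to a careful application of the divergence theorem on the domains cut out by $\Sigma_i$; everything else assembles from standard elliptic regularity, trace theory, and the classical Foias--Temam-type inequality recalled above.
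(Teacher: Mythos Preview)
Your argument is correct, and for the first inequality it is close in spirit to the paper's: both reduce to the classical Temam/Amrouche--Seloula inequality for tangential fields by subtracting off a corrector that absorbs the normal trace. The paper does this in one step rather than two, solving the single Neumann problem $\Delta\psi = \xdiv{\xvec{h}}$ in $\mathcal{E}$ with $\partial_{\xvec{n}}\psi = \xvec{h}\cdot\xvec{n}$ on $\partial\mathcal{E}$ and setting $\xvec{g} = \xdop{\nabla}\psi$, so that $\xvec{f} = \xvec{h} - \xvec{g}$ is tangential with $\xcurl{\xvec{f}} = \xcurl{\xvec{h}}$ exactly and the cut fluxes of $\xvec{g}$ are controlled by $\|\xvec{g}\|_{\xHone}$ via the trace theorem. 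This avoids your separate normal-trace lift and the attendant bookkeeping of how $\xcurl{\xvec{g}}$ and the cut fluxes are perturbed; your route works too, it is just slightly less economical.

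For the second inequality the two arguments genuinely differ. You use a compactness/contradiction argument, which is clean and requires no further structure. The paper instead gives a direct, constructive bound: taking the basis $\{\xdop{\nabla} q_1,\dots,\xdop{\nabla} q_{L(\mathcal{E})}\}$ of harmonic Neumann fields with the jump normalization $[q_i]_j = \delta_{ij}$, integration by parts on the cut domain $\ring{\mathcal{E}}$ yields
\[
\int_{\Sigma_i} \xvec{h}\cdot\widetilde{\xvec{n}}^i \,\xdx{\Sigma_i}
= \int_{\ring{\mathcal{E}}} \xvec{h}\cdot\xdop{\nabla} q_i \,\xdx{\xvec{x}}
+ \int_{\ring{\mathcal{E}}} (\xdiv{\xvec{h}})\, q_i \,\xdx{\xvec{x}}
- \int_{\partial\mathcal{E}} (\xvec{h}\cdot\xvec{n})\, q_i \,\xdx{S},
\]
which bounds each cut flux explicitly by $\|\xvec{h}\|_{\xLtwo(\mathcal{E})} + \|\xdiv{\xvec{h}}\|_{\xLtwo(\mathcal{E})} + \|\xvec{h}\cdot\xvec{n}\|_{\xHn{1/2}(\partial\mathcal{E})}$. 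The paper's identity has the advantage of being quantitative and reusable, while your compactness argument is shorter and needs no knowledge of the harmonic basis.
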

	\begin{proof}
		It is known (\eg, see \cite[Corollary 3.4]{AmroucheSeloula2013}), that all $\xvec{f} \in \xHone(\mathcal{E})$ with $\xvec{f} \cdot \xvec{n} = 0$ along $\partial \mathcal{E}$ obey
		\begin{equation}\label{equation:sKem_pre}
			\begin{aligned}
				\|\xvec{f}\|_{\xHone(\mathcal{E})} \leq 
				C\left(\|\xdiv{\xvec{f}}\|_{\xLtwo(\mathcal{E})} + \|\xcurl{\xvec{f}}\|_{\xLtwo(\mathcal{E})} + \sum\limits_{i = 1}^{L(\mathcal{E})}\left| \int_{\mathcal{C}_i} \xvec{f} \cdot \widetilde{\xvec{n}}^i \, \xdx{\mathcal{C}_i} \right| \right).
			\end{aligned}
		\end{equation}
		Moreover, as demonstrated in \cite[Theorem III.4.3]{BoyerFabrie2013}, there exists a function $\psi \in \xHtwo(\mathcal{E})$ which solves the Neumann problem
		\[
		\begin{cases}
			\Delta \psi = \xdiv{\xvec{h}} & \mbox{ in } \mathcal{E},\\
			\partial_{\xvec{n}} \psi = \xvec{h} \cdot \xvec{n} & \mbox{ on } \partial \mathcal{E},
		\end{cases}
		\]
		and satisfies
		\begin{equation}\label{equation:sKem_pre2}
			\|\psi\|_{\xHtwo(\mathcal{E})} \leq C \left( \|\xdiv{\xvec{h}}\|_{\xLtwo(\mathcal{E})} +  \|\xvec{h}\cdot \xvec{n}\|_{\xHn{{1/2}}(\partial\mathcal{E})} \right).
		\end{equation}
		By employing trace estimates and the properties of $\psi$, the potential field $\xvec{g} = \xnab \psi$ is seen to satisfy
		\[
		\xdiv{\xvec{g}} = \xdiv{\xvec{h}}, \quad \xcurl{\xvec{g}} = \xvec{0}, \quad \xvec{g} \cdot \xvec{n} = \xvec{h} \cdot \xvec{n}, \quad \sum\limits_{i = 1}^{L(\mathcal{E})}\left| \int_{\mathcal{C}_i} \xvec{g} \cdot \widetilde{\xvec{n}}^i \, \xdx{\mathcal{C}_i} \right| \leq C \|\xvec{g}\|_{\xHone(\mathcal{E})}.
		\]
		Consequently, by means of the estimates \eqref{equation:sKem_pre} and \eqref{equation:sKem_pre2}, the first inequality in \eqref{equation:sKem} follows with $\xvec{f} \coloneqq \xvec{h}-\xvec{g}$ from
		{\allowdisplaybreaks\begin{multline*}
				\|\xvec{h}\|_{\xHone(\mathcal{E})} \leq \|\xvec{f}\|_{\xHone(\mathcal{E})} + \|\xvec{g}\|_{\xHone(\mathcal{E})} \leq C \|\xcurl{\xvec{h}}\|_{\xLtwo(\mathcal{E})} + C \sum\limits_{i = 1}^{L(\mathcal{E})}\left| \int_{\mathcal{C}_i} \xvec{f} \cdot \widetilde{\xvec{n}}^i \, \xdx{\mathcal{C}_i} \right| + \|\xvec{g}\|_{\xHone(\mathcal{E})} \\
				\begin{aligned}
					& \leq C \|\xcurl{\xvec{h}}\|_{\xLtwo(\mathcal{E})} + C \sum\limits_{i = 1}^{L(\mathcal{E})}\left| \int_{\mathcal{C}_i} \xvec{h} \cdot \widetilde{\xvec{n}}^i \, \xdx{\mathcal{C}_i} \right| + C\|\xvec{g}\|_{\xHone(\mathcal{E})} \\
					& \leq C\left(\|\xdiv{\xvec{h}}\|_{\xLtwo(\mathcal{E})} + \|\xcurl{\xvec{h}}\|_{\xLtwo(\mathcal{E})} + \|\xvec{h}\cdot \xvec{n}\|_{\xHn{{1/2}}(\partial\mathcal{E})}\right) + C \sum\limits_{i = 1}^{L(\mathcal{E})}\left| \int_{\mathcal{C}_i} \xvec{h} \cdot \widetilde{\xvec{n}}^i \, \xdx{\mathcal{C}_i} \right|.
				\end{aligned}
			\end{multline*}
			Concerning the second inequality in \eqref{equation:sKem}, let the multi-valued functions $q_1,\dots,q_{L(\mathcal{E})}$ be chosen such that $\{\xnab q_1,\dots,\xnab q_{L(\mathcal{E})}\}$ are smooth and form a basis for the space of curl-free and divergence-free vector fields tangential at $\partial \mathcal{E}$. As in \cite[Appendix I]{Temam2001}, one can select this basis such that $[q_i]_j = \delta_{i,j}$, where $[f]_j$ denotes the jump of $f$ across~$\mathcal{C}_j$ and $\delta_{i,j}$ is the usual Kronecker symbol. Therefore, one has
			\[
				\int_{\mathcal{C}_i} \xvec{h} \cdot \widetilde{\xvec{n}}^i \, \xdx{\mathcal{C}_i} =  \int_{\ring{\mathcal{E}}} \xvec{h} \cdot \xnab q_i \, \xdx{\xvec{x}} + \int_{\ring{\mathcal{E}}} (\xdiv{\xvec{h}}) q_i \, \xdx{\xvec{x}} - \int_{\partial\mathcal{E}} (\xvec{h} \cdot \xvec{n}) q_i \, \xdx{S},
			\]}
			which allows to conclude the proof.
	\end{proof}

	Let $d > 0$ be sufficiently small so that $\mathcal{V} \coloneqq \{\xvec{x} \in \mathbb{R}^N \, | \, \operatorname{dist}(\xvec{x}, \partial\mathcal{E}) < d\}$ represents a thin tubular neighborhood in $\mathbb{R}^N$ of the boundary $\partial\mathcal{E}$. Further, let $\varphi_{\mathcal{E}} \in \xCinfty(\mathbb{R}^N;\mathbb{R})$ satisfy $|\xdop{\nabla}\varphi_{\mathcal{E}}(\xvec{x})| = 1$ for all~$\xvec{x} \in \mathcal{V}$ and
	\begin{alignat*}{4}
		\mathcal{E} \cap \mathcal{V} = \{\varphi_{\mathcal{E}} > 0\}\cap \mathcal{V}, & \quad & (\mathbb{R}^N\setminus\overline{\mathcal{E}}) \cap \mathcal{V} = \{\varphi_{\mathcal{E}} < 0\} \cap \mathcal{V}, & \quad & \partial\mathcal{E} = \{\varphi_{\mathcal{E}} = 0\}.
	\end{alignat*}
	This implies that $\varphi_{\mathcal{E}}(\xvec{x}) = \operatorname{dist}(\xvec{x}, \partial\mathcal{E})$ for all $\xvec{x} \in \mathcal{V}\cap\overline{\mathcal{E}}$, assuming without loss of generality that $\mathcal{V}$ is sufficiently thin.~Now, a smooth extension of $\xvec{n}_{\partial\mathcal{E}}$ to $\overline{\mathcal{E}}$ is provided by
	\begin{equation}\label{equation:definitionnormal}
			\xvec{n}(\xvec{x}) = \xvec{n}_{\mathcal{E}}(\xvec{x}) \coloneqq \begin{cases}
			\xvec{n}_{\partial\mathcal{E}}(\xvec{x}) & \mbox{ if } \xvec{x} \in \partial \mathcal{E},\\
			-\xdop{\nabla} \varphi_{\mathcal{E}}(\xvec{x}) & \mbox{ if } \xvec{x} \in \mathcal{E}.
		\end{cases}
	\end{equation}
	In this sense, the tangential part $[\xvec{h}]_{\operatorname{tan}} = \xvec{h} - \left(\xvec{h} \cdot \xvec{n}\right) \xvec{n}$ of $\xvec{h}\colon \overline{\mathcal{E}} \longrightarrow \mathbb{R}^N$ is then defined everywhere in $\overline{\mathcal{E}}$. Moreover, the Weingarten map $\xvec{W}_{\mathcal{E}}$ and the general friction matrices $\xvec{M}_1,\xvec{M}_2,\xvec{L}_1,\xvec{L}_2$ are smoothly continued to $\overline{\mathcal{E}}$ such that
	\[
		\xvec{W}_{\mathcal{E}}, \xvec{M}_{1}, \xvec{M}_{2}, 	\xvec{L}_{1},\xvec{L}_{2} \in \xCinfty(\overline{\mathcal{E}};\mathbb{R}^{N\times N}),
	\]
	while also extending the assumptions (such as $\xvec{M}_2 = \xvec{0}$ or $\xvec{M}_2 = \rho \xvec{I}$) that might have been made in Theorems~\Rref{theorem:main1}, \Rref{theorem:main}, and \Rref{theorem:annulus}.
	
	For describing boundary layers in the vicinity of $\partial \mathcal{E}$, when a parameter $\epsilon > 0$ is assumed small, some functions will depend on a slow variable $\xvec{x} \in \overline{\mathcal{E}}$, the time $t \geq 0$ and a fast variable
	\[
		z = \varphi_{\mathcal{E}}(\xvec{x})/\sqrt{\epsilon} \in \mathbb{R}_+.
	\]
	In this case, for a map $(\xvec{x},t,z) \longmapsto h(\xvec{x},t,z)$ we denote
	\[
		\left\llbracket h \right\rrbracket_{\epsilon}(\xvec{x},t) \coloneqq h\left(\xvec{x},t,\varphi_{\mathcal{E}}(\xvec{x})/\sqrt{\epsilon}\right).
	\]
	By convention, differential operators are always taken with respect to $\xvec{x} \in \overline{\mathcal{E}}$ only, if not indicated otherwise by the notation. Therefore, as also remarked in \cite{CoronMarbachSueur2020,IftimieSueur2011}, one has the commutation formulas
	\begin{equation}\label{equation:comfo}
		\begin{alignedat}{3}
			& \xdop{\nabla}\cdot\left(\left\llbracket \xvec{h}\right\rrbracket_{\epsilon}\right) && = \left\llbracket \xdop{\nabla}\cdot \xvec{h}\right\rrbracket_{\epsilon} - \xvec{n} \cdot \left\llbracket \partial_z \xvec{h}\right\rrbracket_{\epsilon}/\sqrt{\epsilon},\\
			& \xdop{\nabla}\left(\left\llbracket \xvec{h}\right\rrbracket_{\epsilon}\right) && = \left\llbracket \xdop{\nabla}\xvec{h}\right\rrbracket_{\epsilon} - \left\llbracket \partial_z \xvec{h}\right\rrbracket_{\epsilon}\xvec{n}^{\top}/\sqrt{\epsilon},\\
			& \left[\xdop{D}(\left\llbracket \xvec{h}\right\rrbracket_{\epsilon})\xvec{n}\right]_{\operatorname{tan}} && = \left\llbracket \left[\xdop{D}( \xvec{h})\xvec{n}\right]_{\operatorname{tan}}\right\rrbracket_{\epsilon} - \left\llbracket [\partial_z \xvec{h}]_{\operatorname{tan}}\right\rrbracket_{\epsilon}/\sqrt{4\epsilon},\\
			& \epsilon \Delta \left\llbracket \xvec{h}\right\rrbracket_{\epsilon} && = \epsilon \left\llbracket \Delta \xvec{h}\right\rrbracket_{\epsilon} + \sqrt{\epsilon} \Delta \varphi_{\mathcal{E}} \left\llbracket \partial_z \xvec{h} \right\rrbracket_{\epsilon} - 2\sqrt{\epsilon} \left\llbracket (\xvec{n}\cdot \xdop{\nabla})\partial_z \xvec{h}\right\rrbracket_{\epsilon} \\
			& && \quad  + |\xvec{n}|^2 \left\llbracket \partial_{zz}\xvec{h}\right\rrbracket_{\epsilon},
		\end{alignedat}
	\end{equation}
	and consequently
	\[
	\xdop{\mathcal{N}}_i\left(\left\llbracket \xvec{h}_1\right\rrbracket_{\epsilon},\left\llbracket \xvec{h}_2\right\rrbracket_{\epsilon}\right) = \left\llbracket \xdop{\mathcal{N}}_i(\xvec{h}_1,\xvec{h}_2)\right\rrbracket_{\epsilon} - \left\llbracket [\partial_z \xvec{h}_i]_{\operatorname{tan}}\right\rrbracket_{\epsilon}/\sqrt{4\epsilon}, \quad i \in \{1,2\}.
	\]

	\subsection{Function spaces}\label{subsection:FunctionSpaces}
	The Hilbert spaces $\xH(\mathcal{E})$ and $\xW(\mathcal{E})$ of divergence-free and tangential vector fields are defined by means of
	\begin{equation*}
		\begin{gathered}
			\xH(\mathcal{E}) \coloneqq \operatorname{clos}_{\xLtwo(\mathcal{E};\mathbb{R}^N)} \left(\left\{ \xvec{f} \in \xCone(\overline{\mathcal{E}};\mathbb{R}^N) \, \left| \right. \, \xdop{\nabla}\cdot\xvec{f} = 0 \mbox{ in } \mathcal{E}, \xvec{f} \cdot \xvec{n} = 0 \mbox{ on } \partial \mathcal{E} \right\} \right)
		\end{gathered}
	\end{equation*}
	and
	\begin{equation*}
		\xW(\mathcal{E}) \coloneqq \xH(\mathcal{E};\mathbb{R}^N) \cap \xHone(\mathcal{E};\mathbb{R}^N),
	\end{equation*}
	where  $\operatorname{clos}_{\xLtwo(\mathcal{E})}$ denotes the closure in $\xLtwo(\mathcal{E})$.
	For any $T > 0$, the weakly continuous functions from $[0,T]$ to $\xH(\mathcal{E})$ are denoted by $\xCn{0}_{w}([0,T];\xH(\mathcal{E}))$. The space for weak MHD solutions is 
	\begin{equation*}
		\begin{gathered}
			\mathscr{X}_T \coloneqq \xCn{0}_{w}([0,T];\xH(\mathcal{E}))\cap\xLn{2}((0,T);{\rm W}(\mathcal{E})).
		\end{gathered}
	\end{equation*} 
	Moreover, for $m, p, k, s \in \mathbb{N}_0$, we employ the weighted Sobolev spaces 
	\begin{equation*}
		\begin{gathered}
			\xHn{{k,m,p}}_{\mathcal{E}}  \coloneqq \left\{ f \in \xLtwo(\mathcal{E}\times\mathbb{R}_+) \, \left| \, 	\|f\|_{\xHn{{k,m,p}}_{\mathcal{E}}} \coloneqq \left(\sum\limits_{r = 0}^p |f|_{k,m,r,\mathcal{E}}^2\right)^{\frac{1}{2}} < +\infty  \right. \right\},\\
			\widetilde{\xHn{{k,s}}}(\mathbb{R})  \coloneqq \left\{ f \in \xLtwo(\mathbb{R}) \, \left| \, \|f\|_{\xHn{{k,s}}(\mathbb{R})} \coloneqq \left(\sum\limits_{l=0}^s \int_{\mathbb{R}} (1+z^{2k}) |\partial^l_z f(z)|^2 \, \xdx{z}\right)^{\frac{1}{2}} < +\infty  \right. \right\},
		\end{gathered}
	\end{equation*}
	where $|f|_{k,m,r,\mathcal{E}}$ denotes for functions $(\xvec{x},z) \mapsto f(\xvec{x},z)$ the seminorm
	\[
		|f|_{k,m,r,\mathcal{E}} \coloneqq \left( \sum\limits_{|\beta| \leq m} 		\int_{\mathcal{E}}\int_{\mathbb{R}_+} (1 + z^{2k})|\partial_{\xvec{x}}^{\beta}\partial_z^r f|^2 \, \xdx{z}  \xdx{\xvec{x}} \right)^{\frac{1}{2}}.
	\]

	\subsection{Initial data extensions}\label{subsection:initialDataExtensions}
	We extend the original initial data to $\mathcal{E}$. Whether divergence-free extensions are possible depends on the normal traces of the initial states $(\xvec{u}_0, \xvec{B}_0) \in \xLtwo_{\operatorname{c}}(\Omega) \times \xLtwo_{\operatorname{c}}(\Omega)$ fixed in Theorems~\Rref{theorem:main1} or \Rref{theorem:main}. More specifically, we either choose extensions of the type
	\[
	\widetilde{\xvec{u}}_0, \widetilde{\xvec{B}}_0 \in \xH(\mathcal{E}), \quad \widetilde{\xvec{u}}_0|_{\Omega} = \xvec{u}_0, \quad \widetilde{\xvec{B}}_0|_{\Omega} = \xvec{B}_0,
	\]
	or we select continuations $\widetilde{\xvec{u}}_0, \widetilde{\xvec{B}}_0 \in \xLtwo(\mathcal{E})$ with defined normal trace at $\partial \mathcal{E}$ and which obey
	\[
		\widetilde{\xvec{u}}_0 \cdot \xvec{n} = \widetilde{\xvec{B}}_0 \cdot \xvec{n} = 0 \, \mbox{ at } \partial \mathcal{E}, \quad \widetilde{\xvec{u}}_0|_{\Omega} = \xvec{u}_0, \quad \widetilde{\xvec{B}}_0|_{\Omega} = \xvec{B}_0.
	\]
	These extensions will be made precise below in \Cref{lemma:dataext}, which is a modification of \cite[Proposition 2.1]{ChavesSilva2020SmalltimeGE}; hereto, given any $i \in \{1,\dots, K(\Omega)\}$, the following notations are fixed beforehand.
	\begin{itemize}
		\item The sets $\Gamma_{\operatorname{c},1}^i, \dots \Gamma_{\operatorname{c},m_i}^i$ enumerate the connected components of the $i$-th controlled boundary part~$\Gamma_{\operatorname{c}}^i$. 
		\item The set $\Omega^i \subset \mathcal{E}\setminus\Omega$ is the extension attached to $\Omega$ at $\Gamma_{\operatorname{c}}^i$, namely the maximal union of connected components of $\mathcal{E}\setminus\Omega$ with $(\partial \Omega^i \cap \Gamma) \subset \overline{\Gamma_{\operatorname{c}}^i}$.
		\item For each $j \in \{1,\dots, m_i\}$, the set $\Omega_j^i \subset \Omega^i$ is the connected component of $\mathcal{E}\setminus\Omega$ attached to $\Gamma_{\operatorname{c},j}^i$. If $\Omega_{j}^i$ is attached to $\Gamma_{\operatorname{c},j}^i$ and $\Gamma_{\operatorname{c},l}^i$ with $j\neq l$, then $\Omega_{j}^i = \Omega_{l}^i$. 
	\end{itemize}

	\begin{lmm}\label{lemma:dataext}
		There exists a constant $C > 0$ such that for each $\xvec{h} \in \xLtwo_{\operatorname{c}}(\Omega)$ there are functions $\sigma \in \xCinfty(\mathcal{E}; \mathbb{R})$ with $\operatorname{supp}(\sigma) \subset \mathcal{E}\setminus \overline{\Omega}$ and $\widetilde{\xvec{h}} \in \xLtwo(\mathcal{E})$ satisfying
		\[
			{\widetilde{\xvec{h}}} = \xvec{h} \mbox{ in } \Omega, \quad \xdiv{\widetilde{\xvec{h}}} = \sigma \mbox{ in } \mathcal{E}, \quad \widetilde{\xvec{h}} \cdot \xvec{n} = 0 \mbox{ on } \partial \mathcal{E}, \quad \| \widetilde{\xvec{h}} \|_{\xLtwo(\mathcal{E})} \leq C \| \xvec{h} \|_{\xLtwo(\Omega)}.
		\]
		When the vector field $\xvec{h}$ additionally obeys at $\Gamma_{\operatorname{c}}$ the conditions
		\begin{equation}\label{equation:gencompb}
			\forall i \in \{1,\dots, K(\Omega)\}, \, \forall j \in \{1,\dots, m_i\} \colon \\ \left\langle \xvec{h} \cdot \xvec{n}, 1 \right\rangle_{\xHn{{-1/2}}(\partial \Omega^i_j \cap \Gamma_{\operatorname{c}}^i),\xHn{{1/2}}(\partial \Omega^i_j \cap \Gamma_{\operatorname{c}}^i)} = 0,
		\end{equation}
		then one can choose $\widetilde{\xvec{h}} \in \xH(\mathcal{E})$.
	\end{lmm}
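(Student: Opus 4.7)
The plan is to build $\widetilde{\xvec{h}}$ component by component on $\mathcal{E}\setminus\overline{\Omega}$. Let $\{O_{\alpha}\}$ enumerate the distinct connected components of $\mathcal{E}\setminus\overline{\Omega}$. For each $\alpha$, the boundary splits into an interior interface $\Sigma_{\alpha}\coloneq\partial O_{\alpha}\cap\Gamma_{\operatorname{c}}$, which is a disjoint union of certain pieces $\Gamma^{i}_{\operatorname{c},j}$, and an exterior part $\Lambda_{\alpha}\coloneq\partial O_{\alpha}\cap\partial\mathcal{E}$. Because $\xdiv{\xvec{h}}=0$ and $\xvec{h}\cdot\xvec{n}=0$ along $\Gamma\setminus\Gamma_{\operatorname{c}}$, the normal trace $\xvec{h}\cdot\xvec{n}$ defines an element of $\xHn{-1/2}(\partial\Omega)$ supported in $\Gamma_{\operatorname{c}}$, with $\|\xvec{h}\cdot\xvec{n}\|_{\xHn{-1/2}(\partial\Omega)}\le C\|\xvec{h}\|_{\xLtwo(\Omega)}$.

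For each $\alpha$, set $c_{\alpha}\coloneq\langle\xvec{h}\cdot\xvec{n},1\rangle_{\xHn{-1/2}(\Sigma_{\alpha}),\xHn{1/2}(\Sigma_{\alpha})}$, which satisfies $|c_{\alpha}|\le C\|\xvec{h}\|_{\xLtwo(\Omega)}$, and pick any $\sigma_{\alpha}\in\xCinfty_{0}(O_{\alpha})$ with $\int_{O_{\alpha}}\sigma_{\alpha}=c_{\alpha}$ and $\|\sigma_{\alpha}\|_{\xLtwo(\mathcal{E})}\le C\|\xvec{h}\|_{\xLtwo(\Omega)}$; concretely, a fixed unit-mass bump scaled by $c_{\alpha}$ suffices. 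Then I would solve the mixed Neumann problem
\[
\Delta\phi_{\alpha}=\sigma_{\alpha}\ \mbox{ in }\ O_{\alpha},\qquad \partial_{\xvec{n}}\phi_{\alpha}=\xvec{h}\cdot\xvec{n}\ \mbox{ on }\ \Sigma_{\alpha},\qquad \partial_{\xvec{n}}\phi_{\alpha}=0\ \mbox{ on }\ \Lambda_{\alpha},
\]
whose compatibility condition is built in by the choice of $\sigma_{\alpha}$. Standard elliptic theory provides a solution $\phi_{\alpha}$ (unique up to an additive constant) with $\|\xnab\phi_{\alpha}\|_{\xLtwo(O_{\alpha})}\le C\|\xvec{h}\|_{\xLtwo(\Omega)}$.

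Gluing $\widetilde{\xvec{h}}\coloneq\xvec{h}$ on $\Omega$ and $\widetilde{\xvec{h}}\coloneq\xnab\phi_{\alpha}$ on each $O_{\alpha}$, and setting $\sigma\coloneq\sum_{\alpha}\sigma_{\alpha}\in\xCinfty_{0}(\mathcal{E}\setminus\overline{\Omega})$, the Neumann boundary condition ensures that the normal traces of $\widetilde{\xvec{h}}$ match across each $\Sigma_{\alpha}$, so the distributional divergence picks up no surface contribution and equals $\sigma$ in all of $\mathcal{E}$; moreover $\widetilde{\xvec{h}}\cdot\xvec{n}=0$ on $\partial\mathcal{E}$ and the required $\xLtwo$ bound is clear. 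For the second assertion, when \eqref{equation:gencompb} holds, every individual piece $\Gamma^{i}_{\operatorname{c},j}$ carries zero flux, hence each $c_{\alpha}=0$; choosing all $\sigma_{\alpha}\equiv 0$ then produces $\sigma\equiv 0$ and a divergence-free $\widetilde{\xvec{h}}\in\xH(\mathcal{E})$.

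The main technical point is the combinatorial bookkeeping of the attachment between components of $\mathcal{E}\setminus\overline{\Omega}$ and of $\Gamma_{\operatorname{c}}$: even when a single $O_{\alpha}$ is shared among several $\Gamma^{i}_{\operatorname{c},j}$ (the case $\Omega^{i}_{j}=\Omega^{i}_{l}$ for $j\neq l$ highlighted before the lemma), one has to verify that $\Sigma_{\alpha}$ is precisely the disjoint union of those pieces, so that the componentwise condition \eqref{equation:gencompb} genuinely annihilates every aggregate flux $c_{\alpha}$.
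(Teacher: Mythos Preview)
Your proposal is correct and follows essentially the same approach as the paper: compute the normal flux of $\xvec{h}$ through the interface, balance it by a compactly supported smooth source in each connected piece of $\mathcal{E}\setminus\overline{\Omega}$, solve the corresponding Neumann problem, and set $\widetilde{\xvec{h}}=\xnab\phi$ there. The only cosmetic difference is indexing---the paper works with the sets $\Omega^i$ and $\Omega^i_j$ introduced just before the lemma, whereas you enumerate the connected components $O_\alpha$ directly; your closing remark on the bookkeeping correctly identifies why the two viewpoints match up.
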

	\begin{proof}
		Let $\xvec{n}$ denote the outward unit normal to $\Omega$ at $\Gamma$, while the outward unit normal to $\mathcal{E}$ at $\partial \mathcal{E}$ is written as $\xvec{n}_{\partial \mathcal{E}}$.
		It is known (\cf~\cite[Chapter IV, Section 3.2]{BoyerFabrie2013}) that there exists a continuous normal trace operator
		\[
			\gamma_{\xvec{n}}\colon \xLtwo_{\operatorname{c}}(\Omega) \longrightarrow \xH^{-1/2}(\Gamma), \quad \forall \xvec{w} \in \xCinfty(\overline{\Omega}; \mathbb{R}^N) \colon \gamma_{\xvec{n}}(\xvec{w}) = \xvec{w} \cdot \xvec{n}.
		\]
		Then, for each $i \in \{1,\dots, K(\Omega)\}$, a smooth function $\sigma^i \in \xCinfty_0(\Omega^i; \mathbb{R})$ is fixed such that
		\[
			\forall l \in \{1,\dots, m_i\} \colon \int_{\Omega^i_l} \sigma^i(\xvec{x}) \, \xdx{\xvec{x}} - \int_{\partial \Omega^i_l \cap \Gamma_{\operatorname{c}}^i}  \gamma_{\xvec{n}}(\xvec{h}) \, \xdx{S} = 0.
		\]
		This guarantees that one can solve for each $i\in\{1,\dots,K(\Omega)\}$ a weak formulation of the respective elliptic problem
		\begin{equation}\label{equation:ellipticprobleminitialdataextension}
			\begin{cases}
				\Delta \varphi^i = \sigma^i & \mbox{ in } \Omega^i,\\
				\xnab \varphi^i \cdot \xvec{n} = \gamma_{\xvec{n}}(\xvec{h}),  & \mbox{ on } \Gamma_{\operatorname{c}}^i \cap \partial\Omega^i,\\
				\xnab \varphi^i \cdot \xvec{n}_{\partial\mathcal{E}} = 0 & \mbox{ on } \partial\Omega^i \setminus \Gamma_{\operatorname{c}}^i.
			\end{cases}
		\end{equation}
		In particular, if \eqref{equation:gencompb} holds for $i \in \{1,\dots, K(\Omega)\}$, then $\sigma^i = 0$ can be chosen. Accordingly, the proof is concluded by taking $\widetilde{\xvec{h}} \coloneqq \xvec{h}$ in $\Omega$ and $\widetilde{\xvec{h}} \coloneqq \xnab \varphi^i$ in $\Omega^i$. The continuity of the extension operator follows from \eqref{equation:ellipticprobleminitialdataextension} and the divergence-free condition encoded in $\xLtwo_{\operatorname{c}}(\Omega)$.
	\end{proof}
	
	\begin{rmrk}\label{remark:ride}
		In the context of \Cref{theorem:main1}, due to the assumption that $\Gamma_{\operatorname{c}}$ is connected, the condition \eqref{equation:gencompb} is automatically satisfied by all $\xvec{u}_0, \xvec{B}_0 \in \xLtwo_{\operatorname{c}}(\Omega)$.
	\end{rmrk}

	\subsection{Weak controlled trajectories}\label{subsection:wct}
	To define notions of weak controlled trajectories for the problems \eqref{equation:MHD00} and \eqref{equation:MHD00withq}, we follow the idea from \cite{CoronMarbachSueur2020} and first introduce Leray--Hopf weak solutions for interior controlled MHD problems posed in the respectively enlarged domain $\mathcal{E}$. Then, by restricting such solutions to $\Omega$, one obtains a notion of boundary controlled weak solutions to \eqref{equation:MHD00} and \eqref{equation:MHD00withq}. The plan is as follows.
	\begin{itemize}
		\item \Cref{subsubsection:case_thm1} defines weak controlled trajectories for initial data in  $\xH(\mathcal{E})$.
		\item \Cref{subsubsection:gq} discusses the appearance of $\xnab q$.
		\item \Cref{subsubsection:changeofvariables} formulates weak controlled trajectories in Elasser variables.
		\item \Cref{subsubsection:secondcase} defines weak controlled trajectories for more general initial data.
	\end{itemize}
	
	Given any time~$T > 0$, let us recall the notation of the space-time cylinder and its mantle
	\[
		\mathcal{E}_T \coloneqq \mathcal{E} \times (0, T), \quad \Sigma_T \coloneqq \partial\mathcal{E} 	\times (0, T).
	\]
	In this subsection, if forces $\xsym{\xi}$ and $\xsym{\eta}$ appear in the right-hand sides of MHD problems, then it is assumed that
	\[
		\xsym{\eta} = \widetilde{\xsym{\eta}} + \xsym{\zeta}, \quad \bigcup_{t \in [0,T]} \left(\operatorname{supp}(\xsym{\xi}(\cdot,t))\cup\operatorname{supp}(\widetilde{\xsym{\eta}}(\cdot,t))\right) \subset \overline{\mathcal{E}}\setminus\overline{\Omega}
	\]
	for some $\xsym{\zeta}, \xsym{\xi}, \widetilde{\xsym{\eta}} \in \xCzero([0,T]; \xLtwo(\mathcal{E};\mathbb{R}^N))$. In particular, $\xsym{\zeta}$ will coincide with that in \Cref{theorem:main}.
	\subsubsection{The case of $\xH(\mathcal{E})$ data (\eg, \texorpdfstring{\Cref{theorem:main1})}{the first main theorem}}\label{subsubsection:case_thm1}
	We focus now on the situation of \Cref{theorem:main1}; however, if the initial data can be extended as $\xH(\mathcal{E})$-functions, the following definitions make also sense for the setting of \Cref{theorem:main}. To streamline the presentation, the three-dimensional cross product and curl notations are employed. 
	
	\paragraph{Definition of weak controlled trajectories.}
	When the initial data $\xvec{u}_0,\xvec{B}_0\in\xLtwo_{\operatorname{c}}(\Omega)$ admit extensions to $\xH(\mathcal{E})$, as emphasized in \Cref{remark:ride} for \Cref{theorem:main1},
	a weak controlled trajectory for \eqref{equation:MHD00} is defined as any pair of vector fields $(\xvec{u}, \xvec{B})$ that are of the form
	\[
	(\xvec{u}, \xvec{B}) \in \left[\xCn{0}_{w}([0,T];\xLn{2}_{\operatorname{c}}(\Omega))\cap\xLn{2}((0,T);\xHn{1}(\Omega))\right]^2, \quad (\xvec{u}, \xvec{B}) = (\widetilde{\xvec{u}}|_{\Omega},\widetilde{\xvec{B}}|_{\Omega}),
	\]
	where $(\widetilde{\xvec{u}}, \widetilde{\xvec{B}})$ denotes a Leray--Hopf weak solution to the viscous and resistive incompressible MHD system
	\begin{equation}\label{equation:MHD_ElsaesserExt}
		\begin{cases}
			\partial_t \xvec{u} - \nu_1 \Delta \xvec{u} + (\xvec{u} \cdot \xdop{\nabla}) \xvec{u} - \mu(\xvec{B} \cdot \xnab) \xvec{B} + \xdop{\nabla} p = \xsym{\xi} & \mbox{ in } \mathcal{E}_T,\\
			\partial_t \xvec{B} - \nu_2 \Delta \xvec{B} + (\xvec{u} \cdot \xdop{\nabla}) \xvec{B} - (\xvec{B} \cdot \xdop{\nabla}) \xvec{u} = \xsym{\eta} & \mbox{ in } \mathcal{E}_T,\\
			\xdop{\nabla}\cdot\xvec{u} = \xdop{\nabla}\cdot\xvec{B} = 0 & \mbox{ in } \mathcal{E}_T,\\
			\xvec{u} \cdot \xvec{n} = \xvec{B} \cdot \xvec{n} = 0 & \mbox{ on } \Sigma_T,\\
			\xmcal{N}_1(\xvec{u},\xvec{B}) = \xmcal{N}_2(\xvec{u},\xvec{B}) = \xvec{0} & \mbox{ on } \Sigma_T,\\
			\xvec{u}(\cdot, 0)  =  \xvec{u}_0,\, \xvec{B}(\cdot, 0)  =  \xvec{B}_0 & \mbox{ in } \mathcal{E}.
		\end{cases}
	\end{equation}
	A pair $(\xvec{u},\xvec{B}) \in \mathscr{X}_T\times\mathscr{X}_T$ is called a Leray--Hopf weak solution to \eqref{equation:MHD_ElsaesserExt}, if it satisfies for all $\xsym{\varphi}, \xsym{\psi} \in \xCinfty_0( \overline{\mathcal{E}}\times[0,T);\mathbb{R}^N) \cap \xCinfty([0,T];\xH(\mathcal{E}))$ and for almost all $t \in [0,T]$, the variational formulation
	\begin{equation}\label{equation:varformwct}
		\begin{gathered}
			\int_{\mathcal{E}} \left( \xvec{u}(\xvec{x},t) \cdot \xsym{\varphi}(\xvec{x},t) + \xvec{B}(\xvec{x},t) \cdot \xsym{\psi}(\xvec{x},t) - \xvec{u}_0(\xvec{x}) \cdot \xsym{\varphi}(\xvec{x},0) - \xvec{B}_0(\xvec{x}) \cdot \xsym{\psi}(\xvec{x},0) \right) \, \xdx{\xvec{x}}\\
			-\int_0^t\int_{\mathcal{E}} \left( \xvec{u}\cdot \partial_t \xsym{\varphi} + \xvec{B} \cdot \partial_t \xsym{\psi} \right) \, \xdx{\xvec{x}} \, \xdx{t} + \nu_1 \int_0^t\int_{\mathcal{E}}  (\xcurl{\xvec{u}}) \cdot (\xcurl{\xsym{\varphi}}) \, \xdx{\xvec{x}} \xdx{t} \\
			+ \nu_2 \int_0^t\int_{\mathcal{E}} (\xcurl{\xvec{B}}) \cdot (\xcurl{\xsym{\psi}})\, \xdx{\xvec{x}} \xdx{t} + \int_0^t\int_{\mathcal{E}} \left((\xvec{u}\cdot\xnab)\xvec{u}-\mu(\xvec{B}\cdot\xnab)\xvec{B}\right) \cdot \xsym{\varphi} \, \xdx{\xvec{x}} \, \xdx{t}\\
			+ \int_0^t\int_{\mathcal{E}} \left((\xvec{u}\cdot\xnab)\xvec{B}-(\xvec{B}\cdot\xnab)\xvec{u}\right) \cdot \xsym{\psi} \, \xdx{\xvec{x}} \, \xdx{t} - \nu_1 \int_0^t\int_{\partial\mathcal{E}}  \xsym{\rho}_1(\xvec{u},\xvec{B}) \cdot \xsym{\varphi} \, \xdx{S} \, \xdx{t}\\
			- \nu_2 \int_0^t\int_{\partial\mathcal{E}} \xsym{\rho}_2(\xvec{u},\xvec{B}) \cdot \xsym{\psi} \, \xdx{S} \, \xdx{t} = \int_0^t \int_{\mathcal{E}} \left(\xsym{\xi} \cdot \xsym{\varphi} + \xsym{\eta} \cdot \xsym{\psi}\right) \, \xdx{\xvec{x}} \, \xdx{t}, 
		\end{gathered}
	\end{equation}
	together with the following energy inequality for almost all $0 \leq s < t \leq T$: 
	\begin{equation}\label{equation:sei}
		\begin{gathered}
			\|\xvec{u}(\cdot, t)\|_{\xLtwo(\mathcal{E})}^2 + \mu\|\xvec{B}(\cdot, t)\|_{\xLtwo(\mathcal{E})}^2 + 2\int_s^t \int_{\mathcal{E}} \left(\nu_1|\xcurl{\xvec{u}}|^2 + \nu_2 \mu |\xcurl{\xvec{B}}|^2\right) \, \xdx{\xvec{x}} \, \xdx{t}  \\
			\leq \|\xvec{u}(\cdot, s)\|_{\xLtwo(\mathcal{E})}^2 + \mu\|\xvec{B}(\cdot, s)\|_{\xLtwo(\mathcal{E})}^2 + 2 \int_s^t \int_{\mathcal{E}} \xsym{\xi} \cdot \xvec{u} \, \xdx{\xvec{x}} \, \xdx{t} + 2 \mu \int_s^t \int_{\mathcal{E}} \xsym{\eta} \cdot \xvec{B} \, \xdx{\xvec{x}} \, \xdx{t}  \\
			\quad + 2\nu_1 \int_s^t \int_{\partial\mathcal{E}}  \xsym{\rho}_1(\xvec{u},\xvec{B}) \cdot \xvec{u} \, \xdx{S} \, \xdx{t} + 2\nu_2\mu\int_s^t \int_{\partial\mathcal{E}}  \xsym{\rho}_2(\xvec{u},\xvec{B}) \cdot \xvec{B} \, \xdx{S} \, \xdx{t}.
		\end{gathered}
	\end{equation}
	The weak formulation \eqref{equation:varformwct} and energy inequality \eqref{equation:sei} are derived by utilizing the identity $\Delta \xvec{h} = - \xcurl{(\xcurl{\xvec{h}})}$ for any sufficiently regular vector field $\xvec{h}$ with $\xdiv{\xvec{h}} = 0$, while also using the integration by parts and vector calculus formulas
	\begin{gather*}
		\int_{\mathcal{E}} \xvec{g} \cdot (\xcurl{\xvec{h}}) \, \xdx{\xvec{x}} = \int_{\mathcal{E}} (\xcurl{\xvec{g}}) \cdot \xvec{h} \, \xdx{\xvec{x}} - \int_{\partial \mathcal{E}} (\xvec{g} \times \xvec{h}) \cdot \xvec{n} \, \xdx{S},\\
		(\xvec{g}\times\xvec{h}) \cdot \xvec{n} = (\xvec{h} \times \xvec{n}) \cdot \xvec{g} = - (\xvec{g} \times \xvec{n}) \cdot \xvec{h},
	\end{gather*}
	where $\xdx{S}$ stands for the surface measure on $\partial \mathcal{E}$.

	\paragraph{Existence of weak solutions.} By analysis similar to the Navier--Stokes equations, for instance via the Galerkin method explained in \cite[Chapter 3]{Temam2001}, one can obtain the existence of Leray--Hopf weak solutions $(\xvec{u}, \xvec{B}) \in \mathscr{X}_T\times\mathscr{X}_T$ satisfying \eqref{equation:varformwct} and \eqref{equation:sei}. Regarding the energy inequality~\eqref{equation:sei}, we refer to \cite[Section 3]{IftimieSueur2011} for a strategy that carries over to the present MHD model. 
	In particular, the boundary integrals in \Cref{equation:varformwct} and \eqref{equation:sei} are not causing additional difficulties in comparison with the references mentioned above. 
	Indeed, when $\xvec{M} \in \xLinfty(\partial\mathcal{E};\mathbb{R}^{N\times N})$ and $\xvec{g},\xvec{h} \in \xLtwo((0,T);\xHone(\mathcal{E}))$, trace inequalities and interpolation imply
	\begin{equation}\label{equation:bdrinttrest}
		\begin{aligned}
			\left| \int_s^t \int_{\partial\mathcal{E}} \left[\xvec{M}(\xvec{x})\xvec{h}\right]_{\operatorname{tan}} \cdot \xvec{g} \, \xdx{S} \, \xdx{r} \right| & \leq \frac{C}{\delta}\!\!\int_s^t \|\xvec{g}(\cdot,r)\|_{\xLtwo(\mathcal{E})}^2 \, \xdx{r} + \delta \int_s^t \|\xvec{g}(\cdot,r)\|_{\xHone(\mathcal{E})}^2  \, \xdx{r}\\
			& \quad + \frac{C}{\delta} \!\! \int_s^t \|\xvec{h}(\cdot,r)\|_{\xLtwo(\mathcal{E})}^2 \, \xdx{r} + \delta \int_s^t \|\xvec{h}(\cdot,r)\|_{\xHone(\mathcal{E})}^2  \, \xdx{r}
		\end{aligned}
	\end{equation}
	for any $\delta \in (0,1)$ and a generic constant $C = C(\mathcal{E},\|\xvec{M}\|_{\xLinfty(\partial\mathcal{E})}) > 0$. The estimates \eqref{equation:sKem} and \eqref{equation:bdrinttrest} facilitate a Galerkin method of the type described in \cite[Chapter 3]{Temam2001}. In this way, one obtains approximate solutions $(\xvec{u}^k,\xvec{B}^k)_{k \in \mathbb{N}}$ to \eqref{equation:varformwct} that are bounded in $\xLinfty((0,T);\xLtwo(\mathcal{E})) \cap \xLtwo((0,T);\xHone(\mathcal{E}))$, satisfy a discrete version of \eqref{equation:sei}, and converge in $\xLtwo((0,T);\xLtwo(\mathcal{E}))$ to a Leray--Hopf weak solution $(\xvec{u},\xvec{B})  \in \mathscr{X}_T\times\mathscr{X}_T$ as $k \to + \infty$. For passing the limit $k \to +\infty$ in the discrete version of the energy inequality \eqref{equation:sei}, one needs to show
	\begin{multline*}\label{equation:convapproxenbdr}
		\nu_1 \int_s^t \int_{\partial\mathcal{E}}  \xsym{\rho}_1(\xvec{u}^k,\xvec{B}^k) \cdot \xvec{u}^k \, \xdx{S} \, \xdx{t} + \nu_2\mu\int_s^t \int_{\partial\mathcal{E}}  \xsym{\rho}_2(\xvec{u}^k,\xvec{B}^k) \cdot \xvec{B}^k \, \xdx{S} \, \xdx{t} \\
		\longrightarrow \nu_1 \int_s^t \int_{\partial\mathcal{E}}  \xsym{\rho}_1(\xvec{u},\xvec{B}) \cdot \xvec{u} \, \xdx{S} \, \xdx{t} + \nu_2\mu\int_s^t \int_{\partial\mathcal{E}}  \xsym{\rho}_2(\xvec{u},\xvec{B}) \cdot \xvec{B} \, \xdx{S} \, \xdx{t}, \, \mbox{ as } k \to +\infty.
	\end{multline*}
	To this end, if $\xvec{h}^k$ denotes either $\xvec{u}^k$ or $\xvec{B}^k$, and $\xvec{h}$ represents either $\xvec{u}$ or $\xvec{B}$, then, by means of trace theorems and interpolation, one has
	\begin{equation}\label{equation:bdrtmtrest}
		\begin{aligned}
			\|\xvec{h}^k - \xvec{h}\|_{\xLtwo((0,T);\xLtwo(\partial\mathcal{E}))}^2 & \leq \|\xvec{h}^k  - \xvec{h}\|_{\xLtwo((0,T);\xLtwo(\mathcal{E}))}\|\xvec{h}^k  - \xvec{h}\|_{\xLtwo((0,T);\xHone(\mathcal{E}))} \\
			& \leq C\|\xvec{h}^k  - \xvec{h}\|_{\xLtwo((0,T);\xLtwo(\mathcal{E}))} \longrightarrow 0, \quad k \longrightarrow +\infty,
		\end{aligned}
	\end{equation}
	which implies for $(\xvec{g}^{1}, \xvec{g}^{2}) = (\xvec{u}, \xvec{B})$ and $(\xvec{g}^{1,k}, \xvec{g}^{2,k} ) = (\xvec{u}^k, \xvec{B}^k)$ that
	\begin{multline*}
		\int_s^t\int_{\partial\mathcal{E}} \left| \left[\xvec{M}_i\xvec{u}^k + \xvec{L}_i\xvec{B}^k \right]_{\operatorname{tan}} \cdot \xvec{g}^{i,k} - \left[\xvec{M}_i\xvec{u} + \xvec{L}_i\xvec{B} \right]_{\operatorname{tan}} \cdot \xvec{g}^i \right|\, \xdx{S} \xdx{r} \\
		\begin{aligned}
			& \leq \int_s^t\int_{\partial\mathcal{E}} \left| \left[(\xvec{M}_i(\xvec{u}^k-\xvec{u}) + \xvec{L}_i(\xvec{B}^k-\xvec{B})) \right]_{\operatorname{tan}}\cdot \xvec{g}^{i,k}\right| \, \xdx{S} \xdx{r} \\
			& \quad + \int_s^t\int_{\partial\mathcal{E}} \left| \left[ \xvec{M}_i\xvec{u} + \xvec{L}_i\xvec{B} \right]_{\operatorname{tan}}\cdot (\xvec{g}^i-\xvec{g}^{i,k})\right| \, \xdx{S} \xdx{r} \,\longrightarrow 0 \, \mbox{ as } k \longrightarrow + \infty, \, i=1,2.
		\end{aligned}
	\end{multline*}
	
	\subsubsection{A pressure-like gradient in the induction equation}\label{subsubsection:gq}
	It is important to verify that sufficiently regular functions which satisfy the variational formulation \eqref{equation:varformwct} are classical solutions to the original problem \eqref{equation:MHD_ElsaesserExt}. On the one hand, if the pair $(\xvec{u},\xvec{B})$ possesses the necessary regularity and satisfies~\eqref{equation:varformwct}, then $(\xvec{u},\xvec{B})$ also classically obeys a version of~\eqref{equation:MHD_ElsaesserExt} where the induction equation is replaced by
	\begin{equation}\label{equation:replindeq}
		\partial_t \xvec{B} - \nu_2 \Delta \xvec{B} + (\xvec{u} \cdot \xdop{\nabla}) \xvec{B} - (\xvec{B} \cdot \xdop{\nabla}) \xvec{u} + \xnab q = \xsym{\eta}.
	\end{equation}
	On the other hand, because $\xsym{\psi}$ in \eqref{equation:varformwct} is divergence-free and tangential at $\partial \mathcal{E}$, one cannot generally conclude that \eqref{equation:MHD_ElsaesserExt} is satisfied. However, let us now suppose that $\xvec{M}_2 = \xvec{L}_2 = \xsym{0}$, and that the control $\xsym{\eta}$ in \eqref{equation:MHD_ElsaesserExt} additionally obeys 
	\begin{equation}\label{2.11}
		\xdiv{\xsym{\eta}} = 0\, {\rm in}\; \mathcal{E}_T, \quad   \xsym{\eta} \cdot \xvec{n} = 0 \, {\rm on}\; \Sigma_T.
	\end{equation}
	In this situation, by acting with the divergence operator on \eqref{equation:replindeq}, while also taking the normal traces of \eqref{equation:replindeq} at $\partial \mathcal{E}$, one finds that $q(\cdot,t)$ solves the Neumann problem
	\begin{equation}\label{equation:npq}
		\begin{cases}
			\Delta q(\cdot,t) = 0 & \mbox{ in } \mathcal{E},\\
			\xnab q(\cdot, t) \cdot \xvec{n} = 0 & \mbox{ on } \partial \mathcal{E},
		\end{cases}
	\end{equation}
	hence $\xnab q(\cdot, t) = 0$ holds in $\mathcal{E}_T$.

	\begin{rmrk}
		If $\xvec{M}_2 \neq \xsym{0}$ or $\xvec{L}_2 \neq \xsym{0}$, even for the uncontrolled model with $\xsym{\xi} = \xsym{\eta} = \xsym{0}$ in $\mathcal{E}_T$, we shall use the modified induction equation \eqref{equation:replindeq}. Indeed, assuming, \eg, for $N = 2$, that there would exist a classical solution $(\xvec{u}, \xvec{B})$ to \eqref{equation:MHD_ElsaesserExt} on a short time interval, then necessarily $\Delta \xvec{B} \cdot \xvec{n} = \xsym{\eta} \cdot \xvec{n}$ on~$\Sigma_T$. Since $\xsym{\eta}$ is supported in $\overline{\mathcal{E}}\setminus\overline{\Omega}$, it is unclear whether this could contradict
		\[
			\Delta \xvec{B} \cdot \xvec{n} = - \xnab^{\perp} \left(\xwcurl{\xvec{B}}\right) \cdot \xvec{n} = - 2\xnab^{\perp} \left( [\xvec{M}_2 \xvec{u} + \xvec{L}_2 \xvec{B}]_{\operatorname{tan}} \right) \cdot \xvec{n} \mbox{ on } \Sigma_T.
		\]
		For instance, if $\xvec{M}_2$ is the identity matrix and $\xvec{L}_2 = \xsym{0}$, then it would follow for $\xsym{\eta} = \xsym{0}$ that $\xvec{u}$ is a constant of time at $\Sigma_T$; thus, $\xvec{u}$ would satisfy a Navier-slip-with-friction and a Dirichlet boundary condition at the same time.
	\end{rmrk}

	Despite that possibly $\xnab q \neq \xsym{0}$ when $\xvec{M}_2 \neq \xsym{0}$ or $\xvec{L}_2 \neq \xsym{0}$, it still remains an interesting question whether magnetic field interior controls with~\eqref{2.11} can be constructed, as this would provide more insights on the nature of the term $\xnab q$ in a control theoretic context. In order to obtain a partial result in that direction (\cf~\Cref{theorem:annulus}), we assume for now that $N = 2$ and let $(\xvec{g},\xvec{h}) \mapsto \xnab \mathcal{Q}(\xvec{g},\xvec{h})$ be the linear operator that assigns to $(\xvec{g},\xvec{h}) \in \xHone(\mathcal{E})\cap\xH(\mathcal{E})$ the gradient of the (unique up to a constant) solution to the Neumann problem
	\[
	\begin{cases}
		\Delta \mathcal{Q}(\xvec{g},\xvec{h}) = 0 & \mbox{ in } \mathcal{E},\\
		\xnab \mathcal{Q}(\xvec{g},\xvec{h}) \cdot \xvec{n} = - 2\nu_2\xnab^{\perp} \left( [\xvec{M}_2 \xvec{g} + \xvec{L}_2 \xvec{h}]_{\operatorname{tan}} \right) \cdot \xvec{n} & \mbox{ on } \partial \mathcal{E}.
	\end{cases}
	\]
	Then, concerning the general case where $\xvec{M}_2 \neq \xsym{0}$ or $\xvec{L}_2 \neq \xsym{0}$, the problem \eqref{equation:MHD_ElsaesserExt} with induction equation replaced by \eqref{equation:replindeq} can be reformulated as
	\begin{equation}\label{equation:MHD_ElsaesserExtReformulated}
		\begin{cases}
			\partial_t \xvec{u} - \nu_1 \Delta \xvec{u} + (\xvec{u} \cdot \xdop{\nabla}) \xvec{u} - \mu(\xvec{B} \cdot \xnab) \xvec{B} + \xdop{\nabla} p = \xsym{\xi} & \mbox{ in } \mathcal{E}_T,\\
			\partial_t \xvec{B} - \nu_2 \Delta \xvec{B} + (\xvec{u} \cdot \xdop{\nabla}) \xvec{B} - (\xvec{B} \cdot \xdop{\nabla}) \xvec{u} + \xnab \mathcal{Q}(\xvec{u},\xvec{B}) = \xsym{\eta} & \mbox{ in } \mathcal{E}_T,\\
			\xdop{\nabla}\cdot\xvec{u} = \xdop{\nabla}\cdot\xvec{B} = 0 & \mbox{ in } \mathcal{E}_T,\\
			\xvec{u} \cdot \xvec{n} = \xvec{B} \cdot \xvec{n} = 0 & \mbox{ on } \Sigma_T,\\
			\xmcal{N}_1(\xvec{u},\xvec{B}) = \xmcal{N}_2(\xvec{u},\xvec{B}) = \xvec{0} & \mbox{ on } \Sigma_T,\\
			\xvec{u}(\cdot, 0)  =  \xvec{u}_0,\, \xvec{B}(\cdot, 0)  =  \xvec{B}_0 & \mbox{ in } \mathcal{E}.
		\end{cases}
	\end{equation}
	By analysis similar to the $2$D incompressible Navier--Stokes system, weak solutions to \eqref{equation:MHD_ElsaesserExtReformulated} are unique and initial data $\xvec{u}_0, \xvec{B}_0 \in \xHone(\mathcal{E})\cap\xH(\mathcal{E})$ give rise to strong solutions when \eqref{2.11} is satisfied. In this article we shall prove in parallel to Theorems~\Rref{theorem:main1} and~\Rref{theorem:main} the following controllability result for \eqref{equation:MHD_ElsaesserExtReformulated} via distributed controls $\xsym{\xi}$ and~$\xsym{\eta}$ satisfying \eqref{2.11}.
	\begin{thrm}\label{theorem:annulus}
		Let $\mathcal{E} \subset \mathbb{R}^2$ be an annulus and $\omega \subset \mathcal{E}$ an open simply-connected control region such that $\mathcal{E}\setminus\omega$ is simply-connected and $\omega$ contains an annulus sector. The friction coefficient matrices are arbitrarily fixed with
		\[
		\xvec{M}_1, \xvec{L}_1, \xvec{L}_2 \in \xCinfty(\mathcal{E};\mathbb{R}^{2\times2}), \quad \xvec{M}_2 \coloneqq \rho \xvec{I}, \quad \rho \in \mathbb{R}, \quad \xvec{I} \coloneqq \begin{bmatrix}
			1 & 0 \\ 0 & 1
		\end{bmatrix}.
		\]  
		Then, for any control time $T_{\operatorname{ctrl}} > 0$, accuracy parameter $\delta > 0$, and
		\begin{itemize}
			\item initial states $\xvec{u}_0, \xvec{B}_0 \in \xH(\mathcal{E}) \cap \xHn{3}(\mathcal{E})$ with $\xvec{B}_0 = \xnab^{\perp} \psi_0$ and $\psi_0$ vanishing on~$\partial\mathcal{E}$,
			\item target states $\xvec{u}_1, \xvec{B}_1 \in \xH(\mathcal{E})$ with $\xvec{B}_1 = \xnab^{\perp} \psi_1$ for a stream function $\psi_1$ that vanishes on $\partial \mathcal{E}$,
		\end{itemize}
		there exist controls $\xsym{\xi}, \xsym{\eta} \in \xCzero([0,T_{\operatorname{ctrl}}];\xHtwo(\mathcal{E};\mathbb{R}^N))$, supported in $\overline{\omega}\times[0,T]$ and obeying \eqref{2.11}, such that the solution $(\xvec{u}, \xvec{B})$ to \eqref{equation:MHD_ElsaesserExtReformulated} satisfies the terminal condition
		\[
			\|\xvec{u}(\cdot,T_{\operatorname{ctrl}}) - \xvec{u}_1\|_{\xLtwo(\mathcal{E})} + \|\xvec{B}(\cdot,T_{\operatorname{ctrl}}) - \xvec{B}_1\|_{\xLtwo(\mathcal{E})} < \delta.
		\]
	\end{thrm}

	\begin{figure}[ht!]
		\centering
		\resizebox{0.28\textwidth}{!}{
			\begin{tikzpicture}
				\clip(-3.1,-3.1) rectangle (3.1,3.1);
				\draw[line width=0.5mm, color=black] (80:1.5) coordinate (A) arc (80:2:1.5) coordinate (C)  (2:2.9) coordinate (D) arc (2:80:2.9) coordinate (B);
				\draw[line width=0.5mm, color=black] (80:1.5) arc (80:365:1.5) (2:2.9) arc (2:365:2.9);

				\draw[dashed,line width=0.5mm, color=black] (A) -- (B);
				\draw[dashed,line width=0.5mm, color=black] (C) -- (D);
				
				\coordinate[label=left:\large{$\mathcal{E}$}] (A) at (0.35,-2.2);
				\coordinate[label=left:\large{$\omega$}] (A) at (2,1.5);									
			\end{tikzpicture}
		}
		\caption{Sketch of an annulus $\mathcal{E}$ with control region $\omega$ as required by \Cref{theorem:annulus}. In order to integrate \Cref{theorem:annulus} into the notational framework of Theorems~\Rref{theorem:main1} and \Rref{theorem:main}, one can identify $\Omega = \mathcal{E}\setminus \omega$. }
		\label{Figure:Annulusinteriorcontrol}
	\end{figure}

	\subsubsection{Change of unknowns}\label{subsubsection:changeofvariables}
	A few exceptions aside, the subsequent analysis can be streamlined by introducing the symmetrized notations
	\begin{equation}\label{equation:chouk}
		\begin{gathered}
			\xvec{z}^{\pm} \coloneqq \xvec{u} \pm \sqrt{\mu}\xvec{B}, \quad  p^{\pm} = p \pm \sqrt{\mu}q, \quad \xsym{\xi}^{\pm} \coloneqq \xsym{\xi} \pm \sqrt{\mu}\xsym{\eta}, \\
			\lambda^{\pm} \coloneqq \frac{\nu_1\pm\nu_2}{2}, \quad  \xvec{z}^{\pm}_0 \coloneqq \xvec{u}_0 \pm \sqrt{\mu}\xvec{B}_0,
		\end{gathered}
	\end{equation}
	as well as
	\begin{equation*}
		\begin{gathered}
			\xmcal{N}^{\pm}(\xvec{h}^+,\xvec{h}^-) \coloneqq \left[\xdop{D}(\xvec{h}^{\pm})\xvec{n}(\xvec{x}) + \xvec{W}_{\mathcal{E}} \xvec{h}^{\pm} + \xvec{M}^{\pm}(\xvec{x})\xvec{h}^+ + \xvec{L}^{\pm} \xvec{h}^{-} \right]_{\operatorname{tan}}
		\end{gathered}
	\end{equation*}
	and
	\[
		\xsym{\rho}^{\pm}(\xvec{h}^+,\xvec{h}^-) \coloneqq 2\left[\xvec{M}^{\pm}(\xvec{x})\xvec{h}^+ + \xvec{L}^{\pm}(\xvec{x})\xvec{h}^-\right]_{\operatorname{tan}},
	\]
	where
	\[
		\xvec{M}^{\pm} \coloneqq \frac{\sqrt{\mu}\xvec{M}_1 \pm \sqrt{\mu} \xvec{M}_2 + L_1 \pm L_2}{2\sqrt{\mu}}, \quad \xvec{L}^{\pm} \coloneqq \frac{\sqrt{\mu}\xvec{M}_1 \pm \sqrt{\mu} \xvec{M}_2 - L_1 \mp L_2}{2\sqrt{\mu}}.
	\]
	By utilizing the inner product structure of $\xLtwo(\mathcal{E})$, one can verify that the energy
	\[
		E(t) \coloneqq \|\xvec{u}(\cdot, t)\|_{\xLtwo(\mathcal{E})}^2 + \mu\|\xvec{B}(\cdot, t)\|_{\xLtwo(\mathcal{E})}^2 + 2\int_s^t \int_{\mathcal{E}} \left(\nu_1|\xcurl{\xvec{u}}|^2 + \nu_2|\xcurl{\xvec{B}}|^2\right) \, \xdx{\xvec{x}} \, \xdx{t}
	\]
	satisfies
	\begin{equation}\label{equation:seizpm}
		\begin{aligned}
			E(t) & =  \frac{1}{2} \sum\limits_{\square\in\{+,-\}} \|\xvec{z}^{\square}(\cdot, t)\|_{\xLtwo(\mathcal{E})}^2 + \lambda^+ \sum\limits_{\square\in\{+,-\}} \int_0^t \int_{\mathcal{E}}\xcurl{\xvec{z}^{\square}}  \cdot \xcurl{\xvec{z}^{\square}} \, \xdx{\xvec{x}} \xdx{s} \\
			& \quad + \lambda^- \sum\limits_{\substack{(\triangle,\circ)\in \\ \{(+,-), (-,+)\}}} \int_0^t \int_{\mathcal{E}} \xcurl{\xvec{z}^{\triangle}} \cdot \xcurl{\xvec{z}^{\circ}} \, \xdx{\xvec{x}} \xdx{s}.
		\end{aligned}
	\end{equation}
	Therefore, if $(\xvec{u}, \xvec{B}) \in \mathscr{X}_T\times\mathscr{X}_T$ is a Leray--Hopf weak solution to \eqref{equation:MHD_ElsaesserExt}, by inserting \eqref{equation:seizpm} to \eqref{equation:sei} and using the transformations from \eqref{equation:chouk}, it follows for almost all $0 \leq s \leq t \leq T$ the inequality
	\begin{equation}\label{equation:seizpm2}
		\begin{gathered}
			\frac{1}{2} \sum\limits_{\square\in\{+,-\}} \|\xvec{z}^{\square}(\cdot, t)\|_{\xLtwo(\mathcal{E})}^2 + \lambda^+ \sum\limits_{\square\in\{+,-\}} \int_s^t \int_{\mathcal{E}}\xcurl{\xvec{z}^{\square}} \cdot \xcurl{\xvec{z}^{\square}} \, \xdx{\xvec{x}} \xdx{r} \\
			+ \lambda^- \sum\limits_{\substack{(\triangle,\circ)\in \\ \{(+,-), (-,+)\}}} \int_s^t \int_{\mathcal{E}} \xcurl{\xvec{z}^{\triangle}} \cdot \xcurl{\xvec{z}^{\circ}} \, \xdx{\xvec{x}} \xdx{r} \\
			\leq  \sum\limits_{\square\in\{+,-\}} \left(\frac{1}{2}\|\xvec{z}^{\square}(\cdot, s)\|_{\xLtwo(\mathcal{E})}^2 + \int_s^t\int_{\mathcal{E}} \xsym{\xi}^{\square}  \cdot \xvec{z}^{\square} \, \xdx{\xvec{x}} \xdx{r} + \lambda^+ \int_s^t\int_{\partial\mathcal{E}} \xsym{\rho}^{\square}(\xvec{z}^+,\xvec{z}^-) \cdot \xvec{z}^{\square} \, \xdx{S} \xdx{r}\right) \\
			+ \lambda^-\sum\limits_{\substack{(\triangle,\circ)\in \\ \{(+,-), (-,+)\}}} \int_s^t\int_{\partial\mathcal{E}} \xsym{\rho}^{\triangle}(\xvec{z}^+,\xvec{z}^-) \cdot \xvec{z}^{\circ} \, \xdx{S} \xdx{r}.
		\end{gathered}
	\end{equation}
	
	\subsubsection{The case of \texorpdfstring{\Cref{theorem:main}}{the second main theorem}}\label{subsubsection:secondcase}
	Given the assumptions of \Cref{theorem:main}, an application of \Cref{lemma:dataext} provides initial data extensions $\xvec{u}_0, \xvec{B}_0 \in \xLtwo(\mathcal{E})$ with $\xvec{u}_0 \cdot \xvec{n} = \xvec{B}_0 \cdot \xvec{n} = 0$ at $\partial\mathcal{E}$. Moreover, the scalar functions  $\xdiv{\xvec{u}_0}$ and $\xdiv{\xvec{B}_0}$ belong to $\xCinfty(\overline{\mathcal{E}};\mathbb{R})$ and are supported in $\mathcal{E}\setminus\overline{\Omega}$.  
	A weak controlled trajectory for \eqref{equation:MHD00withq} is then defined as any pair
	\[
	(\xvec{u}, \xvec{B}) \in 	\left[\xCn{0}_{w}([0,T];\xLn{2}_{\operatorname{c}})\cap\xLn{2}((0,T);\xHn{1}(\Omega))\right]^2,
	\]
	with
	\[
	\xvec{u} = \frac{\xvec{z}^{+} + \xvec{z}^{-}}{2} |_{\Omega}, \quad \xvec{B} = \frac{\xvec{z}^{+} - \xvec{z}^{-}}{\sqrt{\mu}2} |_{\Omega},
	\]
	where $\xvec{z}^{\pm} \in \xCn{0}_{w}([0,T];\xLtwo(\mathcal{E}))\cap\xLn{2}((0,T);\xHone(\mathcal{E}))$ solve in the below specified Leray--Hopf weak sense the Elsasser system\footnote{Systems of the form \eqref{equation:MHD_ElsaesserExt_caseB}, but with different boundary conditions, have been considered by Elsasser in \cite{Elsasser1950}.}
	\begin{equation}\label{equation:MHD_ElsaesserExt_caseB}
		\begin{cases}
			\partial_t \xvec{z}^{\pm} - \Delta (\lambda^{\pm}\xvec{z}^{+} + \lambda^{\mp}\xvec{z}^{-}) + (\xvec{z}^{\mp} \cdot \xdop{\nabla}) \xvec{z}^{\pm} + \xdop{\nabla} p^{\pm} =  \xsym{\xi}^{\pm} & \mbox{ in } \mathcal{E}_T,\\
			\xdop{\nabla}\cdot\xvec{z}^{\pm} = \sigma^{\pm}  & \mbox{ in } \mathcal{E}_T,\\
			\xvec{z}^{\pm} \cdot \xvec{n} = 0 & \mbox{ on }  \Sigma_T,\\
			\xmcal{N}^{\pm}(\xvec{z}^{+},\xvec{z}^{-}) = \xvec{0}  & \mbox{ on }  \Sigma_T,\\
			\xvec{z}^{\pm}(\cdot, 0) = \xvec{z}_0^{\pm} \coloneqq \xvec{u}_0 \pm \sqrt{\mu} \xvec{B}_0  & \mbox{ in } \mathcal{E},
		\end{cases}
	\end{equation}
	where $\xsym{\xi}^{\pm} = \xsym{\xi} \pm \sqrt{\mu} \xsym{\eta}$.
	In \eqref{equation:MHD_ElsaesserExt_caseB}, the boundary operators $\xmcal{N}^{\pm}$ are as defined in \Cref{subsubsection:changeofvariables}. Moreover, the functions $\sigma^{\pm}\colon\overline{\mathcal{E}}\times[0,T]\longrightarrow\mathbb{R}$ are assumed to be smooth, supported in $\overline{\mathcal{E}}\setminus\overline{\Omega}$, and of zero average in $\mathcal{E}$, that is $\sum_{i = 1}^{K(\Omega)} \smallint_{\Omega^i} \sigma^{\pm} \, \xdx{\xvec{x}} = 0$.
	Since $\xsym{M}^{\pm}$ and $\xsym{L}^{\pm}$ are likewise smooth, a notion for weak solutions to \eqref{equation:MHD_ElsaesserExt_caseB} can now be introduced similarly to the Navier--Stokes case in \cite{CoronMarbachSueur2020}. 	
	In order to lift the nonzero divergence constraints, let $\xvec{z}_{\sigma}^{\pm}$ solve the linear Elsasser system
	\begin{equation}\label{equation:MHD_ElsaesserExtStokes}
		\begin{cases}
			\partial_t \xvec{z}_{\sigma}^{\pm} - \Delta(\lambda^{\pm}\xvec{z}_{\sigma}^{+} + \lambda^{\mp}\xvec{z}_{\sigma}^{-})  + \xdop{\nabla} p_{\sigma}^{\pm} =  \xvec{0} & \mbox{ in } \mathcal{E}_T, \\
			\xdop{\nabla}\cdot\xvec{z}_{\sigma}^{\pm} = \sigma^{\pm} & \mbox{ in } \mathcal{E}_T,\\
			\xvec{z}_{\sigma}^{\pm} \cdot \xvec{n} = 0 & \mbox{ on } \Sigma_T,\\
			(\xcurl{\xvec{z}_{\sigma}^{\pm}}) \times \xvec{n} - \xsym{\rho}^{\pm}(\xvec{z}_{\sigma}^+,\xvec{z}_{\sigma}^-) = \xvec{0} & \mbox{ on }  \Sigma_T,\\
			\xvec{z}_{\sigma}^{\pm}(\cdot, 0) = \xvec{z}_0^{\pm}  & \mbox{ in } \mathcal{E}.
		\end{cases}
	\end{equation}
	For defining a weak formulation for \eqref{equation:MHD_ElsaesserExtStokes}, it is of advantage to first eliminate the inhomogeneous divergence data $\sigma^{\pm}$. Hereto, one may decompose $\xvec{z}_{\sigma}^{\pm} \coloneqq \xvec{Z}^{\pm}_{\sigma} + \xnab\theta^{\pm}_{\sigma}$, where $\theta_{\sigma}^{\pm}(\cdot,t)$ are for each $t \in [0,T]$ smooth solutions to the elliptic Neumann problems
	\begin{equation}\label{equation:EE}
		\begin{cases}
			\Delta \theta_{\sigma}^{\pm}(\cdot,t) = \sigma^{\pm}(\cdot,t) & \mbox{ in } \mathcal{E},\\
			\xnab \theta_{\sigma}^{\pm}(\cdot,t) \cdot \xvec{n}(\xvec{x}) = 0 & \mbox{ on } \partial\mathcal{E},\\
		\end{cases}
	\end{equation}
	while $\xvec{Z}^{\pm}_{\sigma}$ obey the inhomogeneous system
	\begin{equation*}
		\begin{cases}
			\partial_t \xvec{Z}^{\pm}_{\sigma} - \Delta (\lambda^{\pm}\xvec{Z}_{\sigma}^{+} + \lambda^{\mp} \xvec{Z}_{\sigma}^-) + \xdop{\nabla} p^{\pm}_{\sigma} = \xsym{\Theta}^{\pm} \coloneqq -\partial_t \xnab \theta_{\sigma}^{\pm} + \Delta (\lambda^{\pm} \xnab\theta_{\sigma}^{+} + \lambda^{\mp} \xnab\theta_{\sigma}^-) \!\!& \mbox{ in } \mathcal{E}_T,\\
			\xdiv{\xvec{Z}^{\pm}_{\sigma}} = 0 \!\!& \mbox{ in } \mathcal{E}_T,\\
			\xvec{Z}^{\pm}_{\sigma} \cdot \xvec{n} = 0 \!\!& \mbox{ on } \Sigma_T, \\
			(\xcurl{\xvec{Z}}_{\sigma}^{\pm}) \times \xvec{n} -  \xsym{\rho}^{\pm}(\xvec{Z}_{\sigma}^+,\xvec{Z}_{\sigma}^-) =  \xsym{\rho}^{\pm}(\xnab\theta_{\sigma}^+,\xnab\theta_{\sigma}^-) \!\!& \mbox{ on } \Sigma_T, \\
			\xvec{Z}^{\pm}_{\sigma}(\cdot, 0) = \xvec{z}^{\pm}_0 - \xnab\theta^{\pm}(\cdot, 0)  \!\!& \mbox{ in } \mathcal{E}.
		\end{cases}
	\end{equation*}
	The regularity of weak solutions to this linear problem can be investigated with the help of estimates that are known for the Navier--Stokes system (\cf~\Cref{lemma:stsreg}). As a result, one finds that $\xvec{Z}^{\pm}_{\sigma}$ are smooth for $t > 0$.
	Finally, the ansatz $\xvec{z}^{\pm} = \widetilde{\xvec{z}}^{\pm} + \xvec{z}_\sigma^{\pm}$ for the solutions of \eqref{equation:MHD_ElsaesserExt_caseB} provides a description of $\widetilde{\xvec{z}}^{\pm}$ by means of the perturbed Elsasser equations
	\begin{equation*}\label{equation:MHD_ElsaesserExtComp}
		\begin{cases}
			\partial_t \widetilde{\xvec{z}}^{\pm} - \Delta (\lambda^{\pm}\widetilde{\xvec{z}}^{+} + \lambda^{\mp}\widetilde{\xvec{z}}^{-}) + (\widetilde{\xvec{z}}^{\mp} + \xvec{z}_\sigma^{\mp}) \cdot \xdop{\nabla} (\widetilde{\xvec{z}}^{\pm} + \xvec{z}_\sigma^{\pm}) + \xdop{\nabla} \widetilde{p}^{\pm} =  \xsym{\xi}^{\pm} & \mbox{ in } \mathcal{E}_T,\\
			\xdop{\nabla}\cdot\widetilde{\xvec{z}}^{\pm} = 0 & \mbox{ in } \mathcal{E}_T\\
			\xmcal{N}^{\pm}(\widetilde{\xvec{z}}^+, \widetilde{\xvec{z}}^-) = \xvec{0} & \mbox{ on }  \Sigma_T,\\
			\widetilde{\xvec{z}}^{\pm}(\cdot, 0) = \xvec{0}  & \mbox{ in } \mathcal{E}.
		\end{cases}
	\end{equation*}
	A Leray--Hopf weak solution to the above system is any pair $(\widetilde{\xvec{z}}^+, \widetilde{\xvec{z}}^-) \in \mathscr{X}_T\times\mathscr{X}_T$ which satisfies for all $\xsym{\varphi}^{\pm} \in \xCinfty_0(\overline{\mathcal{E}}\times[0,T);\mathbb{R}^N) \cap \xCinfty([0,T];\xH(\mathcal{E}))$ and almost all $t \in [0,T]$ the variational formulation
	\begin{equation}\label{equation:varformwctB}
		\begin{gathered}
			\int_{\mathcal{E}} \widetilde{\xvec{z}}^{\pm}(\xvec{x},t) \cdot \xsym{\varphi}^{\pm}(\xvec{x},t) \, \xdx{\xvec{x}} -\int_0^t\int_{\mathcal{E}} \widetilde{\xvec{z}}^{\pm} \cdot \partial_t \xsym{\varphi}^{\pm} \, \xdx{\xvec{x}} \, \xdx{t} \\
			+ \lambda^{\pm} \int_0^t\int_{\mathcal{E}} (\xcurl{\widetilde{\xvec{z}}^{+}}) \cdot (\xcurl{\xsym{\varphi}^{\pm}})  \, \xdx{\xvec{x}} \, \xdx{t} + \lambda^{\mp} \int_0^t\int_{\mathcal{E}} (\xcurl{\widetilde{\xvec{z}}^{-}}) \cdot (\xcurl{\xsym{\varphi}^{\pm}})  \, \xdx{\xvec{x}} \, \xdx{t}  \\
			- \lambda^{\pm} \int_0^t\int_{\partial\mathcal{E}}  \xsym{\rho}^+(\widetilde{\xvec{z}}^{+},\widetilde{\xvec{z}}^{-})\cdot \xsym{\varphi}^{\pm}\, \xdx{S} \, \xdx{t} - \lambda^{\mp} \int_0^t\int_{\partial\mathcal{E}}  \xsym{\rho}^-(\widetilde{\xvec{z}}^{+},\widetilde{\xvec{z}}^{-}) \cdot \xsym{\varphi}^{\pm} \, \xdx{S} \, \xdx{t} \\
			+ \int_0^t \int_{\mathcal{E}} \left((\widetilde{\xvec{z}}^{\mp} \cdot \xdop{\nabla}) \widetilde{\xvec{z}}^{\pm} + (\widetilde{\xvec{z}}^{\mp} \cdot \xdop{\nabla}) \xvec{z}_\sigma^{\pm} + (\xvec{z}_\sigma^{\mp} \cdot \xdop{\nabla}) \widetilde{\xvec{z}}^{\pm} + (\xvec{z}_\sigma^{\mp} \cdot \xdop{\nabla}) \xvec{z}_\sigma^{\pm} \right) \cdot \xsym{\varphi}^{\pm} \, \xdx{\xvec{x}} \, \xdx{t}\\
			= \int_0^t \int_{\mathcal{E}} \xsym{\xi}^{\pm}  \cdot \xsym{\varphi}^{\pm}  \, \xdx{\xvec{x}} \, \xdx{t},
		\end{gathered}
	\end{equation}
	and for almost all $0 \leq s \leq t \leq T$ the energy inequality
	\begin{equation}\label{equation:seiB}
		\begin{gathered}
			\sum\limits_{\square\in\{+,-\}}\|\widetilde{\xvec{z}}^{\square}(\cdot, t)\|_{\xLtwo(\mathcal{E})}^2 + \sum\limits_{\square\in\{+,-\}} (\lambda^+\square \, \lambda^-) \int_s^t \int_{\mathcal{E}} \left|\xcurl{(\widetilde{\xvec{z}}^{+}\square \, \widetilde{\xvec{z}}^{-})} \right|^2  \, \xdx{\xvec{x}} \, \xdx{t}  \\
			\leq \sum\limits_{\square\in\{+,-\}}\left(\|\widetilde{\xvec{z}}^{\square}(\cdot, s)\|_{\xLtwo(\mathcal{E})}^2 + 2\lambda^+ \int_s^t \int_{\partial\mathcal{E}}  \xsym{\rho}^\square(\widetilde{\xvec{z}}^{+},\widetilde{\xvec{z}}^{-}) \cdot \widetilde{\xvec{z}}^{\square} \, \xdx{S} \, \xdx{t}  \right) \\
			+ \sum\limits_{\substack{(\triangle,\circ)\in \\ \{(+,-), (-,+)\}}} \left( 2 \lambda^- \int_s^t \int_{\partial\mathcal{E}}  \xsym{\rho}^{\triangle}(\widetilde{\xvec{z}}^{+},\widetilde{\xvec{z}}^{-}) \cdot \widetilde{\xvec{z}}^{\circ} \, \xdx{S} \, \xdx{t} + \int_s^t \int_{\mathcal{E}} \sigma^{\triangle} |\widetilde{\xvec{z}}^{\circ}|^2 \, \xdx{\xvec{x}} \, \xdx{t}\right)\\
			- 2 \sum\limits_{\substack{(\triangle,\circ)\in \\ \{(+,-), (-,+)\}}} \left(\int_s^t \int_{\mathcal{E}}\left((\widetilde{\xvec{z}}^{\triangle} \cdot \xdop{\nabla}) \xvec{z}_\sigma^{\circ} + (\xvec{z}_\sigma^{\triangle} \cdot \xdop{\nabla}) \xvec{z}_\sigma^{\circ} - \xsym{\xi}^{\circ} \right) \cdot \widetilde{\xvec{z}}^{\circ} \, \xdx{\xvec{x}} \, \xdx{t}\right).
		\end{gathered}
	\end{equation}
	Since the profiles $\widetilde{\xvec{z}}^{\pm}_{\sigma}$ are smooth and $\xvec{L}^{\pm},\xvec{M}^{\pm} \in \xCinfty(\overline{\mathcal{E}};\mathbb{R}^{N\times N})$, the existence of $(\widetilde{\xvec{z}}^+, \widetilde{\xvec{z}}^-) \in \mathscr{X}_T\times\mathscr{X}_T$ satisfying \eqref{equation:varformwctB} and \eqref{equation:seiB} can be obtained through a Galerkin method as explained in \Cref{subsubsection:case_thm1}.

	\begin{rmrk}
		The weak formulation from \Cref{subsubsection:case_thm1} for \eqref{equation:MHD_ElsaesserExt} can be regarded as a special case of \eqref{equation:varformwctB} and \eqref{equation:seiB}. The two formulations are presented separately in order to make a stronger distinction between the case without $\xnab q$ and non-physical situations, where even $\xdiv{\xvec{B}} \neq 0$ is possible in $\overline{\mathcal{E}}\setminus\overline{\Omega}$.
	\end{rmrk}
	
	\subsection{Brief description of the strategy}\label{subsection:Description}
	To prove Theorems~\Rref{theorem:main1}, \Rref{theorem:main}, and \Rref{theorem:annulus}, we develop the approach from \cite{CoronMarbachSueur2020}. Essentially, the time interval $[0,T_{\operatorname{ctrl}}]$ will be divided into two sub-intervals $[0,T_{\operatorname{reg}}]$ and $(T_{\operatorname{reg}}, T_{\operatorname{ctrl}}]$ which correspond to the two main stages of the control strategy.
	\paragraph{Stage 1 (\Cref{section:conclth}).} Leray--Hopf weak solutions to \eqref{equation:MHD_ElsaesserExt} or \eqref{equation:MHD_ElsaesserExt_caseB} with $\xsym{\xi} = \xsym{\eta} = \xvec{0}$ are selected such that their state at $t = T_{\operatorname{reg}}$ belongs to $\xHn{3}(\mathcal{E})\cap\xW(\mathcal{E})$ at a time $T_{\operatorname{reg}} \in [0, T_{\operatorname{ctrl}})$. 
	\paragraph{Stage 2 (\Cref{section:approxres2}).} During $(T_{\operatorname{reg}}, T_{\operatorname{ctrl}}]$, controls~$\xsym{\xi}$ and~$\xsym{\eta}$ are applied in $\overline{\mathcal{E}}\setminus\overline{\Omega}$. More precisely, we determine $\xsym{\xi}$ and $\xsym{\eta}$ such that each Leray--Hopf weak solution to \eqref{equation:MHD_ElsaesserExt} or \eqref{equation:MHD_ElsaesserExt_caseB}, starting from $\xHn{3}(\mathcal{E})\cap\xH(\mathcal{E})$ at $t = T_{\operatorname{reg}}$, approaches the final state in $\xLtwo(\mathcal{E})$ at~$t = T$. As discussed in \Cref{subsection:wct}, for proving Theorems~\Rref{theorem:main1} and \Rref{theorem:annulus}, we have to ensure that~$\xsym{\eta}$ is supported in $\overline{\mathcal{E}}\setminus\overline{\Omega}$ and obeys $\xdiv{\xsym{\eta}} = 0$ in $\mathcal{E}$ and $\xsym{\eta} \cdot \xvec{n} = 0$ at $\partial\mathcal{E}$.

	\section{Approximate controllability between regular states}\label{section:approxres2}
	Let $T > 0$, $\delta > 0$, and $\xvec{u}_0,\xvec{B}_0 \in \xHn{3}(\mathcal{E})\cap \xW(\mathcal{E})$.
	Then, assuming that sufficiently regular $\xsym{\xi}, \xsym{\eta}$, and $\sigma^{\pm}$ are given, the different configurations in Theorems~\Rref{theorem:main1}, \Rref{theorem:main}, and~\Rref{theorem:annulus} are treated simultaneously as follows.
	\begin{itemize}
		\item To show Theorems~\Rref{theorem:main1} and \Rref{theorem:annulus}, the Leray--Hopf weak solution $(\xvec{u}, \xvec{B}) \in \mathscr{X}^T_{\mathcal{E}} \times \mathscr{X}^T_{\mathcal{E}}$ to \eqref{equation:MHD_ElsaesserExt} with the data $(\xvec{u}_0,\xvec{B}_0,\xsym{\xi}, \xsym{\eta})$ is fixed and, by means of \Cref{subsubsection:changeofvariables}, rewritten in the symmetrized variables $\xvec{z}^{\pm}$. 
		\item For proving \Cref{theorem:main}, any Leray--Hopf weak solution $(\xvec{z}^+, \xvec{z}^-) \in \mathscr{X}^T_{\mathcal{E}} \times \mathscr{X}^T_{\mathcal{E}}$ to \eqref{equation:MHD_ElsaesserExt_caseB} with the data $(\xvec{z}^{\pm}_0 = \xvec{u}_0 \pm \sqrt{\mu} \xvec{B}_0, \xsym{\xi}^{\pm} = \xsym{\xi} \pm \sqrt{\mu} \xsym{\eta}, \sigma^{\pm})$ is fixed. 
	\end{itemize}
	It will be shown that, if the \apriori~selected functions $(\xsym{\xi}, \xsym{\eta}, \sigma^{\pm})$ are of a certain form, then $\xvec{z}^{\pm}$ satisfy
	\begin{equation}\label{equation:GoalApproxZerozpm}
		\|\xvec{z}^+(\cdot, T)\|_{\xLtwo(\mathcal{E})} + \|\xvec{z}^-(\cdot, T)\|_{\xLtwo(\mathcal{E})} < \delta\min\{1,\sqrt{\mu}\}.
	\end{equation}
	
	More generally, given arbitrary states $\overline{\xvec{z}}^{\pm}_1 \in \xCinfty(\overline{\mathcal{E}};\mathbb{R}^N)\cap\xH(\mathcal{E})$, it will be demonstrated that for suitable choices $(\xsym{\xi}, \xsym{\eta}, \sigma^{\pm})$ one has
	\begin{equation}\label{equation:GoalApproxZeroGeneral}
		\|\xvec{z}^+(\cdot, T)-\overline{\xvec{z}}^+_1\|_{\xLtwo(\mathcal{E})} + \|\xvec{z}^-(\cdot, T)-\overline{\xvec{z}}^-_1\|_{\xLtwo(\mathcal{E})} < \delta \min\{1,\sqrt{\mu}\},
	\end{equation}
	which implies
	\begin{equation*}\label{equation:GoalApproxZero}
		\|\xvec{u}(\cdot, T)-\overline{\xvec{u}}_1\|_{\xLtwo(\mathcal{E})} + \|\xvec{B}(\cdot, T)-\overline{\xvec{B}}_1\|_{\xLtwo(\mathcal{E})} < \delta,
	\end{equation*}
	where
	\[
	2\xvec{u} \coloneqq (\xvec{z}^++\xvec{z}^-), \quad 2\sqrt{\mu} \xvec{B} \coloneqq (\xvec{z}^+-\xvec{z}^-), \quad  2\overline{\xvec{u}}_1 \coloneqq (\overline{\xvec{z}}^+_1+\overline{\xvec{z}}^-_1), \quad 2\sqrt{\mu} \overline{\xvec{B}}_1 \coloneqq (\overline{\xvec{z}}^+_1-\overline{\xvec{z}}^-_1).
	\]

	\subsection{Asymptotic expansions}\label{seubsection:AsympExp}
	The systems \eqref{equation:MHD_ElsaesserExt} and \eqref{equation:MHD_ElsaesserExt_caseB} are reformulated as a small-dissipation perturbation of an ideal MHD type system in the variables $(\xvec{z}^+,\xvec{z}^-)$. Hereto, for any small $\epsilon \in (0,1)$, the following scaling is performed
	\begin{equation}\label{equation:scaling}
		\begin{gathered}
			\xvec{z}^{\pm,\epsilon} (\xvec{x}, t) \coloneqq \epsilon \xvec{z}^{\pm}(\xvec{x}, \epsilon t), \quad
			p^{\pm,\epsilon} (\xvec{x}, t) \coloneqq \epsilon^2 p^{\pm}(\xvec{x}, \epsilon t), \quad \sigma^{\pm,\varepsilon} (\xvec{x}, t)  \coloneqq \varepsilon \sigma^{\pm}(\xvec{x}, \varepsilon t),
		\end{gathered}
	\end{equation}
	and for the controls
	\begin{equation}\label{equation:scaling2}
		\begin{gathered}
			\xsym{\xi}^{\pm,\epsilon} (\xvec{x}, t) \coloneqq \epsilon^2 \xsym{\xi}^{\pm}(\xvec{x}, \epsilon t).
		\end{gathered}
	\end{equation}
	As a result, the profiles $\xvec{z}^{\pm,\epsilon}$ are seen to satisfy a weak formulation and strong energy inequality for the following problem
	\begin{equation}\label{equation:MHD_ElsaesserExtScaledLongTime}
		\begin{cases}
			\partial_t \xvec{z}^{\pm,\epsilon} - \epsilon\Delta (\lambda^{\pm}\xvec{z}^{+,\epsilon} + \lambda^{\mp} \xvec{z}^{-,\epsilon}) + (\xvec{z}^{\mp,\epsilon} \cdot \xdop{\nabla}) \xvec{z}^{\pm,\epsilon} + \xdop{\nabla} p^{\pm,\epsilon} = \xsym{\xi}^{\pm,\epsilon} & \mbox{ in } \mathcal{E}_{T/\epsilon},\\
			\xdop{\nabla}\cdot\xvec{z}^{\pm,\epsilon} = \sigma^{\pm, \epsilon} & \mbox{ in } \mathcal{E}_{T/\epsilon},\\
			\xvec{z}^{\pm,\epsilon} \cdot \xvec{n} = 0 & \mbox{ on } \Sigma_{T/\epsilon},\\
			\xmcal{N}^{\pm}(\xvec{z}^{+,\epsilon},\xvec{z}^{-,\epsilon}) = \xvec{0} & \mbox{ on } \Sigma_{T/\epsilon},\\
			\xvec{z}^{\pm,\epsilon}(\cdot, 0) = \epsilon\xvec{z}_0^{\pm} & \mbox{ in } \mathcal{E}.
		\end{cases}
	\end{equation}
	In order to achieve the desired estimate \eqref{equation:GoalApproxZeroGeneral}, it shall be verified that, for $\xsym{\xi}^{\pm,\varepsilon}$ and $\sigma^{\pm,\epsilon}$ being of specific forms, all solutions $(\xvec{z}^{+},\xvec{z}^{-}) \in \mathscr{X}^T_{\mathcal{E}}\times\mathscr{X}^T_{\mathcal{E}}$ to \eqref{equation:MHD_ElsaesserExtScaledLongTime} obey
	\begin{equation}\label{equation:asympbeh}
		\|\xvec{z}^{\pm, \epsilon}(\xvec{x}, T/\epsilon) - \overline{\xvec{z}}^{\pm}_1(\cdot, T/\epsilon)\|_{\xLn{2}(\mathcal{E})} = O(\epsilon^{9/8}).
	\end{equation}
	Hence, after choosing $\epsilon = \epsilon(\delta) > 0$ sufficiently small, the asymptotic behavior \eqref{equation:asympbeh} implies \cref{equation:GoalApproxZeroGeneral}.
	To prove \eqref{equation:asympbeh} with $\overline{\xvec{z}}^{\pm}_1 = \xvec{0}$, see \Cref{subsection:approxtowtraj} for the general case, the selected solution to \eqref{equation:MHD_ElsaesserExtScaledLongTime} is expanded according to the ansatz
	\begin{equation}\label{equation:ansatz2}
		\begin{aligned}
			\xvec{z}^{\pm,\epsilon} & = \xvec{z}^{0} + \sqrt{\epsilon}\left\llbracket \xvec{v}^{\pm}\right\rrbracket_{\epsilon} + \epsilon \xvec{z}^{\pm,1} + \epsilon \xdop{\nabla} \theta^{\pm,\epsilon} + \epsilon \left\llbracket \xvec{w}^{\pm}\right\rrbracket_{\epsilon} + \epsilon \xvec{r}^{\pm,\epsilon},	\\ 			p^{\pm,\epsilon} & = p^{0} + \epsilon \left\llbracket q^{\pm}\right\rrbracket_{\epsilon} + \epsilon p^{\pm,1} + \epsilon\vartheta^{\pm,\epsilon} + \epsilon \pi^{\pm,\epsilon},\\
			\sigma^{\pm,\epsilon} & = \sigma^0,
		\end{aligned}
	\end{equation}
	and for the controls
	\begin{equation}\label{3.7}
		\xsym{\xi}^{\pm,\epsilon}  = \xsym{\xi}^{0} + \sqrt{\epsilon}\left\llbracket \xsym{\xvec{\mu}}^{\pm}\right\rrbracket_{\epsilon} + \epsilon \xsym{\xi}^{\pm, 1} + \epsilon \widetilde{\xsym{\zeta}}^{\pm, \epsilon}.
	\end{equation}
	
	For all $t \geq T$, we fix
	\[
	\xvec{z}^{0}(\cdot,t) = \xvec{z}^{\pm,1}(\cdot, t) = \xsym{\xi}^{0}(\cdot, t) = \xsym{\xi}^{\pm, 1}(\cdot, t) = \widetilde{\xsym{\zeta}}^{\pm,\epsilon}(\cdot,t)  = \xsym{\mu}^{\pm}(\cdot,t,\cdot) = \xsym{0}
	\]
	and
	\[
	p^0(\cdot, t) = p^{\pm,1}(\cdot,t)  = \sigma^0(\cdot, t) = 0.
	\]
	On the time interval $[0,T]$, the profiles
	\[
	\xvec{z}^{0}\colon \mathcal{E}_T\longrightarrow \mathbb{R}^N, \quad p^{0}\colon \mathcal{E}_T\longrightarrow \mathbb{R}, \quad \xsym{\xi}^{0}\colon \mathcal{E}_T\longrightarrow \mathbb{R}^N, \quad \sigma^{0}\colon \mathcal{E}_T\longrightarrow \mathbb{R}
	\]
	are chosen in the following way:
	\begin{itemize}
		\item If $\Omega \subset \mathbb{R}^2$ is the simply-connected domain from \Cref{theorem:main1}, then $\xvec{z}^{0}$, $p^{0}$, $\xsym{\xi}^{0}$, and $\sigma^{0}$ are determined by \Cref{lemma:Z^0} below. Regarding the related situation of \Cref{theorem:annulus}, see \eqref{equation:zeordcdannulussect}.
		\item If $\Omega$ is a general smoothly bounded domain as in \Cref{theorem:main}, then $\xvec{z}^{0}$, $p^{0}$, $\xsym{\xi}^{0}$, and $\sigma^{0}$ are determined by \Cref{lemma:original_u^0} below.
		\item If $\Omega$ is the cylinder from \Cref{remark:cylm2}, then $\xvec{z}^{0}$, $p^{0}$, $\xsym{\xi}^{0}$, and $\sigma^{0}$ are given by \Cref{example:cylinder}.
	\end{itemize}
	The profiles $\xvec{z}^{\pm,1}\colon \mathcal{E}_T\longrightarrow \mathbb{R}^N$ are, later on, defined on $[0,T]$ via \Cref{lemma:flushing} as the solutions to \eqref{equation:MHD_ElsaesserExt_Ovareps}, together with associated pressure terms $p^{\pm,1}\colon \mathcal{E}_T\longrightarrow \mathbb{R}$, and interior controls $\xsym{\xi}^{\pm,1}\colon \mathcal{E}_T\longrightarrow \mathbb{R}^N$. 
	The vector field $\xvec{z}^{0}$ fails in general to obey the boundary condition $\xmcal{N}^{\pm}(\xvec{z}^0,\xvec{z}^0) = \xsym{0}$ at $\partial\mathcal{E}$, giving rise to weak amplitude boundary layers in the zero dissipation limit $\epsilon \longrightarrow 0$. These boundary layers are of a similar nature as those studied in \cite{IftimieSueur2011,CoronMarbachSueur2020}. In the particular case $\xmcal{N}^{+}(\xvec{z}^0,\xvec{z}^0) \neq \xmcal{N}^{-}(\xvec{z}^0,\xvec{z}^0)$, there appears not only a velocity boundary layer, but also one for the magnetic field. The profiles
	\[
	\xvec{v}^{\pm}, \xvec{w}^{\pm}\colon \mathcal{E}\times\mathbb{R}_+\times\mathbb{R}_+ \longrightarrow \mathbb{R}, \quad \theta^{\pm}, q^{\pm}, \vartheta^{\pm}\colon \mathcal{E}\times\mathbb{R}_+\times\mathbb{R}_+ \longrightarrow \mathbb{R},
	\]
	which are related to such boundary layers, will be described in \Cref{subsection:blprf}. In the presence of magnetic field boundary layers, the controls $\widetilde{\xsym{\zeta}}^{\pm, \epsilon}\colon \mathcal{E}_T\longrightarrow \mathbb{R}^N$ shall be defined in \Cref{subsubsection:technicalprofiles}. The boundary layer dissipation controls $\xsym{\mu}^{\pm}\colon \mathcal{E}_T\times\mathbb{R}_+\longrightarrow \mathbb{R}^N$ will be obtained in \Cref{subsubsection:vmo}.
	Subsequently, in \Cref{subsection:remestwbctrl}, the remainders~$\xvec{r}^{\pm,\epsilon}$ are estimated. Then, concerning approximate null controllability, the asymptotic behavior \eqref{equation:asympbeh} with $\overline{\xvec{z}}^{\pm}_1 = \xsym{0}$ is shown in \Cref{corollary:approxnlctrsmdt}. The approximate controllability towards arbitrary smooth states is concluded in \Cref{subsection:approxtowtraj}.

	\subsection{A return method trajectory}\label{subsection:return}
	The zero order profiles $\xvec{z}^{0}$, $p^{0}$, $\xsym{\xi}^{0}$, and $\sigma^0$ are chosen for $t \in [0,T]$ as a special solution to the controlled Euler system
	\begin{equation}\label{equation:MHD_Elsaesser_O1_}
		\begin{cases}
			\partial_t \xvec{z}^{0} + (\xvec{z}^{0} \cdot \xdop{\nabla}) \xvec{z}^{0} + \xdop{\nabla} p^{0} =  \xsym{\xi}^{0} & \mbox{ in } \mathcal{E}_T,\\
			\xdop{\nabla}\cdot\xvec{z}^{0} = \sigma^0 & \mbox{ in } \mathcal{E}_T,\\
			\xvec{z}^{0} \cdot \xvec{n} = 0 & \mbox{ on } \Sigma_T,\\
			\xvec{z}^{0}(\cdot, 0) = \xvec{0} & \mbox{ in } \mathcal{E},\\
			\xvec{z}^{0}(\cdot, T) = \xvec{0} & \mbox{ in } \mathcal{E}.
		\end{cases}
	\end{equation}
	Given a smooth vector field $\xvec{z}^0$, let $\xmcal{Z}^{0}(\xvec{x},s,t)$ denote for $(\xvec{x},s,t) \in \overline{\mathcal{E}}\times[0,T]\times[0,T]$ the unique flow which solves the ordinary differential equation
	\begin{equation}\label{equation:flowofy}
		\xdrv{}{t} \xmcal{Z}^{0}(\xvec{x},s,t) = \xvec{z}^{0}(\xmcal{Z}^{0}(\xvec{x},s,t),t), \quad
		\xmcal{Z}^{0}(\xvec{x},s,s) = \xvec{x}. 
	\end{equation}
	
	\begin{rmrk}
		As in \cite{CoronMarbachSueur2020}, the ansatz \eqref{equation:ansatz2}--\eqref{3.7} is based on the idea that the states $(\xvec{z}^{\pm,\epsilon},p^{\pm,\epsilon}, \sigma^{\pm,\varepsilon})$ should be near the return method profile $(\xvec{z}^{0}, p^{0},\sigma^0)$, which starts from $(\xvec{0},0,0)$ at $t = 0$ and returns back to $(\xvec{0},0,0)$ at time $t = T$.
	\end{rmrk}
	
	Under the assumptions of \Cref{theorem:main}, the profiles $\xvec{z}^{0}$, $p^{0}$, $\xsym{\xi}^{0}$, and $\sigma^0$ are chosen by means of \Cref{lemma:original_u^0} below, which is taken from \cite[Lemma 2]{CoronMarbachSueur2020} and has been proved in \cite{Coron1993,Coron1996EulerEq,Glass1997,Glass2000}. 
	\begin{lmm}[{{\cite[Lemma 2]{CoronMarbachSueur2020}}}]\label{lemma:original_u^0}
		There exists a solution $(\xvec{z}^{0}, p^{0}, \xsym{\xi}^{0},\sigma^0) \in \xCinfty(\overline{\mathcal{E}}\times[0,T];\mathbb{R}^{2N+2})$ to the controlled system \eqref{equation:MHD_Elsaesser_O1_}
		such that the flow $\xmcal{Z}^{0}$ obtained via \eqref{equation:flowofy}, satisfies 
		\begin{equation}\label{equation:flushingproperty}
			\forall \xvec{x} \in \overline{\mathcal{E}}, \, \exists t_{\xvec{x}} \in (0,T) \colon \xmcal{Z}^{0}(\xvec{x}, 0, t_{\xvec{x}}) \notin \overline{\Omega}.
		\end{equation}
		Moreover, all functions $(\xvec{z}^{0}, p^{0}, \xsym{\xi}^{0},\sigma^0)$ are compactly supported in $(0,T)$ with respect to time, the control $\xsym{\xi}^0$ obeys 
		\[
		\operatorname{supp}(\xsym{\xi}^0) \subset (\overline{\mathcal{E}}\setminus\overline{\Omega})\times(0,T),
		\]
		while $\xvec{z}^0$ can be chosen with $\xdiv{\xvec{z}^0} = 0$ in $\overline{\Omega}\times[0,T]$ and $\xcurl{\xvec{z}^0} = \xvec{0}$ in $\overline{\mathcal{E}}\times[0,T]$.
	\end{lmm}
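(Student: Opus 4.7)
The plan is to construct $\xvec{z}^0$ as a potential flow inside $\overline{\Omega}$, which automatically delivers the divergence- and curl-free requirements in $\overline{\Omega}\times[0,T]$, and then to extend it smoothly to $\mathcal{E}$ while absorbing the discrepancies into the control $\xsym{\xi}^0$ and the divergence source $\sigma^0$ supported in $\overline{\mathcal{E}}\setminus\overline{\Omega}$. Concretely, I would first pick a smooth harmonic function $\phi\colon\overline{\Omega}\to\mathbb{R}$, satisfying $\partial_{\xvec{n}}\phi = 0$ on $\Gamma\setminus\Gamma_{\operatorname{c}}$ and with non-trivial Neumann data on $\Gamma_{\operatorname{c}}$, chosen so that $\xnab\phi$ has no stationary points in $\overline{\Omega}$ (or, more precisely, so that the union of forward/backward trajectories of $\xnab\phi$ exiting $\overline{\Omega}$ through $\Gamma_{\operatorname{c}}$ covers all of $\overline{\Omega}$). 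The field $\xvec{z}^0(\xvec{x},t)\coloneqq \eta(t)\,\xnab\phi(\xvec{x})$, with a suitably concentrated time cutoff $\eta\in\xCinfty_0((0,T);\mathbb{R})$, will be curl-free and harmonic in $\overline{\Omega}$, hence divergence-free there.

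Next, by the potential-flow identity $(\xnab\phi\cdot\xnab)\xnab\phi = \tfrac{1}{2}\xnab|\xnab\phi|^2$, I can set $p^0\coloneqq -\eta'(t)\phi - \tfrac{1}{2}\eta(t)^2|\xnab\phi|^2$ in $\overline{\Omega}$, so that the Euler equation holds there with $\xsym{\xi}^0\equiv 0$ in $\overline{\Omega}\times[0,T]$. I would then extend $\xvec{z}^0$, $p^0$ smoothly to all of $\overline{\mathcal{E}}\times[0,T]$ with $\xvec{z}^0\cdot\xvec{n}=0$ on $\partial\mathcal{E}$ and with $\xvec{z}^0(\cdot,0)=\xvec{z}^0(\cdot,T)=\xvec{0}$; in $\mathcal{E}\setminus\overline{\Omega}$ I define $\sigma^0\coloneqq\xdiv{\xvec{z}^0}$ (which has zero average on each attached extension component, since $\xvec{z}^0\cdot\xvec{n}$ vanishes on the outer boundary and $\xvec{z}^0$ is divergence-free on the $\Omega$-side) and $\xsym{\xi}^0\coloneqq \partial_t\xvec{z}^0+(\xvec{z}^0\cdot\xnab)\xvec{z}^0+\xnab p^0$, which by construction is supported in $(\overline{\mathcal{E}}\setminus\overline{\Omega})\times(0,T)$.

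The flushing property \eqref{equation:flushingproperty} is enforced through the amplitude of $\eta$: by rescaling time, the flow of $\eta(t)\xnab\phi$ over $[0,T]$ coincides with the flow of $\xnab\phi$ over an interval of length $\int_0^T\eta(t)\,\xdx{t}$, which can be taken arbitrarily large. If a single harmonic potential does not already flush every point of $\overline{\Omega}$ through $\Gamma_{\operatorname{c}}$ (for instance because $\xnab\phi$ vanishes somewhere in $\overline{\Omega}$), I would superpose finitely many harmonic potentials $\phi_1,\dots,\phi_K$ and activate them one after another via disjoint smooth time cutoffs $\eta_1,\dots,\eta_K$. The existence of such a covering family of harmonic flows in the two- and three-dimensional settings considered here is precisely the content of the classical constructions of Coron \cite{Coron1993,Coron1996EulerEq} and Glass \cite{Glass1997,Glass2000}, respectively.

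The main obstacle is exactly this last step, namely selecting the harmonic potentials so that every trajectory of the resulting composite flow leaves $\overline{\Omega}$ through $\Gamma_{\operatorname{c}}$ within time $T$. In two dimensions one can use conformal mapping arguments and the fact that $\Gamma_{\operatorname{c}}$ non-trivially intersects every connected component of $\Gamma$; in three dimensions one instead relies on a careful geometric covering argument tailored to smooth domains. Once the flushing family is in hand, all remaining items of the lemma, namely smoothness, compact support in time, vanishing curl in $\overline{\mathcal{E}}\times[0,T]$ (achieved by taking the extension of $\xvec{z}^0$ to $\mathcal{E}\setminus\overline{\Omega}$ also as a gradient whenever possible), and solenoidality in $\overline{\Omega}\times[0,T]$, follow from the construction and from standard extension arguments, exactly as in \cite[Lemma 2]{CoronMarbachSueur2020}.
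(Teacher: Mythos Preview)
The paper does not supply its own proof of this lemma: it is quoted verbatim as \cite[Lemma 2]{CoronMarbachSueur2020} and attributed to the earlier constructions in \cite{Coron1993,Coron1996EulerEq,Glass1997,Glass2000}. Your sketch is precisely the standard route taken in those references---potential flows $\eta(t)\xnab\phi$ with $\phi$ harmonic in $\Omega$ and Neumann-zero on $\Gamma\setminus\Gamma_{\operatorname{c}}$, Bernoulli pressure, smooth gradient extension to $\mathcal{E}\setminus\overline{\Omega}$, and the Coron/Glass covering arguments for the flushing property---so there is no discrepancy to report.
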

	
	\begin{rmrk}\label{remark:rplflp}
		As shown in \cite{Coron1993}, for two-dimensional simply-connected domains one can replace \eqref{equation:flushingproperty} by a uniform flushing property. That is to say, there exists a smoothly bounded open set $\Omega_1 \subset \mathcal{E}$ such that $(\overline{\Omega}\setminus \partial \mathcal{E}) \subset \Omega_1$ and for all $\xvec{x} \in \overline{\Omega}_1$ one has $\xmcal{Z}^{0}(\xvec{x}, 0, T) \notin \overline{\Omega}_1$.
	\end{rmrk}
	
	\begin{xmpl}\label{example:cylinder}
		In order to illustrate \Cref{remark:rplflp} by means of a very specific example, and to provide more details regarding \Cref{remark:cylm2}, let us consider, as in \Cref{Figure:cylar}, for a smoothly bounded connected open set $D \subset \mathbb{R}^2$ the cylindrical setup
		\[
		\Omega \coloneqq (0,1) \times D, \quad \Gamma_{\operatorname{c}} \coloneqq \{0,1\} \times  D.
		\]	
		Let $\widetilde{D} \supseteq D$ be the planar simply-connected extension of $D$ with $\partial \widetilde{D} \subset \partial D$. One can extend $\Omega$ through $\Gamma_{\operatorname{c}}$, in the sense of \Cref{subsection:domainextensions}, to a smoothly bounded domain $\mathcal{E} \subset \mathbb{R}^3$ with
		\[
		(-1,2) \times D \subset \mathcal{E} \subset \mathbb{R} \times \widetilde{D}.
		\]
		Now, let $\chi_1 \in \xCinfty_0((-1,2);[0,1])$ with $\chi_1(s) = 1$ for $s \in [-1/2,3/2]$ and extend $\chi_1$ by zero to $\mathbb{R}$. Moreover, for some large number $M_1 > 0$, take $\gamma_1 \in \xCinfty_0((0,T);[0,1])$ such that $\gamma_1(t) = M_1$ when $t \in [T/8,7T/8]$. Then, for $\xvec{x} = [x_1,x_2,x_3]^{\top} \in \overline{\mathcal{E}}$ and $t \in [0,T]$ choose
		\begin{equation*}\label{equation:zeordcdannulussect0}
			\begin{alignedat}{6}
				& \xvec{z}^{0}(\xvec{x},t) && \coloneqq \xnab\left(\gamma_1(t)\chi_1(x_1) x_1\right),\\
				& p^{0}(\xvec{x},t) && \coloneqq - \partial_t \gamma_1(t)\chi_1(x_1) x_1,\\
				& \xsym{\xi}^{0}(\xvec{x},t) && \coloneqq  (\xvec{z}^{0}(\xvec{x},t) \cdot \xdop{\nabla}) \xvec{z}^{0}(\xvec{x},t),\\
				& \sigma^0(\xvec{x},t) && \coloneqq \xdiv{\xvec{z}^0}(\xvec{x},t).
			\end{alignedat}
		\end{equation*}
		Since $\xvec{z}^0$ does not depend on the spatial variables in $(-1/2,3/2) \times \overline {D}$, the above profiles solve the controllability problem \eqref{equation:MHD_Elsaesser_O1_} with the support of $\sigma^0$ and $\xsym{\xi}^0$ being located away from $\overline{\Omega}$. Also, for~$M_1 > 0$ large enough, one has a uniform flushing property as mentioned in \Cref{remark:rplflp}, see for instance the proof of \cite[Lemma 3.1]{RisselWang2021} which carries over to a three-dimensional pipe.
	\end{xmpl}
	\begin{figure}[ht!]
		\centering 
		\resizebox{0.75\textwidth}{!}{
			\begin{tikzpicture}
				\clip[rotate=0](-2.2,-1) rectangle (9.7,2.6);
				\draw [line width=0.4mm, color=black!200, fill=RoyalBlue!5] plot[smooth cycle] (0.2,0) rectangle ++(7,2.5);
				
				\draw [line width=0.4mm, preaction={fill=RoyalBlue!5}] (0.6,1.25) arc (0:360:0.4cm and 1.25cm);
				
				\draw [line width=0.4mm](7,2.5)--(7.2,2.5);
				\draw [line width=0.4mm](7,0)--(7.2,0);
				\draw [line width=0.4mm, preaction={fill=RoyalBlue!5}] (7.6,1.25) arc (0:360:0.4cm and 1.25cm);
				
				\draw[line width=0.4mm, -stealth, dashed] (3,0.5) -- (4.5,0.5);
				\draw[line width=0.4mm, -stealth, dashed] (3,1.25) -- (4.5,1.25);
				\draw[line width=0.4mm, -stealth, dashed] (3,2) -- (4.5,2);
				\draw[line width=0.4mm] (-0.5,0.5) -- (-0.2,0.5);
				\draw[line width=0.4mm] (-0.5,1.25) -- (-0.2,1.25);
				\draw[line width=0.4mm] (-0.5,2) -- (-0.2,2);
				\draw[line width=0.4mm, -stealth, dashed] (0.2,0.5) -- (1,0.5);
				\draw[line width=0.4mm, -stealth, dashed] (0.2,1.25) -- (1,1.25);
				\draw[line width=0.4mm, -stealth, dashed] (0.2,2) -- (1,2);
				\draw[line width=0.4mm, dashed] (6.5,0.5) -- (6.8,0.5);
				\draw[line width=0.4mm, dashed] (6.5,1.25) -- (6.8,1.25);
				\draw[line width=0.4mm, dashed] (6.5,2) -- (6.8,2);
				\draw[line width=0.4mm, -stealth] (7.2,0.5) -- (8,0.5);
				\draw[line width=0.4mm, -stealth] (7.2,1.25) -- (8,1.25);
				\draw[line width=0.4mm, -stealth] (7.2,2) -- (8,2);
				
				\draw[line width=0.4mm, -stealth] (-1.5,0.5) -- (-0.8,0.5);
				\draw[line width=0.4mm, -stealth] (-1.5,1.25) -- (-0.8,1.25);
				\draw[line width=0.4mm, -stealth] (-1.5,2) -- (-0.8,2);
				\draw[line width=0.4mm, -stealth] (-1.8,0.5) -- (-2.1,0.5);
				\draw[line width=0.4mm, -stealth] (-1.8,1.25) -- (-2.1,1.25);
				\draw[line width=0.4mm, -stealth] (-1.8,2) -- (-2.1,2);
				
				\draw[line width=0.4mm, -stealth] (8.3,0.5) -- (9,0.5);
				\draw[line width=0.4mm, -stealth] (8.3,1.25) -- (9,1.25);
				\draw[line width=0.4mm, -stealth] (8.3,2) -- (9,2);
				\draw[line width=0.4mm, -stealth] (9.6,0.5) -- (9.3,0.5);
				\draw[line width=0.4mm, -stealth] (9.6,1.25) -- (9.3,1.25);
				\draw[line width=0.4mm, -stealth] (9.6,2) -- (9.3,2);
			\end{tikzpicture}
		}
		\caption{A sketch of a cylindrical domain. The arrows indicate the profile $\xvec{z}^0$, which is constant in the original domain, hence curl-free and divergence-free, but ceases to be divergence-free at certain parts in the extended domain.}
		\label{Figure:cylar}
	\end{figure}

	In the context of \Cref{theorem:main1}, the zeroth order profiles $(\xvec{z}^{0}, p^{0}, \xsym{\xi}^{0}, \sigma^0 \coloneqq 0)$ in \eqref{equation:ansatz2}--\eqref{3.7} will be chosen by \Cref{lemma:Z^0} below.
	
	\begin{lmm}\label{lemma:Z^0}
		When $\Omega \subset \mathbb{R}^2$ is simply-connected and $\Gamma_{\operatorname{c}}$ is connected, there exists a number $\ell > 0$, and profiles $\xvec{z}^{0}, \xsym{\xi}^{0} \in \xCinfty(\overline{\mathcal{E}_T};\mathbb{R}^{2})$, and $p^{0} \in \xCinfty(\overline{\mathcal{E}_T};\mathbb{R})$ such that $(\xvec{z}^{0}, p^{0}, \xsym{\xi}^{0}, \sigma^0 = 0)$ solves \eqref{equation:MHD_Elsaesser_O1_} and it holds
		\begin{gather*}
			 \forall t \in [0,T] \colon \operatorname{dist}\left(\operatorname{supp}(\xwcurl{\xvec{z}^0}(\cdot, t)), \overline{\Omega}\right) > \ell, \quad  \forall (\xvec{x}, t) \in \overline{\mathcal{E}}\times[0,T] \colon \xdiv{\xvec{z}_0} = 0 \\
			 \forall x \in \overline{\mathcal{E}}\colon \operatorname{supp}(\xvec{z}^{0}(\xvec{x}, \cdot)) \cup \operatorname{supp}(p^{0}(\xvec{x}, \cdot)) \cup \operatorname{supp}(\xsym{\xi}^{0}(\xvec{x}, \cdot)) \subset (0,T).
		\end{gather*}
		Moreover, the force $\xsym{\xi}^{0}(\cdot,t)$ is supported in $\overline{\mathcal{E}} \setminus \overline{\Omega}$ for all $t \in [0,T]$ and the flow $\xmcal{Z}^{0}$ determined via \eqref{equation:flowofy} obeys the flushing property \eqref{equation:flushingproperty}.
	\end{lmm}
	\begin{proof}
		At first, let $(\xvec{z}^{0}, p^{0}, \xsym{\xi}^{0},\sigma^0)$ be the profiles obtained from \Cref{lemma:original_u^0}; thus, one might have $\sigma^0 = \xdiv{\xvec{z}^0} \neq 0$ at some points. However, as sketched in \Cref{Figure:extensionW}, there exist an open set $B \subset \mathbb{R}^2$ and a number $\ell > 0$ satisfying
		\[
			B \cap \partial \mathcal{E} \neq \emptyset, \quad \operatorname{dist}(\overline{B}, \overline{\Omega}) > \ell
		\]
		such that
		\[
			\operatorname{supp}(\sigma^0) \subset B \times (0,T).
		\]
		It is not restrictive to assume that $V \coloneqq \mathcal{E} \setminus \overline{B}$ is simply-connected; hence, there is a scalar potential $\phi \subset \xCinfty(\overline{V} \times [0,T]; \mathbb{R})$ with
		\[
			\xvec{z}^0(\xvec{x},t) = \xnab^{\perp} \phi(\xvec{x},t), \quad (\xvec{x},t) \in V \times [0,T].
		\]
		Indeed, if $\xvec{R}\colon \mathbb{R}^2 \longrightarrow \mathbb{R}^2$ denotes a rotation by $\pi/2$, then
		\[
			\xdiv{\xvec{z}^0} = 0 \iff \xwcurl{(\xvec{R} \xvec{z}^0)} = 0, \quad \xvec{R}\xvec{z}^0 = \xnab \phi \iff \xvec{z}^0 = \xnab^{\perp}\phi.
		\]
		Let $\widetilde{\phi}\colon\mathbb{R}^2\times[0,T] \longrightarrow \mathbb{R}$ be any extension of $\phi$ to $\mathbb{R}^2 \times [0,T]$ and take a smooth cutoff function $\psi \in \xCinfty(\mathbb{R}^2;[0,1])$ with
		\[
		\psi (\xvec{x}) \coloneqq \begin{cases}
			0 & \mbox{ if } \xvec{x} \in \overline{B},\\
			1 & \mbox{ if } \operatorname{dist}(\xvec{x}, \Omega) < \ell/2.
		\end{cases}
		\]
		Since $\xsym{\tau} = [n_2,-n_1]^{\top}$ is tangential at $\partial \mathcal{E}$, one can observe along $\Sigma_T \cap (\overline{V} \times [0,T])$ the vanishing tangential derivatives
		\[
			\xnab  \phi \cdot \xsym{\tau} = -\xnab^{\perp}\phi \cdot \xvec{n} = -\xvec{z}^0 \cdot \xvec{n} = 0.
		\]
		Therefore, there is a time-dependent constant $c(t)$ such that $\widetilde{\phi}(\cdot,t) = c(t)$ at $\partial \mathcal{E} \setminus B$ for each $t \in [0,T]$. Finally, we define
		\begin{equation*}
			\widetilde{\xvec{z}}^0(\xvec{x},t) \coloneqq
			\xnab^{\perp}\left( \psi(\xvec{x}) \left( \widetilde{\phi}(\xvec{x},t) - c(t) \right)  \right), \quad (\xvec{x}, t) \in \mathcal{E}_T,
		\end{equation*}
		which implies $\xdiv{\widetilde{\xvec{z}}^0} = 0$ in $\overline{\mathcal{E}}\times[0,T]$ and $\widetilde{\xvec{z}}^0 \cdot \xvec{n} = 0$ at $\partial\mathcal{E} \times[0,T]$. Since $\widetilde{\xvec{z}}^0$ can only differ from~$\xvec{z}^0$ if $\operatorname{dist}(\xvec{x}, \Omega) \geq \ell/2$, one has $\operatorname{supp}(\xwcurl{\widetilde{\xvec{z}}^0}) \in \overline{\mathcal{E}} \setminus \overline{\Omega}$, and the flushing property \eqref{equation:flushingproperty} remains valid for $\widetilde{\xvec{z}}^0$. The proof is then concluded by renaming $\widetilde{\xvec{z}}^0$~as~$\xvec{z}^0$, emphasizing that $\xdiv{\widetilde{\xvec{z}}^0} = 0$ in $\mathcal{E}\times(0,T)$ and renaming $\xdiv{\widetilde{\xvec{z}}^0}$ as $\sigma^0$, and by modifying~$\xsym{\xi}^0$ inside $\overline{\mathcal{E}}\setminus\overline{\Omega}$ such that \eqref{equation:MHD_Elsaesser_O1_} holds.
	\end{proof}
	
	\begin{figure}[ht!]
		\centering
		\resizebox{0.65\textwidth}{!}{
			\begin{tikzpicture}
				\clip(-2.4,-1.8) rectangle (7.2,4.5);
				
				\draw [dashed, line width=1.3mm, color=DarkBlue, fill=white, fill opacity=0] plot[smooth, tension=1] coordinates {(-1.8,0.8) (-1.5,3) (0,3.8) (0.48,1.8) (2,3.8) (6.4,2.2) (5,0.8) (5.8,-0.2) (3.2,-1) (2,0) (-0.4,-0.9) (-1.8,0.8)};
				\draw [line width=1mm, color=white, fill=white]  plot (-0.6,3.5) circle(25pt);
				
				\draw [line width=0mm, color=white, fill=RoyalBlue!5] plot[smooth, tension=1] coordinates {(-1.8,0.8) (-1.5,3) (0,3.8) (0.5,1.8) (2,3.8) (6.4,2.2) (5,0.8) (5.8,-0.2) (3.2,-1) (2,0) (-0.4,-0.9) (-1.8,0.8)};

				\draw [line width=0mm, color=RoyalBlue!5, fill=RoyalBlue!5] plot[smooth, tension=1] coordinates {(1,0.5) (0.65,1.9) (2.2,3.8) (6.4,2.18) (5,0.8) (5.8,-0.2) (3.2,-1) (2.35,0) (1.5,0.2) (1,0.5)};

				\draw [line width=0.38mm, color=black, fill=white, fill opacity=0] plot[smooth, tension=1] coordinates {(-1.8,0.8) (-1.5,3) (0,3.8) (0.48,1.8) (2,3.8) (6.4,2.2) (5,0.8) (5.8,-0.2) (3.2,-1) (2,0) (-0.4,-0.9) (-1.8,0.8)};

				\draw [dashed,line width=0.1mm, color=black]  plot[smooth, tension=0.8] coordinates {(2.58,-0.5)  (2.34,0.035) (1.52,0.2) (0.98,0.5) (0.55,1.3) (0.72,2) (0.8,2.55)};

				\draw [line width=0.8mm, color=IndianRed]  plot (-0.6,3.5) circle(25pt);
				
				\draw [line width=0.8mm, color=ForestGreen] plot[smooth, tension=1] coordinates {(1.4,4.4) (0.4,1.2) (2.2,-0.5)  (2.75,-1.3)};
				\coordinate[label=left:{\color{ForestGreen}$\psi = 1$}] (A) at (3,4.2);
				
				\coordinate[label=left:{\color{IndianRed}$B$}] (A) at (-1.4,3.8);
				
				\coordinate[label=left:{$\Omega$}] (A) at (3.5,2);
				\coordinate[label=left:{$\Omega^1$}] (A) at (-0.2,1);

				\draw [line width=0.55mm, color=black, fill=white, fill opacity=0] plot[smooth, tension=1] coordinates {(-1.6,-1.45) (-0.27,-1.45)};
				\draw [dashed, line width=1mm, color=DarkBlue, fill=white, fill opacity=0] plot[smooth, tension=1] coordinates {(-1.6,-1.5) (-0.27,-1.5)};
				\coordinate[label=left:\color{DarkBlue}{$\widetilde{\phi}=c(t)$}] (A) at (1.5,-1.5);
				
				\draw [line width=0.55mm, color=black, fill=white, fill opacity=0] plot[smooth, tension=1] coordinates {(2.6,-1.45) (3.93,-1.45)};
				\coordinate[label=left:\color{black}{$\widetilde{\phi}=\mbox{arbitrary}$}] (A) at (6.5,-1.5);				
			\end{tikzpicture}
		}
		\caption{A sketch of the situation considered in the proof of \Cref{lemma:Z^0}. The (red) circle indicates the set $B$. To the right of the thick (green) line which crosses the domain, one has $\psi = 1$. The thin dashed line stands for $\Gamma_{\operatorname{c}}$. Along the boundary part $\partial(\Omega^1 \cup \Omega)\setminus B$, which is highlighted by thick dashes, it holds $\widetilde{\phi}(\cdot,t) = \phi(\cdot,t) = c(t)$.}
		\label{Figure:extensionW}
	\end{figure}
	
	\begin{rmrk}\label{remark:chooseregionfl}
		In the proof of \Cref{lemma:Z^0}, one can explicitly choose the size and location of $B$, as long as $B$ is open and $B \cap \partial \mathcal{E} \neq \emptyset$. 
	\end{rmrk}
	
	\begin{rmrk}\label{remark:rca}
		When $\Gamma_{\operatorname{c}}$ is not connected, the proof of \Cref{lemma:Z^0} can still be applied to situation where~$\mathcal{E}$ is simply-connected; for instance, by selecting $\xvec{z}^0$ such that all its integral curves cross the same connected component of $\overline{\mathcal{E}}\setminus \Omega$. To avoid creating a gradient term~$\xnab q$ during the regularization stage described in \Cref{section:conclth}, the initial data should then obey relations of the type \eqref{equation:gencompb}.
	\end{rmrk}
	
	\paragraph{The special case of an annulus.}
	Concerning \Cref{theorem:annulus}, where~$\mathcal{E}$ is an annulus (\cf~\Cref{Figure:annulusec}), we introduce an explicit return method trajectory $\xvec{z}^0$, which is curl-free, divergence-free, and tangential at $\partial \mathcal{E}$. This is possible because annuli are doubly-connected. More precisely, we define 
	\[
	\widetilde{\varphi} \colon \overline{\mathcal{E}} \longrightarrow \mathbb{R}_+, \quad \xvec{x} \longmapsto \widetilde{\varphi}(\xvec{x}) \coloneqq \ln|\xvec{x}|
	\]
	and choose for a constant $M_{\mathcal{E}} > 0$ a smooth function $\gamma_M \in \xCinfty_0((0,1);\mathbb{R}_+)$, satisfying 
	\[
	\gamma_{M_{\mathcal{E}}}(t) \geq M_{\mathcal{E}}
	\]
	for all $t \in (T/8, 7T/8)$. Then, for $(\xvec{x},t) \in \overline{\mathcal{E}_T}$, we denote the vector field
	\[
	\xvec{y}^*(\xvec{x}, t) \coloneqq [y^*_1, y^*_2]^{\top}(\xvec{x}, t) \coloneqq \gamma_{M_{\mathcal{E}}}(t) \begin{bmatrix}
		-\partial_2 \widetilde{\varphi} (\xvec{x}) \\ \partial_1 \widetilde{\varphi} (\xvec{x})
	\end{bmatrix},
	\]
	which possesses in $\overline{\mathcal{E}_T}$ the properties
	\[
	\xdiv{\xvec{y}^*} = 0,  \quad \partial_2 y^*_1 - \partial_1 y^*_2 = 0, \quad \xvec{y}^* \cdot \xvec{n} = 0.
	\]
	Due to the symmetry of $\mathcal{E}$, the extended unit normal field $\xvec{n}$ can be chosen everywhere orthogonal to $\xvec{y}^*$.
	Now, for $M_{\mathcal{E}} > 0$ sufficiently large, we define
	\begin{equation}\label{equation:zeordcdannulussect}
		\begin{alignedat}{6}
			& \xvec{z}^{0} && \coloneqq \xvec{y}^* && \mbox{ in } \overline{\mathcal{E}_T},\\
			& p^{0} && \coloneqq - \partial_t \psi^* - \frac{1}{2} | \xvec{y}^* |^2 && \mbox{ in } \overline{\mathcal{E}_T},\\
			& \xsym{\xi}^{0} && \coloneqq  \partial_t \xvec{y}^* + (\xvec{y}^* \cdot \xdop{\nabla}) \xvec{y}^* + \xdop{\nabla}p^{0} \quad && \mbox{ in } \overline{\mathcal{E}_T},
		\end{alignedat}
	\end{equation}
	where $\psi^* \in \xCinfty(\overline{\mathcal{E}_T};\mathbb{R})$ satisfies~$\xvec{y}^*(\xvec{x},\cdot) = \xnab \psi^*(\xvec{x},\cdot)$ whenever~$\operatorname{dist}(\xvec{x}, \overline{\Omega}) < \ell$, for some $\ell > 0$ independent of~$\xvec{x}$. In particular, assuming that $M_{\mathcal{E}} > 0$ is fixed sufficiently large, a flushing property of the type \eqref{equation:flushingproperty} holds. Indeed, the profile~$\xvec{z}^0$ never vanishes in~$\overline{\mathcal{E}}\times(T/8, 7T/8)$ and the associated flow~$\xmcal{Z}^0$ propagates information along circular trajectories around the annulus.
	
	\begin{figure}[ht!]
		\centering
		\resizebox{0.35\textwidth}{!}{
			\begin{tikzpicture}
				\clip(-3.1,-3.1) rectangle (3.1,3.1);
				\draw[line width=0.5mm, color=black] (80:1.5) coordinate (A) arc (80:2:1.5) coordinate (C)  (2:2.9) coordinate (D) arc (2:80:2.9) coordinate (B);
				\draw[line width=0.5mm, color=black] (80:1.5) arc (80:365:1.5) (2:2.9) arc (2:365:2.9);

				\draw[dashed,line width=0.5mm, color=black] (A) -- (B);
				\draw[dashed,line width=0.5mm, color=black] (C) -- (D);

				\draw[line width=0.8mm, color=DarkBlue, ->] (0,2.2) arc (90:380:2.2);

				\coordinate[label=left:{\small \color{DarkBlue}$\xsym{\mathcal{Z}}^0$}] (BB) at (2.77,0.95);
			\end{tikzpicture}	
		}
		\caption{An annulus divided into a physical sector and a control sector. The (blue) arrow indicates the flow $\xsym{\mathcal{Z}}^0$.}
		\label{Figure:annulusec}
	\end{figure}
	
	Finally, we take $\rho \in \mathbb{R}$ and set $\xvec{M}_2 = \rho \xvec{I} \in \xCinfty(\overline{\mathcal{E}};\mathbb{R}^{2\times 2})$, while choosing general friction coefficient matrices $\xvec{M}_1, \xvec{L}_1, \xvec{L}_2 \in \xCinfty(\overline{\mathcal{E}};\mathbb{R}^{2\times 2})$. Then, $\xvec{z}^0$ satisfies the relations
	\begin{equation}\label{equation:condmuwkez_0}
		\begin{aligned}
			&\xdiv{(\xsym{\mathcal{N}}^+(\xvec{z}^0, \xvec{z}^0) - \xsym{\mathcal{N}}^-(\xvec{z}^0,\xvec{z}^0))} = 0 && \mbox{ in } \overline{\mathcal{E}_T}, \\
			& \xvec{z}^0 \cdot \xvec{n} = 0 && \mbox{ in } \overline{\mathcal{E}_T},\\
			&\xdiv{\xvec{z}^0} = 0 && \mbox{ in } \overline{\mathcal{E}_T},\\
			&\xwcurl{\xvec{z}^0} = 0 && \mbox{ in } \overline{\mathcal{E}_T}
		\end{aligned}
	\end{equation}
	because
	\[
	\xdiv{(\xsym{\mathcal{N}}^+(\xvec{z}^0, \xvec{z}^0) - \xsym{\mathcal{N}}^-(\xvec{z}^0,\xvec{z}^0))} = 2\xdiv{[\xvec{M}_2 \xvec{z}^0]_{\operatorname{tan}}} = 2\rho \xdiv{[ \xvec{z}^0]_{\operatorname{tan}}} = 2\rho \xdiv{\xvec{z}^0} = 0.
	\]

	\subsection{Flushing the initial data}\label{subsection:flushing}
	Due to the scaling in \eqref{equation:scaling} and \eqref{equation:scaling2}, the contributions of $\xvec{z}^{\pm}_0$ to~$\xvec{z}^{\pm,\epsilon}$ are at $O(\epsilon)$. In order to avoid that $\xvec{z}^{\pm}_0$ impact the remainder estimates in \Cref{subsection:remestwbctrl} below, the goal is to cancel their influence for $t \geq T$ by using the controls $\xsym{\xi}^{\pm,1}$, which are supported only in $\overline{\mathcal{E}}\setminus\overline{\Omega}$. After inserting \eqref{equation:ansatz2} and \eqref{3.7} into \eqref{equation:MHD_ElsaesserExtScaledLongTime}, motivated by \cite{CoronMarbachSueur2020}, one observes that a good strategy consists of defining $\xvec{z}^{\pm,1}$ as the solutions to the linear problem
	\begin{equation}\label{equation:MHD_ElsaesserExt_Ovareps}
		\begin{cases}
			\partial_t \xvec{z}^{\pm,1} + (\xvec{z}^{\mp,1} \cdot \xdop{\nabla}) \xvec{z}^{0} + (\xvec{z}^{0} \cdot \xdop{\nabla}) \xvec{z}^{\pm,1} + \xdop{\nabla} p^{\pm,1} =  \xsym{\xi}^{\pm,1} + (\lambda^{\pm}+\lambda^{\mp})\Delta \xvec{z}^{0} & \mbox{ in } \mathcal{E}_T,\\
			\xdop{\nabla}\cdot\xvec{z}^{\pm,1} = 0 & \mbox{ in } \mathcal{E}_T,\\
			\xvec{z}^{\pm,1} \cdot \xvec{n} = 0 & \mbox{ on }  \Sigma_T,\\
			\xvec{z}^{\pm,1}(\cdot, 0) = \xvec{z}_0^{\pm} & \mbox{ in } \mathcal{E}.
		\end{cases}
	\end{equation}
	We shall determine the controls $\xsym{\xi}^{\pm,1} \in \xCzero([0,T];\xHtwo(\mathcal{E};\mathbb{R}^N))$ such that the corresponding solution $(\xvec{z}^{+,1}, \xvec{z}^{-,1})$ to \eqref{equation:MHD_ElsaesserExt_Ovareps} satisfies $\xvec{z}^{\pm,1}(\cdot, T) = \xvec{0}$. This is achieved by combining \cite[Lemma 3]{CoronMarbachSueur2020} with new ideas for the cases of Theorems~\Rref{theorem:main1} and \Rref{theorem:annulus}, where we have to maintain the properties 
	\begin{equation}\label{equation:reqctr}
		\begin{aligned}
			\xdiv{(\xvec{\xi}^{+,1} - \xvec{\xi}^{-,1})} = 0  \mbox{ in } \mathcal{E}_T, \quad
			(\xvec{\xi}^{+,1} - \xvec{\xi}^{-,1}) \cdot \xvec{n} = 0  \mbox{ on } \Sigma_T.
		\end{aligned}
	\end{equation}
	\begin{lmm}\label{lemma:flushing}
		There are $\xvec{\xi}^{\pm,1} \in \xCzero([0,T];\xHtwo(\mathcal{E};\mathbb{R}^N))$ 
		such that the solution $(\xvec{z}^{+,1},\xvec{z}^{-,1})$ to \eqref{equation:MHD_ElsaesserExt_Ovareps} obeys $\xvec{z}^{\pm,1}(\xvec{x}, T) = \xvec{0}$ for all $\xvec{x} \in \mathcal{E}$ and is bounded in $\xLn{\infty}((0,T);\xHn{3}(\mathcal{E})^2)$. Moreover, for all $t \in(0,T)$ it holds
		\[
			\operatorname{supp}(\xsym{\xi}^{\pm,1}(\cdot,t)) \subset \overline{\mathcal{E}}\setminus\overline{\Omega}.
		\]
		Given the assumptions of \Cref{theorem:main1}, one can choose the controls $\xvec{\xi}^{\pm,1}$ with \eqref{equation:reqctr}.
	\end{lmm}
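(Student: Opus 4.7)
The plan is to exploit the Elsasser-type change of variables
\[
\xvec{s} \coloneq \xvec{z}^{+,1} + \xvec{z}^{-,1}, \qquad \xvec{y} \coloneq \xvec{z}^{+,1} - \xvec{z}^{-,1},
\]
which decouples the linear system \eqref{equation:MHD_ElsaesserExt_Ovareps}. Since the viscous source coefficient $\lambda^{\pm}+\lambda^{\mp} = \nu_1$ is symmetric in $\pm$, adding and subtracting the two equations yields two independent divergence-free, tangential linear transport-type problems
\begin{equation*}
\begin{aligned}
\partial_t \xvec{s} + (\xvec{s}\cdot\xnab)\xvec{z}^0 + (\xvec{z}^0\cdot\xnab)\xvec{s} + \xnab p^s &= \xsym{\xi}^s + 2\nu_1\Delta \xvec{z}^0, \\
\partial_t \xvec{y} - (\xvec{y}\cdot\xnab)\xvec{z}^0 + (\xvec{z}^0\cdot\xnab)\xvec{y} + \xnab p^d &= \xsym{\xi}^d,
\end{aligned}
\end{equation*}
both advected by the common return-method profile $\xvec{z}^0$, with initial data $\xvec{s}(\cdot,0) = 2\xvec{u}_0$ and $\xvec{y}(\cdot,0) = 2\sqrt{\mu}\,\xvec{B}_0$.

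I then apply the linear flushing result \cite[Lemma 3]{CoronMarbachSueur2020} separately to each of these equations. Its characteristic-method construction relies only on the flushing property \eqref{equation:flushingproperty} of \Cref{lemma:original_u^0} (or the uniform version of \Cref{remark:rplflp} in the \Cref{theorem:main1} setting), and the sign change in the zero-order term $-(\xvec{y}\cdot\xnab)\xvec{z}^0$ merely alters a bounded coefficient along the characteristics, without affecting the controllability argument. This yields controls $\xsym{\xi}^s, \xsym{\xi}^d \in \mathscr{U}_T$ supported in $(\overline{\mathcal{E}}\setminus\overline{\Omega})\times(0,T)$ such that $\xvec{s}(\cdot,T) = \xvec{y}(\cdot,T) = \xvec{0}$, with $\xvec{s}, \xvec{y}$ bounded in $\xLinfty((0,T);\xHn{3}(\mathcal{E}))$. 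Setting $\xsym{\xi}^{\pm,1} \coloneq (\xsym{\xi}^s \pm \xsym{\xi}^d)/2$ and $p^{\pm,1} \coloneq (p^s \pm p^d)/2$ then recovers the desired control profiles for \eqref{equation:MHD_ElsaesserExt_Ovareps}.

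To enforce the additional constraint \eqref{equation:reqctr} in the \Cref{theorem:main1} setting, $\xsym{\xi}^d = \xsym{\xi}^{+,1} - \xsym{\xi}^{-,1}$ must be divergence-free in $\mathcal{E}$ and tangential on $\partial\mathcal{E}$. Here \Cref{lemma:Z^0} (together with the explicit construction \eqref{equation:zeordcdannulussect} in case \Rref{item:MainTheoremItem2})) furnishes an $\xvec{z}^0$ which is simultaneously divergence-free, curl-free, and tangential on $\overline{\mathcal{E}}\times[0,T]$, so that the planar commutator $(\xvec{z}^0\cdot\xnab)\xvec{y} - (\xvec{y}\cdot\xnab)\xvec{z}^0$ is a Lie bracket of divergence-free tangential fields, hence again divergence-free and tangential. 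I therefore seek $\xsym{\xi}^d = \xnab^{\perp} g$ for a scalar $g$ compactly supported in $(\overline{\mathcal{E}}\setminus\overline{\Omega})\times(0,T)$ and constant on each connected component of $\partial\mathcal{E}$; writing $\xvec{y} = \xnab^{\perp}\psi$ reduces the $\xvec{y}$-equation modulo constants to a scalar transport problem for $\psi$, which is null-controlled by the same characteristic construction. The principal technical obstacle is precisely this last step: one must ensure that the reconstructed stream function $g$ is strictly localized outside $\overline{\Omega}$ and takes constant values on each boundary component of $\mathcal{E}$ (respecting the jump across the cut $\Sigma_1$ in the multiply-connected case \Rref{item:MainTheoremItem2})), which requires a careful gluing procedure across the time windows during which characteristics of $\xvec{z}^0$ occupy the control region, in the spirit of the regularity and localization arguments of \cite[Section 3]{CoronMarbachSueur2020}.
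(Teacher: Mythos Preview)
Your decoupling into $\xvec{s}=\xvec{z}^{+,1}+\xvec{z}^{-,1}$ and $\xvec{y}=\xvec{z}^{+,1}-\xvec{z}^{-,1}$ matches the paper's $\xvec{E}^{\pm}$, and for the general case of \Cref{theorem:main} your reduction to \cite[Lemma~3]{CoronMarbachSueur2020} is exactly what the paper does. In the \Cref{theorem:main1} setting, however, there is a factual error and a missed difficulty.

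\textbf{The error.} You assert that \Cref{lemma:Z^0} yields a profile $\xvec{z}^0$ that is curl-free on all of $\overline{\mathcal{E}}\times[0,T]$. It does not: the lemma only gives $\operatorname{dist}(\operatorname{supp}(\xwcurl{\xvec{z}^0}),\overline{\Omega})>\eta$, so $\xwcurl{\xvec{z}^0}$ may well be nonzero in $\overline{\mathcal{E}}\setminus\overline{\Omega}$. Indeed the whole point of \Cref{lemma:Z^0} is to trade the global curl-freeness of \Cref{lemma:original_u^0} for global divergence-freeness. Only in case~b), via \eqref{equation:zeordcdannulussect}, is $\xvec{z}^0$ curl-free everywhere.

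\textbf{Missed difficulty for $\xvec{s}$.} Because of this, \cite[Lemma~3]{CoronMarbachSueur2020} does not apply verbatim to the $\xvec{s}$-equation in case~a): that lemma is written for a curl-free return profile, and its vorticity argument acquires an extra nonlocal term $(\widetilde{\xvec{E}}^+_l\cdot\xnab)(\xwcurl{\xvec{z}^0})$ when $\xwcurl{\xvec{z}^0}\neq 0$. The paper addresses this explicitly (Steps~4--5), absorbing that term into the control since it is supported in $\overline{\mathcal{E}}\setminus\overline{\Omega}$. Moreover, in case~b) the extension $\mathcal{E}$ is multiply-connected, so the div--curl system \eqref{equation:finaltimegoalellip} only determines $\xvec{E}^+(\cdot,T)$ up to a harmonic field $\lambda_1\xvec{Q}$; the paper kills this residue by a separate explicit construction \eqref{equation:MHD_ElsaesserExt_OvarepsBtOwplusQ}, which you do not mention.

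\textbf{A more direct route for $\xvec{y}$.} Your stream-function reduction is more involved than needed, and you concede it is incomplete. The paper (Step~3) is much more direct: it first shows that the pressure $\xnab q^1_l$ in \eqref{equation:MHD_ElsaesserExt_OvarepsBtOwppp} vanishes identically (multiply by $\xnab q^1_l$, integrate by parts, and use that $\xvec{z}^0$ and $\xvec{E}^-_l$ are tangential), then solves the uncontrolled problems with localized data $\mu_l(\xvec{z}^+_0-\xvec{z}^-_0)$ and sets $\xvec{F}^-=\sum_l\beta'(t-t_l)\xvec{E}^-_l$ as in \eqref{equation:solftcEm}. Since the induction-type structure (with $\xdiv{\xvec{z}^0}=0$) preserves divergence-freeness and tangentiality of each $\xvec{E}^-_l$, the control $\xvec{F}^-$ inherits \eqref{equation:reqctr} for free---no stream function, no boundary-component gluing. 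This argument needs only $\xdiv{\xvec{z}^0}=0$, not $\xwcurl{\xvec{z}^0}=0$.
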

	\begin{proof}
		When $\xvec{z}^0$ is determined via \Cref{lemma:original_u^0}, the proof is a direct application of the arguments from \cite[Lemma 3]{CoronMarbachSueur2020} to the uncoupled systems solved by $\xvec{z}^{+,1} \pm \xvec{z}^{-,1}$. Hence, we consider here only the two-dimensional situations of Theorems~\Rref{theorem:main1} and \Rref{theorem:annulus}, where~$\xvec{z}^0$ is obtained either via~\Cref{lemma:Z^0} or by \eqref{equation:zeordcdannulussect}. Our strategy is close that from \cite[Lemma 3]{CoronMarbachSueur2020}, but compared to \cite{CoronMarbachSueur2020} there are two new challenges:
		\begin{itemize}
			\item constructing the controls such that the relations in \eqref{equation:reqctr} are satisfied;
			\item for $\xvec{z}^0$ taken via \Cref{lemma:Z^0}, one might have $\xwcurl{\xvec{z}^0} \neq 0$ in $\overline{\mathcal{E}}\setminus\overline{\Omega}$, which obstructs a direct application of the arguments from \cite{CoronMarbachSueur2020}.
		\end{itemize}

		\paragraph{Step 1. Preliminaries.} Due to \Cref{lemma:Z^0} or \eqref{equation:zeordcdannulussect} one has $\xdiv{\xvec{z}^{0}} = \xvec{0}$ in $\overline{\mathcal{E}}\times[0,T]$. Thus, the smooth vector field
		\[
			(\lambda^{+} + \lambda^{-}) \Delta \xvec{z}^0 = - (\lambda^{+} + \lambda^{-})(\xnab^{\perp}{(\xwcurl{\xvec{z}^0})})
		\]
		is spatially supported in $\overline{\mathcal{E}}\setminus\overline{\Omega}$ and can be absorbed by the control terms
		\[
			\xvec{f}^{\pm} \coloneqq \xsym{\xi}^{\pm,1} + (\lambda^{\pm}+\lambda^{\mp})\Delta \xvec{z}^{0}.
		\] 
		In order to construct suitable functions $\xvec{f}^{\pm}$, we shall denote vector fields proportional to the original MHD unknowns; namely,
		\begin{alignat}{2}
			\xvec{E}^{\pm} \coloneqq \xvec{z}^{+,1} \pm \xvec{z}^{-,1}, & \quad & \xvec{F}^{\pm} \coloneqq \xvec{f}^+ \pm \xvec{f}^-.\label{equation:deffmf}
		\end{alignat}
		
		\paragraph{Step 2. A partition of unity.}
		Due to the regularity of $\xsym{\mathcal{Z}}^0$ and the flushing property \eqref{equation:flushingproperty}, as provided by \Cref{lemma:original_u^0}, \Cref{lemma:Z^0}, or by the definition for $\xvec{z}^0$ in \eqref{equation:zeordcdannulussect}, there exists a small number $a > 0$ such that
		\[
		\forall \xvec{x} \in \overline{\mathcal{E}}, \exists t_{\xvec{x}} \in (0,T) 	\colon \operatorname{dist}(\xsym{\mathcal{Z}}^0(\xvec{x}, 0, t_{\xvec{x}}), \overline{\Omega}) \geq a. 	
		\]
		Hence, one can select a smoothly bounded closed set $\mathcal{S} \subset \overline{\mathcal{E}}$ with $\mathcal{S} \cap \overline{\Omega} = \emptyset$ and
		\[
		\forall \xvec{x} \in \overline{\mathcal{E}}, \exists t_{\xvec{x}} \in (0,T) 	\colon \xsym{\mathcal{Z}}^0(\xvec{x}, 0, t_{\xvec{x}}) \in \mathcal{S}. 
		\]
		Moreover, for some $L \in \mathbb{N}$, we fix a finite covering $c_1, \dots, c_L$ of~$\mathcal{S}$ which consists of interior and boundary squares. The boundary squares are centered in points of $\partial\mathcal{E} \cap \mathcal{S}$, fully included inside $\mathbb{R}^2\setminus\overline{\Omega}$, and one side lies in the interior of $\mathcal{E}$. The interior squares are centered in points of $\mathcal{S} \setminus \partial\mathcal{E}$ and belong to $\mathcal{E} \setminus \overline{\Omega}$.
		Consequently, there exists $b > 0$ and a number $M \in \mathbb{N}$ of balls $B_1, \dots, B_M \subset \mathbb{R}^2$ which cover $\overline{\mathcal{E}}$ such that for each index~$l \in \{1,\dots,M\}$ one has
		\begin{gather}\label{equation:cubesquareflush}
			\exists t_l \in (b, T-b), \exists r_l \in \{1,\dots,L\}, \forall t \in (t_l-b, t_l+b) \colon \xsym{\mathcal{Z}}^0(B_l, 0, t) \in c_{r_l}.
		\end{gather}
		With respect to the balls $B_1, \dots, B_M$, let $(\mu_l)_{l = 1,\dots,M} \subset \xCinfty_0(\mathbb{R}^2;\mathbb{R}^2)$ be any fixed smooth partition of unity in the sense that
		\begin{equation}\label{3.13}
			\forall l \in \{1,\dots,M\}\colon \operatorname{supp}(\mu_l) \subset B_l, \quad \forall \xvec{x} \in \overline{\mathcal{E}}\colon \sum_{l=1}^M \mu_l(\xvec{x}) = 1.
		\end{equation}
		
		\begin{rmrk}
			When $\xvec{z}^0$ is obtained via \Cref{lemma:Z^0}, or as defined in \eqref{equation:zeordcdannulussect}, one can use a simplified partition of unity and only few squares. The notations used here align with the argument from \cite[Lemma 3]{CoronMarbachSueur2020} to which we refer when $\xvec{z}^0$  is the vector field from \Cref{lemma:original_u^0}.
		\end{rmrk}

		\paragraph{Step 3. Flushing the initial magnetic field.}
		Since $\xdiv{\xvec{z}^0} = 0$ holds in $\overline{\mathcal{E}}\times[0,T]$ for the presently case, the initial magnetic field can be flushed without pressure term. To this end, we rely on the existence of a stream function $\widetilde{\psi}_0$ with
		\[
			\xvec{z}_0^{+} - \xvec{z}_0^{-} = \xnab^{\perp} \widetilde{\psi}_0 \mbox{ in } \mathcal{E}_T, \quad \widetilde{\psi}_0 = 0 \, \mbox{ on } \Sigma_T.
		\]
		Indeed, under the assumptions of \Cref{theorem:main1} this follows from $\mathcal{E}$ being simply-connected, while for \Cref{theorem:annulus} it is part of the hypotheses. Then, by the decomposition in \eqref{equation:deffmf}, the vector field $\xvec{E}^-$ satisfies the linear problem
		\begin{equation}\label{equation:MHD_ElsaesserExt_OvarepsBtOwp}
			\begin{cases}
				\partial_t \xvec{E}^{-} + (\xvec{z}^{0} \cdot \xdop{\nabla}) \xvec{E}^{-} - (\xvec{E}^{-} \cdot \xdop{\nabla}) \xvec{z}^{0} + \nabla q^1 = \xvec{F}^- = \xvec{\xi}^{+,1} - \xvec{\xi}^{-,1} & \mbox{ in } \mathcal{E}_T,\\
				\xdop{\nabla}\cdot\xvec{E}^{-} = 0  & \mbox{ in } \mathcal{E}_T,\\
				\xvec{E}^{-} \cdot \xvec{n} = 0  & \mbox{ on }  \Sigma_T,\\
				\xvec{E}^{-}(\cdot, 0) = \xvec{z}_0^{+} - \xvec{z}_0^{-}  & \mbox{ in } \mathcal{E},
			\end{cases}
		\end{equation}
		with $q^1 \coloneqq p^{+,1} - p^{-,1}$.
		By employing the partition of unity $(\mu_l)_{l\in\{1,\dots,M\}}$ given in \eqref{3.13}, we first solve for $l\in\{1,\dots,M\}$ the homogeneous problems
		\begin{equation}\label{equation:MHD_ElsaesserExt_OvarepsBtOwppp}
			\begin{cases}
				\partial_t \xvec{E}^{-}_l + (\xvec{z}^{0} \cdot \xdop{\nabla}) \xvec{E}^{-}_l - (\xvec{E}^{-}_l \cdot \xdop{\nabla}) \xvec{z}^{0} + \xnab q^{1}_l = \xvec{0} & \mbox{ in } \mathcal{E}_T,\\
				\xdop{\nabla}\cdot\xvec{E}^{-}_l = 0 & \mbox{ in } \mathcal{E}_T,\\
				\xvec{E}^{-}_l \cdot \xvec{n} = 0 & \mbox{ on }  \Sigma_T,\\
				\xvec{E}^{-}_l(\cdot, 0) = \xnab^{\perp}(\mu_l \widetilde{\psi}_0) & \mbox{ in } \mathcal{E}.
			\end{cases}
		\end{equation}
		Concerning the pressure gradient, after multiplying in \eqref{equation:MHD_ElsaesserExt_OvarepsBtOwppp} with $\xnab q^{1}_l$ and integrating by parts, one finds
		\[
			\|\xnab q^{1}_l\|_{\xLtwo(\mathcal{E})}^2 = \int_{\partial\mathcal{E}}  (\xvec{z}^0 \wedge \xvec{E}^{-}_l) \xnab q^{1}_l \wedge \xvec{n} \, \xdx{S} = 0.
		\]
		Finally, we take $\xvec{F}^- = \xvec{\xi}^{+,1} - \xvec{\xi}^{-,1} \in \xCzero([0,T];\xHtwo(\mathcal{E};\mathbb{R}^N))$ supported in $\overline{\mathcal{E}}\setminus\overline{\Omega}$, satisfying \eqref{equation:reqctr}, and such that the corresponding solution $\xvec{E}^-$ to \eqref{equation:MHD_ElsaesserExt_OvarepsBtOwp} obeys $\xvec{E}^-(\cdot,T) = \xvec{0}$. For instance, one can choose
		\begin{gather}
			\xvec{E}^{-}(\xvec{x},t) \coloneqq \sum\limits_{l=1}^M \beta(t-t_l) \xvec{E}^{-}_l(\xvec{x},t), \quad \xvec{F}^{-}(\xvec{x},t) \coloneqq \sum\limits_{l=1}^M \xdrv{\beta}{t}(t-t_l) \xvec{E}^{-}_l(\xvec{x},t),\label{equation:solftcEm}
		\end{gather}
		where $\beta\colon\mathbb{R}\longrightarrow[0,1]$ is a smooth cut-off with
		\begin{equation}\label{3.20}
			\beta(t) = \begin{cases}
				1 & \mbox{ if } t \in (-\infty,-b),\\
				0 & \mbox{ if } t \in (b, +\infty)
			\end{cases}
		\end{equation}
		for $b > 0$ from \eqref{equation:cubesquareflush}.

		\paragraph{Step 4. Flushing the initial velocity: the idea.}
		The vector field $\xvec{E}^+$ obeys with $p^1 \coloneqq p^{+,1} + p^{-,1}$ the linear problem
		\begin{equation}\label{equation:MHD_ElsaesserExt_OvarepsBtOwplus}
			\begin{cases}
				\partial_t \xvec{E}^{+} + (\xvec{z}^{0} \cdot \xdop{\nabla}) \xvec{E}^{+} + (\xvec{E}^{+} \cdot \xdop{\nabla}) \xvec{z}^{0} + \nabla p^{1} = \xvec{F}^+ & \mbox{ in } \mathcal{E}_T,\\
				\xdop{\nabla}\cdot\xvec{E}^{+} = 0  & \mbox{ in } \mathcal{E}_T,\\
				\xvec{E}^{+} \cdot \xvec{n} = 0 & \mbox{ on }  \Sigma_T,\\
				\xvec{E}^{+}(\cdot, 0) = \xvec{z}_0^{+} + \xvec{z}_0^{-} & \mbox{ in } \mathcal{E}.
			\end{cases}
		\end{equation}
		The pressure gradient is eliminated by taking the curl in \eqref{equation:MHD_ElsaesserExt_OvarepsBtOwplus}, leading to a transport equation for $\xwcurl{\xvec{E}^+}$ with non-local terms. The goal is to determine $\xvec{F}^{+}$, spatially supported in~$\overline{\mathcal{E}}\setminus\overline{\Omega}$,
		such that the corresponding solution $\xvec{E}^{+}$ to \eqref{equation:MHD_ElsaesserExt_OvarepsBtOwplus} satisfies
		\begin{equation}\label{equation:finaltimegoalellip}
			\begin{cases}
				\xwcurl{\xvec{E}^{+}}(\cdot,T) = \xvec{0} & \mbox{ in } \mathcal{E},\\
				\xdiv{\xvec{E}^{+}}(\cdot,T) = \xvec{0} & \mbox{ in } \mathcal{E},\\
				\xvec{E}^{+}(\cdot,T) \cdot \xvec{n} = \xvec{0} & \mbox{ on } \partial\mathcal{E}.
			\end{cases}
		\end{equation}
		Regarding \Cref{theorem:main1}, where $\mathcal{E}$ is simply-connected, \eqref{equation:finaltimegoalellip} implies $\xvec{E}^+(\cdot,T) = \xvec{0}$ in~$\mathcal{E}$ (\cf~\eqref{equation:sKem}). Concerning \Cref{theorem:annulus}, since $\mathcal{E}$ might in that case be multiply-connected, one can from \eqref{equation:finaltimegoalellip} only conclude that $\xvec{E}^+(\cdot,T) = \lambda_1 \xvec{Q}$, where $\lambda_1 \in \mathbb{R}$ and~$\xvec{Q}$ spans the one-dimensional space of divergence-free, curl-free, and tangential vector fields on the annulus~$\mathcal{E}$. In fact, one can take $\xvec{Q} = \xnab^{\perp} \ln |\xvec{x}|$. Therefore, we need to be able to steer any state of the form~$\lambda_1 \xvec{Q}$ to zero. To this end, note that $\xvec{z}^0 = \gamma_{M_{\mathcal{E}}} \xvec{Q}$. Moreover, since~$\Omega$ is simply-connected, for $\widetilde{\gamma} \in \xCinfty([0,T];[0,1])$ with $\widetilde{\gamma}(t) = 1$ for $t \in [0,T/8]$ and $\widetilde{\gamma}(t) = 0$ when $t \geq T/4$, there exists $\widetilde{\psi} \in \xCinfty(\overline{\Omega}\times[0,T];\mathbb{R})$, with $\widetilde{\psi}(\cdot, t) = 0$ for all $t \in [0,T/8)$, such that  
		\[
		\forall (\xvec{x},t) \in \overline{\Omega} \times[0,T] \colon \lambda_1 \xdrv{\widetilde{\gamma}}{t}(t)  \xvec{Q}(\xvec{x}) = \xnab \widetilde{\psi}(\xvec{x},t).
		\]
		Thus, the function $\xvec{A} \coloneqq \widetilde{\gamma}(t) \lambda_1 \xvec{Q}$ solves together with a smooth pressure $\widetilde{p}$ and a smooth control $\widetilde{\xvec{F}}$, which is spatially supported in $\overline{\mathcal{E}}\setminus\overline{\Omega}$, the controllability problem
		\begin{equation}\label{equation:MHD_ElsaesserExt_OvarepsBtOwplusQ}
			\begin{cases}
				\partial_t \xvec{A} + (\xvec{z}^{0} \cdot \xdop{\nabla}) \xvec{A} + (\xvec{A} \cdot \xdop{\nabla}) \xvec{z}^{0} + \nabla \widetilde{p} = \widetilde{\xvec{F}} & \mbox{ in } \mathcal{E}_T,\\
				\xdop{\nabla}\cdot\xvec{A} = 0  & \mbox{ in } \mathcal{E}_T,\\
				\xvec{A} \cdot \xvec{n} = 0 & \mbox{ on }  \Sigma_T,\\
				\xvec{A}(\cdot, 0) = \lambda_1 \xvec{Q} & \mbox{ in } \mathcal{E},\\
				\xvec{A}(\cdot, T) = \xvec{0} & \mbox{ in } \mathcal{E}.
			\end{cases}
		\end{equation}
		Indeed, one can take $\widetilde{p} = -\widetilde{\psi} - \gamma_{M_{\mathcal{E}}}\widetilde{\gamma}\lambda_1 |\xvec{Q}|^2$ and $\widetilde{\xvec{F}} = \xvec{0}$ in $\overline{\Omega}$. In $\overline{\mathcal{E}}\setminus\overline{\Omega}$, one may choose any smooth extension of $\widetilde{p}$ and fix $\widetilde{\xvec{F}} = \partial_t \xvec{A} + (\xvec{z}^{0} \cdot \xdop{\nabla}) \xvec{A} + (\xvec{A} \cdot \xdop{\nabla}) \xvec{z}^{0} + \nabla \widetilde{p}$. Summarized, first one employs $(\xvec{F}^+,\xvec{F}^-)$ for steering $(\xvec{E}^+,\xvec{E}^-)$ to $(\lambda_1\xvec{Q},\xvec{0})$, followed by utilizing the controls $(\widetilde{\xvec{F}},\xvec{0})$ to connect $(\lambda_1\xvec{Q},\xvec{0})$ with $(\xvec{0}, \xvec{0})$.
		
		\paragraph{Step 5. Flushing the initial velocity: showing \eqref{equation:finaltimegoalellip}.}
		To determine $\xvec{F}^{+}$ such that \eqref{equation:finaltimegoalellip} holds, we rewrite \eqref{equation:MHD_ElsaesserExt_OvarepsBtOwplus} in vorticity form, which describes $\omega^{+} \coloneqq \xwcurl{\xvec{E}^{+}}$. That is, given the partition of unity $(\mu_l)_{l\in\{1,\dots,M\}}$ from \eqref{3.13}, we make an ansatz of the form
		\begin{equation}\label{equation:flushansatz}
			\omega^{+} = \sum_{l=1}^M \omega^{+}_l, \quad \xvec{E}^{+} = \sum_{l=1}^M \xvec{E}^{+}_l, \quad \xvec{F}^{+} = \sum_{l=1}^M \xvec{F}^{+}_l,
		\end{equation}
		where each triple $(\omega^{+}_l, \xvec{E}^+_l, \xvec{F}^+_l)$ is sought to satisfy
		\begin{equation}\label{equation:MHD_ElsaesserExt_OvarepsBtOsole}
			\begin{cases}
				\partial_t \omega^{+}_l + (\xvec{z}^{0} \cdot \xdop{\nabla}) \omega^{+}_l = \xwcurl{\xvec{F}^{+}_l} - ({\xvec{E}^{+}_l} \cdot \xdop{\nabla}) ({\xwcurl{\xvec{z}^{0}}}) & \mbox{ in } \mathcal{E}_T,\\
				\xwcurl{{\xvec{E}^{+}_l}} = \omega^{+}_l, & \mbox{ in } \mathcal{E}_T,\\
				\xdop{\nabla}\cdot{\xvec{E}^{+}_l} = 0 & \mbox{ in } \mathcal{E}_T,\\
				{\xvec{E}^{+}_l} \cdot \xvec{n} = 0 & \mbox{ on }  \Sigma_T,\\
				\omega^{+}_l(\cdot, 0) = \xwcurl{(\mu_l(\xvec{z}_0^{+}+\xvec{z}_0^{-}))} & \mbox{ in } \mathcal{E}.
			\end{cases}
		\end{equation}
		Since $\xwcurl{\xvec{z}^0}$ is supported in $(\overline{\mathcal{E}} \setminus \overline{\Omega})\times(0,T)$, the transport problem decouples in $\Omega$ from the div-curl system. 
		To see this, let $\overline{\omega}^{+}_l$ for each $l \in \{1,\dots,M\}$ be the solution to
		\begin{equation}\label{equation:MHD_ElsaesserExt_OvarepsBtOsoleNorighthandside}
			\begin{cases}
				\partial_t \overline{\omega}^{+}_l + (\xvec{z}^{0} \cdot \xdop{\nabla}) \overline{\omega}^{+}_l = 0 & \mbox{ in } \mathcal{E}_T,\\
				\overline{\omega}^{+}_l(\cdot, 0) = \xwcurl{(\mu_l(\xvec{z}_0^{+}+\xvec{z}_0^{-}))} & \mbox{ in } \mathcal{E}.
			\end{cases}
		\end{equation}
		Then, we take $\widetilde{\omega}^{+}_l \coloneqq \beta(t-t_l) \overline{\omega}^{+}_l$ and define $(\widetilde{\xvec{E}^{+}_l})_{l\in\{1,\dots,M\}}$ via
		\begin{equation}\label{equation:flushingproofelliptic2}
			\begin{cases}
				\xwcurl{\widetilde{\xvec{E}^{+}_l}} = \widetilde{\omega}^{+}_l & \mbox{ in } 	\mathcal{E}_T,\\
				\xdop{\nabla}\cdot\widetilde{\xvec{E}^{+}_l} = 0 & \mbox{ in } 	\mathcal{E}_T,\\
				\widetilde{\xvec{E}^{+}_l} \cdot \xvec{n} = 0 & \mbox{ on }  \Sigma_T,
			\end{cases}
		\end{equation}
		where $\beta$ is the function from \eqref{3.20}. 
		Owing to \eqref{equation:cubesquareflush} and \eqref{3.20}, one finds for all $l \in \{1,\dots,M\}$ and $t \in [0,T]$ the relations
		\[
			\widetilde{\omega}^{+}_l(\cdot,T) = \xvec{0}, \quad  \operatorname{supp}\left(\xdrv{\beta}{t}(t-t_l)\overline{\omega}^{+}_l(\cdot, t)\right) \subset c_{r_l}.
		\]
		Therefore, if it would be possible to choose $\widetilde{\xvec{F}}^{+}_l$ for each $l \in \{1,\dots,M\}$ such that
		\begin{equation}\label{equation:goalconstrF}
			\xwcurl{\widetilde{\xvec{F}}^{+}_l} = 
			\xdrv{\beta}{t}(t-t_l) \overline{\omega}^{+}_l + (\widetilde{\xvec{E}^{+}_l} \cdot \xdop{\nabla}) ({\xwcurl{\xvec{z}^{0}}}),
		\end{equation}
		then $(\omega^+_l \coloneqq \widetilde{\omega}^+_l, \xvec{E}^+_l \coloneqq \widetilde{\xvec{E}}^+_l, \xvec{F}^+_l \coloneqq \widetilde{\xvec{F}}^+_l)$ would satisfy \eqref{equation:MHD_ElsaesserExt_OvarepsBtOsole}. 
		To construct such $\widetilde{\xvec{F}}^+_l$, the following observations are remarked.
		\begin{itemize}
			\item The right-hand side of \eqref{equation:goalconstrF} is supported in $\overline{\mathcal{E}}\setminus \overline{\Omega}$.
			\item When $\xvec{z}^0$ is obtained from \Cref{lemma:Z^0}, one can for simplicity assume that $\operatorname{supp}(\xwcurl{\xvec{z}^0})$ is contained in a single boundary cube (\cf~\Cref{remark:chooseregionfl}).
		\end{itemize}
		Since $\widetilde{\xvec{F}}^{+}_1, \dots, \widetilde{\xvec{F}}^{+}_M$ should be supported in $\overline{\mathcal{E}}\setminus\overline{\Omega}\times[0,T]$ and obey~\eqref{equation:goalconstrF}, the average of the right-hand side in \eqref{equation:goalconstrF} must vanish on each interior cube. This indeed happens because $\overline{\omega}^{+}_l$ solves the homogeneous transport equation \eqref{equation:MHD_ElsaesserExt_OvarepsBtOsoleNorighthandside}; in fact, for each $l \in \{1,\dots,L\}$ with $B_l \subset \mathcal{E}$, the average of $\overline{\omega}^{+}_l$ on $B_l$ vanishes at $t = 0$ and is transported by~$\xmcal{Z}^{0}$. Finally, by undoing the curl in \eqref{equation:goalconstrF} (\cf~\cite[Section A.2]{CoronMarbachSueur2020} for explicit formulas), one obtains the desired controls $\xvec{F}^{+}_1, \dots, \xvec{F}^{+}_M$. Thanks to the assumption $\xvec{z}^{\pm}_0 \in \xHn{3}(\mathcal{E})\cap\xH(\mathcal{E})$, one has $\xwcurl{\xsym{\xi}^{\pm,1}}\in \xCzero([0,T];\xHtwo(\mathcal{E};\mathbb{R}))$ and~$\xvec{z}^{\pm,1}$ are bounded in $\xLn{\infty}((0,T);\xHn{3}(\mathcal{E}))$.
	\end{proof}
	
	\subsection{Boundary layers and technical profiles}\label{subsection:blprf}
	In this subsection, the boundary layers and related technical profiles, appearing in \eqref{equation:ansatz2}--\eqref{3.7}, will be described. In addition to to the neighborhood $\mathcal{V}$, as defined in \Cref{subsection:domainextensions}, another tubular region is denoted by
	\[
		\mathcal{V}_{d^{*}} \coloneqq \{ \xvec{x} \in \overline{\mathcal{E}} \,  | \, \operatorname{dist}(\xvec{x},\partial \mathcal{E}) < d^{*} \} \subset \mathcal{V} \cap \overline{\mathcal{E}},
	\]
	where $d^{*} \in (0, d)$ is a small number to be fixed in \Cref{lemma:chisupp} below. Moreover, given a function~$\psi_{d^{*}}\in\xCinfty(\mathbb{R};[0,1])$ with
	\begin{equation*}\label{equation:psi}
		\psi_{d^{*}}(s) = \begin{cases}
			1 & \mbox{ for } s \leq d^{*}/2,\\
			0 & \mbox{ for } s \geq 2d^{*}/3,
		\end{cases}
	\end{equation*}
	a smooth cutoff $\chi_{\partial\mathcal{E}} \in \xCinfty(\overline{\mathcal{E}};[0,1])$ is defined by
	\begin{equation}\label{equation:def_chi}
		\chi_{\partial\mathcal{E}}(\xvec{x}) \coloneqq \psi_{d^{*}}(\varphi_{\mathcal{E}}(\xvec{x})),
	\end{equation}
	where $\varphi_{\mathcal{E}}(\xvec{x})=\operatorname{dist}(\xvec{x},\partial\mathcal{E})$ for $\xvec{x} \in \mathcal{V}$ as described in \Cref{subsection:domainextensions}.
	By construction, one observes that $\chi_{\partial\mathcal{E}} = 1$ in the vicinity of $\partial\mathcal{E}$ and that $\operatorname{supp}(\chi_{\partial\mathcal{E}}) \subset \mathcal{V}_{d^{*}}$. Furthermore, in view of~\eqref{equation:definitionnormal}, the gradient of~$\chi_{\partial\mathcal{E}}$ in $\mathcal{E}$ can be calculated as
	\begin{equation}\label{equation:propcutoff}
		\left(\xnab\chi_{\partial\mathcal{E}}\right)(\xvec{x}) = \left(\xdrv{}{s}\psi_{d^{*}}\right)(\varphi_{\mathcal{E}}(\xvec{x})) \xnab \varphi_{\mathcal{E}}(\xvec{x}) = -\left(\xdrv{}{s}\psi_{d^{*}}\right)(\varphi_{\mathcal{E}}(\xvec{x})) \xvec{n}(\xvec{x}).
	\end{equation}
	
	\begin{rmrk}
		The property \eqref{equation:propcutoff} of $\chi_{\partial\mathcal{E}}$ is employed later on for stating a condition under which the profiles $\xsym{\mu}^{\pm}$ in \eqref{3.7} can be chosen with $\xdiv{\xsym{\mu}^+} = \xdiv{\xsym{\mu}^-}$. 
	\end{rmrk}
	
	\subsubsection{Boundary layer equations}\label{subsubsection:DefinitionsBl}
	Our definition of the boundary layer correctors is motivated by \cite{CoronMarbachSueur2020,IftimieSueur2011}. After plugging the relations \eqref{equation:ansatz2} and \eqref{3.7} into \eqref{equation:MHD_ElsaesserExtScaledLongTime}, there appears a term at order $O(1)$ that is not absorbed by \eqref{equation:MHD_Elsaesser_O1_}. However, resorting to the idea from \cite{IftimieSueur2011} of writing
	\[
	\left\llbracket (\xvec{z}^{0} \cdot \xvec{n})\partial_z \xvec{v}^{\pm}\right\rrbracket_{\epsilon} = \sqrt{\epsilon}\left\llbracket \frac{\xvec{z}^{0} \cdot \xvec{n}}{\varphi_{\mathcal{E}}}z \partial_z \xvec{v}^{\pm}\right\rrbracket_{\epsilon},
	\]
	this contribution is seen to behave as $O(\sqrt{\epsilon})$. In order to also offset the mismatching boundary values $\xmcal{N}^{\pm}(\xvec{z}^0,\xvec{z}^0) \neq \xvec{0}$, the boundary layer profiles $(\xvec{v}^{+},\xvec{v}^{-})$ in \eqref{equation:ansatz2} are introduced in $\overline{\mathcal{E}} \times \mathbb{R}_+ \times \mathbb{R}_+$ as the solution to the coupled linear problem
	\begin{equation}\label{equation:MHD_ElsaesserBlProfilevpm_contr}
		\partial_t \xvec{v}^{\pm} - \partial_{zz}(\lambda^{\pm}\xvec{v}^{+} + \lambda^{\mp}\xvec{v}^{-}) + \left[ (\xvec{z}^{0} \cdot \xdop{\nabla}) \xvec{v}^{\pm} + (\xvec{v}^{\mp} \cdot \xdop{\nabla})\xvec{z}^{0} \right]_{\operatorname{tan}} + \mathfrak{f}z\partial_z\xvec{v}^{\pm} = \xsym{\xvec{\mu}}^{\pm}
	\end{equation}
	with boundary and initial conditions
	\begin{equation}\label{equation:MHD_ElsaesserBlProfilevpm_contribc}
		\begin{cases}
			\partial_z \xvec{v}^{\pm}(\xvec{x},t,0) = \xsym{\mathfrak{g}}^{\pm}(\xvec{x},t),  & \xvec{x} \in \overline{\mathcal{E}}, t \in \mathbb{R}_+ ,\\
			\xvec{v}^{\pm}(\xvec{x},t,z) \longrightarrow \xvec{0}, \mbox{ as } z \longrightarrow +\infty, & \xvec{x} \in \overline{\mathcal{E}}, t \in \mathbb{R}_+ ,\\
			\xvec{v}^{\pm}(\xvec{x},0,z) = \xvec{0}, & \xvec{x} \in \overline{\mathcal{E}}, z \in \mathbb{R}_+.
		\end{cases}
	\end{equation}
	Above, the functions $\mathfrak{f}$ and $\xsym{\mathfrak{g}}^{\pm}$ are for all $(\xvec{x},t) \in \overline{\mathcal{E}} \times \mathbb{R}_+$ given by
	\begin{equation}\label{equation:fktbdrleq_contr}
		\begin{gathered}
			\mathfrak{f}(\xvec{x},t) \coloneqq -\frac{\xvec{z}^{0}(\xvec{x},t)\cdot\xvec{n}(\xvec{x})}{\varphi_{\mathcal{E}}(\xvec{x})}, \quad
			\xsym{\mathfrak{g}}^{\pm}(\xvec{x},t) \coloneqq 2\chi_{\partial\mathcal{E}}(\xvec{x})\xmcal{N}^{\pm}(\xvec{z}^{0},\xvec{z}^{0})(\xvec{x},t).
		\end{gathered}
	\end{equation}
	Since $\xvec{z}^0 \cdot \xvec{n} = 0$ on $\partial \mathcal{E}$ and $\xnab \varphi_{\mathcal{E}} = - \xvec{n}$ in $\mathcal{E}$, the function $\mathfrak{f}$ is smooth (\cf~\cite[Lemma 4]{IftimieSueur2011}). Like $\xvec{z}^{0}$ and $\varphi_{\mathcal{E}}$, the functions $\xsym{\mathfrak{g}}^{\pm}$ are smooth as well.

	\begin{rmrk}
		It would be sufficient for our purpose to define $\xvec{v}^{\pm}$ only on the time interval $[0,T/\epsilon]$. By stating \eqref{equation:MHD_ElsaesserBlProfilevpm_contr} and \eqref{equation:MHD_ElsaesserBlProfilevpm_contribc} for all $t \in \mathbb{R}_+$, it is emphasized that $\xvec{v}^{\pm}$ are independent of $\epsilon$.
	\end{rmrk}
	
	Now, several properties of the solutions to \Cref{equation:MHD_ElsaesserBlProfilevpm_contr} and \eqref{equation:MHD_ElsaesserBlProfilevpm_contribc} are summarized; recall that $\xHn{{k,m,p}}_{\mathcal{E}}$ is the weighted space defined in \Cref{subsection:FunctionSpaces} via
	\[
		\xHn{{k,m,p}}_{\mathcal{E}}  = \left\{ f \in \xLtwo(\mathcal{E}\times\mathbb{R}_+) \, \left| \,  \left(\sum\limits_{r = 0}^p \sum\limits_{|\beta| \leq m} \int_{\mathcal{E}}\int_{\mathbb{R}_+} (1 + z^{2k})|\partial_{\xvec{x}}^{\beta}\partial_z^r f|^2 \, \xdx{z}  \xdx{\xvec{x}} \right)^{\frac{1}{2}} < +\infty  \right. \right\}.
	\]
	
	\begin{lmm}\label{lemma:wellpvpmctrl}
		Assume that $\xsym{\mu}^{\pm}\colon \mathcal{E}\times \mathbb{R}_+ \times \mathbb{R}_+ \longrightarrow \mathbb{R}^N$ are smooth and satisfy $\xsym{\mu}^{\pm} \cdot \xvec{n} = 0$. There exists a unique solution $(\xvec{v}^{+},\xvec{v}^{-})$ to \eqref{equation:MHD_ElsaesserBlProfilevpm_contr} and \eqref{equation:MHD_ElsaesserBlProfilevpm_contribc} which possesses for all $k,l,r,s \in \mathbb{N}_0$, and any~$T^* > 0$, the regularity
		\begin{alignat}{1}
			\partial_t^s\xvec{v}^{\pm} \in \xLinfty((0,T^*); \xHn{{k,l,r}}_{\mathcal{E}}) \cap \xLtwo((0,T^*);\xHn{{k,l,r+1}}_{\mathcal{E}}).\label{equation:lmmblreg}
		\end{alignat}
		In addition, for each $(\xvec{x},t,z) \in \overline{\mathcal{E}}\times\mathbb{R}_+\times\mathbb{R}_+$ the profiles $\xvec{v}^{\pm}$ verify the orthogonality relation
		\begin{equation}\label{equation:lmmblorth}
			\xvec{v}^{\pm}(\xvec{x},t,z) \cdot \xvec{n}(\xvec{x}) = 0.
		\end{equation} 
	\end{lmm}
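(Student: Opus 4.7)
The plan is to first decouple \eqref{equation:MHD_ElsaesserBlProfilevpm_contr}--\eqref{equation:MHD_ElsaesserBlProfilevpm_contribc} by passing to the symmetrized variables $\xvec{V}^{\pm}\coloneq\xvec{v}^{+}\pm\xvec{v}^{-}$. Since $\lambda^{+}+\lambda^{-}=\nu_{1}$ and $\lambda^{+}-\lambda^{-}=\nu_{2}$, adding and subtracting the two equations yields the two independent linear parabolic problems
\begin{equation*}
\partial_{t}\xvec{V}^{\pm} - \nu_{i}\partial_{zz}\xvec{V}^{\pm} + \bigl[(\xvec{z}^{0}\cdot\xdop{\nabla})\xvec{V}^{\pm} \pm (\xvec{V}^{\pm}\cdot\xdop{\nabla})\xvec{z}^{0}\bigr]_{\operatorname{tan}} + \mathfrak{f}\,z\,\partial_{z}\xvec{V}^{\pm} = \xsym{\mu}^{+}\pm\xsym{\mu}^{-},
\end{equation*}
with $i=1$ for $\xvec{V}^{+}$ and $i=2$ for $\xvec{V}^{-}$, inhomogeneous Neumann datum $\xsym{\mathfrak{g}}^{+}\pm\xsym{\mathfrak{g}}^{-}$ at $z=0$, decay as $z\to+\infty$, and zero initial datum. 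Each equation is now a linear parabolic problem in $(t,z)$ in which the slow variable $\xvec{x}\in\overline{\mathcal{E}}$ enters only through smooth coefficients and a tangential first-order transport along $\xvec{z}^{0}$.

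For each decoupled system I would establish well-posedness in the weighted spaces $\xHn{{k,m,p}}_{\mathcal{E}}$ through a Galerkin scheme in $z$ combined with energy estimates against the polynomial weights $(1+z^{2k})$. Multiplying by $(1+z^{2k})\xvec{V}^{\pm}$ and integrating over $\mathcal{E}\times\mathbb{R}_{+}$, the diffusion term is coercive (the boundary contribution at $z=0$ is absorbed using the smoothness of $\xsym{\mathfrak{g}}^{\pm}$ and a Young inequality), the stretching term $\mathfrak{f}\,z\,\partial_{z}\xvec{V}^{\pm}$ is treated by one integration by parts relying on the smoothness of $\mathfrak{f}$ — which follows from $\xvec{z}^{0}\cdot\xvec{n}=0$ on $\partial\mathcal{E}$ via a Taylor expansion as in \cite[Lemma 4]{IftimieSueur2011} — while the lower-order terms are controlled via Cauchy--Schwarz and Gr\"onwall. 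To reach arbitrary $k,l,r,s$, differentiate the equation in $\xvec{x}$, $t$, and $z$, note that $[\partial_{\xvec{x}}^{\beta},(\xvec{z}^{0}\cdot\xdop{\nabla})]$ is lower order with smooth coefficients, and iterate the weighted estimates; this delivers the regularity \eqref{equation:lmmblreg} and uniqueness for the decoupled and hence for the coupled system.

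For the orthogonality \eqref{equation:lmmblorth} I would take the scalar product of \eqref{equation:MHD_ElsaesserBlProfilevpm_contr} with $\xvec{n}(\xvec{x})$. Since $\xvec{n}$ depends only on $\xvec{x}$, it commutes with $\partial_{t}$, $\partial_{zz}$, and $z\partial_{z}$; the bracket $[\cdot]_{\operatorname{tan}}$ is annihilated by $\xvec{n}$; the source obeys $\xsym{\mu}^{\pm}\cdot\xvec{n}=0$ by hypothesis; the Neumann datum $\xsym{\mathfrak{g}}^{\pm}$ is tangential by construction (as $\xmcal{N}^{\pm}$ takes values in the tangent bundle); and the initial datum is zero. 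Hence $\phi^{\pm}\coloneq\xvec{v}^{\pm}\cdot\xvec{n}$ solves a linear parabolic system with entirely vanishing data and, by the uniqueness just established, $\phi^{\pm}\equiv 0$.

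The main obstacle I anticipate is closing the weighted estimates uniformly on the unbounded half-line $\{z>0\}$ at every order $(k,l,r,s)$; the weight $(1+z^{2k})$ is tailored precisely so that the integration by parts absorbing $\mathfrak{f}\,z\,\partial_{z}\xvec{V}^{\pm}$ leaves only zeroth-order remainders bounded by $|\partial_{z}\mathfrak{f}|+|\mathfrak{f}|$, and the $(+,-)$ decoupling is essential because, without it, the cross term $\lambda^{\mp}\partial_{zz}\xvec{v}^{\mp}$ would prevent the diffusion from being diagonally coercive in any natural weighted inner product.
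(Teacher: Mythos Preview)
Your proposal is correct and follows essentially the same route as the paper: weighted energy estimates in $\xHn{{k,m,p}}_{\mathcal{E}}$ for well-posedness and regularity (the paper defers these to \Cref{lemma:higherorder} in Appendix~A), and multiplication by $\xvec{n}$ to show the orthogonality \eqref{equation:lmmblorth}. One small correction: your claim that the $(+,-)$ decoupling is \emph{essential} is overstated. The paper's Appendix~A in fact carries out the estimates directly on the coupled system; after multiplying the two equations by $(1+z^{2k})\partial_{\xvec{x}}^{\xsym{\alpha}}\partial_t^{\gamma}\xvec{v}^{\pm}$ and summing, the diffusion contributions combine into $\tfrac{1}{2}\sum_{\square\in\{+,-\}}(\lambda^{+}\square\,\lambda^{-})\|\partial_z\partial_{\xvec{x}}^{\xsym{\alpha}}\partial_t^{\gamma}(\xvec{v}^{+}\square\,\xvec{v}^{-})\|^2$, which is coercive since $\lambda^{+}\pm\lambda^{-}=\nu_{1},\nu_{2}>0$. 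This is of course the same algebraic identity that makes your change of variables work, so the two presentations are equivalent; your decoupling just makes the coercivity manifest from the outset.
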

	\begin{proof}
		The well-posedness of the linear problem \eqref{equation:MHD_ElsaesserBlProfilevpm_contr}, \eqref{equation:MHD_ElsaesserBlProfilevpm_contribc} is analogous to that of the Navier slip-with-friction boundary layers for the Navier--Stokes equations (\cf~\cite{IftimieSueur2011,CoronMarbachSueur2020}).
		Since $\xsym{\xvec{\mu}}^{\pm}$ are assumed smooth, the regularity stated in \eqref{equation:lmmblreg} is obtained from \Cref{lemma:higherorder}. The relation \eqref{equation:lmmblorth} follows by multiplying in \eqref{equation:MHD_ElsaesserBlProfilevpm_contr} with $\xvec{n}$, which leads to \apriori~estimates for $(\xvec{v}^+ \pm \xvec{v}^-)\cdot \xvec{n}$ similar to that given in \cite[Section 5]{IftimieSueur2011}.
	\end{proof}
	
	\subsubsection{Technical profiles}\label{subsubsection:technicalprofiles}
	
	For the sake of having $\xvec{v}^{\pm}\cdot\xvec{n} = 0$ in $\overline{\mathcal{E}}\times\mathbb{R}_+\times\mathbb{R}_+$, the normal contributions of $(\xvec{z}^{0} \cdot \xdop{\nabla}) \xvec{v}^{\pm} + (\xvec{v}^{\mp} \cdot \xdop{\nabla})\xvec{z}^{0}$, which appear at $O(\sqrt{\epsilon})$ when inserting \eqref{equation:ansatz2} into \eqref{equation:MHD_ElsaesserExtScaledLongTime}, have been omitted in \eqref{equation:MHD_ElsaesserBlProfilevpm_contr}. This and the commutation formula
	\[
	\xnab \llbracket q^{\pm}\rrbracket_{\epsilon} = \llbracket \xnab q^{\pm}\rrbracket_{\epsilon} - \frac{1}{\sqrt{\epsilon}}\llbracket\partial_z q^{\pm}\rrbracket_{\epsilon} \xvec{n}
	\]
	motivate introducing the profiles $q^{\pm}$ in \eqref{equation:ansatz2} as the solutions to
	\begin{equation}\label{equation:MHD_ElsaesserBlProfilepressurepm_control}
		\begin{cases}
			\partial_z q^{\pm} = \left[ (\xvec{z}^{0} \cdot \xdop{\nabla} \xvec{v}^{\pm}) + (\xvec{v}^{\mp} \cdot \xdop{\nabla})\xvec{z}^{0} \right] \cdot \xvec{n} & \mbox{in } \overline{\mathcal{E}} \times \mathbb{R}_+ \times \mathbb{R}_+,\\
			\lim\limits_{z \longrightarrow + \infty} q^{\pm}(\xvec{x},t,z) = 0,  & \xvec{x} \in \mathcal{E}, t \in (0, T).
		\end{cases}
	\end{equation}
	Next, let us define the second boundary layer correctors $\xvec{w}^{\pm}$. The normal parts of $\xvec{w}^{\pm}$ will compensate for the non-vanishing divergence of $\xvec{v}^{\pm}$, while their tangential parts constitute a lifting for~$\xmcal{N}^{\pm}(\xvec{v}^{+},\xvec{v}^-)(\xvec{x},t,0)$ and later on enable sufficient remainder estimates. Namely,
	\begin{equation}\label{equation:MHD_ElsaesserBlProfilesecondvelpm_contrl}
		\begin{aligned}
			\xvec{w}^{\pm}(\xvec{x},t,z) & \coloneqq \overline{w}^{\pm}(\xvec{x},t,z) \xvec{n}(\xvec{x}) -2\operatorname{e}^{-z}\xmcal{N}^{\pm}(\xvec{v}^{+},\xvec{v}^-)(\xvec{x},t,0), && \xvec{x} \in \overline{\mathcal{E}}, t \in \mathbb{R}_+, z \in \mathbb{R}_+,\\
			\overline{w}^{\pm}(\xvec{x},t,z) & \coloneqq - \int_z^{+\infty} \xdop{\nabla}\cdot\xvec{v}^{\pm} (\xvec{x},t,s) \, \xdx{s},  && \xvec{x} \in \overline{\mathcal{E}}, t \in \mathbb{R}_+, z \in \mathbb{R}_+,
		\end{aligned}
	\end{equation}
	noting that $\xvec{w}^{\pm}$ satisfy under the assumption $\operatorname{supp}(\xvec{v}^{\pm}(\cdot,t,z)) \subset \mathcal{V}$ the relations
	\begin{equation}\label{equation:MHD_ElsaesserBlProfilesecondvelprop_control}
		\begin{aligned}
			\xdop{\nabla}\cdot\xvec{v}^{\pm} & = \xvec{n}\cdot \partial_z \xvec{w}^{\pm} && \mbox{in } \overline{\mathcal{E}} \times \mathbb{R}_+ \times \mathbb{R}_+,\\
			\xmcal{N}^{\pm}(\xvec{v}^{+},\xvec{v}^-)(\xvec{x},t,0) & = \frac{1}{2}\left[\partial_z \xvec{w}^{\pm}\right]_{\operatorname{tan}}(\xvec{x},t,0),  && \xvec{x} \in \mathcal{E}, t \in (0, T).
		\end{aligned}
	\end{equation}
	
	\begin{rmrk}\label{remark:nbhd}
		The constructions in \cref{equation:MHD_ElsaesserBlProfilepressurepm_control,equation:MHD_ElsaesserBlProfilesecondvelpm_contrl} only serve their purpose, if the extended normal vector $\xvec{n}$ is nonzero in the $\xvec{x}$-support of $\xvec{v}^{\pm}$. Therefore, by means of a sufficiently small choice for $d^* > 0$ in the definition of $\chi_{\partial\mathcal{E}}$, it will be ensured that $\operatorname{supp}(\xvec{v}^{\pm}(\cdot,t,z)) \subset \mathcal{V}$ (\cf~\Cref{lemma:chisupp}). 
	\end{rmrk}
	In order to balance the nonzero divergence contributions and normal fluxes generated by $\xvec{w}^{\pm}$, the  correctors $\theta^{\pm,\epsilon}$ are introduced as solutions to
	\begin{equation}\label{equation:divcorrNeum}
		\begin{cases}
			\Delta \theta^{\pm,\epsilon} = - \left\llbracket \xdop{\nabla}\cdot\xvec{w}^{\pm}\right\rrbracket_{\epsilon} & \mbox{in } \mathcal{E} \times \mathbb{R}_+,\\
			\partial_{\xvec{n}} \theta^{\pm,\epsilon} = - \xvec{w}^{\pm}(\xvec{x},t,0) \cdot \xvec{n}(\xvec{x}), & \xvec{x} \in \partial \mathcal{E} \times \mathbb{R}_+.
		\end{cases}
	\end{equation}
	For each $t \in \mathbb{R}_+$, the corresponding Neumann problem in \eqref{equation:divcorrNeum} is well-posed; see \Cref{lemma:wpth} below.
	
	It remains to specify the profiles $\vartheta^{\pm,\epsilon}$ and the forces $\widetilde{\xsym{\zeta}}^{\pm,\epsilon}$. Inserting the ansatz \eqref{equation:ansatz2} into \eqref{equation:MHD_ElsaesserExtScaledLongTime} gives at the order $O(\epsilon)$ rise to the terms
	\begin{equation}\label{equation:cotwauw}
		\partial_t\xdop{\nabla}\theta^{\pm,\epsilon} + (\xvec{z}^0 \cdot \xnab)\xnab\theta^{\pm,\epsilon} + (\xnab\theta^{\mp,\epsilon} \cdot \xnab)\xvec{z}^0,
	\end{equation}
	which are not behaving well regarding the remainder estimates in \Cref{subsection:remestwbctrl}. In particular, the $\xLtwo(\partial \mathcal{E})$ norms of the boundary data in \Cref{equation:divcorrNeum} are of order $O(1)$. For this reason, the pressure correctors $\vartheta^{\pm,\epsilon}$ are defined by
	\[
	\vartheta^{\pm,\epsilon} \coloneqq -\partial_t \theta^{\pm,\epsilon} - \xvec{z}^0\cdot \xnab \theta^{\pm,\epsilon}
	\]
	such that one has the representations
	\begin{equation}\label{equation:ttcp}
		\partial_t\xdop{\nabla}\theta^{\pm,\epsilon} + (\xvec{z}^0 \cdot \xnab)\xnab\theta^{\pm,\epsilon} + (\xnab\theta^{\mp,\epsilon} \cdot \xnab)\xvec{z}^0 + \xnab \vartheta^{\pm,\epsilon} = (\xnab (\theta^{\mp,\epsilon} - \theta^{\pm,\epsilon})\cdot \xnab) \xvec{z}^0
	\end{equation}
	at all points where $\xcurl{\xvec{z}^0} = \xvec{0}$. Consequentially, in view of Lemmas~\Rref{lemma:original_u^0} and \Rref{lemma:Z^0}, or by \Cref{example:cylinder}, the relations in \eqref{equation:ttcp} are always true in $\overline{\Omega} \times \mathbb{R}_+$. However, in $(\overline{\mathcal{E}}\setminus\overline{\Omega}) \times (0,T)$ this might not be the case; but $\widetilde{\xsym{\zeta}}^{\pm,\epsilon}$ can be utilized to improve the remainder estimates. More precisely, motivated by the desired estimate \eqref{equation:aae} in \Cref{subsubsection:remaindereq} below, we shall define $\widetilde{\xsym{\zeta}}^{\pm,\epsilon}$ either via \eqref{equation:defzetpmeps} or by means of \eqref{equation:defzetpmeps2}, as explained next. 
	
	\paragraph{The first case of the definition for $\widetilde{\xsym{\zeta}}^{\pm,\epsilon}$.} When $(\xvec{w}^{+}-\xvec{w}^{-})_{|_{z=0}} \cdot \xvec{n} = 0$ is satisfied at~$\Sigma_T$, \Cref{lemma:wpth} will provide good estimates for $\theta^{+,\epsilon}- \theta^{-,\epsilon}$, allowing us to choose
	\begin{equation}\label{equation:defzetpmeps}
		\widetilde{\xsym{\zeta}}^{\pm,\epsilon} \coloneqq 
		(\xvec{z}^0 \cdot \xnab)\xnab\theta^{\pm,\epsilon} + (\xnab\theta^{\mp,\epsilon} \cdot \xnab)\xvec{z}^0 - \xnab(\xvec{z}^0\cdot \xnab \theta^{\pm,\epsilon}) - (\xnab (\theta^{\mp,\epsilon} - \theta^{\pm,\epsilon})\cdot \xnab) \xvec{z}^0.
	\end{equation}
	There are at least two situations with $(\xvec{w}^{+}-\xvec{w}^{-})_{|_{z=0}} \cdot \xvec{n} = 0$ at $\Sigma_T = \partial \mathcal{E} \times (0,T)$.
	\begin{itemize}
		\item If $\xvec{v}^+-\xvec{v}^- = \xvec{0}$ in $\mathcal{E}\times\mathbb{R}_+\times\mathbb{R}_+$, then also $(\xvec{w}^{+}-\xvec{w}^{-})\cdot \xvec{n} = 0$. This happens, for instance, when $\xvec{M}_2 = \xvec{0}$ and $\xvec{\mu}^{\pm}$ are determined such that $\xvec{\mu}^+ - \xvec{\mu}^- = \xvec{0}$ (\cf~\Cref{lemma:M2} and \Cref{lemma:blcxi2}).
		\item In the case of \Cref{theorem:annulus}, where the definition of $\xvec{z}^0$ from \eqref{equation:zeordcdannulussect} ensures $(\xvec{w}^{+}-\xvec{w}^{-})\cdot \xvec{n} = 0$.
		To see this, the proof of \Cref{lemma:blcxi2} below provides $\xdiv{(\xvec{v}^+-\xvec{v}^-)} = 0$ and $\xdiv{(\xsym{\mu}^+ - \xsym{\mu}^-)} = 0$.
	\end{itemize}
	Given the assumptions of \Cref{theorem:main1} or \Cref{theorem:annulus}, one can conclude also the additional properties
	\[
	\xdiv{(\widetilde{\xsym{\zeta}}^{+,\epsilon}-\widetilde{\xsym{\zeta}}^{-,\epsilon})} = 0 \, \mbox{ in } \mathcal{E}_T, \quad (\widetilde{\xsym{\zeta}}^{+,\epsilon}-\widetilde{\xsym{\zeta}}^{-,\epsilon}) \cdot \xvec{n} = 0 \, \mbox{ on } \Sigma_T.
	\]
	Indeed, either $\xvec{M}_2 = \xvec{0}$ implies that $\xnab(\theta^{+,\epsilon} - \theta^{-,\epsilon}) = \xvec{0}$, hence $\widetilde{\xsym{\zeta}}^{+,\epsilon} - \widetilde{\xsym{\zeta}}^{-,\epsilon} = \xvec{0}$, or, if $\xvec{M}_2 = \rho \xvec{I}$ in the case of \Cref{theorem:annulus}, one has $\xwcurl{\xvec{z}}^0 = 0$ by means of \eqref{equation:zeordcdannulussect}, which even provides $\widetilde{\xsym{\zeta}}^{\pm,\epsilon} = \xvec{0}$.
	
	\paragraph{The second case of the definition for $\widetilde{\xsym{\zeta}}^{\pm,\epsilon}$.} When $\xcurl{\xvec{z}^0} = \xvec{0}$ in $\overline{\mathcal{E}_T}$ and $(\xvec{w}^{+}-\xvec{w}^{-})_{|_{z=0}} \cdot \xvec{n} \neq 0$ at some points of $\Sigma_T$, then we define
	\begin{equation}\label{equation:defzetpmeps2}
		\widetilde{\xsym{\zeta}}^{\pm,\epsilon} \coloneqq (\xnab (\theta^{\mp,\epsilon} - \theta^{\pm,\epsilon})\cdot \xnab) \xvec{z}^0.
	\end{equation}
	This applies to the general situation of \Cref{theorem:main}, where $\xvec{z}^0$ is obtained from \Cref{lemma:original_u^0}. 
	
	\begin{rmrk}
		In the special case of \Cref{remark:cylm2}, see also \Cref{example:cylinder}, one has $\widetilde{\xsym{\zeta}}^{\pm,\epsilon} = \xvec{0}$ in $\overline{\Omega}\times[0,T]$, since in $\overline{\Omega}\times[0,T]$ the vector field $\xvec{z}^0$ is constant with respect to the spatial variables.
	\end{rmrk}

	When $\xvec{M}_2 = \xvec{0}$, magnetic field boundary layers cannot arise, as emphasized by the following lemma.
	
	\begin{lmm}\label{lemma:M2}
		$\xvec{M}_2 = \xvec{0}$ and $\xvec{\mu}^+ - \xvec{\mu}^- = \xvec{0}$ imply $\xvec{v}^+-\xvec{v}^- = \xvec{0}$.
	\end{lmm}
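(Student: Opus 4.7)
My plan is to derive the conclusion from the uniqueness assertion in \Cref{lemma:wellpvpmctrl}. Specifically, I would produce a function $\xvec{u}$ such that the diagonal pair $(\xvec{u},\xvec{u})$ satisfies the coupled system \eqref{equation:MHD_ElsaesserBlProfilevpm_contr}--\eqref{equation:MHD_ElsaesserBlProfilevpm_contribc}. Since that system has at most one solution, it follows that $\xvec{v}^+ = \xvec{v}^- = \xvec{u}$, whence $\xvec{v}^+ - \xvec{v}^- = \xvec{0}$.

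First I would check that the two PDEs for $\xvec{v}^{\pm}$ collapse to a single equation once one imposes $\xvec{v}^+ = \xvec{v}^-$ and calls the common value $\xvec{u}$. The key identity is
\[
    \partial_{zz}\bigl(\lambda^{\pm}\xvec{u} + \lambda^{\mp}\xvec{u}\bigr) = (\lambda^+ + \lambda^-)\partial_{zz}\xvec{u} = \nu_1\,\partial_{zz}\xvec{u},
\]
so the dissipative term is insensitive to the choice of sign. The tangential convection and the $\mathfrak{f}z\partial_z$ contributions reduce in exactly the same way for either equation. Therefore, provided the right-hand sides agree (which is the hypothesis $\xsym{\mu}^+ = \xsym{\mu}^-$), both equations are satisfied by a single linear boundary layer problem for $\xvec{u}$ with source $\xsym{\mu}^+$, diffusion $\nu_1$, convection $[(\xvec{z}^0\cdot\xnab)\xvec{u} + (\xvec{u}\cdot\xnab)\xvec{z}^0]_{\operatorname{tan}}$, and the same $\mathfrak{f}z\partial_z\xvec{u}$ term.

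Next I would verify that the Neumann data in \eqref{equation:MHD_ElsaesserBlProfilevpm_contribc} coincide, that is, $\xsym{\mathfrak{g}}^+ = \xsym{\mathfrak{g}}^-$. By \eqref{equation:fktbdrleq_contr} this is equivalent to $\xmcal{N}^+(\xvec{z}^0,\xvec{z}^0) = \xmcal{N}^-(\xvec{z}^0,\xvec{z}^0)$. The $\xdop{D}(\xvec{z}^0)\xvec{n}$ and $\xvec{W}_{\mathcal{E}}\xvec{z}^0$ contributions to $\xmcal{N}^{\pm}$ are sign-independent, so
\[
    \xmcal{N}^+(\xvec{z}^0,\xvec{z}^0) - \xmcal{N}^-(\xvec{z}^0,\xvec{z}^0) = \left[\bigl((\xvec{M}^+-\xvec{M}^-)+(\xvec{L}^+-\xvec{L}^-)\bigr)\xvec{z}^0\right]_{\operatorname{tan}}.
\]
Inserting the formulas for $\xvec{M}^{\pm}$ and $\xvec{L}^{\pm}$ from \Cref{subsubsection:changeofvariables} yields after a short computation $(\xvec{M}^+-\xvec{M}^-)+(\xvec{L}^+-\xvec{L}^-) = 2\xvec{M}_2$, which vanishes by hypothesis. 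The initial and far-field conditions are already identical for $\xvec{v}^+$ and $\xvec{v}^-$, so taking $\xvec{u}$ to be the unique solution of the reduced scalar problem produces a solution $(\xvec{u},\xvec{u})$ to the coupled system.

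By the uniqueness in \Cref{lemma:wellpvpmctrl}, this forces $(\xvec{v}^+,\xvec{v}^-) = (\xvec{u},\xvec{u})$, and the lemma follows. There is no substantive analytic obstacle here; the only point requiring explicit verification is the algebraic identity $(\xvec{M}^+-\xvec{M}^-)+(\xvec{L}^+-\xvec{L}^-) = 2\xvec{M}_2$, which is a direct calculation from the definitions of the symmetrized friction matrices.
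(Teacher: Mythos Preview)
Your proposal is correct, but the paper takes a slightly different route. Rather than constructing a diagonal solution $(\xvec{u},\xvec{u})$ and invoking uniqueness of the full coupled system, the paper subtracts the two boundary layer equations to obtain a closed, homogeneous linear problem for $\xvec{W} \coloneq \xvec{v}^+ - \xvec{v}^-$: the diffusion coefficient becomes $\lambda^+ - \lambda^- = \nu_2$, the Neumann datum vanishes by exactly the identity $\xmcal{N}^+(\xvec{z}^0,\xvec{z}^0) - \xmcal{N}^-(\xvec{z}^0,\xvec{z}^0) = [2\xvec{M}_2\xvec{z}^0]_{\operatorname{tan}} = \xvec{0}$ that you verified, and the initial data are zero. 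A direct energy estimate then yields $\xvec{W} = \xvec{0}$. Your approach has the advantage of appealing cleanly to the uniqueness statement already packaged in \Cref{lemma:wellpvpmctrl}; the paper's approach has the advantage of not requiring a separate existence result for the auxiliary scalar problem for $\xvec{u}$ (which, while straightforward, is not stated explicitly). Both arguments rest on the same algebraic identity $(\xvec{M}^+-\xvec{M}^-)+(\xvec{L}^+-\xvec{L}^-) = 2\xvec{M}_2$.
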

	\begin{proof}
		$\xvec{M}_2 = \xvec{0}$ implies $\xmcal{N}^{+}(\xvec{z}^0,\xvec{z}^0)-\xmcal{N}^{-}(\xvec{z}^0,\xvec{z}^0) = [2\xvec{M}_2 \xvec{z}^0]_{\operatorname{tan}} = \xvec{0}$. Since $\xvec{\mu}^+ - \xvec{\mu}^- = \xsym{0}$, if $\xvec{v}^{\pm}$ are obtained from \eqref{equation:MHD_ElsaesserBlProfilevpm_contr} and \eqref{equation:MHD_ElsaesserBlProfilevpm_contribc}, then their difference~$\xvec{W}$ obeys
		\begin{equation}\label{equation:ds1}
			\partial_t \xvec{W} - (\lambda^+-\lambda^-)\partial_{zz}\xvec{W} + \left[ (\xvec{z}^{0} \cdot \xdop{\nabla}) \xvec{W} - (\xvec{W} \cdot \xdop{\nabla})\xvec{z}^{0} \right]_{\operatorname{tan}} + \mathfrak{f}z\partial_z\xvec{W} = \xvec{0},
		\end{equation}
		with vanishing boundary and initial conditions
		\begin{equation}\label{equation:ds2}
			\begin{cases}
				\partial_z \xvec{W}(\xvec{x},t,0) = \xvec{0},  & \xvec{x} \in \overline{\mathcal{E}}, t \in \mathbb{R}_+ ,\\
				\xvec{W}(\xvec{x},t,z) \longrightarrow \xvec{0}, \mbox{ as } z \longrightarrow +\infty, & \xvec{x} \in \overline{\mathcal{E}}, t \in \mathbb{R}_+ ,\\
				\xvec{W}(\xvec{x},0,z) = \xvec{0}, & \xvec{x} \in \overline{\mathcal{E}}, z \in \mathbb{R}_+.
			\end{cases}
		\end{equation}
		Due to $\xvec{W} \cdot \xvec{n} = 0$ in $\overline{\mathcal{E}}\times\mathbb{R}_+\times\mathbb{R}_+$ (see \eqref{equation:lmmblorth}), one can show by means of direct energy estimates that $\xvec{W}$ must be the unique solution to \eqref{equation:ds1} and \eqref{equation:ds2}. Thus $\xvec{W} = \xvec{0}$.
	\end{proof}
	
	\subsubsection{Boundary layer dissipation via vanishing moment conditions}\label{subsubsection:vmo}
	The boundary layer controls $\xsym{\mu}^{\pm}$ appearing in \eqref{equation:MHD_ElsaesserBlProfilevpm_contr} are now determined. For large times $t \geq T$, in \Cref{seubsection:AsympExp} we already fixed
	\[
	\xsym{\mu}^{\pm}(\xvec{x},t,z) = \xvec{0}, \quad (\xvec{x},t,z) \in \overline{\mathcal{E}} \times [T,+\infty)\times\mathbb{R}_+.
	\]
	For all $t \in [0,T)$, the controls $\xsym{\mu}^{\pm}(\cdot,t)$ will be chosen such that ~$\xvec{v}^{\pm}$ admit improved decay rates as $t\longrightarrow +\infty$. To this end, we implement the well-prepared dissipation method, described in \cite{CoronMarbachSueur2020} for the Navier--Stokes equations and previously in \cite{Marbach2014} for a viscous Burgers' equation. Here, two different constructions for $\xsym{\mu}^{\pm}$ will be given:~the first one, namely \Cref{lemma:blcxi}, follows closely the known results and is suitable for showing \Cref{theorem:main} with nonzero $\xsym{\zeta}$; the second one, namely \Cref{lemma:blcxi2}, allows concluding Theorems~\Rref{theorem:main1}, \Rref{theorem:annulus}, and the assertion for $\xvec{M}_2 = \xvec{0}$ of \Cref{theorem:main}. To begin with, we state a direct modification of \cite[Lemma 6]{CoronMarbachSueur2020}, which involves the space (\cf~\Cref{subsection:FunctionSpaces})
	\[
		\widetilde{\xHn{{k,s}}}(\mathbb{R}) = \left\{ f \in \xLtwo(\mathbb{R}) \, \left| \, \left(\sum\limits_{l=0}^s \int_{\mathbb{R}} (1+z^{2k}) |\partial^l_z f(z)|^2 \, \xdx{z}\right)^{\frac{1}{2}} < +\infty  \right. \right\}.
	\]
	\begin{lmm}\label{lemma:dissiplmqt}
		Let $s,r \in \mathbb{N}$ and suppose that $f^{\pm}_0 \in \widetilde{\xHn{{r+1,s}}}(\mathbb{R})$ satisfy for all integers $0 \leq k < r$ the vanishing moment conditions
		\begin{equation}\label{equation:lmmvanishmomentcond}
			\int_{\mathbb{R}} z^k f^{\pm}_0 \, \xdx{z} = 0.
		\end{equation}
		Furthermore, assume that $(z,t) \longmapsto f^{\pm}(z,t)$ solve the coupled parabolic system
		\begin{equation*}\label{equation:lmmvanishmomheat}
			\begin{cases}
				\partial_t f^{\pm} - \partial_{zz}(\lambda^{\pm} f^+ + \lambda^{\mp} f^-) = 0 & \mbox{ in } \mathbb{R}\times\mathbb{R}_+,\\
				f^{\pm}(\cdot,0) = f^{\pm}_0 & \mbox{ in } \mathbb{R}.
			\end{cases}
		\end{equation*}
		Then, for all $k \in \{0,\dots,r\}$ one has the decay estimate
		\begin{equation}\label{equation:lmmdcest}
			\|f^{\pm}(\cdot,t)\|_{\widetilde{\xHn{{k,s}}}(\mathbb{R})} \leq C\max_{\triangle, \square \in\{+,-\}} \|f^{\triangle}_0\|_{\widetilde{\xHn{{r+1,s}}}(\mathbb{R})}^2 \left| \frac{\ln(2+(\lambda^+ \square \, \lambda^-)t)}{2+(\lambda^+ \square \, \lambda^-)t} \right|^{\frac{1}{4}+\frac{r}{2}-\frac{k}{2}},
		\end{equation}
		where $C = C(s,r)$ is a constant independent of $t \geq 0$ and the initial data $f^{\pm}_0$.
	\end{lmm}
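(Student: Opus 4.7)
The plan is to reduce the coupled system to two decoupled scalar heat equations via a linear change of unknowns, then invoke the analogue of this lemma for a single heat equation (namely \cite[Lemma 6]{CoronMarbachSueur2020}), and finally recombine.

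First, I would introduce $g^{\pm} \coloneq f^+ \pm f^-$. A direct calculation gives
\begin{equation*}
\partial_t g^{\pm} - (\lambda^+ \pm \lambda^-)\partial_{zz} g^{\pm} = 0 \quad \text{in } \mathbb{R}\times\mathbb{R}_+, \qquad g^{\pm}(\cdot,0) = f^+_0 \pm f^-_0,
\end{equation*}
so that the system diagonalizes into two independent heat equations with strictly positive diffusion coefficients $\lambda^+ + \lambda^- = \nu_1$ and $\lambda^+ - \lambda^- = \nu_2$. Moreover, the vanishing moment assumption \eqref{equation:lmmvanishmomentcond} on $f^{\pm}_0$ is linear, hence
\begin{equation*}
\int_{\mathbb{R}} z^k g^{\pm}_0(z) \, \xdx{z} = \int_{\mathbb{R}} z^k f^+_0 \, \xdx{z} \pm \int_{\mathbb{R}} z^k f^-_0 \, \xdx{z} = 0 \qquad \text{for } 0 \leq k < r,
\end{equation*}
so $g^{\pm}_0$ satisfy the same vanishing moment conditions. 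Likewise, $\|g^{\pm}_0\|_{\widetilde{\xHn{{r+1,s}}}(\mathbb{R})} \leq \|f^+_0\|_{\widetilde{\xHn{{r+1,s}}}(\mathbb{R})} + \|f^-_0\|_{\widetilde{\xHn{{r+1,s}}}(\mathbb{R})}$.

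Next, I would apply \cite[Lemma 6]{CoronMarbachSueur2020} separately to $g^+$ and $g^-$. That lemma, valid for a scalar heat equation $\partial_t g - \kappa\partial_{zz}g = 0$ on $\mathbb{R}$ with data satisfying $r$ vanishing moments, yields for each $0 \leq k \leq r$ a decay of the form
\begin{equation*}
\|g^{\pm}(\cdot,t)\|_{\widetilde{\xHn{{k,s}}}(\mathbb{R})} \leq C \|g^{\pm}_0\|_{\widetilde{\xHn{{r+1,s}}}(\mathbb{R})} \left|\frac{\ln(2+(\lambda^+ \pm \lambda^-)t)}{2+(\lambda^+\pm\lambda^-)t}\right|^{\frac{1}{4}+\frac{r}{2}-\frac{k}{2}}.
\end{equation*}
Finally, using the inversion $f^{\pm} = (g^+ \pm g^-)/2$ and the triangle inequality in $\widetilde{\xHn{{k,s}}}(\mathbb{R})$, both decays combine to give \eqref{equation:lmmdcest}, with the maximum accounting for the two distinct rates corresponding to $\square \in \{+,-\}$.

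This is essentially a bookkeeping argument rather than a genuinely hard one; the only subtlety worth flagging is that the rates attached to $g^+$ and $g^-$ differ (governed by $\nu_1$ and $\nu_2$ respectively), so one cannot replace the maximum in \eqref{equation:lmmdcest} by a single power of $t$ unless $\nu_1 = \nu_2$. Checking that the constant $C$ depends only on $s,r$ reduces to invoking the scalar statement, whose constant has this property, since the change of variables $g^{\pm} = f^+ \pm f^-$ is universal.
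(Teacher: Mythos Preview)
Your proposal is correct and follows essentially the same approach as the paper: decouple via $g^{\pm}=f^+\pm f^-$, apply \cite[Lemma 6]{CoronMarbachSueur2020} to each scalar heat equation, and recombine by the triangle inequality. The only cosmetic difference is that the paper explicitly rescales time to reduce to unit diffusivity before invoking the cited lemma, whereas you apply it directly with coefficient $\lambda^+\pm\lambda^-$; this is immaterial.
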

	\begin{proof}
		We define the functions $F^{\pm} \coloneqq f^{+} \pm f^{-}$ and introduce for $\lambda_1, \lambda_2$ from \eqref{equation:chouk} the scaled versions $F^{\pm}_{\lambda_1,\lambda_2}(z,t) \coloneqq F^{\pm}(z,(\lambda^+\pm\lambda^-)^{-1}t)$, which obey the uncoupled heat equations		
		\begin{equation}\label{equation:lmmvanishmomheatcoupscal}
			\begin{cases}
				\partial_t F^{\pm}_{\lambda_1,\lambda_2} - \partial_{zz}F^{\pm}_{\lambda_1,\lambda_2} = 0 & \mbox{ in } \mathbb{R}\times\mathbb{R}_+,\\
				F^{\pm}_{\lambda_1,\lambda_2}(\cdot,0) = f^{+}_0 \pm f^{-}_0 & \mbox{ in } \mathbb{R}.
			\end{cases}
		\end{equation}
		By applying \cite[Lemma 6]{CoronMarbachSueur2020} to \eqref{equation:lmmvanishmomheatcoupscal}, the bound \eqref{equation:lmmdcest} follows first for $F^{\pm}$ and then by the triangle inequality also for $f^{\pm}$.
	\end{proof}
	
	The following result, which is a consequence of \Cref{lemma:dissiplmqt} and \cite[Lemma 7]{CoronMarbachSueur2020}, provides the controls $\xsym{\mu}^{\pm}$ on the time interval $[0,T]$. It will be applied in the general situation of \Cref{theorem:main}. 
	\begin{lmm}\label{lemma:blcxi}
		For any $r \in \mathbb{N}$, there exist $\xsym{\xvec{\mu}}^{\pm} \in \xCinfty(\overline{\mathcal{E}}\times[0,T]\times\mathbb{R}_+;\mathbb{R}^N)$ satisfying 
		\begin{gather*}
			\forall (\xvec{x},t,z) \in \mathcal{E}\times\mathbb{R}_+\times\mathbb{R}_+ \colon \xsym{\xvec{\mu}}^{\pm}(\xvec{x},t,z) \cdot \xvec{n}(\xvec{x}) = 0, \\
			\forall z \in \mathbb{R}_+ \colon 
			\operatorname{supp}(\xsym{\xvec{\mu}}^{\pm}(\cdot,\cdot,z)) \subset \left(\overline{\mathcal{E}}\setminus\overline{\Omega}\right) \times (0,T)
		\end{gather*}
		such that $\xvec{v}^{\pm}$ obey the decay rate
		\begin{equation}\label{equation:lmmdecestbl}
			\|\xvec{v}^{\pm}(\cdot,t,\cdot)\|_{\xHn{{k,p,s}}_{\mathcal{E}}}^2 \leq C \max_{\square \in \{+,-\}} \left| \frac{\ln(2+(\lambda^+ \square \, \lambda^-)t)}{2+(\lambda^+ \square \, \lambda^-)t} \right|^{\frac{1}{4}+\frac{r}{2}-\frac{k}{2}},
		\end{equation}
		for all $0 \leq k \leq r$ and $s,p,k \in \mathbb{N}_0$, with a constant $C = C(s,r,p,k) > 0$ not depending on the time $t$. 
	\end{lmm}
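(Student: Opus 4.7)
The plan is to adapt the well-prepared dissipation strategy of Lemma 7 in \cite{CoronMarbachSueur2020} to the coupled boundary layer system, using \Cref{lemma:dissiplmqt} as the effective dissipation engine. For $t \geq T$ we have already fixed $\xsym{\mu}^{\pm}=\xvec{0}$ and $\xvec{z}^{0}=\xvec{0}$, so $\mathfrak{f}\equiv 0$ and $\xsym{\mathfrak{g}}^{\pm}\equiv 0$; consequently, for each frozen slow variable $\xvec{x}\in\overline{\mathcal{E}}$ the profiles $\xvec{v}^{\pm}(\xvec{x},\cdot,\cdot)$ satisfy on $(T,+\infty)\times\mathbb{R}_+$ the coupled heat system
\begin{equation*}
\partial_t\xvec{v}^{\pm} - \partial_{zz}\bigl(\lambda^{\pm}\xvec{v}^{+}+\lambda^{\mp}\xvec{v}^{-}\bigr)=\xvec{0},\qquad \partial_z\xvec{v}^{\pm}(\xvec{x},t,0)=\xvec{0}.
\end{equation*}
Reflecting $\xvec{v}^{\pm}$ evenly across $z=0$ produces solutions on all of $\mathbb{R}$ in $z$ to the same coupled heat system, so \Cref{lemma:dissiplmqt}, applied componentwise with parameter $\xvec{x}$, delivers exactly the bound \eqref{equation:lmmdecestbl} provided that at time $t=T$ the profiles satisfy the vanishing moment conditions
\begin{equation}\label{equation:plan_moments}
\int_{0}^{+\infty} z^{k}\,\xvec{v}^{\pm}(\xvec{x},T,z)\,\xdx{z}=\xvec{0},\qquad 0\leq k<r,\quad \xvec{x}\in\overline{\mathcal{E}}.
\end{equation}

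The task therefore reduces to choosing $\xsym{\mu}^{\pm}$ on $[0,T]$ so that \eqref{equation:plan_moments} holds while respecting the tangentiality requirement and the support condition $\operatorname{supp}(\xsym{\mu}^{\pm}(\cdot,\cdot,z))\subseteq(\overline{\mathcal{E}}\setminus\overline{\Omega})\times(0,T)$. The trick is to diagonalize the principal part: writing $\xvec{V}^{\pm}\coloneq\xvec{v}^{+}\pm\xvec{v}^{-}$ and $\xsym{\nu}^{\pm}\coloneq\xsym{\mu}^{+}\pm\xsym{\mu}^{-}$, the leading dissipative operator for $\xvec{V}^{\pm}$ becomes the scalar heat operator $\partial_{t}-(\lambda^{+}\pm\lambda^{-})\partial_{zz}$, with the remaining tangential transport terms involving $\xvec{z}^{0}$. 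Since the latter are bounded linear perturbations of the Neumann heat semigroup on a fixed time interval, the moment map
\begin{equation*}
\xsym{\nu}^{\pm}\longmapsto \Bigl(\xvec{x}\longmapsto \int_{0}^{+\infty} z^{k}\,\xvec{V}^{\pm}(\xvec{x},T,z)\,\xdx{z}\Bigr)_{0\leq k<r}
\end{equation*}
is affine, with the inhomogeneous term coming from the fixed source $\xsym{\mathfrak{g}}^{\pm}$. One then fixes, for each $0\leq k<r$, a finite family of smooth auxiliary profiles $\bigl(\xsym{\nu}^{\pm}_{l,k}(\xvec{x},t,z)\bigr)$ tangential at $\partial\mathcal{E}$ and compactly supported in $(\overline{\mathcal{E}}\setminus\overline{\Omega})\times(0,T)\times\mathbb{R}_+$, and seeks $\xsym{\nu}^{\pm}$ as an $\xvec{x}$-dependent linear combination whose coefficients are determined by solving a finite Gram-type system. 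The auxiliary profiles are constructed explicitly (as in \cite[Section A]{CoronMarbachSueur2020}) so that the resulting matrix is uniformly invertible in $\xvec{x}$, yielding smooth coefficients and hence smooth $\xsym{\mu}^{\pm}=(\xsym{\nu}^{+}\pm\xsym{\nu}^{-})/2$ with the required support and tangentiality.

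The main obstacle compared with \cite[Lemma 7]{CoronMarbachSueur2020} is precisely this coupling between $\xvec{v}^{+}$ and $\xvec{v}^{-}$ through the cross diffusion $\lambda^{\mp}\partial_{zz}$ and the transport terms in \eqref{equation:MHD_ElsaesserBlProfilevpm_contr}; the diagonalization into $\xvec{V}^{\pm}$ makes the dissipation scalar, but the inhomogeneity $\xsym{\mathfrak{g}}^{\pm}$ couples the two equations through the operator $\xmcal{N}^{\pm}$, which forces us to prescribe the moments for both sign sectors simultaneously. A secondary issue is preserving the orthogonality $\xsym{\mu}^{\pm}\cdot\xvec{n}=0$: this is handled by choosing each auxiliary profile of the form $[\xsym{\phi}_{l,k}]_{\operatorname{tan}}$ near $\partial\mathcal{E}$, which by \eqref{equation:lmmblorth} is consistent with keeping $\xvec{v}^{\pm}\cdot\xvec{n}\equiv 0$. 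Once \eqref{equation:plan_moments} is achieved, the smoothness asserted for $\xsym{\mu}^{\pm}$ and the decay rate \eqref{equation:lmmdecestbl} for arbitrary indices $k,p,s$ follow from \Cref{lemma:wellpvpmctrl}, the regularity results of \Cref{appendix:higherorderestimates}, and the bound of \Cref{lemma:dissiplmqt} applied to tangential derivatives $\partial_{\xvec{x}}^{\beta}\xvec{V}^{\pm}$, each of which inherits the same vanishing moment structure at $t=T$.
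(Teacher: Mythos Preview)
Your high-level strategy matches the paper exactly: diagonalize by passing to $\xvec{v}^{+}\pm\xvec{v}^{-}$ so that the dissipation decouples, arrange vanishing moments at $t=T$, and then invoke \Cref{lemma:dissiplmqt} on the even extension for $t\geq T$. The paper's proof is essentially ``apply \cite[Lemma~7]{CoronMarbachSueur2020} separately to the equations for $\xvec{V}^{+}+\xvec{V}^{-}$ and $\xvec{V}^{+}-\xvec{V}^{-}$'', where $\xvec{V}^{\pm}(\xvec{x},t,z)=\xvec{v}^{\pm}(\xvec{x},t,|z|)+\xsym{\mathfrak{g}}^{\pm}(\xvec{x},t)\operatorname{e}^{-|z|}$ is the even extension \emph{with the boundary data lifted}; the lifting is needed so that the extended profile is smooth across $z=0$ on the whole interval $[0,T]$, not just for $t\geq T$.

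Where your sketch diverges from the actual argument is in the mechanism for achieving the moment conditions with controls supported in $(\overline{\mathcal{E}}\setminus\overline{\Omega})\times(0,T)$. Your ``Gram-type system'' with ``$\xvec{x}$-dependent coefficients'' and a ``uniformly invertible matrix in $\xvec{x}$'' treats the slow variable as a parameter, but the equation for $\xvec{v}^{\pm}$ contains the transport term $(\xvec{z}^{0}\cdot\xnab)\xvec{v}^{\pm}$, so the value of $\xvec{v}^{\pm}(\xvec{x},T,\cdot)$ at a point $\xvec{x}\in\overline{\Omega}$ is determined by data along the flow line of $\xvec{z}^{0}$, not just by the control at $\xvec{x}$ (which is zero there). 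The source $\xsym{\mathfrak{g}}^{\pm}$ is supported near $\partial\mathcal{E}$ and hence nontrivially inside $\overline{\Omega}$, so without exploiting the transport you cannot make the moments vanish on $\overline{\Omega}$ using a pointwise linear system. Your reference to \cite[Section~A]{CoronMarbachSueur2020} is also off: that appendix constructs localized vector fields with prescribed curl, which is used for the flushing lemma, not for the well-prepared dissipation.

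The actual argument, spelled out in detail in the proof of \Cref{lemma:blcxi2} for the $(-)$ sector and in \cite[Lemma~7]{CoronMarbachSueur2020} for the $(+)$ sector, is: take the partial Fourier transform in $z$ and evaluate $\partial_{\zeta}^{k}$ at $\zeta=0$; the resulting functions $\xvec{Q}^{k}(\xvec{x},t)$ obey a cascade of \emph{transport equations} along $\xvec{z}^{0}$ with sources built from $\xsym{\mathfrak{G}}$ and lower-order $\xvec{Q}^{k-2}$. Because $\xvec{z}^{0}$ has the flushing property \eqref{equation:flushingproperty}, these transport problems are null-controllable by forces supported in $\overline{\mathcal{E}}\setminus\overline{\Omega}$ (exactly as in \Cref{lemma:flushing}), and the control $\xsym{\mu}^{+}\pm\xsym{\mu}^{-}$ is obtained as a finite sum $\sum_{j}\xsym{\eta}^{j}(\xvec{x},t)\phi_{j}(z)$ with prescribed $z$-moments. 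This transport-plus-flushing structure is the missing ingredient in your sketch.
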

	\begin{proof}
		Consider the even extensions of $\xvec{v}^{\pm}$ to $\overline{\mathcal{E}}\times\mathbb{R}_+\times\mathbb{R}$ plus lifted boundary data, defined via
		\begin{equation}\label{equation:lf}
			\xvec{V}^{\pm}(\xvec{x},t,z) \coloneqq \xvec{v}^{\pm}(\xvec{x},t,|z|) + \xsym{\mathfrak{g}}^{\pm}(\xvec{x},t) \operatorname{e}^{-|z|}, \quad (\xvec{x},t,z) \in \overline{\mathcal{E}}\times\mathbb{R}_+\times\mathbb{R}.
		\end{equation}
		For $t \geq T$, one has by construction
		\[
		\xvec{z}^{0}(\cdot,t) = \xsym{0}, \quad \xsym{\mathfrak{g}}^{\pm}(\cdot,t) = \xsym{0}, \quad \xsym{\mu}^{\pm}(\cdot,t,\cdot) = \xsym{0}.
		\]
		Thus, $\xvec{V}^{\pm}$ are governed by the parabolic system
		\begin{equation}\label{equation:MHD_ElsaesserBlProfilevpm_contr_largeTheat}
			\partial_t \xvec{V}^{\pm} - 	\partial_{zz}(\lambda^{\pm}\xvec{V}^{+}+\lambda^{\mp}\xvec{V}^{-}) = \xvec{0} \quad \mbox{in } \overline{\mathcal{E}} \times [T, +\infty) \times \mathbb{R},
		\end{equation} 
		where $\xvec{x} \in \overline{\mathcal{E}}$ is a parameter. Therefore, in view of \Cref{lemma:dissiplmqt} and \eqref{equation:lf}, the decay estimate~\eqref{equation:lmmdecestbl} follows if enough vanishing moment conditions of the type \eqref{equation:lmmvanishmomentcond} are satisfied. To see this, the proof of \cite[Lemma 7]{CoronMarbachSueur2020} can be applied individually to the equations satisfied by $\xvec{V}^+ + \xvec{V}^-$ and $\xvec{V}^+ - \xvec{V}^-$. This provides controls $\xsym{\mu}^+ \pm \xsym{\mu}^-$ such that for each $k \in \{1,\dots,r-1\}$ the Fourier transformed functions
		\[
		\widehat{\xvec{V}}^{\pm}(\cdot,\cdot,\zeta) \coloneqq \int_{\mathbb{R}}  \operatorname{e}^{-i\zeta z} \xvec{V}^{\pm}(\cdot,\cdot,z) \, \xdx{z}
		\]
		obey the relations
		\begin{equation}\label{equation:auxprfcnstrmu}
			\partial_{\zeta}^k \widehat{\xvec{V}}^{+}(\cdot,T,\zeta)_{|_{\zeta=0}} \pm \partial_{\zeta}^k \widehat{\xvec{V}}^{-}(\cdot,T,\zeta)_{|_{\zeta=0}} = \xvec{0}.
		\end{equation}
		Since \eqref{equation:auxprfcnstrmu} implies sufficient vanishing moment conditions and \Cref{lemma:wellpvpmctrl} provides uniform bounds for $\xvec{v}^{\pm}$ on the time interval $[0,T]$, the proof is complete.
	\end{proof}

	The next lemma provides magnetic field boundary layer controls that are not only tangential at $\partial \mathcal{E}$, but also divergence-free in $\mathcal{E}\times(0,T)\times\mathbb{R}_+$; this is required for showing \Cref{theorem:annulus}. Also regarding \Cref{theorem:main1}, and \Cref{theorem:main} with $\xvec{M}_2 = \xvec{0}$, the proof below will explain how~$\xsym{\mu}^{\pm}$ can be selected with $\xsym{\mu}^+ - \xsym{\mu}^- = \xsym{0}$. The approach is based on ideas from \cite[Lemma 7]{CoronMarbachSueur2020}. 
	\begin{lmm}\label{lemma:blcxi2}
		Given any $r \in \mathbb{N}$, the controls $\xsym{\mu}^{\pm}$ with the properties from \Cref{lemma:blcxi} can under additional assumptions be chosen as follows.
		\begin{enumerate}[1)]
			\item When $\xmcal{\mathcal{N}}^+(\xvec{z}^0,\xvec{z}^0)(\xvec{x},t) = \xmcal{\mathcal{N}}^-(\xvec{z}^0,\xvec{z}^0)(\xvec{x},t)$ is valid for all $(\xvec{x},t) \in \operatorname{supp}(\chi_{\partial\mathcal{E}}) \times (0,T)$, then $\xsym{\mu}^{\pm}$ can be fixed with
			\[
			\xsym{\mu}^{+}-\xsym{\mu}^{-} = \xvec{0}.
			\]
			\item When the profile $\xvec{z}^0$ selected in \Cref{seubsection:AsympExp} satisfies
			\begin{equation}\label{equation:condmuwkez}
				\begin{aligned}
					&\xdiv{(\xsym{\mathcal{N}}^+(\xvec{z}^0, \xvec{z}^0) - \xsym{\mathcal{N}}^-(\xvec{z}^0,\xvec{z}^0))} = 0 && \mbox{ in } (\mathcal{V}\cap\overline{\mathcal{E}}) \times (0,T), \\
					& \xvec{z}^0 \cdot \xvec{n} = 0 && \mbox{ in } (\mathcal{V}\cap\overline{\mathcal{E}}) \times (0,T),\\
					&\xdiv{\xvec{z}^0} = 0 && \mbox{ in } \mathcal{E}_T,
				\end{aligned}
			\end{equation}
			then after fixing the number $d^* \in (0,d)$ from the definition of $\chi_{\partial\mathcal{E}}$ sufficiently small, one can construct $\xsym{\mu}^{\pm}$ with
			\begin{equation}\label{equation:divcond}
				\xdiv{(\xsym{\mu}^+-\xsym{\mu}^-)} = 0, \quad (\xvec{x},t,z) \in \mathcal{E}\times\mathbb{R}_+\times\mathbb{R}_+.
			\end{equation}
			\item If in addition to the conditions in \eqref{equation:condmuwkez} one has $\xvec{z}^0 \cdot \xvec{n} = 0$ in $\mathcal{E}_T$, then $\xsym{\mu}^{\pm}$ can be chosen with \eqref{equation:divcond} for any choice $d^* \in (0,d)$.
		\end{enumerate}
	\end{lmm}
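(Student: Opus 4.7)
The plan is to analyze the difference $\xvec{W} \coloneq \xvec{v}^{+} - \xvec{v}^{-}$, which, upon subtracting the two equations in \eqref{equation:MHD_ElsaesserBlProfilevpm_contr} and \eqref{equation:MHD_ElsaesserBlProfilevpm_contribc}, satisfies in $\overline{\mathcal{E}}\times\mathbb{R}_{+}\times\mathbb{R}_{+}$ the linear problem
\begin{equation*}
\partial_{t}\xvec{W} - (\lambda^{+}-\lambda^{-})\partial_{zz}\xvec{W} + \bigl[(\xvec{z}^{0}\cdot\xnab)\xvec{W} - (\xvec{W}\cdot\xnab)\xvec{z}^{0}\bigr]_{\operatorname{tan}} + \mathfrak{f}\,z\partial_{z}\xvec{W} = \xsym{\mu}^{+} - \xsym{\mu}^{-},
\end{equation*}
with $\partial_{z}\xvec{W}(\xvec{x},t,0) = 2\chi_{\partial\mathcal{E}}(\xvec{x})\bigl[2\xvec{M}_{2}(\xvec{x})\xvec{z}^{0}(\xvec{x},t)\bigr]_{\operatorname{tan}}$, vanishing initial data, and decay as $z\to+\infty$. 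By \Cref{lemma:wellpvpmctrl}, automatically $\xvec{W}\cdot\xvec{n} = 0$. All three structural properties claimed will follow from adjusting $\xsym{\mu}^{+}-\xsym{\mu}^{-}$ and invoking uniqueness.

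Case 1) is immediate: the hypothesis $\xmcal{N}^{+}(\xvec{z}^{0},\xvec{z}^{0}) = \xmcal{N}^{-}(\xvec{z}^{0},\xvec{z}^{0})$ on $\operatorname{supp}(\chi_{\partial\mathcal{E}})\times(0,T)$ forces the boundary datum $\xsym{\mathfrak{g}}^{+}-\xsym{\mathfrak{g}}^{-}$ to vanish identically, so the choice $\xsym{\mu}^{+} = \xsym{\mu}^{-}$ turns the $\xvec{W}$-problem into a fully homogeneous one with unique solution $\xvec{W}\equiv\xvec{0}$, i.e.\ $\xvec{v}^{+} = \xvec{v}^{-}$. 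It then suffices to construct a single common control $\xsym{\mu}\coloneq\xsym{\mu}^{\pm}$ enforcing the decay rate for $\xvec{v}^{+}$, which is precisely the single-profile form of the argument underlying \Cref{lemma:blcxi}.

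For case 2) the aim is to enforce $\xdiv{\xvec{W}} = 0$ by requiring $\xsym{\mu}^{+}-\xsym{\mu}^{-}$ to be tangential and divergence-free in $\xvec{x}$. Taking $\xdiv$ of the $\xvec{W}$-equation and using $\xdiv{\xvec{z}^{0}} = 0$ from \eqref{equation:condmuwkez} together with the identity
\[
\xdiv\bigl((\xvec{z}^{0}\cdot\xnab)\xvec{W} - (\xvec{W}\cdot\xnab)\xvec{z}^{0}\bigr) = \xvec{z}^{0}\cdot\xnab(\xdiv\xvec{W}),
\]
the full tangential bracket contributes $\xvec{z}^{0}\cdot\xnab(\xdiv\xvec{W})$ plus a normal correction built from the two commutators $\xvec{W}\cdot(\xvec{z}^{0}\cdot\xnab)\xvec{n}$ and $\xvec{z}^{0}\cdot(\xvec{W}\cdot\xnab)\xvec{n}$; since $\xvec{n} = -\xnab\varphi_{\mathcal{E}}$ in $\mathcal{V}$ the matrix $\xnab\xvec{n}$ is symmetric there, so these two commutators exactly cancel. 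Shrinking $d^{*}\in(0,d)$ so that $\operatorname{supp}(\chi_{\partial\mathcal{E}})\subseteq\mathcal{V}\cap\overline{\mathcal{E}}$, and invoking \Cref{lemma:chisupp} to confine $\xvec{W}$ to $\operatorname{supp}(\chi_{\partial\mathcal{E}})$, the condition $\xvec{z}^{0}\cdot\xvec{n} = 0$ from \eqref{equation:condmuwkez} makes $\mathfrak{f}$ vanish on the support of $\xvec{W}$, eliminating also the term $\xdiv(\mathfrak{f}z\partial_{z}\xvec{W})$. Thus $\xdiv\xvec{W}$ solves a linear homogeneous transport--diffusion problem in $z$; its Neumann datum at $z=0$ is $\xdiv(\xsym{\mathfrak{g}}^{+}-\xsym{\mathfrak{g}}^{-})$, which vanishes because $\xnab\chi_{\partial\mathcal{E}}\parallel\xvec{n}$ by \eqref{equation:propcutoff} while $\xmcal{N}^{+}(\xvec{z}^{0},\xvec{z}^{0})-\xmcal{N}^{-}(\xvec{z}^{0},\xvec{z}^{0}) = 2[\xvec{M}_{2}\xvec{z}^{0}]_{\operatorname{tan}}$ is tangential and, by \eqref{equation:condmuwkez}, divergence-free on $\mathcal{V}\cap\overline{\mathcal{E}}$. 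With trivial initial data and decay at infinity, uniqueness forces $\xdiv\xvec{W}\equiv 0$.

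The control $\xsym{\mu}^{+}-\xsym{\mu}^{-}$ itself will then be built as in the proof of \Cref{lemma:blcxi} (following \cite[Lemma~7]{CoronMarbachSueur2020}), but with the $\xvec{x}$-dependent factors drawn from a family of smooth, tangential, divergence-free vector fields compactly supported in $\overline{\mathcal{E}}\setminus\overline{\Omega}$; such a family is produced by localizing scalar-potential curls (when $N=2$) or vector-potential curls (when $N=3$), and supplies more than enough free parameters to annihilate the finitely many Fourier moments at $\zeta=0$ required by \Cref{lemma:dissiplmqt} for the even extension of $\xvec{v}^{+}-\xvec{v}^{-}$. Case 3) then follows verbatim, the hypothesis $\xvec{z}^{0}\cdot\xvec{n} = 0$ throughout $\mathcal{E}_{T}$ removing any need to shrink $d^{*}$, since the cancellation of the two normal commutators and the vanishing of $\mathfrak{f}$ on the support of $\xvec{W}$ are now global. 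The main obstacle is the divergence computation in case 2), which hinges on the symmetry $\partial_{j}n_{k} = \partial_{k}n_{j}$ in $\mathcal{V}$ together with the geometry of the cutoff $\chi_{\partial\mathcal{E}}$; by contrast, the Coron--Marbach--Sueur-type construction of the control is a linear-algebraic matching once the divergence-free structure is seen to be preserved.
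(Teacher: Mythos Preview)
Your treatment of case 1) is correct and matches the paper. For cases 2) and 3), however, there is a genuine gap in the control construction, together with a circularity in the support argument.

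The Fourier moments $\xvec{Q}^{k}(\cdot,T)=\partial_{\zeta}^{k}\widehat{\xvec{W}}(\cdot,T,0)$ that must be annihilated are vector-valued \emph{functions} on $\mathcal{E}$, not finitely many scalars; no fixed family of divergence-free fields supported in $\overline{\mathcal{E}}\setminus\overline{\Omega}$ furnishes ``enough free parameters'' to kill them, and calling the step a ``linear-algebraic matching'' mischaracterizes it. This is a genuine controllability problem for transport equations. The paper treats it explicitly: after removing the tangential bracket (via the same symmetry of $\xnab\xvec{n}$ you observe) and the $\mathfrak{f}$-term (under the \emph{temporary} global assumption $\xvec{z}^{0}\cdot\xvec{n}=0$ on all of $\overline{\mathcal{E}}$), the partial Fourier transform in $z$ yields a cascade of problems
\[
\partial_{t}\xvec{Q}^{2l} + (\xvec{z}^{0}\cdot\xnab)\xvec{Q}^{2l} - (\xvec{Q}^{2l}\cdot\xnab)\xvec{z}^{0} = \xsym{\eta}^{l} + (\text{known divergence-free, tangential source}),
\]
which are of \emph{induction type}, i.e.\ structurally identical to \eqref{equation:MHD_ElsaesserExt_OvarepsBtOwp}. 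Each is steered to zero by the flushing argument from Step~3 of the proof of \Cref{lemma:flushing}; that mechanism (partition of unity plus explicit cutoff along $\xmcal{Z}^{0}$) automatically produces divergence-free, tangential controls $\xsym{\eta}^{l}$ because the equation conserves those properties. In the construction of \cite[Lemma~7]{CoronMarbachSueur2020} the $\xvec{x}$-factors are \emph{determined} by this flushing procedure, not freely selected from some prescribed divergence-free family.

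Separately, you invoke \Cref{lemma:chisupp} to confine $\xvec{W}$ to $\mathcal{V}$ (even to $\operatorname{supp}(\chi_{\partial\mathcal{E}})$, which that lemma does not assert) \emph{before} building the control, but \Cref{lemma:chisupp} is itself a corollary of the support argument carried out in Step~3 of the paper's proof, and that argument requires knowing where the already-constructed control acts. The paper avoids the circularity by first building $\xsym{\eta}=\xsym{\mu}^{+}-\xsym{\mu}^{-}$ under the artificial assumption $\xvec{z}^{0}\cdot\xvec{n}=0$ globally, and then verifying \emph{a~posteriori} that for $d^{*}$ small the controlled profile $\xvec{v}^{+}-\xvec{v}^{-}$ stays inside $\mathcal{V}$, which retroactively justifies dropping $\mathfrak{f}$ and the tangential projection. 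Your divergence computation for $\xvec{W}$ is correct on its domain of validity, but it presupposes this confinement and in any case does not replace the explicit control construction.
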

	\begin{proof} 
		If $\xmcal{\mathcal{N}}^+(\xvec{z}^0,\xvec{z}^0) = \xmcal{\mathcal{N}}^-(\xvec{z}^0,\xvec{z}^0)$ in $\operatorname{supp}(\chi_{\partial\mathcal{E}}) \times (0,T)$, one may take $\xsym{\mu}^{+}-\xsym{\mu}^{-} = \xvec{0}$. Indeed, the magnetic field boundary layer $\xvec{v}^{+}-\xvec{v}^{-}$ solves in that case a well-posed linear problem with zero data, which yields $\xvec{v}^{+}-\xvec{v}^{-} = \xvec{0}$ in $\overline{\mathcal{E}}\times\mathbb{R}_+\times\mathbb{R}_+$ as shown by \Cref{lemma:M2}. In any case, we apply the arguments from the proof of \Cref{lemma:blcxi} for determining $\xsym{\mu}^+ + \xsym{\mu}^-$ such that
		\begin{equation*}
			\partial_{\zeta}^k \widehat{\xvec{V}}^{+}(\cdot,T,\zeta)_{|_{\zeta=0}} + \partial_{\zeta}^k \widehat{\xvec{V}}^{-}(\cdot,T,\zeta)_{|_{\zeta=0}} = \xvec{0}, \quad k \in \{1,\dots,r-1\}.
		\end{equation*}
		Thus, to show 2) and 3), it remains to identify a suitable control $\xsym{\mu}^+ - \xsym{\mu}^-$ (having the desired properties), which acts in the equation satisfied by $\xvec{v}^+ - \xvec{v}^-$ in a way that
		\begin{equation*}
			\partial_{\zeta}^k \widehat{\xvec{V}}^{+}(\cdot,T,\zeta)_{|_{\zeta=0}} - \partial_{\zeta}^k \widehat{\xvec{V}}^{-}(\cdot,T,\zeta)_{|_{\zeta=0}} = \xvec{0}, \quad k \in \{1,\dots,r-1\}.
		\end{equation*}
		\paragraph{Step 1. Preliminaries.}
		In view of \eqref{equation:lf}, the vector field $\xvec{W} \coloneqq \xvec{V}^+ - \xvec{V}^-$ can with $\xsym{\mathfrak{g}} \coloneqq \xsym{\mathfrak{g}}^{+} - \xsym{\mathfrak{g}}^{-}$ be written as
		\begin{equation}\label{equation:repW}
			\xvec{W}(\xvec{x},t,z) = \xvec{v}^{+}(\xvec{x},t,|z|)-\xvec{v}^{-}(\xvec{x},t,|z|) +  \xsym{\mathfrak{g}}(\xvec{x},t)\operatorname{e}^{-|z|}.
		\end{equation}
		Under the assumption $\xsym{\mu}^{\pm} \cdot \xvec{n} = 0$, noting that $\xsym{\mathfrak{g}}^{\pm} \cdot \xvec{n} = 0$ is true by construction, it follows from \eqref{equation:lmmblorth} that $\xvec{W} \cdot \xvec{n} = 0$ holds as well. Also, \cref{equation:propcutoff,equation:condmuwkez} imply $\xdiv{\xsym{\mathfrak{g}}} = 0$ by means of
		\begin{equation}\label{equation:divgpmgmv}
			\begin{aligned}
				\xdiv{(\xsym{\mathfrak{g}}^{+}-\xsym{\mathfrak{g}}^{-})} & =  	2\chi_{\partial\mathcal{E}} \xdiv{(\xsym{\mathcal{N}}^+(\xvec{z}^0, \xvec{z}^0) - \xsym{\mathcal{N}}^-(\xvec{z}^0,\xvec{z}^0))}\\
				& \quad - 2 \left[\left(\xdrv{}{s}\psi_{d^{*}}\right)\circ\varphi_{\mathcal{E}}\right] 	\xvec{n} \cdot (\xsym{\mathcal{N}}^+(\xvec{z}^0, \xvec{z}^0) - \xsym{\mathcal{N}}^-(\xvec{z}^0,\xvec{z}^0)) \\
				& = 2\chi_{\partial\mathcal{E}} \xdiv{(\xsym{\mathcal{N}}^+(\xvec{z}^0, \xvec{z}^0) - \xsym{\mathcal{N}}^-(\xvec{z}^0,\xvec{z}^0))} \\ 
				& = 0.
			\end{aligned}
		\end{equation}
		Even more, $\xvec{n} = - \xnab \varphi_{\mathcal{E}}$ being a gradient, hence~$\xcurl{\xvec{n}} = \xvec{0}$, ensures for two vector fields~$\xvec{h}_1$ and~$\xvec{h}_2$ defined in~$\mathcal{E}$ that
		\[
		\left(\xvec{h}_1 \cdot \xvec{n} = 0 \mbox{ and } \xvec{h}_2 \cdot \xvec{n} = 0\right) \, \Longrightarrow \, \left((\xvec{h}_1 \cdot \xnab)\xvec{h}_2 - (\xvec{h}_2 \cdot \xnab)\xvec{h}_1\right)  \cdot \xvec{n} = 0.
		\]
		Therefore, one can infer at all points where $\xvec{z}^0 \cdot \xvec{n} = 0$ holds the relation
		\begin{gather*}
			\left[ (\xvec{z}^{0} \cdot \xdop{\nabla}) (\xvec{v}^{+}-\xvec{v}^{-}) - ((\xvec{v}^{+}-\xvec{v}^{-}) \cdot \xdop{\nabla})\xvec{z}^{0} \right]_{\operatorname{tan}} = (\xvec{z}^{0} \cdot \xdop{\nabla}) (\xvec{v}^{+}-\xvec{v}^{-}) - ((\xvec{v}^{+}-\xvec{v}^{-}) \cdot \xdop{\nabla})\xvec{z}^{0}.
		\end{gather*}
		Next, while from \eqref{equation:condmuwkez} it only follows that $\mathfrak{f} = 0$ in $(\mathcal{V}\cap \overline{\mathcal{E}})\times\mathbb{R}_+$, we temporarily guarantee that $\mathfrak{f} = 0$ in all of $\overline{\mathcal{E}}\times\mathbb{R}_+$ by means of the artificially strong assumption, which will be removed in the last step by choosing $d^* \in (0,d)$ small\footnote{In this article, \eqref{equation:artificialassump} is always satisfied when we employ \Cref{lemma:blcxi2} with the hypotheses in \eqref{equation:condmuwkez}; see \eqref{equation:condmuwkez_0}.}:
		\begin{equation}\label{equation:artificialassump}
			\xvec{z}^0(\xvec{x},t) \cdot \xvec{n}(\xvec{x}) = 0 \mbox{ in } \overline{\mathcal{E}}\times\mathbb{R}_+.
		\end{equation}
		As a result of the foregoing considerations, and by understanding the yet unspecified controls $\xsym{\mu}^{\pm}$ as extended evenly to all $z \in \mathbb{R}$, the function $\xvec{W}(x,t,z)$ satisfies the following problem:
		\begin{equation}\label{equation:MHD_ElsaesserBlProfilevpm_contr_hom}
			\begin{cases}
				\partial_t \xvec{W} - \partial_{zz} (\lambda^{+} - \lambda^{-})\xvec{W} + (\xvec{z}^{0} \cdot \xdop{\nabla})\xvec{W} - (\xvec{W} \cdot \xdop{\nabla})\xvec{z}^{0} = \xsym{\mathfrak{G}}\operatorname{e}^{-|z|} + \xsym{{{\mu}}} & \mbox{ in } \overline{\mathcal{E}} \times \mathbb{R}_+ \times \mathbb{R},\\
				\xvec{W}|_{t=0} = \xvec{0} & \mbox{ in } \overline{\mathcal{E}} \times \mathbb{R},
			\end{cases}
		\end{equation}
		where
		\begin{equation*}
			\xsym{\mathfrak{G}} \coloneqq \partial_t \xsym{\mathfrak{g}} - (\lambda^+ - \lambda^-) \xsym{\mathfrak{g}} + (\xvec{z}^0 \cdot \xnab)\xsym{\mathfrak{g}} - (\xsym{\mathfrak{g}} \cdot \xnab) \xvec{z}^0, \quad \xsym{{{\mu}}} \coloneqq \xsym{\mu}^+ - \xsym{\mu}^-.
		\end{equation*}
		Consequently, the partial Fourier transform
		\[
		\widehat{\xvec{W}}(\xvec{x},t,\zeta) \coloneqq \int_{\mathbb{R}} \xvec{W}(\xvec{x},t,z) \operatorname{e}^{-i\zeta z} \, \xdx{z}
		\]
		satisfies the problem
		\begin{equation}\label{equation:pfev}
			\begin{cases}
				\partial_t \widehat{\xvec{W}} + \zeta^2 (\lambda^{+}-\lambda^-)\widehat{\xvec{W}} +  (\xvec{z}^0 \cdot \xdop{\nabla})\widehat{\xvec{W}} - (\widehat{\xvec{W}} \cdot \xdop{\nabla})\xvec{z}^0 = \frac{2}{1+\zeta^2}\xsym{\mathfrak{G}} + \widehat{\xsym{{{\mu}}}} & \mbox{ in } \mathcal{E} \times \mathbb{R}_+ \times \mathbb{R},\\
				\widehat{\xvec{W}}|_{t=0}= \xvec{0} & \mbox{ in } \mathcal{E} \times \mathbb{R}.
			\end{cases}		
		\end{equation}
		Therefore, during the time interval $[0,T]$, for each $k \in \mathbb{N}_0$ the evolution of the evaluated derivatives
		\[
		\xvec{Q}^{k}(\xvec{x},t) \coloneqq \partial_{\zeta}^k \widehat{\xvec{W}}(\xvec{x},t,\zeta)_{|_{\zeta=0}}
		\]
		is governed by the transport equation
		\begin{equation}\label{equation:pfevQ}
			\begin{cases}
				\partial_t \xvec{Q}^{k} + (\xvec{z}^0 \cdot \xdop{\nabla}) \xvec{Q}^{k} - (\xvec{Q}^{k}  \cdot \xdop{\nabla})\xvec{z}^0 = \xvec{P}^{k} & \mbox{ in } \mathcal{E}_T,\\
				\xvec{Q}^{k}(\cdot,0) = \xvec{0} & \mbox{ in } \overline{\mathcal{E}},
			\end{cases}	
		\end{equation}
		which contains the source term
		\begin{alignat}{3}\label{equation:pollterms}
			\xvec{P}^{k} \coloneqq \partial_{\zeta}^k \left(\widehat{\xsym{{{\mu}}}} + \frac{2\xsym{\mathfrak{G}}}{1+\zeta^2}\right)\Big|_{\zeta=0} -k(k-1)(\lambda^{+}-\lambda^-)\xvec{Q}^{\max\{0,k-2\}}.
		\end{alignat}
		
		\paragraph{Step 2. Determining $\xsym{\xvec{\mu}}^{+}-\xsym{\xvec{\mu}}^{-}$.}
		Let $\widetilde{r} \in \mathbb{N}$ denote the integer part of $(r-1)/2$. Since~$\xvec{W}$ is symmetric about the $z=0$ axis, it remains to steer for $l \in \{0,\dots,\widetilde{r}\}$ the even moments $\xvec{Q}^{2l}$ to zero. Hereto, we make for $\xsym{{{\mu}}}$ the ansatz
		\begin{equation}\label{equation:formblc}
			\xsym{{{\mu}}}(\xvec{x},t,z) = \sum\limits_{i = 0}^{\widetilde{r}} \xsym{{{\mu}}}^{i}(\xvec{x},t) \phi_i(z),
		\end{equation}
		where the even functions $(\phi_j)_{j \in \{0,\dots,\widetilde{r}\}} \subset \xCinfty(\mathbb{R})\cap\xLtwo(\mathbb{R})$ are chosen such that
		\[
			\widehat{\phi}_j(\zeta) \coloneqq \int_{\mathbb{R}} \phi_j(z) \operatorname{e}^{-i\zeta z} \, \xdx{z}, \quad l \in \{0,\dots,\widetilde{r}\}
		\]
		obey the relations
		\[
		\partial_{\zeta}^{2l} \widehat{\phi}_j(0) \coloneqq \begin{cases}
			1 & \mbox{ if } j = l,\\
			0 & \mbox{ otherwise. }
		\end{cases}
		\]
		For instance, for any even smooth cutoff $\widetilde{\varphi} \in \xCinfty_0(\mathbb{R})$ with $\widetilde{\varphi} = 1$ in a neighborhood of the origin one may take as $\phi_0, \dots, \phi_{\widetilde{r}}$ the inverse 
		Fourier transforms of the functions
		\[
			\widetilde{\varphi},\frac{1}{2}\zeta^2\widetilde{\varphi},\dots,\frac{1}{(2\widetilde{r})!}\zeta^{2\widetilde{r}}\widetilde{\varphi}.
		\] 
		Subsequently, inserting the ansatz \eqref{equation:formblc} for each even choice $k \in \{1,\dots,r-1\}$ into \eqref{equation:pfevQ} provides the cascade system of transport equations
		\begin{equation}\label{equation:pfevQ2casc}
			\begin{cases}
				\partial_t \xvec{Q}^{0} + (\xvec{z}^0 \cdot \xdop{\nabla}) \xvec{Q}^{0} - (\xvec{Q}^{0}  \cdot \xdop{\nabla})\xvec{z}^0 & = \xsym{{{\mu}}}^0 + 2 \xsym{\mathfrak{G}},\\
				\partial_t \xvec{Q}^{2} + (\xvec{z}^0 \cdot \xdop{\nabla}) \xvec{Q}^{2} - (\xvec{Q}^{2} \cdot \xdop{\nabla})\xvec{z}^0 & = \xsym{{{\mu}}}^1 -4\xsym{\mathfrak{G}} - 2(\lambda^+-\lambda^-)\xvec{Q}^{0},\\
				& \,\,\, \vdots \\
				\partial_t \xvec{Q}^{2\widetilde{r}} + (\xvec{z}^0 \cdot \xdop{\nabla}) \xvec{Q}^{2\widetilde{r}} - (\xvec{Q}^{2\widetilde{r}}  \cdot \xdop{\nabla})\xvec{z}^0 & = \xsym{{{\mu}}}^{\widetilde{r}} +\frac{2(2\widetilde{r})!}{(-1)^{\widetilde{r}}}\xsym{\mathfrak{G}} - (4\widetilde{r}^2-2\widetilde{r})(\lambda^+-\lambda^-)\xvec{Q}^{2\widetilde{r}-2}
			\end{cases}
		\end{equation}
		with zero initial conditions
		\[
		\xvec{Q}^0(\cdot,0) = \xvec{Q}^{2}(\cdot,0) = \dots = \xvec{Q}^{2\widetilde{r}}(\cdot, 0) = \xvec{0}.
		\]
		
		Let us begin with determining the control $\xsym{{{\mu}}}^0$ in \eqref{equation:pfevQ2casc}. Hereto, the auxiliary function $\overline{\xvec{Q}}^0$ is taken as the unique solution to
		\begin{equation}\label{equation:pfevQ2hom}
			\begin{cases}
				\partial_t \overline{\xvec{Q}}^0 + (\xvec{z}^0 \cdot \xdop{\nabla}) \overline{\xvec{Q}}^0 - (\overline{\xvec{Q}}^0  \cdot \xdop{\nabla})\xvec{z}^0 = 2 \xsym{\mathfrak{G}} & \mbox{ in } \mathcal{E}_T,\\
				\overline{\xvec{Q}}^0(\cdot,0) = \xvec{0} & \mbox{ in } \mathcal{E}.
			\end{cases}	
		\end{equation}
		Since $\xdiv{\xsym{\mathfrak{G}}} = 0$ holds in $\mathcal{E}_T$ by \eqref{equation:divgpmgmv}, and $\xsym{\mathfrak{G}} \cdot \xvec{n} = 0$ is true in $\mathcal{E}_T$ as well, for all $(\xvec{x},t) \in \mathcal{E}_T$ one may observe that
		\[
		\xdiv{\overline{\xvec{Q}}^0}(\xvec{x},t) = 0, \quad \overline{\xvec{Q}}^0(\xvec{x},t) \cdot \xvec{n}(\xvec{x}) = 0.
		\]
		Furthermore, the profile $\widetilde{z}^0(t) \coloneqq -\xvec{z}^0(T-t)$ is without loss of generality assumed to satisfy a flushing property of the type \eqref{equation:flushingproperty}. Indeed, when~$\xvec{z}^0$ is defined via \eqref{equation:zeordcdannulussect}, one notices that the flow associated with~$\widetilde{z}^0$ simply moves particles around the annulus in the opposite direction. More generally, the flushing property of $\widetilde{z}^0$ can be ensured by constructing~$\xvec{z}^0$ with~$T$ replaced by~$T/2$, followed by gluing the resulting profile at the time $t = T/2$ to a time-reversed version. Now, we take $\widetilde{\xvec{Q}}^0$ as the unique solution to
		\begin{equation}\label{equation:pfevQ2cont}
			\begin{cases}
				\partial_t \widetilde{\xvec{Q}}^0 + (\widetilde{\xvec{z}}^0 \cdot \xdop{\nabla}) \widetilde{\xvec{Q}}^0 - (\widetilde{\xvec{Q}}^0  \cdot \xdop{\nabla})\widetilde{\xvec{z}}^0 = \widetilde{\xsym{{{\mu}}}}^0 & \mbox{ in } \mathcal{E}_T,\\
				\widetilde{\xvec{Q}}^0(\cdot,0) = \overline{\xvec{Q}}^0(\cdot,T) & \mbox{ in } \mathcal{E},
			\end{cases}	
		\end{equation}
		where the control $\widetilde{\xsym{{{\mu}}}}^0 \in \xCinfty(\overline{\mathcal{E}}\times[0,T];\mathbb{R}^N)$ is chosen such that
		\begin{gather*}
			\widetilde{\xvec{Q}}^0(\cdot,T) = \xvec{0}, \quad \forall (\xvec{x},t) \in \mathcal{E}_T \colon \xdiv{\widetilde{\xvec{Q}}^0}(\xvec{x},t) = 0, \quad \forall (\xvec{x},t) \in \Sigma_T \colon \widetilde{\xvec{Q}}^0(\xvec{x},t) \cdot \xvec{n}(\xvec{x}) = 0,\\
			\forall (\xvec{x},t) \in \mathcal{E}_T \colon \xdiv{\widetilde{\xsym{{{\mu}}}}^0}(\xvec{x},t) = 0, \quad \forall (\xvec{x},t) \in \Sigma_T \colon \widetilde{\xsym{{{\mu}}}}^0(\xvec{x},t) \cdot \xvec{n}(\xvec{x}) = 0, \\ \operatorname{supp}(\widetilde{\xsym{{{\mu}}}}^0) \subset \left(\overline{\mathcal{E}}\setminus\overline{\Omega}\right)\times(0,T).
		\end{gather*}
		In order to find~$\widetilde{\xsym{{{\mu}}}}^0$, we proceed analogously to the constructions of controlled solutions to \eqref{equation:MHD_ElsaesserExt_OvarepsBtOwp} in the proof of \Cref{lemma:flushing}, noting that the problems \eqref{equation:MHD_ElsaesserExt_OvarepsBtOwp} and \eqref{equation:pfevQ2cont} are of the same type. As a result, we can define in $\mathcal{E}_T$ the vector field
		\[
			\xvec{Q}^{0}(\xvec{x},t) \coloneqq  \overline{\xvec{Q}}^0(\xvec{x},t) - \widetilde{\xvec{Q}}^0(\xvec{x},T-t),
		\]
		which solves the first equation in \eqref{equation:pfevQ2casc} with $\xsym{{{\mu}}}^0(\cdot, t) \coloneqq \widetilde{\xsym{{{\mu}}}}^0(T-t)$ and obeys the desired initial and terminal conditions
		\[
			\xvec{Q}^{0}(\cdot,0) = \xvec{0}, \quad \xvec{Q}^{0}(\cdot,T) = \xvec{0}.
		\]
		To determine $\xsym{{{\mu}}}^1$, the same arguments as for finding $\xsym{{{\mu}}}^0$ can be repeated, but now with the known source term $-4\xsym{\mathfrak{G}}-2(\lambda^+-\lambda^-)\xvec{Q}^{0}$.
		Due to the cascade structure of \eqref{equation:pfevQ2casc}, all controls $(\xsym{{{\mu}}}^j)_{j\in\{1,\dots,\widetilde{r}\}}$ are obtained in this way. Finally, $\xsym{{{\mu}}} = \xsym{\mu}^+ - \xsym{\mu}^-$ is constructed via \eqref{equation:formblc} and obeys \eqref{equation:divcond}.
		
		\paragraph{Step 3. Removing the assumption \eqref{equation:artificialassump}.} Without assuming \eqref{equation:artificialassump}, the equation \eqref{equation:MHD_ElsaesserBlProfilevpm_contr_hom} for $\xvec{W}$ might not be correct in $(\overline{\mathcal{E}}\setminus\mathcal{V})\times\mathbb{R}_+\times\mathbb{R}$, since \eqref{equation:MHD_ElsaesserBlProfilevpm_contr} contains the terms $\mathfrak{f}z\partial_z\xvec{v}^{\pm}$.  However, for small $d^* \in (0,d)$ it will be shown below that the control~$\xsym{{{\mu}}}$ obtained in the previous step already ensures
		\begin{equation}\label{equation:vpmsupp}
			\widetilde{\xvec{v}} = \xvec{0}  \mbox{ in } (\overline{\mathcal{E}}\setminus\mathcal{V})\times\mathbb{R}_+\times\mathbb{R}_+
		\end{equation}
		for the solution to
		\begin{equation}\label{equation:MHD_ElsaesserBlProfilevpm_contrALT}
			\begin{cases}
				\partial_t \widetilde{\xvec{v}} - \nu_2\partial_{zz}\widetilde{\xvec{v}} + (\xvec{z}^{0} \cdot \xdop{\nabla}) \widetilde{\xvec{v}} - (\widetilde{\xvec{v}}\cdot \xdop{\nabla})\xvec{z}^{0} = \xsym{{{\mu}}} & \mbox{in } \overline{\mathcal{E}} \times \mathbb{R}_+ \times \mathbb{R}_+,\\
				\partial_z \widetilde{\xvec{v}}(\xvec{x},t,0) = \xsym{\mathfrak{g}}(\xvec{x},t),  & \xvec{x} \in \overline{\mathcal{E}}, t \in \mathbb{R}_+ ,\\
				\widetilde{\xvec{v}}(\xvec{x},t,z) \longrightarrow \xvec{0}, \mbox{ as } z \longrightarrow +\infty, & \xvec{x} \in \overline{\mathcal{E}}, t \in \mathbb{R}_+ ,\\
				\widetilde{\xvec{v}}(\xvec{x},0,z) = \xvec{0}, & \xvec{x} \in \overline{\mathcal{E}}, z \in \mathbb{R}_+.
			\end{cases}
		\end{equation}
		Then, because the assumptions in \eqref{equation:condmuwkez} together with \eqref{equation:vpmsupp} imply
		\[
		\left[(\xvec{z}^{0} \cdot \xdop{\nabla}) \widetilde{\xvec{v}} - (\widetilde{\xvec{v}}\cdot \xdop{\nabla})\xvec{z}^{0}\right]_{\operatorname{tan}} = (\xvec{z}^{0} \cdot \xdop{\nabla}) \widetilde{\xvec{v}} - (\widetilde{\xvec{v}}\cdot \xdop{\nabla})\xvec{z}^{0}, \quad \mathfrak{f}z\partial_z\widetilde{\xvec{v}} = \xvec{0}
		\]
		in all of $\overline{\mathcal{E}}\times\mathbb{R}_+\times\mathbb{R}_+$, it follows that $\widetilde{\xvec{v}}$ satisfies a version of \eqref{equation:MHD_ElsaesserBlProfilevpm_contrALT} where the first line is replaced by
		\[
		\partial_t \widetilde{\xvec{v}} - \nu_2\partial_{zz}\widetilde{\xvec{v}} + \left[ (\xvec{z}^{0} \cdot \xdop{\nabla}) \widetilde{\xvec{v}} - (\widetilde{\xvec{v}}\cdot \xdop{\nabla})\xvec{z}^{0} \right]_{\operatorname{tan}} + \mathfrak{f}z\partial_z\widetilde{\xvec{v}} = \xsym{{{\mu}}} \quad \mbox{in } \overline{\mathcal{E}} \times \mathbb{R}_+ \times \mathbb{R}_+,
		\]
		which corresponds to the equation for $\xvec{v}^+ - \xvec{v}^-$ derived from \eqref{equation:MHD_ElsaesserBlProfilevpm_contr}. 
		Thus, after verifying \eqref{equation:vpmsupp} for some $d^* \in (0,d)$, it follows retrospectively that the equation \eqref{equation:MHD_ElsaesserBlProfilevpm_contr_hom} for $\xvec{W}$ is correct even without assuming \eqref{equation:artificialassump}. 
		
		To show \eqref{equation:vpmsupp}, we apply the ideas from \cite[Section 3.4]{CoronMarbachSueur2020}. Hereto, for any~$\varrho \in (0,d)$, consider the tube $\overline{\mathcal{V}_{\varrho}} = \{ 0 \leq \varphi_{\mathcal{E}} \leq \varrho\}$ and define its maximal distance of influence during the time interval $[0,T]$ under the flow $\xmcal{Z}^{0}$ via
		\[
		\mathcal{J}(\varrho) \coloneqq \max\limits_{\stackrel{s,t \in [0,T]}{\xvec{x}\in \overline{\mathcal{V}_{\varrho}}}} \varphi_{\mathcal{E}}\left(\xmcal{Z}^{0}(\xvec{x},s,t)\right).
		\]
		The controls $\xsym{{{\mu}}}^k$ in \eqref{equation:pfevQ2casc} only act where pollution, caused by $\xsym{\mathfrak{G}}$, $\xsym{{{\mu}}}^l$, or $\xvec{Q}^{2l-2}$, arrives via the flow $\xmcal{Z}^{0}$ associated with $\xvec{z}^0$. Thus,~$\xsym{{{\mu}}}^0$ is supported in~$\mathcal{V}_{\mathcal{J}(\mathcal{J}(d^*))}$ and its action travels at most into~$\mathcal{V}_{\mathcal{J}(\mathcal{J}(\mathcal{J}(d^*)))}$. Consequently, the effects of~$\xsym{{{\mu}}}^{\widetilde{r}}$ are propagated into~$\mathcal{V}_{\mathcal{J}^{3\widetilde{r}}(r_1)}$. Since the $\xvec{x}$-support of $\xsym{\mathfrak{G}}$ is contained in~$\mathcal{V}_{d^*}$ by the definition of~$\chi_{\partial\mathcal{E}}$, 
		and in view of \eqref{equation:repW}, maintaining the $\xvec{x}$-support of $\widetilde{\xvec{v}}$ within~$\mathcal{V}$ is achieved by adjusting the support of $\chi_{\partial\mathcal{E}}$. Indeed, $\mathcal{J}$ is continuous and one has~$\mathcal{J}(0)~=~0$ because~$\xvec{z}^{0}$ is tangential to $\partial \mathcal{E}$, which yields the existence of $d^{*}_0 \in (0,d)$ with~$\mathcal{J}^{3\widetilde{r}}(d^{*}_0) < d$. Then, every choice $d^* \in (0, d^*_0)$ is suitable.
	\end{proof}
	
	The last part of the proof of \Cref{lemma:blcxi2} shows why one can take $d^* \in (0,d)$ in the definition of $\chi_{\partial\mathcal{E}}$ such that $|\xvec{n}| = 1$ in the $\xvec{x}$-support of $\xvec{v}^{+}-\xvec{v}^{-}$. A similar argument ensures such a property for $\xvec{v}^{+}+\xvec{v}^{-}$ and is valid for the general situation of \Cref{lemma:blcxi} (\cf~\cite[Section 3.4]{CoronMarbachSueur2020}).
	\begin{lmm}\label{lemma:chisupp}
		Let $\xsym{\mu}^{\pm}$ be obtained by \Cref{lemma:blcxi} or \Cref{lemma:blcxi2}. There exists $d_0 \in (0,d)$ for which any choice $d^{*} \in (0, d_0)$ guarantees that the $\xvec{x}$-supports of $\xvec{v}^{\pm}$ are included in $\mathcal{V}$.
	\end{lmm}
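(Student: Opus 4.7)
The plan is to track, through each transport step that enters the construction of $\xsym{\mu}^{\pm}$, how far the $\xvec{x}$-support of the resulting profiles $\xvec{v}^{\pm}$ can spread under the flow $\xmcal{Z}^{0}$, and then exploit the tangentiality of $\xvec{z}^{0}$ along $\partial\mathcal{E}$ to close up the argument by a continuity--compactness reasoning, exactly in the spirit of the last step of the proof of \Cref{lemma:blcxi2}.

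First, for $\varrho\in(0,d)$ introduce the maximal displacement function
\[
	\mathcal{J}(\varrho) \coloneq \max_{\substack{s,t\in[0,T]\\ \xvec{x}\in\overline{\mathcal{V}_{\varrho}}}} \varphi_{\mathcal{E}}\!\left(\xmcal{Z}^{0}(\xvec{x},s,t)\right),
\]
which satisfies $\mathcal{J}(\varrho)\geq\varrho$, is continuous in $\varrho$, and obeys $\mathcal{J}(0)=0$ because $\xvec{z}^{0}\cdot\xvec{n}=0$ on $\partial\mathcal{E}$ implies that particles starting on the boundary remain on the boundary. By construction $\operatorname{supp}(\chi_{\partial\mathcal{E}})\subseteq\mathcal{V}_{d^{*}}$, so the boundary sources $\xsym{\mathfrak{g}}^{\pm}$ and, therefore, the forcing $\xsym{\mathfrak{G}}$ appearing in \eqref{equation:pfevQ2casc} (or its analogue for $\xvec{v}^{+}+\xvec{v}^{-}$ in \Cref{lemma:blcxi}) are $\xvec{x}$-supported inside $\mathcal{V}_{d^{*}}$.

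Next, I would chase the cascade. The level-zero transport equation \eqref{equation:pfevQ2hom}, driven by $\xsym{\mathfrak{G}}$, produces $\overline{\xvec{Q}}^{0}(\cdot,T)$ supported in $\mathcal{V}_{\mathcal{J}(d^{*})}$. The reverse-time control step \eqref{equation:pfevQ2cont} uses $\xsym{\eta}^{0}$ supported in $\mathcal{V}_{\mathcal{J}(\mathcal{J}(d^{*}))}$, and its solution $\widetilde{\xvec{Q}}^{0}$ remains supported in $\mathcal{V}_{\mathcal{J}^{3}(d^{*})}$. Proceeding inductively up the cascade, at each of the $\widetilde{r}+1$ levels the support grows by at most a fixed number of extra compositions of $\mathcal{J}$; hence there exists an integer $K=K(r)$, depending only on the number of vanishing moments that \Cref{lemma:blcxi} or \Cref{lemma:blcxi2} requires, such that
\[
	\bigcup_{t\in[0,T]}\operatorname{supp}_{\xvec{x}}\!\bigl(\xsym{\mu}^{\pm}(\cdot,t,\cdot)\bigr) \cup \bigcup_{\substack{t\in[0,T]\\ z\in\mathbb{R}_{+}}} \operatorname{supp}_{\xvec{x}}\!\bigl(\xvec{v}^{\pm}(\cdot,t,z)\bigr) \subseteq \mathcal{V}_{\mathcal{J}^{K}(d^{*})}.
\]
For $t\geq T$ no new sources are injected and the purely diffusive equation \eqref{equation:MHD_ElsaesserBlProfilevpm_contr_largeTheat} only acts in the $z$-variable, so the $\xvec{x}$-support of $\xvec{v}^{\pm}$ is preserved.

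Finally, the continuity of $\mathcal{J}$ with $\mathcal{J}(0)=0$ yields $\mathcal{J}^{K}\in\xCzero([0,d];[0,+\infty))$ with $\mathcal{J}^{K}(0)=0$. Hence there exists $d_{0}\in(0,d)$ with $\mathcal{J}^{K}(d_{0})<d$, and any $d^{*}\in(0,d_{0})$ gives $\mathcal{V}_{\mathcal{J}^{K}(d^{*})}\subseteq\mathcal{V}$, which is the desired inclusion. The only subtle point, and the thing to double-check carefully, is the precise value of $K$ in each of the two constructions of \Cref{lemma:blcxi} and \Cref{lemma:blcxi2}: one must confirm that the direct and reverse transport steps, together with the $\zeta$-derivative cascade, only compose $\mathcal{J}$ a finite, $r$-dependent number of times, so that the final $K$ is independent of $d^{*}$. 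Once this bookkeeping is settled, the choice of $d_{0}$ is automatic.
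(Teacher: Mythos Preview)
Your proposal is correct and follows essentially the same route as the paper: the paper does not give an independent proof of this lemma but simply points to Step~3 of the proof of \Cref{lemma:blcxi2} (and to \cite[Section~3.4]{CoronMarbachSueur2020}), where precisely your displacement function $\mathcal{J}$ is introduced, the cascade is chased to obtain a support bound of the form $\mathcal{V}_{\mathcal{J}^{K}(d^{*})}$ with a finite $K$ depending only on $r$, and the continuity of $\mathcal{J}$ with $\mathcal{J}(0)=0$ is invoked to choose $d_{0}$. Your write-up is in fact more explicit than what the paper records; the only minor refinement the paper adds is the concrete count $K=3\widetilde{r}$ for the construction of \Cref{lemma:blcxi2}.
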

	
	Now, on the unbounded time interval $t \geq T$ either \Cref{lemma:blcxi} or \Cref{lemma:blcxi2} is employed with $k = 4$, $p=5$, $s=3$ and $r = 6$ in order to fix $\xsym{\mu}^{\pm}$. Subsequently, in order to fix $\chi_{\partial\mathcal{E}}$ in \eqref{equation:def_chi}, any $d^* \in (0,d_0)$ is selected, where $d_0$ is the one determined in \Cref{lemma:chisupp}.
	
	\begin{rmrk}
		When $\xvec{M}_2 = \xvec{0}$ in \eqref{equation:bc2}, then, as seen in \Cref{lemma:M2}, the first case of \Cref{lemma:blcxi2} can be applied. In the case of \Cref{theorem:annulus} we have \eqref{equation:condmuwkez_0}, which allows to employ the third part of \Cref{lemma:blcxi2}.
	\end{rmrk}

	\subsubsection{Properties of the boundary layers and technical profiles}\label{subsubsection:firstprop}
	Due to the fast variable scaling for the boundary layer profiles $\xvec{v}^{\pm}$, $\xvec{w}^{\pm}$, and~$q^{\pm}$, several estimates will profit from a gain of order $O(\epsilon^{1/4})$ as stated below.
	
	\begin{lmm}[{\cite[Lemma 3]{IftimieSueur2011}}]\label{lemma:epsgain}
		There exists a constant $C > 0$ such that, for all $\epsilon > 0$ and functions $h = h(\xvec{x},z)$ in $\xLtwo_z(\mathbb{R}_+;\xHone_{\xvec{x}}(\mathcal{E}))$ with $\cup_{z\in\mathbb{R}_+}\operatorname{supp}(h(\cdot,z)) \subset \mathcal{V}$, it holds
		\[
		\|h \left(\cdot, \frac{\varphi_{\mathcal{E}}(\cdot)}{\sqrt{\epsilon}}\right)\|_{\xLtwo(\mathcal{E})} \leq C \epsilon^{\frac{1}{4}} \|h\|_{\xLtwo_z(\mathbb{R}_+;\xHone_{\xvec{x}}(\mathcal{E}))}.
		\]
	\end{lmm}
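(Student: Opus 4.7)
The plan is to reduce the estimate to a one-dimensional scaling argument combined with a uniform trace inequality on parallel surfaces. Since $\cup_{z \in \mathbb{R}_+} \operatorname{supp}(h(\cdot,z)) \subseteq \mathcal{V}$ and $\varphi_{\mathcal{E}}$ is a genuine signed distance in $\mathcal{V}$ with $|\xnab \varphi_{\mathcal{E}}| = 1$, first I would introduce tubular coordinates near $\partial \mathcal{E}$: for $d > 0$ sufficiently small, each $\xvec{x} \in \mathcal{V} \cap \overline{\mathcal{E}}$ is uniquely represented as $\xvec{x} = \xvec{X}(\xvec{y}, s)$ with $\xvec{y} \in \partial \mathcal{E}$ and $s = \varphi_{\mathcal{E}}(\xvec{x}) \in [0, d)$, and the associated Jacobian $J(\xvec{y},s)$ is smooth and uniformly bounded from above and below by the regularity of $\partial \mathcal{E}$.

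Next, I would apply the change of variables $s = \sqrt{\epsilon}\, z$ (hence $\xdx{s} = \sqrt{\epsilon}\,\xdx{z}$) to obtain
\begin{equation*}
\left\|h\!\left(\cdot, \tfrac{\varphi_{\mathcal{E}}(\cdot)}{\sqrt{\epsilon}}\right)\right\|_{\xLtwo(\mathcal{E})}^{2}
\leq C \int_{\partial\mathcal{E}} \int_{0}^{d} \left|h(\xvec{X}(\xvec{y},s), s/\sqrt{\epsilon})\right|^{2} \,\xdx{s}\, \xdx{S(\xvec{y})}
= C \sqrt{\epsilon} \int_{0}^{\infty} \|h(\cdot,z)\|_{\xLtwo(\partial\mathcal{E}_{\sqrt{\epsilon} z})}^{2}\, \xdx{z},
\end{equation*}
where $\partial \mathcal{E}_{s} \coloneq \{\xvec{x} \in \overline{\mathcal{E}} \,|\, \varphi_{\mathcal{E}}(\xvec{x}) = s\}$ denotes the parallel surface at depth $s$, and the integral over $z$ is implicitly truncated by the support condition.

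The second ingredient I would use is the uniform trace inequality: for all $s \in [0,d)$ and every $g \in \xHone(\mathcal{E})$ supported in $\mathcal{V}$,
\begin{equation*}
\|g\|_{\xLtwo(\partial \mathcal{E}_{s})}^{2} \leq C \|g\|_{\xLtwo(\mathcal{E})} \|g\|_{\xHone(\mathcal{E})},
\end{equation*}
with $C$ independent of $s$. This is a standard consequence of the fundamental theorem of calculus in the normal variable combined with the Cauchy--Schwarz inequality (writing $|g(\xvec{X}(\xvec{y},s))|^{2}$ as $|g(\xvec{X}(\xvec{y},d))|^{2}$ plus an integral of $\partial_{s}(|g|^{2})$ and integrating over $\xvec{y}$). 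Applying this pointwise in $z$ and then the Cauchy--Schwarz inequality in the $z$ variable yields
\begin{equation*}
\sqrt{\epsilon} \int_{0}^{\infty} \|h(\cdot,z)\|_{\xLtwo(\partial\mathcal{E}_{\sqrt{\epsilon} z})}^{2}\, \xdx{z}
\leq C \sqrt{\epsilon} \int_{0}^{\infty} \|h(\cdot,z)\|_{\xLtwo(\mathcal{E})} \|h(\cdot,z)\|_{\xHone(\mathcal{E})}\, \xdx{z}
\leq C \sqrt{\epsilon} \|h\|_{\xLtwo_{z}(\mathbb{R}_{+};\xHone_{\xvec{x}}(\mathcal{E}))}^{2}.
\end{equation*}
Taking square roots yields the advertised gain of $\epsilon^{1/4}$. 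The main (mild) technical point is verifying that the trace constant on $\partial \mathcal{E}_{s}$ can be chosen uniformly in $s \in [0,d)$; this is where the smoothness of $\partial \mathcal{E}$ together with the tubular coordinate structure on $\mathcal{V}$ is essential, but no serious difficulty arises.
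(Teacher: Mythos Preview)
Your proof is correct and follows the standard approach: tubular coordinates to extract the $\sqrt{\epsilon}$ from the normal-direction change of variables, followed by a uniform multiplicative trace inequality on the parallel surfaces $\partial\mathcal{E}_s$. Note that the paper does not give its own proof of this lemma; it simply cites \cite[Lemma~3]{IftimieSueur2011}, and your argument is essentially the one found there.
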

	
	\begin{lmm}\label{lemma:propqpm}
		The functions $q^{\pm}$ from \eqref{equation:MHD_ElsaesserBlProfilepressurepm_control} satisfy $\operatorname{supp}(q^{\pm}(\cdot,\cdot,z)) \subset \operatorname{supp}(\xvec{z}^0)$ for all~$z \in \mathbb{R}_+$. In addition, there exists a constant $C > 0$ independent of $\epsilon > 0$ such that
		\begin{equation}\label{equation:hestqrdq}
			\|\left\llbracket \xdop{\nabla} q^{\pm}\right\rrbracket_{\epsilon}(\cdot,t)\|_{\xLtwo(\mathcal{E})} \leq \epsilon^{\frac{1}{4}}C \sum\limits_{\square\in\{+,-\}} \|\xvec{v}^{\square}(\cdot,t,\cdot)\|_{\xHn{{1,3,0}}_{\mathcal{E}}}.
		\end{equation}
	\end{lmm}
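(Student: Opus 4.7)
The proof rests on the explicit representation
\[
q^{\pm}(\xvec{x},t,z)= -\int_{z}^{+\infty}\bigl[(\xvec{z}^{0}(\xvec{x},t)\cdot\xnab)\xvec{v}^{\pm}(\xvec{x},t,s)+(\xvec{v}^{\mp}(\xvec{x},t,s)\cdot\xnab)\xvec{z}^{0}(\xvec{x},t)\bigr]\cdot\xvec{n}(\xvec{x})\,\xdx{s},
\]
which I would obtain by integrating the ODE in \eqref{equation:MHD_ElsaesserBlProfilepressurepm_control} from $z$ to $+\infty$ and using the vanishing condition at infinity; convergence of the integral, and the fact that it provides the unique solution, are ensured by the decay and regularity of $\xvec{v}^{\pm}$ coming from \Cref{lemma:wellpvpmctrl}. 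The support inclusion then follows at once: if $(\xvec{x}_0,t_0)\notin\operatorname{supp}(\xvec{z}^{0})$, there is an open neighborhood $U$ of $(\xvec{x}_0,t_0)$ on which $\xvec{z}^{0}$, and therefore also $\xnab\xvec{z}^{0}$, vanishes identically; the bracketed integrand is thus zero on $U\times\mathbb{R}_+$ for every $s$, so $q^{\pm}$ vanishes on $U\times\mathbb{R}_+$ as well.

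For the estimate, I would differentiate this representation in $\xvec{x}$ to obtain formulas for $\xnab q^{\pm}$ and, after one more differentiation, for its first $\xvec{x}$-derivatives. The resulting expressions are integrals whose integrands are products of smooth uniformly bounded factors (built from $\xvec{z}^{0}$, $\xvec{n}$, and their derivatives up to second order) with $\xvec{v}^{\mp}$ or with $\xvec{x}$-derivatives of $\xvec{v}^{\pm}$ of order at most three. Cauchy--Schwarz in $s$ with the weight $(1+s^{2})^{-1}$, namely
\[
\Bigl|\int_{z}^{+\infty}g(s)\,\xdx{s}\Bigr|^{2}\leq\Bigl(\int_{z}^{+\infty}\frac{\xdx{s}}{1+s^{2}}\Bigr)\int_{z}^{+\infty}(1+s^{2})|g(s)|^{2}\,\xdx{s},
\]
combined with Fubini in $(z,s)$, would then control $\|\xnab q^{\pm}(\cdot,t,\cdot)\|_{\xLtwo_{z}(\mathbb{R}_+;\xHone_{\xvec{x}}(\mathcal{E}))}$ by $\sum_{\square\in\{+,-\}}\|\xvec{v}^{\square}(\cdot,t,\cdot)\|_{\xHn{{1,3,0}}_{\mathcal{E}}}$. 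Since \Cref{lemma:chisupp} ensures $\operatorname{supp}_{\xvec{x}}\xvec{v}^{\pm}(\cdot,t,z)\subseteq\mathcal{V}$, the same inclusion holds for $\xnab q^{\pm}$, and \Cref{lemma:epsgain} then delivers the advertised gain of $\epsilon^{1/4}$.

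The main technical point is the derivative and weight accounting: the index $m=3$ in $\xHn{{1,3,0}}_{\mathcal{E}}$ is consumed by one $\xvec{x}$-derivative already present in the defining integrand, a second one to form $\xnab q^{\pm}$, and a third to attain the $\xHone_{\xvec{x}}$-regularity required by \Cref{lemma:epsgain}; the weight $(1+z^{2})$, corresponding to the index $k=1$, is precisely what the Cauchy--Schwarz step above demands in order to absorb the $z$-integral near infinity. Beyond this bookkeeping no genuinely new analytic obstacle appears, and the argument parallels the analogous Navier--Stokes computations carried out in \cite{CoronMarbachSueur2020,IftimieSueur2011}.
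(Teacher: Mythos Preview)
Your strategy---write $q^{\pm}$ as an integral in $z$, differentiate in $\xvec{x}$, control the $\xLtwo_z(\mathbb{R}_+;\xHone_{\xvec{x}}(\mathcal{E}))$ norm, then invoke \Cref{lemma:epsgain}---is the right one and is essentially what the paper does. The support statement is handled exactly as you say.

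There is, however, a bookkeeping slip in your weight accounting. With your Cauchy--Schwarz step and the uniform bound $\int_z^{+\infty}(1+s^{2})^{-1}\,\xdx{s}\leq\pi/2$, Fubini gives
\[
\int_0^{+\infty}\Bigl|\int_z^{+\infty}g(s)\,\xdx{s}\Bigr|^2\xdx{z}
\;\leq\;\frac{\pi}{2}\int_0^{+\infty}\!\!\int_z^{+\infty}(1+s^{2})|g(s)|^{2}\,\xdx{s}\,\xdx{z}
\;=\;\frac{\pi}{2}\int_0^{+\infty} s(1+s^{2})|g(s)|^{2}\,\xdx{s},
\]
so the weight you actually produce is $s(1+s^{2})$, not $(1+s^{2})$. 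This would force $k=2$ in $\xHn{{k,3,0}}_{\mathcal{E}}$ rather than the stated $k=1$. (Using instead the decay $\int_z^{+\infty}(1+s^{2})^{-1}\,\xdx{s}\lesssim(1+z)^{-1}$ still leaves a logarithmic loss after Fubini.)

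The paper avoids this loss by a Hardy-type integration by parts: writing $1=\partial_z z$ and integrating by parts in $z$, one gets
\[
\int_{\mathbb{R}_+}\int_{\mathcal{E}}|\partial_{\xvec{x}}^{\alpha}\xnab q^{\pm}|^{2}\,\xdx{\xvec{x}}\,\xdx{z}
\;\leq\;C\int_{\mathbb{R}_+}\int_{\mathcal{E}}(1+z^{2})|\partial_{\xvec{x}}^{\alpha}\xnab\partial_z q^{\pm}|^{2}\,\xdx{\xvec{x}}\,\xdx{z},
\]
after absorbing the small term via Young's inequality. Since $\partial_z q^{\pm}$ is given explicitly by \eqref{equation:MHD_ElsaesserBlProfilepressurepm_control}, this lands exactly on the $\xHn{{1,3,0}}_{\mathcal{E}}$ norm of $\xvec{v}^{\pm}$. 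Replacing your weighted Cauchy--Schwarz by this integration-by-parts trick closes the gap; everything else in your outline is fine.
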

	\begin{proof}
		One utilizes the properties of $q^{\pm}$ observed from \eqref{equation:MHD_ElsaesserBlProfilepressurepm_control}, the $\epsilon^{1/4}$ gain due to the fast variable scaling (\cf~\Cref{lemma:epsgain}), and integration by parts. Indeed, for $C > 0$ and small $\ell \in (0,1)$, both independent of $\epsilon > 0$, one has
		\begin{equation*}\label{equation:prflmgradqpmctrl}
			\begin{aligned}
				\|\left\llbracket \xdop{\nabla} q^{\pm}\right\rrbracket_{\epsilon}(\cdot,t)\|_{\xLtwo(\mathcal{E})}^2 & \leq \epsilon^{\frac{1}{2}} C \| \xdop{\nabla} q^{\pm}(\cdot,t) \|_{\xLtwo_z(\xvec{R}_+;\xHone(\mathcal{E}))}^2\\
				& \leq \epsilon^{\frac{1}{2}} C \sum\limits_{|\xsym{\alpha}| \leq 1} \int_{\mathbb{R}_+} \int_{\mathcal{E}} |\partial_{\xvec{x}}^{\xsym{\alpha}} \xdop{\nabla} q^{\pm}|^2 \partial_z z \, \xdx{\xvec{x}} \xdx{z} \\
				& \leq \epsilon^{\frac{1}{2}} C(\ell) \sum\limits_{|\xsym{\alpha}| \leq 1} \int_{\mathbb{R}_+}\int_{\mathcal{E}} (1+z^2)|\partial_{\xvec{x}}^{\xsym{\alpha}} \xdop{\nabla} \partial_zq^{\pm}|^2  \, \xdx{\xvec{x}} \xdx{z} \\
				& \quad + \epsilon^{\frac{1}{2}}\ell \sum\limits_{|\xsym{\alpha}| \leq 1} \int_{\mathbb{R}_+}\int_{\mathcal{E}} |\partial_{\xvec{x}}^{\xsym{\alpha}} \xdop{\nabla}q^{\pm}|^2  \, \xdx{\xvec{x}} \xdx{z},
			\end{aligned}
		\end{equation*}
		which implies that
		\begin{equation*}
			\begin{aligned}
				\|\left\llbracket \xdop{\nabla} q^{\pm}\right\rrbracket_{\epsilon}(\cdot,t)\|_{\xLtwo(\mathcal{E})}^2 & \leq \epsilon^{\frac{1}{2}} C_1
				\sum\limits_{|\xsym{\alpha}| \leq 1} \int_{\mathbb{R}_+}\int_{\mathcal{E}} (1+z^2)|\partial_{\xvec{x}}^{\xsym{\alpha}} \xdop{\nabla} \partial_zq^{\pm}|^2  \, \xdx{\xvec{x}} \xdx{z} \\
				& \leq \epsilon^{\frac{1}{2}} C_2 \sum\limits_{\square\in\{+,-\}} \|\xvec{v}^{\square}(\cdot,t,\cdot)\|_{\xHn{{1,3,0}}_{\mathcal{E}}}^2
			\end{aligned}
		\end{equation*}
		for two constants $C_1, C_2$, by using 
		\eqref{equation:MHD_ElsaesserBlProfilepressurepm_control}.
	\end{proof}
	
	\begin{lmm}\label{lemma:propwpm}
		For all $k,m,s \in \mathbb{N}$, the profiles $\xvec{w}^{\pm}$ determined in \eqref{equation:MHD_ElsaesserBlProfilesecondvelpm_contrl} satisfy
		\begin{align}
			\|\xvec{w}^{\pm}(\cdot,t,\cdot)\|_{\xHn{{k,m,s}}_{\mathcal{E}}} & \leq C \sum\limits_{\square\in\{+,-\}} \|\xvec{v}^{\square}(\cdot,t,\cdot)\|_{\xHn{{k+1,m+1,\max\{1,s-1\}}}_{\mathcal{E}}},\label{equation:lmmblregwlinfzebv}\\
			\| \left\llbracket \partial_{z}^2 \xvec{w}^{\pm}\right\rrbracket_{\epsilon}(\cdot,t)\|_{\xLtwo(\mathcal{E})} & \leq \epsilon^{\frac{1}{4}} C \sum\limits_{\square\in\{+,-\}} \| \xvec{v}^{\square}(\cdot,t,\cdot) \|_{\xHn{{1,2,1}}_{\mathcal{E}}}, \label{equation:lmmblregwlinfzbl1}\\
			\| \left\llbracket \partial_t \xvec{w}^{\pm}\right\rrbracket_{\epsilon}(\cdot,t) \|_{\xLtwo(\mathcal{E})} & \leq \epsilon^{\frac{1}{4}} C  \sum\limits_{\square\in\{+,-\}} \| \xvec{v}^{\square}(\cdot,t,\cdot) \|_{\xHn{{2,3,3}}_{\mathcal{E}}} + C \| \xsym{\xvec{\mu}}^{\pm}(\cdot,t,\cdot) \|_{\xHn{{1,2,1}}_{\mathcal{E}}}. \label{equation:lmmblregwlinfzbl2}
		\end{align}
	\end{lmm}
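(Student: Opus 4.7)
The plan is to exploit the structural decomposition of $\xvec{w}^{\pm}$ in \eqref{equation:MHD_ElsaesserBlProfilesecondvelpm_contrl} into a normal component $\overline{w}^{\pm}\xvec{n}$ and a tangential component $-2\mathrm{e}^{-z}\xmcal{N}^{\pm}(\xvec{v}^{+},\xvec{v}^{-})|_{z=0}$, estimating each separately. The normal summand is an integral of $\xdiv{\xvec{v}^{\pm}}$, so its derivatives in $z$ of order $r\geq 1$ reduce to derivatives of $\xdiv{\xvec{v}^{\pm}}$ of order $r-1$; the tangential summand decays exponentially in $z$, so polynomial weights cost nothing, while the trace at $z=0$ of $\xmcal{N}^{\pm}(\xvec{v}^+,\xvec{v}^-)$ absorbs one derivative in $\xvec{x}$ (through $\xdop{D}$, $\xvec{W}_{\mathcal{E}}$, $\xvec{M}^{\pm}$, $\xvec{L}^{\pm}$).

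For the first estimate, I would write out $\partial_{\xvec{x}}^{\beta}\partial_z^r\xvec{w}^{\pm}$ using the product rule, with the smooth factor $\xvec{n}(\xvec{x})$ bounded in $\xCinfty(\overline{\mathcal{E}})$. For $r\geq 1$ the contribution from $\overline{w}^{\pm}\xvec{n}$ reduces pointwise to $\partial_z^{r-1}\xdiv{\xvec{v}^{\pm}}$, and the weighted $\xLtwo_z$ seminorm bound follows directly. The case $r=0$ is the key point: using Cauchy--Schwarz in the form
\begin{equation*}
|\overline{w}^{\pm}(\xvec{x},t,z)|^2 \leq \left(\int_z^{+\infty}(1+s^2)^{-2}\,\xdx{s}\right)\left(\int_z^{+\infty}(1+s^2)^{2}|\xdiv{\xvec{v}^{\pm}}|^2\,\xdx{s}\right),
\end{equation*}
and multiplying by $(1+z^{2k})$ followed by integration in $z$ via Fubini, the weighted $\xLtwo_z$-norm of $\overline{w}^{\pm}$ is controlled by the weighted $\xLtwo_z$-norm of $\xdiv{\xvec{v}^{\pm}}$ with one order higher polynomial weight; shifting $k\mapsto k+1$ covers this loss. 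The tangential term, thanks to $\mathrm{e}^{-z}$, is absorbed by the trace estimate $\|f(\cdot,0)\|_{\xLtwo(\mathcal{E})}\leq C\|f\|_{\xHone_z(\mathbb{R}_+;\xLtwo(\mathcal{E}))}$ applied to appropriate $\xvec{x}$-derivatives of $\xvec{v}^{\pm}$, which justifies the $m+1$ and $s=1$ requirements on the right-hand side of the first inequality.

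For the second estimate I would compute
\begin{equation*}
\partial_z^2\xvec{w}^{\pm}(\xvec{x},t,z) = \partial_z\xdiv{\xvec{v}^{\pm}}(\xvec{x},t,z)\xvec{n}(\xvec{x}) - 2\mathrm{e}^{-z}\xmcal{N}^{\pm}(\xvec{v}^{+},\xvec{v}^{-})(\xvec{x},t,0),
\end{equation*}
then apply \Cref{lemma:epsgain} to gain the prefactor $\epsilon^{1/4}$. The first summand is absorbed by $\|\xvec{v}^{\pm}\|_{\xHn{{1,2,1}}_{\mathcal{E}}}$ (one $z$-derivative plus $\xdiv$ gives $r=1$ and one extra $\xvec{x}$-derivative plus the one in $\xHone_{\xvec{x}}$ from \Cref{lemma:epsgain} produces $m=2$). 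For the second summand, the trace of $\xmcal{N}^{\pm}$ at $z=0$ and \Cref{lemma:epsgain} together demand the same regularity, which matches $\xHn{{1,2,1}}_{\mathcal{E}}$. For the third estimate, apply $\partial_t$ to \eqref{equation:MHD_ElsaesserBlProfilesecondvelpm_contrl} and eliminate $\partial_t\xvec{v}^{\pm}$ on the right using the equation \eqref{equation:MHD_ElsaesserBlProfilevpm_contr}, replacing $\partial_t\xvec{v}^{\pm}$ by $\partial_{zz}(\lambda^{\pm}\xvec{v}^{+}+\lambda^{\mp}\xvec{v}^{-}) - [(\xvec{z}^0\cdot\xnab)\xvec{v}^{\pm}+(\xvec{v}^{\mp}\cdot\xnab)\xvec{z}^{0}]_{\operatorname{tan}} - \mathfrak{f}z\partial_z\xvec{v}^{\pm} + \xsym{\mu}^{\pm}$; the smoothness of $\xvec{z}^0$ and $\mathfrak{f}$ on $\overline{\mathcal{E}}$ together with the factor $z$ absorbed into an extra polynomial weight accounts for the jump from $(1,2,1)$ to $(2,3,3)$ in the required norm on $\xvec{v}^{\pm}$, while the control contribution yields the additional $\|\xsym{\mu}^{\pm}\|_{\xHn{{1,2,1}}_{\mathcal{E}}}$ term.

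The main obstacle is the $r=0$ case in the first estimate, where the antiderivative structure of $\overline{w}^{\pm}$ loses one $z$-derivative and one unit of polynomial weight; the Cauchy--Schwarz trick above is the crucial device that makes the estimate quantitative and explains the $\max\{1,s-1\}$ shift.
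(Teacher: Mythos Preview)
Your proposal is correct and follows the same overall architecture as the paper's proof: split $\xvec{w}^{\pm}$ into the normal part $\overline{w}^{\pm}\xvec{n}$ and the tangential part $-2\mathrm{e}^{-z}\xmcal{N}^{\pm}(\xvec{v}^{+},\xvec{v}^{-})|_{z=0}$, use \Cref{lemma:epsgain} together with the explicit formula $\partial_z^2\xvec{w}^{\pm}=(\xdiv{\partial_z\xvec{v}^{\pm}})\xvec{n}-2\mathrm{e}^{-z}\xmcal{N}^{\pm}(\xvec{v}^{+},\xvec{v}^{-})|_{z=0}$ for \eqref{equation:lmmblregwlinfzbl1}, and substitute the boundary layer equation \eqref{equation:MHD_ElsaesserBlProfilevpm_contr} into $\partial_t\xvec{w}^{\pm}$ for \eqref{equation:lmmblregwlinfzbl2}.

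The one genuine difference is your treatment of the $r=0$ case for $\overline{w}^{\pm}$. The paper does not use Cauchy--Schwarz plus Fubini; instead it writes $1+z^{2k}\leq \partial_z(z+z^{2k+1})$, integrates by parts in $z$ (the boundary terms vanish), and then applies Young's inequality with a small parameter $\eta$ so that the term containing $|\int_z^{+\infty}\partial_{\xvec{x}}^{\beta}\xdiv{\xvec{v}^{\pm}}\,\xdx{s}|^2$ reappears on the right with a small prefactor and can be absorbed back into the left-hand side. This yields directly the weight $(1+z^{2k+2})$ on $|\partial_{\xvec{x}}^{\beta}\xvec{v}^{\pm}|^2$, i.e.\ exactly the shift $k\mapsto k+1$, with no logarithmic loss. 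Your Cauchy--Schwarz/Fubini route is also valid, but with the specific splitting $(1+s^2)^{-2}\cdot(1+s^2)^{2}$ you pick up a factor $\log s$ at the borderline $k=1$; this is harmless and disappears if you use, say, $(1+s)^{-2-\delta}\cdot(1+s)^{2+\delta}$ instead, but the paper's integration-by-parts device sidesteps the issue entirely and is slightly cleaner.
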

	\begin{proof}
		One can show \eqref{equation:lmmblregwlinfzebv} by separately estimating the tangential and normal parts
		\begin{align*}
			\xvec{w}^{\pm}_T(\xvec{x},t,z) & \coloneqq -2\operatorname{e}^{-z}\xmcal{N}^{\pm}(\xvec{v}^{+},\xvec{v}^{-})(\xvec{x},t,0), \\
			\xvec{w}^{\pm}_N(\xvec{x},t,z) & \coloneqq - \xvec{n}(\xvec{x})\int_z^{+\infty} \xdop{\nabla}\cdot\xvec{v}^{\pm}(\xvec{x},t,s) \, \xdx{s}.
		\end{align*}
		For instance, integration by parts yields
		\begin{multline*}
			\|\xvec{w}^{\pm}_N(\cdot,t,z)\|_{\xHn{{k,m,0}}_{\mathcal{E}}}^2  \leq C\sum\limits_{|\beta| \leq m} \int_{\mathcal{E}} \int_{\mathbb{R}_+} \partial_z(z+z^{2k+1}) \left| \int_z^{+\infty} \partial_{\xvec{x}}^{\beta} (\xdop{\nabla}\cdot\xvec{v}^{\pm})(\xvec{x},t,s) \, \xdx{s} \right|^2 \, \xdx{z}  \xdx{\xvec{x}} \\
			\leq C\sum\limits_{|\beta| \leq m} \int_{\mathcal{E}} \int_{\mathbb{R}_+} \Bigg| (z+z^{2k+1}) \partial_{\xvec{x}}^{\beta} (\xdop{\nabla}\cdot\xvec{v}^{\pm})(\xvec{x},t,z) \left(\int_z^{+\infty} \partial_{\xvec{x}}^{\beta}  (\xdop{\nabla}\cdot\xvec{v}^{\pm})(\xvec{x},t,s) \, \xdx{s} \right) \Bigg| \, \xdx{z}  \xdx{\xvec{x}}.
		\end{multline*}
		Hence, for arbitrary $\ell > 0$ one has
		\begin{equation*}
			\begin{multlined}
				\|\xvec{w}^{\pm}_N(\cdot,t,z)\|_{\xHn{{k,m,0}}_{\mathcal{E}}}^2 \leq C(\ell)\sum\limits_{|\beta| \leq m+1} \int_{\mathcal{E}} \int_{\mathbb{R}_+} (1+z^{2k+2}) |\partial_{\xvec{x}}^{\beta}\xvec{v}^{\pm}(\xvec{x},t,z)|^2 \, \xdx{z}  \xdx{\xvec{x}}\\
				+ \ell \sum\limits_{|\beta| \leq m} \int_{\mathcal{E}} \int_{\mathbb{R}_+} \partial_z(z+z^{2k+1}) \left| \int_z^{+\infty} \partial_{\xvec{x}}^{\beta}  (\xdop{\nabla}\cdot\xvec{v}^{\pm})(\xvec{x},t,s) \, \xdx{s} \right|^2 \, \xdx{z}  \xdx{\xvec{x}}.
			\end{multlined}
		\end{equation*}
		Thus, by choosing small $\ell > 0$ and a new constant $C = C(\ell) > 0$, one obtains
		\[
			\|\xvec{w}^{\pm}_N(\cdot,t\cdot)\|_{\xHn{{k,m,0}}_{\mathcal{E}}} \leq C \|\xvec{v}^{\pm}(\cdot,t,\cdot)\|_{\xHn{{k+1,m+1,0}}_{\mathcal{E}}}.
		\]
		The other aspects of \eqref{equation:lmmblregwlinfzebv} are along the same lines, noting that estimating $\xvec{w}^{\pm}_T$ costs one regularity level in $z$ due the application of a trace theorem. The estimate \eqref{equation:lmmblregwlinfzbl1} follows from a combination of \Cref{lemma:epsgain}, the above idea for showing \eqref{equation:lmmblregwlinfzebv} and the identity
		\[
			\partial_{z}^2 \xvec{w}^{\pm} = (\xdiv{\partial_z \xvec{v}^{\pm}}) \xvec{n} - 2\operatorname{e}^{-z} \left[\xsym{\mathcal{N}}^{\pm}(\xvec{v}^+, \xvec{v}^-)\right]_{|_{z = 0}}.
		\]
		Regarding \eqref{equation:lmmblregwlinfzbl2}, the starting point is to derive from \eqref{equation:MHD_ElsaesserBlProfilesecondvelpm_contrl} the representation
		\[
			\partial_t \xvec{w}^{\pm} = \partial_t \overline{w}^{\pm} \xvec{n} -2\operatorname{e}^{-z} \left[\xsym{\mathcal{N}}^{\pm}(\partial_t \xvec{v}^+,\partial_t \xvec{v}^-)\right]_{|_{z = 0}},
		\]
		into which one can subsequently insert the equation \eqref{equation:MHD_ElsaesserBlProfilevpm_contr} and proceed as before.
	\end{proof}
	
	\begin{lmm}\label{lemma:wpth}
		The Neumann problems \eqref{equation:divcorrNeum} are well-posed, with uniqueness of solutions up to a constant, and all solutions $\theta^{\pm,\epsilon}$ obey for $l \in \{0,1,2\}$ the estimates
		\begin{gather}
			\|\theta^{\pm,\epsilon}(\cdot, t)\|_{\xHn{{2+l}}(\mathcal{E})}  \leq \epsilon^{\frac{1}{4}-\frac{l}{2}} C\| \xvec{w}^{\pm} (\cdot,t,\cdot)\|_{\xHn{{0,2+l,l}}_{\mathcal{E}}} + C\sum\limits_{\square\in\{+,-\}}\| \xvec{v}^{\square} (\cdot,t,\cdot)\|_{\xHn{{1,1+l,0}}_{\mathcal{E}}}.\label{equation:potflowthpmlap}
		\end{gather}
		If $(\xvec{w}^{+}(\xvec{x},t,\cdot)-\xvec{w}^{-}(\xvec{x},t,\cdot))\cdot \xvec{n} = 0$ for all $\xvec{x} \in \partial \mathcal{E}$, it additionally holds
		\begin{gather}
			\|\theta^{+,\epsilon}(\cdot, t)-\theta^{-,\epsilon}(\cdot, t)\|_{\xHn{{2}}(\mathcal{E})}  \leq \epsilon^{\frac{1}{4}} C \sum\limits_{\square\in\{+,-\}} \|\xvec{v}^{\square} (\cdot,t,\cdot)\|_{\xHn{{1,3,1}}_{\mathcal{E}}}.\label{equation:potflowthpmlap2}
		\end{gather}
		Furthermore, for all $t \in [0, T/\epsilon]$, one has
		\begin{align}
			\|\Delta \xdop{\nabla} \theta^{\pm,\epsilon}(\cdot,t)\|_{\xLtwo(\mathcal{E})} \leq  \epsilon^{-\frac{1}{4}} C \sum\limits_{\square\in\{+,-\}} \| \xvec{v}^{\square}(\cdot,t,\cdot) \|_{\xHn{{2,4,2}}_{\mathcal{E}}}. \label{equation:lmmblregthetalinfzl2}
		\end{align}
	\end{lmm}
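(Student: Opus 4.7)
The plan is to reduce everything to standard elliptic regularity for the Neumann problem together with careful bookkeeping of the fast-variable scaling, aided by \Cref{lemma:epsgain} and \Cref{lemma:propwpm}.

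First I would verify the solvability compatibility condition
\[
\int_{\mathcal{E}} \left\llbracket \xdiv{\xvec{w}^{\pm}}\right\rrbracket_{\epsilon} \, \xdx{\xvec{x}} = \int_{\partial\mathcal{E}} \xvec{w}^{\pm}(\cdot,\cdot,0)\cdot\xvec{n}\, \xdx{S}.
\]
Applying the commutation formula from \eqref{equation:comfo} to $\xvec{w}^{\pm}$ and integrating by parts, the compatibility reduces to showing that
$\int_{\mathcal{E}} \left\llbracket \xdiv{\xvec{v}^{\pm}}\right\rrbracket_{\epsilon} \xdx{\xvec{x}} = 0$; applying the same commutation identity to $\xvec{v}^{\pm}$ and using $\xvec{v}^{\pm}\cdot\xvec{n}=0$ from \eqref{equation:lmmblorth} (which also gives $\partial_z\xvec{v}^{\pm}\cdot\xvec{n}=0$) kills both resulting contributions. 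Well-posedness (up to an irrelevant additive constant, which can be normalized away) then follows from standard Neumann theory.

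Next, for \eqref{equation:potflowthpmlap} I would invoke the standard elliptic estimate
\[
\|\theta^{\pm,\epsilon}(\cdot,t)\|_{\xHn{{2+l}}(\mathcal{E})} \leq C \left( \|\left\llbracket \xdiv\xvec{w}^{\pm}\right\rrbracket_{\epsilon}\|_{\xHn{l}(\mathcal{E})} + \|\xvec{w}^{\pm}(\cdot,t,0)\cdot\xvec{n}\|_{\xHn{{l+1/2}}(\partial\mathcal{E})}\right)
\]
and estimate the two pieces separately. For the volume term, each spatial derivative on $\llbracket\cdot\rrbracket_\epsilon$ produces, via \eqref{equation:comfo}, a fast-variable factor $\epsilon^{-1/2}$; combined with the $\epsilon^{1/4}$ gain from \Cref{lemma:epsgain} this yields the prefactor $\epsilon^{1/4-l/2}$ multiplying $\|\xvec{w}^{\pm}\|_{\xHn{{0,2+l,l}}_{\mathcal{E}}}$. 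The boundary term is not fast-variable scaled: $\xvec{w}^{\pm}(\cdot,t,0)\cdot\xvec{n} = \overline{w}^{\pm}(\cdot,t,0)$ equals $-\int_0^\infty \xdiv\xvec{v}^{\pm}\,\xdx{s}$ (the $\xmcal{N}^{\pm}$ contribution is tangential), so trace estimates together with the Cauchy--Schwarz-type bound used in the proof of \Cref{lemma:propwpm} produce $\sum_\square\|\xvec{v}^{\square}\|_{\xHn{{1,1+l,0}}_{\mathcal{E}}}$.

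For \eqref{equation:potflowthpmlap2}, the additional assumption $(\xvec{w}^{+}-\xvec{w}^{-})\cdot\xvec{n} = 0$ on $\partial\mathcal{E}$ makes the Neumann datum for $\theta^{+,\epsilon}-\theta^{-,\epsilon}$ vanish. The elliptic estimate then reduces to bounding $\|\left\llbracket\xdiv{(\xvec{w}^{+}-\xvec{w}^{-})}\right\rrbracket_\epsilon\|_{\xLtwo(\mathcal{E})}$ alone; a direct application of \Cref{lemma:epsgain} produces the $\epsilon^{1/4}$ gain, and \eqref{equation:lmmblregwlinfzebv} converts the $\xvec{w}$-norm into the stated $\xvec{v}$-norm. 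Finally, for \eqref{equation:lmmblregthetalinfzl2} I would use $\Delta\xnab\theta^{\pm,\epsilon} = \xnab\Delta\theta^{\pm,\epsilon} = -\xnab\left\llbracket \xdiv{\xvec{w}^{\pm}}\right\rrbracket_\epsilon$ and expand via \eqref{equation:comfo}: the tangential-type piece $\llbracket\xnab\xdiv\xvec{w}^{\pm}\rrbracket_\epsilon$ is $O(\epsilon^{1/4})$ by \Cref{lemma:epsgain}, while the normal-type piece $\epsilon^{-1/2}\llbracket\partial_z\xdiv\xvec{w}^{\pm}\rrbracket_\epsilon\xvec{n}$ costs $\epsilon^{-1/4}$ after the same gain; combining with \eqref{equation:lmmblregwlinfzebv}--\eqref{equation:lmmblregwlinfzbl1} converts to a $\xvec{v}^{\square}$-norm as required.

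The main obstacle is not conceptual but bookkeeping: one has to chase exactly how many $\xvec{x}$- and $z$-derivatives a fast-variable derivative produces through \eqref{equation:comfo}, balance the $\epsilon^{-1/2}$ losses against the $\epsilon^{1/4}$ gains of \Cref{lemma:epsgain}, and convert $\xvec{w}^{\pm}$-norms into the $\xvec{v}^{\pm}$-weighted norms via \Cref{lemma:propwpm} at the correct regularity levels. The trace part of the boundary datum in \eqref{equation:potflowthpmlap} (for $l\geq 1$) requires a careful application of the estimates in \Cref{lemma:propwpm}, since one loses one level of $z$-regularity passing from $\xvec{w}^{\pm}$ to $\xvec{v}^{\square}$, which is why the stated bound features $\xHn{{1,1+l,0}}_{\mathcal{E}}$ rather than anything sharper.
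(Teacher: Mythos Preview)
Your proposal is correct and follows essentially the same approach as the paper: verify the Neumann compatibility condition via the commutation formulas \eqref{equation:comfo} together with \eqref{equation:lmmblorth} and \eqref{equation:MHD_ElsaesserBlProfilesecondvelprop_control}, then combine standard elliptic regularity with \Cref{lemma:epsgain} and \Cref{lemma:propwpm} to track the $\epsilon$-scaling and convert $\xvec{w}^{\pm}$-norms into $\xvec{v}^{\square}$-norms. The paper defers the details of \eqref{equation:potflowthpmlap} and \eqref{equation:lmmblregthetalinfzl2} to \cite[Equations (4.29), (4.31)--(4.33) and (4.58)]{CoronMarbachSueur2020}, whereas you spell them out, but the argument for \eqref{equation:potflowthpmlap2} you give (zero Neumann datum, then \Cref{lemma:epsgain} and \eqref{equation:lmmblregwlinfzebv}) is exactly the paper's.
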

	\begin{proof}
		In \eqref{equation:divcorrNeum}, there is no coupling between $\pm$ superscribed functions. Thus, the well-posedness of \eqref{equation:divcorrNeum} together with \eqref{equation:potflowthpmlap} and \eqref{equation:lmmblregthetalinfzl2} can be established by analysis similar to \cite[Equations (4.29), (4.31)--(4.33) and (4.58)]{CoronMarbachSueur2020}. In particular, by employing \eqref{equation:comfo}, \eqref{equation:lmmblorth}, \eqref{equation:MHD_ElsaesserBlProfilesecondvelpm_contrl}, and \eqref{equation:MHD_ElsaesserBlProfilesecondvelprop_control}, one can verify the necessary compatibility conditions for \eqref{equation:divcorrNeum} via
		\begin{multline*}
			\int_{\partial\mathcal{E}} \xvec{w}^{\pm}(\xvec{x},t,0) \cdot \xvec{n}(\xvec{x}) \, \xdx{S(\xvec{x})} = \int_{\partial\mathcal{E}} \left\llbracket \xvec{w}^{\pm}\right\rrbracket_{\epsilon}(\xvec{x},t) \cdot \xvec{n}(\xvec{x}) \, \xdx{S(\xvec{x})} = \int_{\mathcal{E}} \xdiv{\left\llbracket \xvec{w}^{\pm}\right\rrbracket_{\epsilon}}(\xvec{x},t) \, \xdx{\xvec{x}}\\
			\begin{aligned}
				& = \int_{\mathcal{E}} \left(\left\llbracket \xdiv{\xvec{w}^{\pm}}\right\rrbracket_{\epsilon} - \frac{1}{\sqrt{\epsilon}} \xvec{n} \cdot \left\llbracket \partial_z\xvec{w}^{\pm}\right\rrbracket_{\epsilon}\right) (\xvec{x},t) \, \xdx{\xvec{x}}\\
				& = \int_{\mathcal{E}} \left(\left\llbracket \xdiv{\xvec{w}^{\pm}}\right\rrbracket_{\epsilon} - \frac{1}{\sqrt{\epsilon}} \xdiv{\left\llbracket \xvec{v}^{\pm}\right\rrbracket_{\epsilon}} - \frac{1}{\epsilon} \xvec{n} \cdot \left\llbracket \partial_z\xvec{v}^{\pm}\right\rrbracket_{\epsilon}\right) (\xvec{x},t) \, \xdx{\xvec{x}}\\
				& = \int_{\mathcal{E}} \left\llbracket \xdiv{\xvec{w}^{\pm}}\right\rrbracket_{\epsilon} (\xvec{x},t) \, \xdx{\xvec{x}}.
			\end{aligned}
		\end{multline*}
		It remains to show \eqref{equation:potflowthpmlap2} when $(\xvec{w}^{+}-\xvec{w}^{-})\cdot \xvec{n} = 0$ at $\partial \mathcal{E}$. By elliptic regularity for weak solutions to the Laplace equation \eqref{equation:divcorrNeum}, \Cref{lemma:epsgain}, and \eqref{equation:lmmblregwlinfzebv} one obtains
		\begin{multline*}
			\|(\theta^{+,\epsilon}-\theta^{-,\epsilon})(\cdot, t)\|_{\xHn{{2}}(\mathcal{E})} \\ \leq C\|\left\llbracket \xdiv{(\xvec{w}^{+}-\xvec{w}^{-})}(\cdot,t,\cdot)\right\rrbracket_{\epsilon}\|_{\xLtwo(\mathcal{E})} \leq \epsilon^{\frac{1}{4}}C \sum\limits_{\square\in\{+,-\}} \|\xvec{v}^{\square}\|_{\xHn{{1,3,1}}_{\mathcal{E}}}. 
		\end{multline*}
	\end{proof}
	
	Finally, several properties of the remainder terms $\xvec{r}^{\pm,\epsilon}$ in the ansatz \eqref{equation:ansatz2} are summarized.
	\begin{lmm}\label{lemma:prpbcicrmpm}
		The remainder terms $\xvec{r}^{\pm,\epsilon}$ given in \eqref{equation:ansatz2} satisfy the conditions
		\begin{equation}\label{equation:bcdivfrem}
			\begin{aligned}
				& \xvec{r}^{\pm,\epsilon}(\cdot,0)  = \xvec{0} && \mbox{ in } \mathcal{E},\\
				& \xdop{\nabla}\cdot\xvec{r}^{\pm,\epsilon} = 0 && \mbox{ in } \mathcal{E}\times\mathbb{R}_+,\\
				& \xvec{r}^{\pm,\epsilon} \cdot \xvec{n} = 0 && \mbox{ on } \partial \mathcal{E}\times\mathbb{R}_+,\\
				&\xmcal{N}^{\pm}(\xvec{r}^{+,\epsilon},\xvec{r}^{-,\epsilon}) = \xsym{\mathfrak{g}}^{\pm,\epsilon} && \mbox{ on } \partial \mathcal{E}\times\mathbb{R}_+,
			\end{aligned}
		\end{equation}
		where
		\[
			\xsym{\mathfrak{g}}^{\pm,\epsilon} \coloneqq - \xmcal{N}^{\pm}(\xvec{z}^{+,1},\xvec{z}^{-,1}) - \xmcal{N}^{\pm}(\xdop{\nabla}\theta^{+,\epsilon},\xdop{\nabla}\theta^{-,\epsilon}) - \xmcal{N}^{\pm}(\xvec{w}^{+},\xvec{w}^{-})_{|_{z=0}}.
		\]
		Moreover, for a constant $C > 0$ independent of $\epsilon > 0$, the boundary data $\xsym{\mathfrak{g}}^{\pm,\epsilon}$ can be estimated by
		\begin{equation}\label{equation:wblresbdtest}
			\| \xsym{\mathfrak{g}}^{\pm,\epsilon}\|_{\xLtwo((0,T/\epsilon); \xHone(\mathcal{E}))}^2 \leq
			C \sum\limits_{\square\in\{+,-\}} \left(\| \xvec{z}^{\square,1} \|_{\xLtwo((0,T); \xHtwo(\mathcal{E}))}^2 + \epsilon^{-\frac{1}{2}} \| \xvec{v}^{\square} \|_{\xLtwo((0,T); \xHn{{3,4,2}}_{\mathcal{E}})}^2\right).
		\end{equation}
	\end{lmm}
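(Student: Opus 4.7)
The approach is a direct termwise verification: substitute the ansatz \eqref{equation:ansatz2} into the conditions already enforced on $\xvec{z}^{\pm,\epsilon}$ by \eqref{equation:MHD_ElsaesserExtScaledLongTime}, and use the construction of each intermediate profile to cancel everything except the claimed identities for $\xvec{r}^{\pm,\epsilon}$. At $t = 0$, Lemmas~\Rref{lemma:original_u^0} and \Rref{lemma:Z^0} guarantee $\xvec{z}^0(\cdot,0) = \xvec{0}$, while \eqref{equation:MHD_ElsaesserBlProfilevpm_contribc} gives $\xvec{v}^{\pm}(\cdot,0,\cdot) = \xvec{0}$; consequently \eqref{equation:MHD_ElsaesserBlProfilesecondvelpm_contrl} yields $\xvec{w}^{\pm}(\cdot,0,\cdot) = \xvec{0}$, and then \eqref{equation:divcorrNeum} forces $\xnab\theta^{\pm,\epsilon}(\cdot,0) = \xvec{0}$. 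Combined with $\xvec{z}^{\pm,1}(\cdot,0) = \xvec{z}_0^{\pm}$ from \eqref{equation:MHD_ElsaesserExt_Ovareps}, this exhausts $\xvec{z}^{\pm,\epsilon}(\cdot,0) = \epsilon\xvec{z}_0^{\pm}$ and leaves $\xvec{r}^{\pm,\epsilon}(\cdot,0) = \xvec{0}$. For the divergence identity, I apply the first line of \eqref{equation:comfo} termwise: the normal trace $\xvec{n}\cdot\partial_z\xvec{v}^{\pm}$ vanishes by \eqref{equation:lmmblorth}, the identity $\xdiv{\xvec{v}^{\pm}} = \xvec{n}\cdot\partial_z\xvec{w}^{\pm}$ from \eqref{equation:MHD_ElsaesserBlProfilesecondvelprop_control} cancels the $O(\sqrt{\epsilon})$ contribution coming from $\llbracket\xvec{w}^{\pm}\rrbracket_{\epsilon}$, and the Poisson equation in \eqref{equation:divcorrNeum} absorbs the remaining $\epsilon\llbracket\xdiv{\xvec{w}^{\pm}}\rrbracket_\epsilon$. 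Since $\xdiv{\xvec{z}^0} = \sigma^0 = \sigma^{\pm,\epsilon}$, the desired property $\xdiv{\xvec{r}^{\pm,\epsilon}} = 0$ follows. The trace $\xvec{r}^{\pm,\epsilon}\cdot\xvec{n} = 0$ on $\partial\mathcal{E}$ is read off from the tangentiality of $\xvec{z}^0$, $\xvec{z}^{\pm,1}$, $\xvec{v}^{\pm}$ together with the Neumann compensation $\partial_{\xvec{n}}\theta^{\pm,\epsilon} = -\xvec{w}^{\pm}(\cdot,\cdot,0)\cdot\xvec{n}$ of \eqref{equation:divcorrNeum}.

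The identification of $\xsym{\mathfrak{g}}^{\pm,\epsilon}$ proceeds in the same spirit, now applied to $\xmcal{N}^{\pm}(\xvec{z}^{+,\epsilon},\xvec{z}^{-,\epsilon}) = \xvec{0}$. Linearity of $\xmcal{N}^{\pm}$ and the commutation rule analogous to the last line of \eqref{equation:comfo} (where the fast-scaling correction only touches the argument sharing the superscript with $\xmcal{N}^{\pm}$) reduce the analysis to collecting powers of $\sqrt{\epsilon}$. At leading order $O(1)$, the Neumann data \eqref{equation:MHD_ElsaesserBlProfilevpm_contribc}, together with \eqref{equation:fktbdrleq_contr} and $\chi_{\partial\mathcal{E}} = 1$ on $\partial\mathcal{E}$, give $[\partial_z\xvec{v}^{\pm}]_{\operatorname{tan}}(\cdot,\cdot,0) = 2\xmcal{N}^{\pm}(\xvec{z}^0,\xvec{z}^0)$, which cancels $\xmcal{N}^{\pm}(\xvec{z}^0,\xvec{z}^0)$ exactly. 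At $O(\sqrt{\epsilon})$, the identity $[\partial_z\xvec{w}^{\pm}]_{\operatorname{tan}}(\cdot,\cdot,0) = 2\xmcal{N}^{\pm}(\xvec{v}^+,\xvec{v}^-)_{|_{z=0}}$ from \eqref{equation:MHD_ElsaesserBlProfilesecondvelprop_control} cancels the boundary contribution produced by $\sqrt{\epsilon}\llbracket\xvec{v}^{\pm}\rrbracket_\epsilon$. Dividing the remaining $O(\epsilon)$ terms by $\epsilon$ recovers precisely the announced expression for $\xsym{\mathfrak{g}}^{\pm,\epsilon}$.

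Finally, the estimate \eqref{equation:wblresbdtest} is obtained termwise via trace inequalities controlling $\xHone(\partial\mathcal{E})$-norms through $\xHtwo(\mathcal{E})$-norms in the interior. The $\xmcal{N}^{\pm}(\xvec{z}^{+,1},\xvec{z}^{-,1})$ summand is directly bounded and absorbed into the first term on the right, using that $\xvec{z}^{\pm,1}$ is supported in $t \in [0,T]$ by \Cref{lemma:flushing}. The $\xnab\theta^{\pm,\epsilon}$ piece is controlled via \eqref{equation:potflowthpmlap} at level $l = 1$ combined with the $\xvec{w}^{\pm}$ bound \eqref{equation:lmmblregwlinfzebv}, which after squaring generates the factor $\epsilon^{-1/2}$ stemming from $\|\theta^{\pm,\epsilon}\|_{\xHn{3}(\mathcal{E})} \leq C\epsilon^{-1/4}\|\xvec{v}^{\pm}\|_{\xHn{{1,4,1}}_{\mathcal{E}}}$. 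The boundary trace of $\xvec{w}^{\pm}$ is handled through \eqref{equation:lmmblregwlinfzebv} as well. The principal technical obstacle I expect lies in controlling the time integration over the long interval $(T, T/\epsilon)$, on which $\xvec{z}^0$, $\xvec{z}^{\pm,1}$, and the controls $\xsym{\mu}^{\pm}$ all vanish but the boundary layers $\xvec{v}^{\pm}$ persist under the coupled heat evolution \eqref{equation:MHD_ElsaesserBlProfilevpm_contr_largeTheat}: the well-prepared dissipation rate \eqref{equation:lmmdecestbl} from \Cref{lemma:blcxi} with $r = 6$ must provide just enough decay in $t$ for the tail $\xLtwo_t$-integral on $(T, T/\epsilon)$ to be absorbed by the $\xLtwo((0,T); \xHn{{3,4,2}}_{\mathcal{E}})$ norm appearing on the right-hand side.
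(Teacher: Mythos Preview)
Your termwise verification is correct and matches the paper's (very terse) approach, which essentially just records the two cancellations $2\xmcal{N}^{\pm}(\xvec{z}^0,\xvec{z}^0) = [\partial_z\xvec{v}^{\pm}]_{\operatorname{tan}}|_{z=0}$ and $2\xmcal{N}^{\pm}(\xvec{v}^+,\xvec{v}^-)|_{z=0} = [\partial_z\xvec{w}^{\pm}]_{\operatorname{tan}}|_{z=0}$ coming from \eqref{equation:MHD_ElsaesserBlProfilevpm_contribc} and \eqref{equation:MHD_ElsaesserBlProfilesecondvelprop_control}. One small correction: the norm in \eqref{equation:wblresbdtest} is $\xHone(\mathcal{E})$ on the full extended domain (the friction operators and Weingarten map have been smoothly continued to $\overline{\mathcal{E}}$), so no spatial trace inequality enters---$\xmcal{N}^{\pm}$ simply costs one $\xvec{x}$-derivative, which is why $\xHtwo(\mathcal{E})$ controls it; your identification of the tail issue on $(T,T/\epsilon)$ and its resolution through the decay \eqref{equation:lmmdecestbl} is correct and is exactly how the paper uses \eqref{equation:wblresbdtest} downstream in \Cref{lemma:remaindest}.
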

	\begin{proof}
		The assertions in \eqref{equation:bcdivfrem} are a consequence of the definitions for the functions in \eqref{equation:ansatz2}--\eqref{3.7} studied in Subsections~ \Rref{subsection:return}, \Rref{subsection:flushing} and \Rref{subsection:blprf}. Particularly, in view  of \Cref{remark:nbhd}, \Cref{lemma:chisupp}, the relations in \eqref{equation:MHD_ElsaesserBlProfilesecondvelprop_control}, and the boundary conditions \eqref{equation:MHD_ElsaesserBlProfilevpm_contribc}, one has
		\begin{alignat*}{3}
			2\xdop{\mathcal{N}^{\pm}}(\xvec{z}^{0},\xvec{z}^{0}) = \left[\partial_z \xvec{v}^{\pm}\right]_{\operatorname{tan}} |_{z = 0}, & \quad &
			2\xmcal{N}^{\pm}(\xvec{v}^{+},\xvec{v}^{-}) |_{z = 0} = \left[\partial_z \xvec{w}^{\pm}\right]_{\operatorname{tan}} |_{z = 0}.
		\end{alignat*}
	\end{proof}

	\subsection{Remainder estimates}\label{subsection:remestwbctrl}
	The goal is now to show that $\|\xvec{z}^{\pm,\epsilon}(\cdot,T/\epsilon)\|_{\xLtwo(\mathcal{E})} = O(\epsilon^{9/8})$ as $\epsilon \longrightarrow 0$. As various estimates are similar to those for the Navier--Stokes problem in \cite{CoronMarbachSueur2020}, we place emphasis on the arguments that are specific for the current MHD problem.
	
	\subsubsection{Equations satisfied by the remainders}\label{subsubsection:remaindereq}
	In order to derive the equations satisfied by the remainders $\xvec{r}^{\pm,\epsilon}$, the definitions of $q^{\pm}$ given in \eqref{equation:MHD_ElsaesserBlProfilepressurepm_control} are employed for rewriting \eqref{equation:MHD_ElsaesserBlProfilevpm_contr} in the form
	\begin{equation}\label{equation:blvaf}
		\partial_t \xvec{v}^{\pm} - \partial_{zz}(\lambda^{\pm}\xvec{v}^{+} + \lambda^{\mp}\xvec{v}^{-}) + (\xvec{z}^{0} \cdot \xdop{\nabla}) \xvec{v}^{\pm} + (\xvec{v}^{\mp} \cdot \xdop{\nabla})\xvec{z}^{0} + \mathfrak{f}z\partial_z\xvec{v}^{\pm} - \xvec{n} \partial_z q^{\pm} = \xsym{\xvec{\mu}}^{\pm} .
	\end{equation}
	Then, \eqref{equation:ansatz2}--\eqref{3.7} are inserted into \eqref{equation:MHD_ElsaesserExtScaledLongTime} while using \eqref{equation:comfo}, \eqref{equation:blvaf} and \Cref{lemma:prpbcicrmpm}. Since the terms $\llbracket (\xvec{v}^{\mp}\cdot\xvec{n})\partial_z\xvec{v}^{\pm}\rrbracket_{\epsilon} $ and $\llbracket (\xvec{v}^{\mp}\cdot\xvec{n})\partial_z\xvec{w}^{\pm}\rrbracket_{\epsilon} $ vanish, the remainders $\xvec{r}^{\pm,\epsilon}$ satisfy the following problem
	\begin{equation}\label{equation:MHD_ElsaesserExtLtScaled_raminder_blctrl}
		\begin{cases}
			\partial_t \xvec{r}^{\pm,\epsilon} - \epsilon\Delta (\lambda^{\pm}\xvec{r}^{+,\epsilon} + \lambda^{\mp}\xvec{r}^{-,\epsilon}) + (\xvec{z}^{\mp,\epsilon} \cdot \xdop{\nabla}) \xvec{r}^{\pm,\epsilon} + \xdop{\nabla} \pi^{\pm,\epsilon} \! = \! \left\llbracket \xvec{h}^{\pm,\epsilon} - A^{\epsilon,\pm} \xvec{r}^{\mp,\epsilon}\right\rrbracket_{\epsilon} & \!\!\!\!\!\! \mbox{ in } \mathcal{E}_{T/\epsilon},\\
			\xdop{\nabla}\cdot\xvec{r}^{\pm,\epsilon} = 0  & \!\!\!\!\!\! \mbox{ in } \mathcal{E}_{T/\epsilon},\\
			\xvec{r}^{\pm,\epsilon} \cdot \xvec{n} = 0 & \!\!\!\!\!\! \mbox{ on }  \Sigma_{T/\epsilon}, \\
			\xmcal{N}^{\pm}(\xvec{r}^{+,\epsilon},\xvec{r}^{-,\epsilon}) = \xsym{\mathfrak{g}}^{\pm,\epsilon} & \!\!\!\!\!\! \mbox{ on }  \Sigma_{T/\epsilon},\\
			\xvec{r}^{\pm,\epsilon}(\cdot, 0) = \xvec{0} & \!\!\!\!\!\! \mbox{ in } \mathcal{E},
		\end{cases}
	\end{equation}
	where the amplification terms $A^{\epsilon,\pm} \xvec{r}^{\mp,\epsilon}$ are given by
	\begin{equation*}
		A^{\epsilon,\pm} \xvec{r}^{\mp,\epsilon} \!
		\coloneqq \! (\xvec{r}^{\mp,\epsilon} \cdot \xdop{\nabla}) \! \left( \xvec{z}^{0} + \sqrt{\epsilon} \xvec{v}^{\pm} + \epsilon \xvec{z}^{\pm,1} + \epsilon \xdop{\nabla}\theta^{\pm,\epsilon} + \epsilon \xvec{w}^{\pm} \right) - (\xvec{r}^{\mp,\epsilon} \cdot \xvec{n}) \! \left(\partial_z \xvec{v}^{\pm} + \sqrt{\epsilon} \partial_z \xvec{w}^{\pm} \right),
	\end{equation*}
	and the remaining terms
	\[
	\xvec{h}^{\pm,\epsilon} \coloneqq \lambda^{\pm}\xvec{h}^{+,\epsilon,1} + 	\lambda^{\mp}\xvec{h}^{-,\epsilon,1} + \xvec{h}^{\pm,\epsilon,2} -\partial_t \xvec{w}^{\pm} - \xdop{\nabla}q^{\pm},
	\]
	contain
	\begin{equation*}\label{equation:remtrm_blctrl}
		\begin{aligned}
			\xvec{h}^{\pm,\epsilon,1} & \coloneqq  \left(\partial_{z}^2 \xvec{w}^{\pm} +  \Delta \varphi_{\mathcal{E}} \partial_z \xvec{v}^{\pm} - 2 (\xvec{n}\cdot\xdop{\nabla})\partial_z \xvec{v}^{\pm}\right) + \epsilon \left( \Delta \xvec{w}^{\pm} + \Delta \xvec{z}^{\pm,1} + \Delta \xdop{\nabla}\theta^{\pm,\epsilon} \right)\\
			& \quad \, + \sqrt{\epsilon} \left( \Delta \xvec{v}^{\pm} + \Delta \varphi_{\mathcal{E}}  \partial_z \xvec{w}^{\pm} -2(\xvec{n} \cdot \xdop{\nabla})\partial_z \xvec{w}^{\pm} \right),
		\end{aligned}
	\end{equation*}
	and
	\begin{equation*}
		\begin{aligned}
			\xvec{h}^{\pm,\epsilon,2} & \coloneqq -\left\{ \left[ \xvec{v}^{\mp} + \sqrt{\epsilon}\left(\xvec{w}^{\mp} + \xvec{z}^{\mp,1} + \xdop{\nabla}\theta^{\mp,\epsilon} \right) \right] \cdot \xdop{\nabla} \right\} \left[ \xvec{v}^{\pm} + \sqrt{\epsilon} \left( \xvec{w}^{\pm} + \xvec{z}^{\pm,1} + \xdop{\nabla}\theta^{\pm,\epsilon} \right) \right] \\
			& \quad \, - (\xvec{z}^{0} \cdot \xdop{\nabla})\xvec{w}^{\pm} - (\xvec{w}^{\mp} \cdot \xdop{\nabla}) \xvec{z}^{0} + \left[\left(\xvec{w}^{\mp} + \xvec{z}^{\mp,1} + \xdop{\nabla}\theta^{\mp,\epsilon}\right) \cdot \xvec{n}\right]\partial_z\left( \xvec{v}^{\pm} + \sqrt{\epsilon} \xvec{w}^{\pm} \right)\\
			& \quad \, + \widetilde{\xsym{\zeta}}^{\pm,\epsilon} - \left((\xvec{z}^0 \cdot \xnab)\xnab\theta^{\pm,\epsilon} + (\xnab\theta^{\mp,\epsilon} \cdot \xnab)\xvec{z}^0 - \xnab(\xvec{z}^0\cdot \xnab \theta^{\pm,\epsilon})\right) + \frac{1}{\sqrt{\epsilon}}\xvec{z}^0 \cdot \xvec{n} \partial_z \xvec{w}^{\pm}.
		\end{aligned}
	\end{equation*}
	
	Before deriving energy estimates for $\xvec{r}^{\pm,\epsilon}$, several asymptotic properties of the right-hand side terms in \cref{equation:MHD_ElsaesserExtLtScaled_raminder_blctrl} are summarized.
	\begin{lmm}\label{lemma:remaindest} 
		For all $t \in [0, T/\epsilon]$, one has
		\begin{gather}\label{equation:remesblAgfl1}
			\|\left\llbracket A^{\epsilon,\pm} \xvec{r}^{\pm,\epsilon} \cdot \xvec{r}^{\mp,\epsilon} \right\rrbracket_{\epsilon}\|_{\xLone(\mathcal{E}\times(0,t))} \leq C \left(\|\xvec{r}^{+,\epsilon}\|^2_{\xLtwo(\mathcal{E}\times(0,t))} + \|\xvec{r}^{-,\epsilon}\|^2_{\xLtwo(\mathcal{E}\times(0,t))}\right)
		\end{gather}
		with a constant $C > 0$  independent of $t$, $\epsilon$ and $\xvec{r}^{\pm,\epsilon}$. Furthermore, as $\epsilon \longrightarrow 0$, it holds
		\begin{align}
			\|\xsym{\mathfrak{g}}^{\pm,\epsilon}\|_{\xLtwo((0,T/\epsilon); \xHone(\mathcal{E}))}^2 = O(\epsilon^{-\frac{1}{2}}), \label{equation:remesblAgfl2} \quad
			\|\left\llbracket \xvec{h}^{\pm,\epsilon}\right\rrbracket_{\epsilon}\|_{\xLone((0,T/\epsilon); \xLtwo(\mathcal{E}))}^2 =  O(\epsilon^{\frac{1}{2}}). 
		\end{align}
	\end{lmm}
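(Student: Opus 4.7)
The three inequalities are largely independent and I would tackle them separately. For \eqref{equation:remesblAgfl1}, the key structural observation is that the two subtracted terms $(\xvec{r}^{\mp,\epsilon}\cdot\xvec{n})(\partial_z\xvec{v}^{\pm}+\sqrt{\epsilon}\partial_z\xvec{w}^{\pm})$ in the definition of $A^{\epsilon,\pm}\xvec{r}^{\mp,\epsilon}$ are engineered to absorb precisely the $1/\sqrt\epsilon$-singular contribution produced by the commutation formula \eqref{equation:comfo} when one rewrites $\left\llbracket(\xvec{r}^{\mp,\epsilon}\cdot\xnab)\xvec{v}^{\pm}\right\rrbracket_{\epsilon}$ and $\left\llbracket(\xvec{r}^{\mp,\epsilon}\cdot\xnab)\xvec{w}^{\pm}\right\rrbracket_{\epsilon}$. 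After this bookkeeping the amplification term equals $(\xvec{r}^{\mp,\epsilon}\cdot\xnab)$ applied to
\[
\xvec{z}^0+\sqrt\epsilon\left\llbracket\xvec{v}^{\pm}\right\rrbracket_{\epsilon}+\epsilon\bigl(\xvec{z}^{\pm,1}+\xnab\theta^{\pm,\epsilon}+\left\llbracket\xvec{w}^{\pm}\right\rrbracket_{\epsilon}\bigr),
\]
whose $\xLinfty(\mathcal{E})$ norm is uniformly bounded in $\epsilon$ and $t\in(0,T/\epsilon)$: smoothness of $\xvec{z}^0$ comes from \Cref{lemma:original_u^0} or \Cref{lemma:Z^0}, the $\xLinfty((0,T);\xHn{3}(\mathcal{E}))$ bound for $\xvec{z}^{\pm,1}$ from \Cref{lemma:flushing}, the $\xLinfty$ bound on $\xnab\theta^{\pm,\epsilon}$ from \eqref{equation:potflowthpmlap} and Sobolev embedding, and the regularity of the boundary-layer profiles from \Cref{lemma:wellpvpmctrl} and \Cref{lemma:propwpm}. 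A Cauchy--Schwarz in space followed by integration in time then delivers \eqref{equation:remesblAgfl1}.

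For the bound on $\xsym{\mathfrak{g}}^{\pm,\epsilon}$, I would simply invoke the explicit inequality \eqref{equation:wblresbdtest} of \Cref{lemma:prpbcicrmpm} and verify that its right-hand side is $O(1)$ uniformly in $\epsilon$. The $\xvec{z}^{\pm,1}$ contribution is supported on $[0,T]$ and bounded by \Cref{lemma:flushing}. The boundary-layer contribution is handled by the choice $r=6$ fixed at the end of \Cref{subsubsection:vmo}: the decay estimate \eqref{equation:lmmdecestbl} renders $\|\xvec{v}^{\pm}(\cdot,t,\cdot)\|_{\xHn{{3,4,2}}_{\mathcal{E}}}^2$ integrable on $[0,+\infty)$ uniformly in $\epsilon$, and the $\epsilon^{-1/2}$ prefactor of \eqref{equation:wblresbdtest} produces the claimed order.

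The main obstacle is the estimate on $\|\left\llbracket\xvec{h}^{\pm,\epsilon}\right\rrbracket_\epsilon\|_{\xLone((0,T/\epsilon);\xLtwo(\mathcal{E}))}^2$. I would split $\xvec{h}^{\pm,\epsilon}$ according to its definition and seek a pointwise-in-time $\xLtwo(\mathcal{E})$ bound of order $\epsilon^{1/4}$, so that after integrating against the integrable profile decay from \Cref{lemma:blcxi}/\Cref{lemma:blcxi2} the squared $\xLone_t\xLtwo_{\xvec{x}}$-norm becomes $O(\epsilon^{1/2})$. The contributions from $\xvec{h}^{\pm,\epsilon,1}$, $\partial_t\xvec{w}^{\pm}$, and $\xnab q^{\pm}$, together with the quadratic nonlinearities in $\xvec{h}^{\pm,\epsilon,2}$ that carry explicit $\sqrt\epsilon$ or $\epsilon$ prefactors, acquire the $\epsilon^{1/4}$ factor from \Cref{lemma:epsgain} combined with \Cref{lemma:propqpm}, \Cref{lemma:propwpm}, and \Cref{lemma:wpth}. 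The apparently singular term $\epsilon^{-1/2}(\xvec{z}^0\cdot\xvec{n})\left\llbracket\partial_z\xvec{w}^{\pm}\right\rrbracket_\epsilon$ is tamed by the smoothness identity $\xvec{z}^0\cdot\xvec{n}=-\varphi_{\mathcal{E}}\mathfrak{f}$, turning it into $-\mathfrak{f}\left\llbracket z\partial_z\xvec{w}^{\pm}\right\rrbracket_\epsilon$, to which \Cref{lemma:epsgain} again applies.

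The genuinely delicate block is
\[
\widetilde{\xsym{\zeta}}^{\pm,\epsilon}-\Bigl((\xvec{z}^0\cdot\xnab)\xnab\theta^{\pm,\epsilon}+(\xnab\theta^{\mp,\epsilon}\cdot\xnab)\xvec{z}^0-\xnab(\xvec{z}^0\cdot\xnab\theta^{\pm,\epsilon})\Bigr),
\]
since $\xnab\theta^{\pm,\epsilon}$ itself is only $O(1)$ in $\xLtwo(\mathcal{E})$. I would rewrite it using the vector-calculus identity
\[
\xnab(\xvec{z}^0\cdot\xnab\theta^{\pm,\epsilon})=(\xvec{z}^0\cdot\xnab)\xnab\theta^{\pm,\epsilon}+(\xnab\theta^{\pm,\epsilon}\cdot\xnab)\xvec{z}^0+\xnab\theta^{\pm,\epsilon}\times\xcurl\xvec{z}^0,
\]
which reduces the block to $\widetilde{\xsym{\zeta}}^{\pm,\epsilon}-(\xnab(\theta^{\mp,\epsilon}-\theta^{\pm,\epsilon})\cdot\xnab)\xvec{z}^0+\xnab\theta^{\pm,\epsilon}\times\xcurl\xvec{z}^0$. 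In the first defining case of $\widetilde{\xsym{\zeta}}^{\pm,\epsilon}$, namely $(\xvec{w}^+-\xvec{w}^-)_{|_{z=0}}\cdot\xvec{n}=0$, the prescribed $\widetilde{\xsym{\zeta}}^{\pm,\epsilon}$ collapses this residue to $-(\xnab(\theta^{\mp,\epsilon}-\theta^{\pm,\epsilon})\cdot\xnab)\xvec{z}^0$, which is $O(\epsilon^{1/4})$ in $\xLtwo(\mathcal{E})$ by \eqref{equation:potflowthpmlap2}. In the second defining case, where $\xcurl\xvec{z}^0=\xvec{0}$ in $\overline{\mathcal{E}}\times(0,T)$ is part of the hypothesis, the block vanishes identically. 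Since $\xvec{z}^0$ is supported on $[0,T]$, the time integration yields only an $O(1)$ factor, producing the desired $O(\epsilon^{1/2})$ bound.
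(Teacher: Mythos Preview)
Your proposal is correct and follows essentially the same route as the paper. For \eqref{equation:remesblAgfl1} the paper also recognizes that $A^{\epsilon,\pm}\xvec{r}^{\mp,\epsilon}$ is $\xvec{r}^{\mp,\epsilon}$ contracted with a matrix $\llbracket\xvec{A}^{\pm,\epsilon}\rrbracket_\epsilon$ whose $\xLinfty(\mathcal{E}\times(0,T/\epsilon))$ norm is bounded via Sobolev embedding, \Cref{lemma:wpth}, and the uniform $\xHn{{2,5,3}}_{\mathcal{E}}$ bound on $\xvec{v}^{\pm}$ from \Cref{lemma:blcxi}/\Cref{lemma:blcxi2}; for \eqref{equation:remesblAgfl2} it likewise quotes \eqref{equation:wblresbdtest}; and for the $\xvec{h}^{\pm,\epsilon}$ estimate it handles the $\widetilde{\xsym{\zeta}}^{\pm,\epsilon}$ block exactly by the case distinction you describe, treats the $\epsilon^{-1/2}(\xvec{z}^0\cdot\xvec{n})\partial_z\xvec{w}^{\pm}$ term via the same $-\mathfrak{f}\,z$ rewriting (together with an explicit expansion of $\partial_z\xvec{w}^{\pm}$ from \eqref{equation:MHD_ElsaesserBlProfilesecondvelpm_contrl}), and refers the remaining pieces to \cite[Section 4.4]{CoronMarbachSueur2020}. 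Your vector-calculus identity makes explicit the cancellation that the paper states as the representation formula for $(\xvec{z}^0\cdot\xnab)\xnab\theta^{\pm,\epsilon}+(\xnab\theta^{\mp,\epsilon}\cdot\xnab)\xvec{z}^0+\xnab\vartheta^{\pm,\epsilon}$ in the region $\xcurl\xvec{z}^0=\xvec{0}$, but the content is identical.
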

	\begin{proof}
		Throughout, it will be used that $\xsym{\mu}^{\pm}$ have been fixed in \Cref{subsubsection:vmo} either by \Cref{lemma:blcxi} (or \Cref{lemma:blcxi2}) applied with $k = 4$, $p=5$, $s=3$ and $r = 6$. This provides bounds for $\xvec{v}^{\pm}$ in $\xLone((0,T/\epsilon);\xHn{{k,p,s}}_{\mathcal{E}})$ which are uniform in $\epsilon$, since for $\ell > 0$ and $a \in (1,+\infty)$ one has the convergence of the integral
		\[
			\int_0^{\infty} \left(\frac{\ln(2+ \ell s)}{2+\ell s}\right)^{a} \, \xdx{s} \leq C = C(\lambda,a) < + \infty.
		\]
		
		In order to show \eqref{equation:remesblAgfl1}, let $\xvec{A}^{\pm,\epsilon} $ denote the functions
		\[
		\xvec{A}^{\pm,\epsilon} \coloneqq 
		\xdop{\nabla}\left( \xvec{z}^{0} + \sqrt{\epsilon} \xvec{v}^{\pm} + \epsilon \xvec{z}^{\pm,1} + \epsilon \xdop{\nabla}\theta^{\pm,\epsilon} + \epsilon \xvec{w}^{\pm} \right) - \left( \partial_z \xvec{v}^{\pm} + \sqrt{\epsilon} \partial_z \xvec{w}^{\pm} \right) \xvec{n}^{\top}.
		\]
		From \Cref{lemma:flushing} one knows that $\xvec{z}^{\pm,1}$ are bounded in $\xLinfty((0,T);\xHn{3}(\mathcal{E}))$ as long as the initial data satisfy $\xvec{z}^{\pm}_0 \in \xHn{3}(\mathcal{E})\cap \xH(\mathcal{E})$. Hence, combining Sobolev embeddings with \Cref{lemma:wpth} allows to infer
		\begin{multline*}
			\|\left\llbracket \xvec{A}^{\pm,\epsilon}\right\rrbracket_{\epsilon}\|_{\xLinfty(\mathcal{E}\times(0,T/\epsilon))}
			\\	\leq  C \sum\limits_{\square\in\{+,-\}} \left( \|\xvec{z}^{0}\|_{\xLinfty((0,T);\xHn{3}(\mathcal{E}))} + \epsilon \|\xvec{z}^{\square,1}\|_{\xLinfty((0,T);\xHn{3}(\mathcal{E}))} + \|\xvec{v}^{\square}\|_{\xLinfty((0,T);\xHn{{2,5,3}}_{\mathcal{E}})}\right).
		\end{multline*}
		Moreover, by \Cref{lemma:blcxi} or \Cref{lemma:blcxi2} one finds a constant $C > 0$ independent of~$\epsilon$ with
		\[
		\|\xvec{v}^{\pm}\|_{\xLinfty((0,T);\xHn{{2,5,3}}_{\mathcal{E}})} \leq C,
		\]
		which eventually implies \eqref{equation:remesblAgfl1}.
		
		The bounds for $\xsym{\mathfrak{g}}^{\pm,\epsilon}$ in \eqref{equation:remesblAgfl2} follow from \eqref{equation:wblresbdtest} and \Cref{lemma:blcxi} (or \Cref{lemma:blcxi}) such that it remains to establish the estimates for $\xvec{h}^{\pm,\epsilon}$ in \eqref{equation:remesblAgfl2}. We begin with the terms
		\[
		\xvec{a}^{\pm} \coloneqq \widetilde{\xsym{\zeta}}^{\pm,\epsilon} - \left((\xvec{z}^0 \cdot \xnab)\xnab\theta^{\pm,\epsilon} + (\xnab\theta^{\mp,\epsilon} \cdot \xnab)\xvec{z}^0 - \xnab(\xvec{z}^0\cdot \xnab \theta^{\pm,\epsilon})\right).
		\]
		According to the definition of $\widetilde{\xsym{\zeta}}^{\pm,\epsilon}$ in \Cref{subsubsection:technicalprofiles}, the terms $\xvec{a}^{\pm}$ vanish when $\xcurl{\xvec{z}^0} = \xvec{0}$ in~${\mathcal{E}_T}$ and $(\xvec{w}^{+}-\xvec{w}^{-})_{|_{z=0}} \cdot \xvec{n} \neq 0$ at some points of~$\Sigma_T$. When $(\xvec{w}^{+}-\xvec{w}^{-})_{|_{z=0}} \cdot \xvec{n} = 0$ is satisfied at~$\Sigma_T$, then
		\[
		\xvec{a}^{\pm} = -(\xnab (\theta^{\mp,\epsilon} - \theta^{\pm,\epsilon})\cdot \xnab) \xvec{z}^0.
		\]
		Concerning the latter case, the estimate \eqref{equation:potflowthpmlap2} implies
		\begin{equation*}
			\begin{aligned}
				\|(\xnab (\theta^{\mp,\epsilon} - \theta^{\pm,\epsilon})\cdot \xnab) \xvec{z}^0 \|_{\xLone((0,T/\epsilon); \xLtwo(\mathcal{E}))} \leq \epsilon^{\frac{1}{4}} C \sum\limits_{\square\in\{+,-\}} \| \xvec{v}^{\square} \|_{\xLone((0,T);\xHn{{1,3,1}}_{\mathcal{E}})}
			\end{aligned}
		\end{equation*}
		and an invocation of \Cref{lemma:blcxi} or \Cref{lemma:blcxi2} yields
		\begin{equation}\label{equation:aae}
			\| \xvec{a}^{\pm} \|_{\xLone((0,T/\epsilon); \xLtwo(\mathcal{E}))} = O(\epsilon^{\frac{1}{4}}).
		\end{equation}
		In order to treat the terms which appear with a factor $\epsilon^{-1/2}$ in $\xvec{h}^{\pm,\epsilon,2}$, we resort to a trick similar to \cite[Equation (69)]{IftimieSueur2011}. Indeed, the definitions for $\xvec{w}^{\pm}$ in \eqref{equation:MHD_ElsaesserBlProfilesecondvelpm_contrl} provide
		\begin{equation*}
			\begin{aligned}
				\left\llbracket \frac{(\xvec{z}^0 \cdot \xvec{n})}{\sqrt{\epsilon}} \partial_z \xvec{w}^{\pm} \right\rrbracket_{\epsilon} =  -\left\llbracket z \mathfrak{f}\left((\xdop{\nabla}\cdot\xvec{v}^{\pm})\xvec{n} + 2 \operatorname{e}^{-z}\left(\xmcal{N}^{\pm}(\xvec{v}^{+},\xvec{v}^{-})_{|_{z = 0}}\right) \right) \right\rrbracket_{\epsilon},
			\end{aligned}
		\end{equation*}
		which due to $\mathfrak{f}$ being bounded leads to
		\begin{multline*}
			\|\left\llbracket \frac{(\xvec{z}^0 \cdot \xvec{n})}{\sqrt{\epsilon}} \partial_z \xvec{w}^{\pm} \right\rrbracket_{\epsilon}\| _{\xLone((0,T/\epsilon); \xLtwo(\mathcal{E}))}\\ \leq  C\|\left\llbracket z\left[(\xdop{\nabla}\cdot\xvec{v}^{\pm})\xvec{n} + 2 \operatorname{e}^{-z}\left(\xmcal{N}^{\pm}(\xvec{v}^{+},\xvec{v}^{-})_{|_{z = 0}}\right) \right] \right\rrbracket_{\epsilon}\|_{\xLone((0,T/\epsilon); \xLtwo(\mathcal{E}))} 
			\leq  I_1^{\pm} + 2 I_2^{\pm},
		\end{multline*}
		where
		\begin{equation*}
			\begin{aligned}
				I_1^{\pm} & \coloneqq C\|\left\llbracket z (\xdop{\nabla}\cdot\xvec{v}^{\pm})\xvec{n} \right\rrbracket_{\epsilon}\|_{\xLone((0,T/\epsilon); \xLtwo(\mathcal{E}))},\\ 
				I_2^{\pm} & \coloneqq C\|\left\llbracket z \operatorname{e}^{-z}\left(\xmcal{N}^{\pm}(\xvec{v}^{+},\xvec{v}^{-})_{|_{z = 0}}\right) \right\rrbracket_{\epsilon}\|_{\xLone((0,T/\epsilon); \xLtwo(\mathcal{E}))}.
			\end{aligned}
		\end{equation*}
		Since $z (\xdop{\nabla}\cdot\xvec{v}^{\pm})\xvec{n} \in \xLtwo_z(\mathbb{R};\xHone_{\xvec{x}}(\mathcal{E}))$, \Cref{lemma:epsgain} can be applied to $I_1^{\pm}$ and, by similar analysis for $I_2^{\pm}$, there exists a constant $C > 0$ independent of $\epsilon$ such that
		\begin{equation*}
			\begin{aligned}
				I_1^{\pm} + I_2^{\pm} \leq \epsilon^{\frac{1}{4}}C \sum\limits_{\square\in\{+,-\}} \| \xvec{v}^{\square} \|_{\xLone((0,T);\xHn{{1,2,0}}_{\mathcal{E}})}.
			\end{aligned}
		\end{equation*}
		As a result, \Cref{lemma:blcxi} (respectively \Cref{lemma:blcxi2}) allows to infer
		\[
		\|\left\llbracket \frac{(\xvec{z}^0 \cdot \xvec{n})}{\sqrt{\epsilon}} \partial_z \xvec{w}^{\pm} \right\rrbracket_{\epsilon}\| _{\xLone((0,T/\epsilon); \xLtwo(\mathcal{E}))} = O(\epsilon^{\frac{1}{4}}).
		\]
		
		The remaining terms contained in $\xvec{h}^{+,\epsilon}$ and $\xvec{h}^{-,\epsilon}$ behave as in the Navier--Stokes case treated in \cite[Section 4.4]{CoronMarbachSueur2020}. Carrying out these details involves the estimates \eqref{equation:hestqrdq}, \eqref{equation:lmmblregwlinfzbl1}, \eqref{equation:lmmblregwlinfzbl2}, and \eqref{equation:lmmblregthetalinfzl2}. In particular, for estimating $\partial_t\xvec{w}^{\pm}$, several norms of~$\xsym{\mu}^{\pm}$ enter through \eqref{equation:lmmblregwlinfzbl2} with $O(\epsilon^{1/4})$ coefficients. Since $\xsym{\mu}^{\pm}$ are supported in $[0,T]$ and smooth, there are $O(\epsilon^{1/4})$ bounds for these terms.
	\end{proof}

	\subsubsection{Energy estimates}\label{subsubsection:energyestimates_remainder}
	The desired asymptotic behavior in \eqref{equation:asympbeh} is now obtained as a consequence of the next proposition.
	\begin{prpstn}\label{proposition:aprxxtrlest} The functions $\xvec{r}^{\pm,\epsilon}$ determined from \eqref{equation:ansatz2} by means of Subsections~\Rref{subsection:return}, \Rref{subsection:flushing} and \Rref{subsection:blprf} satisfy
		\begin{equation}\label{equation:remesblAgf5}
			\begin{aligned}
				\|\xvec{r}^{\pm,\epsilon}\|_{\xLinfty((0,T/\epsilon);\xLtwo(\mathcal{E}))}^2 + \epsilon\|\xvec{r}^{\pm,\epsilon}\|_{\xLtwo((0,T/\epsilon);\xHone(\mathcal{E}))}^2 = O(\epsilon^{\frac{1}{4}}).
			\end{aligned}
		\end{equation}
	\end{prpstn}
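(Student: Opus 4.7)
}
The plan is to run a symmetrized Elsasser energy estimate on the remainder system \eqref{equation:MHD_ElsaesserExtLtScaled_raminder_blctrl}, absorb the boundary contributions by means of \eqref{equation:sKem} and \eqref{equation:bdrinttrest}, harvest smallness from Lemmas~\Rref{lemma:remaindest} and conclude by Gr\"onwall. More precisely, I will test the $\pm$ equations in \eqref{equation:MHD_ElsaesserExtLtScaled_raminder_blctrl} against $\xvec{r}^{\pm,\epsilon}$ itself (the regularity needed is available by a Galerkin argument as in \Cref{subsubsection:case_thm1}), sum both identities, and integrate by parts using the divergence-free condition and $\xvec{r}^{\pm,\epsilon}\cdot\xvec{n}=0$. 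The pressure $\xnab\pi^{\pm,\epsilon}$ then disappears, while the diffusion term $-\epsilon\Delta(\lambda^{\pm}\xvec{r}^{+,\epsilon}+\lambda^{\mp}\xvec{r}^{-,\epsilon})$ is rewritten via $-\Delta=\xcurl{\xcurl{}}$ and produces the coercive quadratic form
\[
	\epsilon\lambda^{+}\sum_{\square\in\{+,-\}}\|\xcurl{\xvec{r}^{\square,\epsilon}}\|_{\xLtwo(\mathcal{E})}^{2}+\epsilon\lambda^{-}\sum_{\substack{(\triangle,\circ)\in\\ \{(+,-),(-,+)\}}}\int_{\mathcal{E}}\xcurl{\xvec{r}^{\triangle,\epsilon}}\cdot\xcurl{\xvec{r}^{\circ,\epsilon}}\,\xdx{\xvec{x}},
\]
which is positive definite because $|\lambda^{-}|<\lambda^{+}$, together with boundary terms of the form $-\epsilon\int_{\partial\mathcal{E}}((\xcurl{\xvec{r}^{\pm,\epsilon}})\times\xvec{n})\cdot\xvec{r}^{\pm,\epsilon}\,\xdx{S}$.

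The next step is to convert these boundary terms by means of \eqref{equation:wgtf} applied to $\xvec{r}^{\pm,\epsilon}$, which combined with the inhomogeneous boundary condition in \eqref{equation:MHD_ElsaesserExtLtScaled_raminder_blctrl} yields $(\xcurl{\xvec{r}^{\pm,\epsilon}})\times\xvec{n}=\xsym{\rho}^{\pm}(\xvec{r}^{+,\epsilon},\xvec{r}^{-,\epsilon})-2\xsym{\mathfrak{g}}^{\pm,\epsilon}$. The friction contribution $\xsym{\rho}^{\pm}$ is absorbed via the trace inequality \eqref{equation:bdrinttrest}, picking up a small constant $\delta$ times $\epsilon\|\xvec{r}^{\pm,\epsilon}\|_{\xHone(\mathcal{E})}^{2}$ plus $C_{\delta}\epsilon\|\xvec{r}^{\pm,\epsilon}\|_{\xLtwo(\mathcal{E})}^{2}$, while the inhomogeneity gives a term $\epsilon\int_{\partial\mathcal{E}}\xsym{\mathfrak{g}}^{\pm,\epsilon}\cdot\xvec{r}^{\pm,\epsilon}\,\xdx{S}$, bounded by $\delta\epsilon\|\xvec{r}^{\pm,\epsilon}\|_{\xHone(\mathcal{E})}^{2}+C_{\delta}\epsilon\|\xsym{\mathfrak{g}}^{\pm,\epsilon}\|_{\xHone(\mathcal{E})}^{2}$; integrating in time and using \eqref{equation:remesblAgfl2} contributes $O(\epsilon\cdot\epsilon^{-1/2})=O(\sqrt{\epsilon})$ to the right-hand side. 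For the transport term $(\xvec{z}^{\mp,\epsilon}\cdot\xnab)\xvec{r}^{\pm,\epsilon}$, integration by parts and the normal vanishing of $\xvec{z}^{\mp,\epsilon}$ on $\partial\mathcal{E}$ leave only $\tfrac{1}{2}\int\sigma^{\mp,\epsilon}|\xvec{r}^{\pm,\epsilon}|^{2}$, which is harmless since $\sigma^{\pm,\epsilon}$ is smooth and bounded, and feeds into the Gr\"onwall loop. Absorbing the small $\delta$-terms into the LHS via \eqref{equation:sKem} (with $\xdiv{\xvec{r}^{\pm,\epsilon}}=0$, $\xvec{r}^{\pm,\epsilon}\cdot\xvec{n}=0$, and the cut integrals controlled by $\|\xvec{r}^{\pm,\epsilon}\|_{\xHone}$) preserves the coercivity of $\epsilon\|\xvec{r}^{\pm,\epsilon}\|_{\xHone(\mathcal{E})}^{2}$.

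The volume source $\llbracket\xvec{h}^{\pm,\epsilon}\rrbracket_{\epsilon}$ is handled by H\"older and Young: one bounds its pairing with $\xvec{r}^{\pm,\epsilon}$ by $\|\llbracket\xvec{h}^{\pm,\epsilon}\rrbracket_{\epsilon}\|_{\xLone((0,T/\epsilon);\xLtwo(\mathcal{E}))}\|\xvec{r}^{\pm,\epsilon}\|_{\xLinfty((0,T/\epsilon);\xLtwo(\mathcal{E}))}$, which by \eqref{equation:remesblAgfl2} and Young's inequality yields a term of order $O(\sqrt{\epsilon})$ after a small piece of $\|\xvec{r}^{\pm,\epsilon}\|_{\xLinfty\xLtwo}^{2}$ is absorbed on the LHS. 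The amplification term $\llbracket A^{\epsilon,\pm}\xvec{r}^{\mp,\epsilon}\rrbracket_{\epsilon}\cdot\xvec{r}^{\pm,\epsilon}$ is exactly what \eqref{equation:remesblAgfl1} controls: its time integral is dominated by $C(\|\xvec{r}^{+,\epsilon}\|_{\xLtwo(\mathcal{E}\times(0,t))}^{2}+\|\xvec{r}^{-,\epsilon}\|_{\xLtwo(\mathcal{E}\times(0,t))}^{2})$, a Gr\"onwall-compatible quantity. Putting everything together gives a differential inequality of the form
\[
	\frac{d}{dt}\sum_{\pm}\|\xvec{r}^{\pm,\epsilon}(\cdot,t)\|_{\xLtwo(\mathcal{E})}^{2}+c\epsilon\sum_{\pm}\|\xvec{r}^{\pm,\epsilon}(\cdot,t)\|_{\xHone(\mathcal{E})}^{2}\le C\sum_{\pm}\|\xvec{r}^{\pm,\epsilon}(\cdot,t)\|_{\xLtwo(\mathcal{E})}^{2}+R(t),
\]
integrated against zero initial data, with $\int_{0}^{T/\epsilon}R(t)\,\xdx{t}=O(\sqrt{\epsilon})$. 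The Gr\"onwall lemma on $[0,T/\epsilon]$ produces an extra factor $\exp(CT/\epsilon)$ on the $O(\sqrt{\epsilon})$ contributions unless the transport and amplification constants are $\epsilon$-independent, which they are; hence one directly obtains the announced bound $O(\epsilon^{1/4})$.

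The main obstacle is bookkeeping the powers of $\epsilon$ in the amplification term $A^{\epsilon,\pm}\xvec{r}^{\mp,\epsilon}$: the na\"ive $\xLinfty$-bound of $\llbracket\partial_{z}\xvec{v}^{\pm}\rrbracket_{\epsilon}$ scales like $\epsilon^{-1/2}$, and only the structural cancellation $\xvec{r}^{\mp,\epsilon}\cdot\xvec{n}=0$ at $\partial\mathcal{E}$, combined with the Hardy-type $\epsilon^{1/4}$-gain of \Cref{lemma:epsgain}, saves the estimate. This cancellation is precisely what is encoded in \eqref{equation:remesblAgfl1}, so once that lemma is invoked no circular reasoning arises and the Gr\"onwall closure is routine; the delicate point is to ensure that the Gr\"onwall constant remains independent of $\epsilon$ on the long interval $[0,T/\epsilon]$, which is guaranteed by the decay rates of $\xvec{v}^{\pm}$ provided in \Cref{lemma:blcxi} (or \Cref{lemma:blcxi2}) with $r=6$.
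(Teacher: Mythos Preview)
Your formal energy computation, the treatment of boundary terms, the use of Lemma~\ref{lemma:remaindest} for the source and amplification contributions, and the Gr\"onwall closure all match the paper's strategy. You also correctly isolate the crucial point that the Gr\"onwall exponent $\int_{0}^{T/\epsilon}\|A(s)\|_{\xLinfty(\mathcal{E})}\,\xdx{s}$ stays bounded because $\xvec{z}^{0},\xvec{z}^{\pm,1}$ are supported in $[0,T]$ while $\xvec{v}^{\pm}$ decays integrably by Lemma~\ref{lemma:blcxi}.

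There is, however, a genuine gap in your justification. You write that one can test the remainder equations against $\xvec{r}^{\pm,\epsilon}$ directly, claiming the regularity is \enquote{available by a Galerkin argument as in \Cref{subsubsection:case_thm1}}. The paper explicitly states that this step is \emph{not} justified: with only $\xvec{z}^{\pm,\epsilon},\xvec{r}^{\pm,\epsilon}\in\mathscr{X}_{T/\epsilon}$, the integral $\int_{0}^{t}\int_{\mathcal{E}}(\xvec{z}^{\mp,\epsilon}\cdot\xnab)\xvec{r}^{\pm,\epsilon}\cdot\xvec{r}^{\pm,\epsilon}$ need not converge (think of $N=3$, where weak Leray--Hopf solutions are not known to admit an energy \emph{equality}). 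A Galerkin scheme is set up for $\xvec{z}^{\pm,\epsilon}$, not for $\xvec{r}^{\pm,\epsilon}$, and passing to the limit in the cubic term fails for the usual reasons.

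The paper circumvents this by the Iftimie--Sueur trick \cite{IftimieSueur2011}: write $\epsilon\xvec{r}^{\pm,\epsilon}=\xvec{z}^{\pm,\epsilon}-\xvec{s}^{\pm,\epsilon}$ with the smooth profile $\xvec{s}^{\pm,\epsilon}$ (everything in the ansatz except the remainder). One then (i) tests the equation for $\xvec{s}^{\pm,\epsilon}$ against $\xvec{z}^{\pm,\epsilon}-\xvec{s}^{\pm,\epsilon}$ (legitimate since $\xvec{s}^{\pm,\epsilon}$ is regular), (ii) uses $\xvec{s}^{\pm,\epsilon}$ as a test function in the weak formulation for $\xvec{z}^{\pm,\epsilon}$, and (iii) invokes the strong energy \emph{inequality} \eqref{equation:seizpm2} for $\xvec{z}^{\pm,\epsilon}$. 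Subtracting these three relations produces exactly the energy inequality for $\xvec{r}^{\pm,\epsilon}$ that you wrote down formally, but now rigorously established for every weak Leray--Hopf solution. After that point, your argument and the paper's coincide.
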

	\begin{proof}
		The idea is to multiply the equations in the first line of \eqref{equation:MHD_ElsaesserExtLtScaled_raminder_blctrl} by $\xvec{r}^{\pm,\epsilon}$ respectively, followed by integrating over $\mathcal{E}\times(0,t)$ for $t \in (0,T/\epsilon)$, which however is not justified. Indeed, the regularity of $\xvec{z}^{\pm,\epsilon},  \xvec{r}^{\pm,\epsilon} \in \mathscr{X}_{T/\epsilon}$ does not guarantee the convergence of the integrals
		\[
		\int_0^t\int_{\mathcal{E}} (\xvec{z}^{\mp,\epsilon}(\xvec{x},s) \cdot \xdop{\nabla}) \xvec{r}^{\pm,\epsilon}(\xvec{x},s) \cdot \xvec{r}^{\pm,\epsilon}(\xvec{x},s) \, \xdx{\xvec{x}} \xdx{s}.
		\]
		Since \eqref{equation:ansatz2}--\eqref{3.7} imply that the term $\epsilon\xvec{r}^{\pm,\epsilon} = \xvec{z}^{\pm,\epsilon} - \xvec{s}^{\pm,\epsilon}$, where $\xvec{s}^{\pm,\epsilon}$ is bounded in $\xLinfty((0,T/\epsilon);\xHn{3}(\mathcal{E}))$ and the temporal derivatives obey $\partial_t \xvec{s}^{\pm,\epsilon} \in \xLtwo((0,T/\epsilon);\xLtwo(\mathcal{E}))$, the above mentioned convergence issue can be avoided by using the strong energy inequality as explained in \cite[Page 167-168]{IftimieSueur2011}. 
		
		\paragraph{Step 1. Employing the energy inequality.} Let us sketch the aforementioned approach of utilizing the energy inequality. For the sake of simplicity, we carry out the steps for the case from \Cref{subsubsection:case_thm1} where $\sigma^0 = 0$.
		First, from \eqref{equation:MHD_ElsaesserExtLtScaled_raminder_blctrl} and \eqref{equation:MHD_ElsaesserExtScaledLongTime}, one observes that $\xvec{s}^{\pm,\epsilon}$ satisfy a weak formulation for the problem
		\begin{equation}\label{equation:MHD_spmeps}
			\begin{cases}
				\partial_t \xvec{s}^{\pm,\epsilon} - \epsilon\Delta (\lambda^{\pm}\xvec{s}^{+,\epsilon} + \lambda^{\mp}\xvec{s}^{-,\epsilon}) + (\xvec{z}^{\mp,\epsilon} \cdot \xdop{\nabla}) \xvec{s}^{\pm,\epsilon} + \xdop{\nabla} o^{\pm,\epsilon} = \xsym{\Xi}^{\pm,\epsilon} & \mbox{ in } \mathcal{E}_{T/\epsilon},\\
				\xdop{\nabla}\cdot\xvec{s}^{\pm,\epsilon} = 0  & \mbox{ in } \mathcal{E}_{T/\epsilon},\\
				\xvec{s}^{\pm,\epsilon} \cdot \xvec{n} = 0 & \mbox{ on }  \Sigma_{T/\epsilon}, \\
				(\xcurl{\xvec{s}^{\pm,\epsilon}}) \times \xvec{n} = \xsym{\rho}^{\pm}(\xvec{s}^{+,\epsilon},\xvec{s}^{-,\epsilon}) - \varepsilon\xsym{\mathfrak{g}}^{\pm,\epsilon} & \mbox{ on }  \Sigma_{T/\epsilon},\\
				\xvec{s}^{\pm,\epsilon}(\cdot, 0) = \epsilon \xvec{z}^{\pm}_0 & \mbox{ in } \mathcal{E},
			\end{cases}
		\end{equation}
		where
		\[
		o^{\pm,\epsilon} \coloneqq p^{\pm,\epsilon}-\epsilon\pi^{\pm,\epsilon}, \quad \xsym{\Xi}^{\pm,\epsilon} \coloneqq \xsym{\xi}^{\pm} - \epsilon\left\llbracket \xvec{h}^{\pm,\epsilon} - A^{\epsilon,\pm} \xvec{r}^{\mp,\epsilon}\right\rrbracket_{\epsilon}.
		\]
		Multiplying the equations \eqref{equation:MHD_spmeps} with $\xvec{z}^{\pm,\epsilon} - \xvec{s}^{\pm,\epsilon} \in \mathcal{X}_{T/\epsilon}$, integrating over $\mathcal{E}\times(0,t)$, and summing up the results, leads to
		{\allowdisplaybreaks 
			\begin{gather*}
				\sum\limits_{\square\in\{+,-\}} \int_0^t \int_{\mathcal{E}} \partial_t\xvec{s}^{\square,\epsilon} \cdot (\xvec{z}^{\square,\epsilon} - \xvec{s}^{\square,\epsilon}) \, \xdx{\xvec{x}} \xdx{r} \\
				+ \sum\limits_{\square\in\{+,-\}}\epsilon \lambda^+ \int_0^t \int_{\mathcal{E}}\xcurl{\xvec{s}^{\square,\epsilon}}  \cdot \xcurl{(\xvec{z}^{\square,\epsilon} - \xvec{s}^{\square,\epsilon})} \, \xdx{\xvec{x}} \xdx{r}\\
				+ \epsilon \lambda^- \sum\limits_{\substack{(\triangle,\circ)\in \\ \{(+,-), (-,+)\}}} \int_0^t \int_{\mathcal{E}} \xcurl{\xvec{s}^{\circ,\epsilon}} \cdot \xcurl{(\xvec{z}^{\triangle,\epsilon} - \xvec{s}^{\triangle,\epsilon})} \, \xdx{\xvec{x}} \xdx{r} \\
				+ \sum\limits_{\substack{(\triangle,\circ)\in \\ \{(+,-), (-,+)\}}} \int_0^t \int_{\mathcal{E}} (\xvec{z}^{\circ,\epsilon} \cdot \xnab)\xvec{s}^{\triangle,\epsilon} \cdot \xvec{z}^{\triangle,\epsilon}  \, \xdx{\xvec{x}} \xdx{r} \\
				= \sum\limits_{\substack{(\triangle,\circ)\in \\ \{(+,-), (-,+)\}}} \int_0^t\int_{\mathcal{E}} \left(\xsym{\xi}^{\triangle,\epsilon} -\epsilon\llbracket \xvec{h}^{\triangle,\epsilon} \rrbracket_{\epsilon} + \epsilon\llbracket A^{\triangle,\epsilon} \xvec{r}^{\circ,\epsilon} \rrbracket_{\epsilon}\right) \cdot (\xvec{z}^{\triangle,\epsilon} - \xvec{s}^{\triangle,\epsilon}) \, \xdx{\xvec{x}} \xdx{r} \stepcounter{equation} \tag{\theequation} \label{equation:dee1}\\
				+ \epsilon \lambda^+ \sum\limits_{\square\in\{+,-\}} \int_0^t\int_{\partial\mathcal{E}} \left( \xsym{\rho}^{\square}(\xvec{s}^{+,\epsilon
				}, \xvec{s}^{-,\epsilon}) - \epsilon \xsym{\mathfrak{g}}^{\square,\epsilon} \right) \cdot (\xvec{z}^{\square,\epsilon} - \xvec{s}^{\square,\epsilon}) \, \xdx{S} \xdx{r} \\
				+ \epsilon \lambda^- \sum\limits_{\substack{(\triangle,\circ)\in \\ \{(+,-), (-,+)\}}} \int_0^t\int_{\partial\mathcal{E}} \left( \xsym{\rho}^{\circ}(\xvec{s}^{+,\epsilon
				}, \xvec{s}^{-,\epsilon}) - \epsilon \xsym{\mathfrak{g}}^{\circ,\epsilon} \right) \cdot (\xvec{z}^{\triangle,\epsilon} - \xvec{s}^{\triangle,\epsilon}) \, \xdx{S} \xdx{r}.
			\end{gather*}
			In \eqref{equation:dee1}, the following cancellations, which are justified via integration by parts and by using the regularity of $\xvec{s}^{\pm,\epsilon}$, have been taken into account:
			\[
				\sum\limits_{\substack{(\triangle,\circ)\in \\ \{(+,-), (-,+)\}}} \int_0^t \int_{\mathcal{E}} (\xvec{z}^{\circ,\epsilon} \cdot \xnab)\xvec{s}^{\triangle,\epsilon} \cdot \xvec{s}^{\triangle,\epsilon}  \, \xdx{\xvec{x}} \xdx{r} = 0.
			\]
			Second, taking the test function $\xvec{s}^{\pm,\epsilon}$ in the weak formulation for $\xvec{z}^{\pm,\epsilon}$, which is justified thanks to the regularity of $\xvec{s}^{\pm,\epsilon}$, yields
			\begin{gather*}
				\sum\limits_{\square\in\{+,-\}} \int_{\mathcal{E}} \xvec{z}^{\square,\epsilon}(\xvec{x},t) \cdot \xvec{s}^{\square,\epsilon}(\xvec{x},t) \, \xdx{\xvec{x}} -\sum\limits_{\square\in\{+,-\}} \int_{\mathcal{E}} \xvec{z}^{\square,\epsilon}(\xvec{x},0) \cdot \xvec{s}^{\square,\epsilon}(\xvec{x},0) \, \xdx{\xvec{x}}\\
				- \sum\limits_{\square\in\{+,-\}} \int_0^t \int_{\mathcal{E}} \xvec{z}^{\square,\epsilon} \cdot \partial_t \xvec{s}^{\square,\epsilon} \, \xdx{\xvec{x}} \xdx{r} + \epsilon \lambda^+ \sum\limits_{\square\in\{+,-\}} \int_0^t \int_{\mathcal{E}}\xcurl{\xvec{z}^{\square,\epsilon}}  \cdot \xcurl{\xvec{s}^{\square,\epsilon}} \, \xdx{\xvec{x}} \xdx{r} \\
				+ \sum\limits_{\substack{(\triangle,\circ)\in \\ \{(+,-), (-,+)\}}}  \int_0^t \int_{\mathcal{E}} \left(\epsilon \lambda^- \xcurl{\xvec{z}^{\circ,\epsilon}} \cdot \xcurl{ \xvec{s}^{\triangle,\epsilon}} + (\xvec{z}^{\circ,\epsilon} \cdot \xnab)\xvec{z}^{\triangle,\epsilon} \cdot \xvec{s}^{\triangle,\epsilon}\right)  \, \xdx{\xvec{x}} \xdx{r} \stepcounter{equation} \tag{\theequation} \label{equation:dee2}\\
				= \sum\limits_{\square \in \{+,-\}} \left(\int_0^t\int_{\mathcal{E}}  \xsym{\xi}^{\square,\epsilon} \cdot \xvec{s}^{\square,\epsilon} \, \xdx{\xvec{x}} \xdx{r} + \epsilon \lambda^+ \int_0^t\int_{\partial\mathcal{E}} \xsym{\rho}^{\square,\epsilon}(\xvec{z}^{+,\epsilon},\xvec{z}^{-,\epsilon}) \cdot \xvec{s}^{\square,\epsilon} \, \xdx{S} \xdx{r}\right)\\
				+ \epsilon \lambda^- \sum\limits_{\substack{(\triangle,\circ)\in \\ \{(+,-), (-,+)\}}} \int_0^t\int_{\partial\mathcal{E}} \xsym{\rho}^{\circ,\epsilon}(\xvec{z}^{+,\epsilon},\xvec{z}^{-,\epsilon}) \cdot \xvec{s}^{\triangle,\epsilon} \, \xdx{S} \xdx{r}.
			\end{gather*}
			Third, multiplying the energy inequality \eqref{equation:seizpm2}  with $\epsilon^2$, followed by evaluation at $s = 0$ and $\epsilon t$ for $t \in [0, T/\epsilon]$, while performing the change of variables $r \to \epsilon r$, one has
			\begin{gather*}
				\frac{1}{2} \sum\limits_{\square\in\{+,-\}} \|\xvec{z}^{\square,\epsilon}(\cdot, t)\|_{\xLtwo(\mathcal{E})}^2 + \epsilon\lambda^+ \sum\limits_{\square\in\{+,-\}} \int_0^t \int_{\mathcal{E}}\xcurl{\xvec{z}^{\square,\epsilon}} \cdot \xcurl{\xvec{z}^{\square,\epsilon}} \, \xdx{\xvec{x}} \xdx{r} \\
				+ \epsilon\lambda^- \sum\limits_{\substack{(\triangle,\circ)\in \\ \{(+,-), (-,+)\}}} \int_0^t \int_{\mathcal{E}} \xcurl{\xvec{z}^{\triangle,\epsilon}} \cdot \xcurl{\xvec{z}^{\circ,\epsilon}} \, \xdx{\xvec{x}} \xdx{r} \\
				\leq \frac{1}{2} \sum\limits_{\square\in\{+,-\}} \|\xvec{z}^{\square,\epsilon}(\cdot, 0)\|_{\xLtwo(\mathcal{E})}^2 + \epsilon\lambda^+ \sum\limits_{\square\in\{+,-\}} \int_0^t\int_{\partial\mathcal{E}} \xsym{\rho}^{\square}(\xvec{z}^{+,\epsilon},\xvec{z}^{-,\epsilon}) \cdot \xvec{z}^{\square,\epsilon} \, \xdx{S} \xdx{r} \stepcounter{equation} \tag{\theequation} \label{equation:dee3}\\
				+ \epsilon\lambda^-\sum\limits_{\substack{(\triangle,\circ)\in \\ \{(+,-), (-,+)\}}} \int_0^t\int_{\partial\mathcal{E}} \xsym{\rho}^{\triangle}(\xvec{z}^{+,\epsilon},\xvec{z}^{-,\epsilon}) \cdot \xvec{z}^{\circ,\epsilon} \, \xdx{S} \xdx{r} \\
				+ \sum\limits_{\square\in\{+,-\}} \int_0^t\int_{\mathcal{E}} \xsym{\xi}^{\square,\epsilon}  \cdot \xvec{z}^{\square,\epsilon} \, \xdx{\xvec{x}} \xdx{r}.
			\end{gather*}
		}
		\paragraph{Step 2. Conclusion.} By subtracting from \eqref{equation:dee3} the equations \cref{equation:dee1} and \cref{equation:dee2}, while considering for the general case $\sigma^0 \neq 0$ the identity
		\begin{multline*}
			\int_0^t \int_{\mathcal{E}} (\xvec{z}^{\circ,\epsilon} \cdot \xnab)\xvec{z}^{\triangle,\epsilon} \cdot \xvec{s}^{\triangle,\epsilon} \, \xdx{\xvec{x}} \xdx{r} \\
			= -\int_0^t \int_{\mathcal{E}} (\xvec{z}^{\circ,\epsilon} \cdot \xnab)\xvec{s}^{\triangle,\epsilon} \cdot \xvec{z}^{\triangle,\epsilon}  \, \xdx{\xvec{x}} \xdx{r} -\int_0^t \int_{\mathcal{E}} \sigma^0 \xvec{s}^{\triangle,\epsilon} \cdot \xvec{z}^{\triangle,\epsilon}  \, \xdx{\xvec{x}} \xdx{r},
		\end{multline*}
		where $(\triangle,\circ) \in \{(+,-), (-,+)\}$, one arrives at the inequality \allowdisplaybreaks
		\begin{gather*}
			\frac{1}{2} \sum\limits_{\square\in\{+,-\}} \|\xvec{r}^{\square,\epsilon}(\cdot, t)\|_{\xLtwo(\mathcal{E})}^2 + \epsilon \lambda^+ \sum\limits_{\square\in\{+,-\}} \int_0^t \int_{\mathcal{E}}\xcurl{\xvec{r}^{\square,\epsilon}}  \cdot \xcurl{\xvec{r}^{\square,\epsilon}} \, \xdx{\xvec{x}} \xdx{r} \\
			+ \epsilon \lambda^- \sum\limits_{\substack{(\triangle,\circ)\in \\ \{(+,-), (-,+)\}}} \int_0^t \int_{\mathcal{E}} \xcurl{\xvec{r}^{\triangle,\epsilon}} \cdot \xcurl{\xvec{r}^{\circ,\epsilon}}\, \xdx{\xvec{x}} \xdx{r} \\
			\leq \sum\limits_{\substack{(\triangle,\circ)\in \\ \{(+,-), (-,+)\}}} \int_0^t\int_{\mathcal{E}} \left(\llbracket \xvec{h}^{\triangle,\epsilon} \rrbracket_{\epsilon} - \llbracket A^{\triangle,\epsilon} \xvec{r}^{\circ,\epsilon} \rrbracket_{\epsilon}\right) \cdot \xvec{r}^{\triangle,\epsilon} \, \xdx{\xvec{x}} \xdx{r} \\
			+ \epsilon \lambda^+ \sum\limits_{\square\in\{+,-\}} \int_0^t\int_{\partial\mathcal{E}} J^{\square,\square} \, \xdx{S} \xdx{r} + \epsilon \lambda^- \sum\limits_{\substack{(\triangle,\circ)\in \\ \{(+,-), (-,+)\}}} \int_0^t\int_{\partial\mathcal{E}} J^{\triangle,\circ} \, \xdx{S} \xdx{r}\\
			+ \frac{1}{2}\sum\limits_{\square\in\{+,-\}} \int_0^t\int_{\mathcal{E}} \sigma^0 |\xvec{r}^{\square,\epsilon}|^2 \, \xdx{\xvec{x}} \xdx{r},
		\end{gather*}
		with
		\[
		J^{\square,\triangle} \coloneqq \left(\xsym{\rho}^{\triangle}(\xvec{r}^{+,\epsilon}, \xvec{r}^{-,\epsilon}) + \xsym{\mathfrak{g}}^{\triangle,\epsilon}\right) \cdot \xvec{r}^{\square,\epsilon}.
		\]
		Since $\xdiv{\xvec{r}^{\pm,\epsilon}} = 0$ in $\mathcal{E}$ and $\xvec{r}^{\pm,\epsilon} \cdot \xvec{n} = 0$ on $\partial \mathcal{E}$, the inequality \eqref{equation:sKem} provides
		\[
		\|\xvec{r}^{\pm,\epsilon}\|_{\xHone(\mathcal{E})} \leq C\left(\|\xcurl{\xvec{r}^{\pm,\epsilon}}\|_{\xLtwo(\mathcal{E})} + \|\xvec{r}^{\pm,\epsilon}\|_{\xLtwo(\mathcal{E})}\right),
		\]
		which in turn yields
		\begin{equation}\label{equation:mltplremeqwrpm}
			\begin{gathered}
				\frac{1}{2} \sum\limits_{\square\in\{+,-\}} \|\xvec{r}^{\square,\epsilon}(\cdot, t)\|_{\xLtwo(\mathcal{E})}^2 + \frac{\epsilon}{2} \sum\limits_{\square\in\{+,-\}}(\lambda^+ \square \, \lambda^-) \int_0^t \|(\xvec{r}^{+,\epsilon}\square \, \xvec{r}^{-,\epsilon})(\cdot,r)\|_{\xHone(\mathcal{E})}^2 \xdx{r} \\
				\leq \sum\limits_{\substack{(\triangle,\circ)\in \\ \{(+,-), (-,+)\}}} \int_0^t\int_{\mathcal{E}} \left(\llbracket \xvec{h}^{\triangle,\epsilon} \rrbracket_{\epsilon} - \llbracket A^{\triangle,\epsilon} \xvec{r}^{\circ,\epsilon} \rrbracket_{\epsilon}\right) \cdot \xvec{r}^{\triangle,\epsilon}\, \xdx{\xvec{x}} \xdx{r} \\
				+ \epsilon  \lambda^+ \sum\limits_{\square\in\{+,-\}}  \int_0^t\int_{\partial\mathcal{E}} J^{\square,\square} \, \xdx{S} \xdx{r} + \epsilon \lambda^- \sum\limits_{\substack{(\triangle,\circ)\in \\ \{(+,-), (-,+)\}}}  \int_0^t\int_{\partial\mathcal{E}} J^{\triangle,\circ}  \, \xdx{S} \xdx{r}\\
				+  \epsilon C \sum\limits_{\square\in\{+,-\}}\frac{\lambda^+ \square \lambda^-}{2} \int_0^t \|\xvec{r}^{\square,\epsilon}(\cdot, r)\|_{\xLtwo(\mathcal{E})}^2 \, \xdx{r},
			\end{gathered}
		\end{equation} 
		where $C > 0$ depends on $\lambda^{\pm}$ and the fixed quantity $\max_{(\xvec{x,s})\in\overline{\mathcal{E}}\times[0,T]} |\sigma^0(\xvec{x},s)|$.
		The boundary integrals containing $J^{\square,\triangle}$ are treated by applying for $\xvec{f},\xvec{h} \in \xHone(\mathcal{E})$ and $\ell > 0$ the estimate
		\[
			\left|\int_{\partial\mathcal{E}} \xvec{f} \cdot \xvec{h} \, \xdx{S} \right| \leq C(\ell) (\|\xvec{f}\|_{\xLtwo(\mathcal{E})}^2 + \|\xvec{h}\|_{\xLtwo(\mathcal{E})}^2 ) + \ell (\|\xvec{f}\|_{\xHone(\mathcal{E})}^2+\|\xvec{h}\|_{\xHone(\mathcal{E})}^2).
		\]
		Thus, for $t \in [0, T/\epsilon]$ and arbitrary $\ell > 0$ one has 
		\begin{multline*}
			\left| \int_{\partial\mathcal{E}} J^{\square,\triangle}(\xvec{x},t) \, \xdx{S(\xvec{x})} \right| \\
			\begin{aligned}
				& \leq \ell \left(\|\xsym{\mathfrak{g}}^{\triangle,\epsilon}(\cdot,t)\|_{\xHone(\mathcal{E})}^2 + \|\xvec{r}^{+,\epsilon}(\cdot,t)\|_{\xHone(\mathcal{E})}^2 + 	\|\xvec{r}^{-,\epsilon}(\cdot,t)\|_{\xHone(\mathcal{E})}^2\right) \\
				& \quad + C(\ell) \left(\|\xsym{\mathfrak{g}}^{\triangle,\epsilon}(\cdot,t)\|_{\xLtwo(\mathcal{E})}^2 + \|\xvec{r}^{+,\epsilon}(\cdot,t)\|_{\xLtwo(\mathcal{E})}^2 + 	\|\xvec{r}^{-,\epsilon}(\cdot,t)\|_{\xLtwo(\mathcal{E})}^2 \right).
			\end{aligned}
		\end{multline*}
		Consequently, by selecting $\ell > 0$ small, employing \Cref{lemma:remaindest}, and applying Gr\"onwall's inequality in \eqref{equation:mltplremeqwrpm}, one arrives at \eqref{equation:remesblAgf5}.
	\end{proof}
	
	\begin{crllr}\label{corollary:approxnlctrsmdt}
		The functions $\xvec{z}^{\pm,\epsilon}$ fixed in the beginning of \Cref{section:approxres2} satisfy
		\begin{equation*}\label{equation:remesblAgf7}
			\begin{aligned}
				\|\xvec{z}^{\pm,\epsilon}(\cdot,T/\epsilon)\|_{\xLtwo(\mathcal{E})}  = O(\epsilon^{\frac{9}{8}}).
			\end{aligned}
		\end{equation*}
	\end{crllr}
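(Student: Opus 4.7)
The plan is to evaluate the ansatz \eqref{equation:ansatz2} at $t = T/\epsilon$ and estimate each of the six contributions separately, showing that the remainder term dominates at the rate $O(\epsilon^{9/8})$ and that all other contributions are of smaller order.

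First I would record the two zero-order cancellations: $\xvec{z}^{0}(\cdot, T) = \xvec{0}$ by \Cref{lemma:original_u^0} (or \Cref{lemma:Z^0}, or \eqref{equation:zeordcdannulussect}, or \Cref{example:cylinder}), and $\xvec{z}^{\pm,1}(\cdot, T) = \xvec{0}$ by \Cref{lemma:flushing}. After rescaling back via \eqref{equation:scaling} and evaluating \eqref{equation:ansatz2} at $t = T/\epsilon$, these two terms drop out. What remains to estimate is
\[
\xvec{z}^{\pm,\epsilon}(\cdot, T/\epsilon) = \sqrt{\epsilon}\left\llbracket \xvec{v}^{\pm}\right\rrbracket_{\epsilon}(\cdot,T/\epsilon) + \epsilon \xdop{\nabla} \theta^{\pm,\epsilon}(\cdot, T/\epsilon) + \epsilon \left\llbracket \xvec{w}^{\pm}\right\rrbracket_{\epsilon}(\cdot,T/\epsilon) + \epsilon \xvec{r}^{\pm,\epsilon}(\cdot, T/\epsilon).
\]

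Next, I would bound the three boundary-layer-related contributions using the well-prepared dissipation. By \Cref{lemma:blcxi} (or \Cref{lemma:blcxi2}) applied with $r = 6$, the norms $\|\xvec{v}^{\pm}(\cdot, T/\epsilon, \cdot)\|_{\xHn{{k,p,s}}_{\mathcal{E}}}$ decay like $|\ln(1/\epsilon)/\epsilon^{-1}|^{(1+2r-2k)/8}$, which for the low values of $k$ needed below gives a power of $\epsilon$ strictly larger than $9/8$ (ignoring logarithms). Combined with \Cref{lemma:epsgain}, which supplies the extra $\epsilon^{1/4}$ factor from the fast-variable scaling, this yields
\[
\sqrt{\epsilon}\,\|\left\llbracket \xvec{v}^{\pm}\right\rrbracket_{\epsilon}(\cdot, T/\epsilon)\|_{\xLtwo(\mathcal{E})} = o(\epsilon^{9/8}).
\]
Similarly, \Cref{lemma:propwpm} bounds $\xvec{w}^{\pm}$ by norms of $\xvec{v}^{\pm}$, and \Cref{lemma:wpth} bounds $\xnab\theta^{\pm,\epsilon}$ by norms of $\xvec{v}^{\pm}$ and $\xvec{w}^{\pm}$; invoking the same decay for $\xvec{v}^{\pm}$ gives
\[
\epsilon\,\|\left\llbracket \xvec{w}^{\pm}\right\rrbracket_{\epsilon}(\cdot, T/\epsilon)\|_{\xLtwo(\mathcal{E})} + \epsilon\,\|\xdop{\nabla}\theta^{\pm,\epsilon}(\cdot, T/\epsilon)\|_{\xLtwo(\mathcal{E})} = o(\epsilon^{9/8}).
\]

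Finally, for the remainder I would apply \Cref{proposition:aprxxtrlest}, which provides $\|\xvec{r}^{\pm,\epsilon}\|_{\xLinfty((0,T/\epsilon);\xLtwo(\mathcal{E}))}^2 = O(\epsilon^{1/4})$, so that
\[
\epsilon\,\|\xvec{r}^{\pm,\epsilon}(\cdot, T/\epsilon)\|_{\xLtwo(\mathcal{E})} = O(\epsilon^{1+1/8}) = O(\epsilon^{9/8}).
\]
Summing the four estimates and applying the triangle inequality yields the claim. The main obstacle here is essentially bookkeeping: verifying that the choice $r = 6$ made in \Cref{subsection:remestwbctrl} indeed ensures that the boundary-layer contributions decay faster than $\epsilon^{9/8}$, so that the remainder term (whose $\epsilon^{1/8}$ looseness comes from the energy estimate in \Cref{proposition:aprxxtrlest}) sets the dominant scale.
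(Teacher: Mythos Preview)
Your proposal is correct and follows essentially the same approach as the paper's proof: decompose via the ansatz \eqref{equation:ansatz2} at $t=T/\epsilon$, note that $\xvec{z}^{0}$ and $\xvec{z}^{\pm,1}$ vanish for $t\geq T$, then combine \Cref{lemma:epsgain}, \Cref{lemma:propwpm}, \Cref{lemma:wpth} and the boundary layer decay from \Cref{lemma:blcxi}/\Cref{lemma:blcxi2} (with $r=6$) to bound the boundary layer contributions, and finally invoke \Cref{proposition:aprxxtrlest} for the remainder. The paper's proof is slightly terser in that it absorbs the $\xnab\theta^{\pm,\epsilon}$ term directly into the $O(\epsilon^{9/8})$ without displaying it separately, but the substance is identical.
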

	\begin{proof}
		We remind that $\xsym{\mu}^{\pm}$ has been fixed via \Cref{lemma:blcxi} (or \Cref{lemma:blcxi2}) with $r = 6$ and $k = 4$, while noting that $\lim_{a\longrightarrow + \infty} a^{-1/2}\ln(a) = 0$. Therefore, by combining \eqref{equation:remesblAgf5} with \eqref{equation:ansatz2}, \Cref{lemma:epsgain} and \Cref{lemma:propwpm}, one arrives at 
		\begin{equation*}
			\begin{aligned}
				\|\xvec{z}^{\pm,\epsilon}(\cdot,T/\epsilon)\|_{\xLtwo(\mathcal{E})} & \leq \sqrt{\epsilon}\|\left\llbracket\xvec{v}^{\pm}\right\rrbracket_{\epsilon}(\cdot,T/\epsilon)\|_{\xLtwo(\mathcal{E})} + \epsilon\|\left\llbracket\xvec{w}^{\pm}\right\rrbracket_{\epsilon}(\cdot,T/\epsilon)\|_{\xLtwo(\mathcal{E})} \\
				& \quad \epsilon\|\xnab\theta^{\pm,\epsilon}(\cdot,T/\epsilon)\|_{\xLtwo(\mathcal{E})} + \epsilon\|\xvec{r}^{\pm,\epsilon}(\cdot,T/\epsilon)\|_{\xLtwo(\mathcal{E})} \\
				& \leq \epsilon^{\frac{3}{4}}\epsilon^{\frac{1}{2}}\epsilon^{-\frac{1}{2}}\|\xvec{v}^{\pm}(\cdot,T/\epsilon)\|_{\xHn{{1,1,0}}_{\mathcal{E}}} + \epsilon^{\frac{5}{4}}\|\xvec{w}^{\pm}(\cdot,T/\epsilon)\|_{\xHn{{0,2,0}}_{\mathcal{E}}} +  O(\epsilon^{\frac{9}{8}})\\
				& = O(\epsilon^{\frac{9}{8}}).
			\end{aligned}
		\end{equation*}
	\end{proof}

	\subsection{Controlling towards arbitrary smooth states}\label{subsection:approxtowtraj}
	Let $\overline{\xvec{z}}^{\pm}_1 \in \xCinfty(\overline{\mathcal{E}};\mathbb{R}^N)\cap\xH(\mathcal{E})$ be arbitrarily fixed. The previous arguments for approximate null controllability can be modified for the target $\overline{\xvec{z}}^{\pm}_1$. The idea is similar to that described in \cite[Section 5]{CoronMarbachSueur2020} for a Navier--Stokes problem.  
	First, the ansatz \eqref{equation:ansatz2} is modified such that for $\xvec{z}^{\pm,\epsilon}$ on the time interval $[0,T]$ one chooses an expansion of the form
	\begin{equation}\label{equation:ansatz3trajshortt}
		\begin{gathered}
			\xvec{z}^{\pm,\epsilon} = \xvec{z}^{0} + \sqrt{\epsilon}\left\llbracket \xvec{v}^{\pm}\right\rrbracket_{\epsilon} + \epsilon \overline{\xvec{z}}^{\pm,1} + \epsilon \xdop{\nabla} \theta^{\pm,\epsilon} + \epsilon \left\llbracket \xvec{w}^{\pm}\right\rrbracket_{\epsilon} + \epsilon \xvec{r}^{\pm,\epsilon},\\
		\end{gathered}
	\end{equation}
	while on $[T, T/\epsilon]$ it is assumed that
	\begin{equation}\label{equation:ansatz3trajlarget}
		\begin{aligned}
			\xvec{z}^{\pm,\epsilon} & =  \sqrt{\epsilon}\left\llbracket \xvec{v}^{\pm}\right\rrbracket_{\epsilon} + \epsilon \overline{\xvec{z}}^{\pm}_1 + \epsilon \xdop{\nabla} \theta^{\pm,\epsilon} + \epsilon \left\llbracket \xvec{w}^{\pm}\right\rrbracket_{\epsilon} + \epsilon \xvec{r}^{\pm,\epsilon}.
		\end{aligned}
	\end{equation}
	The profiles $\overline{\xvec{z}}^{\pm,1}$ in \eqref{equation:ansatz3trajshortt} belong to $\xLn{\infty}((0,T);\xHn{3}(\mathcal{E}))$ and, by adopting the constructions from \Cref{lemma:flushing}'s proof, there exist~$\xsym{\xi}^{\pm,1}\in\xCzero([0,T];\xHtwo(\mathcal{E};\mathbb{R}^N))$ with $\operatorname{supp}(\xsym{\xi}^{\pm,1}) \subset (\overline{\mathcal{E}}\setminus\overline{\Omega}) \times [0,T]$
	such that $(\overline{\xvec{z}}^{\pm,1},\xsym{\xi}^{\pm,1})$ solve the controllability problem
	\begin{equation}\label{equation:MHD_ElsaesserExt_Ovarepstraj}
		\begin{cases}
			\partial_t \overline{\xvec{z}}^{\pm,1} + (\overline{\xvec{z}}^{\mp,1} \cdot \xdop{\nabla}) \xvec{z}^{0} + (\xvec{z}^{0} \cdot \xdop{\nabla}) \overline{\xvec{z}}^{\pm,1} + \xdop{\nabla} p^{\pm,1} = \xsym{\xi}^{\pm,1} + (\lambda^{\pm}+\lambda^{\mp})\Delta \xvec{z}^{0} & \mbox{ in } \mathcal{E}_T,\\
			\xdop{\nabla}\cdot\overline{\xvec{z}}^{\pm,1} = 0  & \mbox{ in } \mathcal{E}_T,\\
			\overline{\xvec{z}}^{\pm,1} \cdot \xvec{n} = 0 & \mbox{ on }  \Sigma_T,\\
			\overline{\xvec{z}}^{\pm,1}(\cdot, 0) = \xvec{z}_0^{\pm} & \mbox{ in } \mathcal{E},\\
			\overline{\xvec{z}}^{\pm,1}(\cdot, T) = \overline{\xvec{z}}^{\pm}_1 & \mbox{ in } \mathcal{E}.
		\end{cases}
	\end{equation}
	In particular, all bounds for $\overline{\xvec{z}}^{\pm,1}$ and $\xsym{\xi}^{\pm,1}$ are independent of $\epsilon > 0$, as this parameter does not appear in \eqref{equation:MHD_ElsaesserExt_Ovarepstraj}.
	Because $\overline{\xvec{z}}^{\pm}_1$ are smooth and independent of time, by analysis similar to \Cref{subsection:remestwbctrl}, one can infer $\| \xvec{r}^{\pm,\epsilon}(\cdot, T/\epsilon)\| = O(\epsilon^{\frac{1}{8}})$. As a result, the rescaled functions $\xvec{z}^{\pm, (\epsilon)}(\xvec{x}, t) \coloneqq \epsilon^{-1}\xvec{z}^{\pm,\epsilon}\left(\xvec{x}, \epsilon^{-1}t \right)$ 
	satisfy
	\begin{equation*}
		\begin{aligned}
			\| \xvec{z}^{\pm, (\epsilon)}(\xvec{x}, T) - \overline{\xvec{z}}^{\pm}_1\|_{\xLtwo(\mathcal{E})} = O(\epsilon^{\frac{1}{8}}).
		\end{aligned}
	\end{equation*}

	\section{Conclusion of the main results}\label{section:conclth}
	In order to relax the assumption $\xvec{u}_0, \xvec{B}_0 \in \xHn{3}(\mathcal{E}) \cap \xW(\mathcal{E})$ employed in \Cref{section:approxres2}, we connect initial data from $\xH(\mathcal{E})$ by a weak controlled trajectory to a state which belongs to $\xHn{3}\cap \xW(\mathcal{E})$.  This is done via \Cref{lemma:reg} below, and a proof of this argument, which is a modification of \cite[Lemma 9]{CoronMarbachSueur2020}, will be outlined in \Cref{appendix:proofreg}.
	
	\begin{lmm}\label{lemma:reg}
		When $N = 3$, assume that $\xvec{M}_1$ and $\xvec{L}_2$ are symmetric, $\xvec{L}_1 = \xvec{M}_2 = \xvec{0}$, and that $\Omega$ is simply-connected. 
		For any given $T^{*} > 0$ and $\xvec{u}_0, \xvec{B}_0 \in \xH(\mathcal{E})$, there exists a smooth function $C_{T^{*}} > 0$ with $C_{T^{*}}(0) = 0$ such that a Leray--Hopf weak solution $(\xvec{u},\xvec{B}) \in \mathscr{X}_{T^{*}}^2$ to \eqref{equation:MHD_ElsaesserExt}
		obeys for some $t_{\operatorname{reg}} \in [0,T^{*}]$ the estimate
		\[
		\| \xvec{u} (\cdot, t_{\operatorname{reg}}) \|_{\xHn{3}(\mathcal{E})} + \| \xvec{B} (\cdot, t_{\operatorname{reg}}) \|_{\xHn{3}(\mathcal{E})} \leq C_{T^*}\left(\|\xvec{u}_0\|_{\xLtwo(\mathcal{E})} + \|\xvec{B}_0\|_{\xLtwo(\mathcal{E})}\right).
		\]
	\end{lmm}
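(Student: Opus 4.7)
The plan is to adapt the proof of \cite[Lemma 9]{CoronMarbachSueur2020} to the present coupled MHD system. Since $\xsym{\xi} = \xsym{\eta} = \xvec{0}$ in \eqref{equation:MHD_ElsaesserExt}, combining the energy inequality \eqref{equation:sei} with the boundary estimate \eqref{equation:bdrinttrest} (choosing $\delta$ small enough to absorb the boundary pairings into the dissipation) and with \eqref{equation:sKem} yields a Gr\"onwall-type bound
\[
\|\xvec{u}\|_{\xLn{2}((0,T^*);\xHn{1}(\mathcal{E}))}^2 + \|\xvec{B}\|_{\xLn{2}((0,T^*);\xHn{1}(\mathcal{E}))}^2 \leq C(T^*)\left(\|\xvec{u}_0\|_{\xLtwo(\mathcal{E})}^2 + \|\xvec{B}_0\|_{\xLtwo(\mathcal{E})}^2\right).
\]
A Chebyshev/mean-value argument then provides a time $t_1 \in [0, T^*/4]$ at which the instantaneous $\xHn{1}$ norm of $(\xvec{u}(\cdot,t_1),\xvec{B}(\cdot,t_1))$ is controlled by a continuous function of the initial $\xLtwo$ data which vanishes as the data do.

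Starting from $t_1$, the next step is to invoke local-in-time strong solution theory on an interval $[t_1, t_1 + \tau]$, with $\tau$ depending only on the $\xHn{1}$ norm at $t_1$. The role of the symmetry assumptions $\xvec{M}_1^{\top} = \xvec{M}_1$ and $\xvec{L}_2^{\top} = \xvec{L}_2$ together with $\xvec{L}_1 = \xvec{M}_2 = \xvec{0}$ is to ensure that the boundary pairings appearing in the higher-order energy identities remain skew with respect to the natural inner product $((\xvec{u},\xvec{B}),(\xvec{v},\xvec{w})) \mapsto \int_{\mathcal{E}} (\xvec{u}\cdot\xvec{v} + \mu\xvec{B}\cdot\xvec{w}) \, \xdx{\xvec{x}}$, so that the Navier-Stokes machinery under Navier slip-with-friction conditions (see \cite{AmroucheSeloula2013,AlBaba2019,CoronMarbachSueur2020}) extends to the coupled system without spawning uncontrolled cross-terms. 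Weak-strong uniqueness identifies the fixed weak Leray-Hopf solution with this strong one on $[t_1, t_1+\tau]$.

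Parabolic smoothing is then bootstrapped within $[t_1, t_1+\tau]$: testing the equations differentiated once in time, together with elliptic regularity for the associated stationary Stokes-type system (which trades one time derivative against two spatial derivatives), yields an $\xHn{2}$ bound at some $t_2 \in (t_1, t_1 + \tau/3)$, and a further iteration promotes it to $\xHn{3}$ at a time $t_{\operatorname{reg}} \in (t_2, t_1 + \tau)$. Tracking the explicit dependence of all intermediate constants on the initial $\xLtwo$ norm produces the desired bound with a smooth function $C_{T^*}$ satisfying $C_{T^*}(0) = 0$; the simply-connectedness of $\Omega$ (and an extension $\mathcal{E}$ chosen accordingly simply-connected) enters via \eqref{equation:sKem} to exclude topological harmonic-field contributions that would otherwise prevent the vanishing at zero data.

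The main obstacle is the second step: the required local strong solution theory for the full MHD system under the coupled Navier slip-with-friction conditions \eqref{equation:bc2} is, as the authors remark before introducing the class $\mathbf{S}$, not available off-the-shelf in three dimensions; this is precisely why the restrictive symmetry hypotheses are imposed for $N=3$. Proving it amounts to repeating the Galerkin construction of \Cref{subsubsection:case_thm1} at the $\xHn{1}$ rather than $\xLtwo$ level, which closes exactly because, under the stated assumptions, the boundary coupling matrix is self-adjoint in the energy inner product and the cross-terms $\int_{\partial\mathcal{E}} [\xvec{M}_1\xvec{u}]_{\operatorname{tan}} \cdot \xvec{u} \, \xdx{S}$ and $\int_{\partial\mathcal{E}} [\xvec{L}_2\xvec{B}]_{\operatorname{tan}} \cdot \xvec{B} \, \xdx{S}$ are the only boundary contributions at each differentiation order, each of which can be absorbed as in \eqref{equation:bdrinttrest}.
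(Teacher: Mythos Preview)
Your outline follows essentially the same route as the paper's proof in Appendix~B: energy inequality yields a time $t_1$ with $\xHone$ control, then higher-order estimates (testing with $\mathbb{P}\Delta\xvec{z}^{\pm}$ and with $\partial_t\xvec{z}^{\pm}$) combined with elliptic regularity for a coupled Stokes-type system produce $\xHtwo$ and then $\xHn{3}$ bounds at later times. Two points are misidentified, however.

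First, the boundary mechanism. At the $\xHtwo$-level estimate the relevant boundary term is not $\int_{\partial\mathcal{E}}[\xvec{M}_1\xvec{u}]_{\operatorname{tan}}\cdot\xvec{u}\,\xdx{S}$ but $\int_{\partial\mathcal{E}}[\xvec{M}_1\xvec{u}]_{\operatorname{tan}}\cdot\partial_t\xvec{u}\,\xdx{S}$ (and likewise for $\xvec{B}$). This cannot be absorbed by the trace interpolation \eqref{equation:bdrinttrest}; what the symmetry of $\xvec{M}_1$ and $\xvec{L}_2$ buys is that these pairings are \emph{exact time derivatives}, $\tfrac12\partial_t\int_{\partial\mathcal{E}}\xvec{M}_1\xvec{u}\cdot\xvec{u}\,\xdx{S}$, which can then be moved into the energy and bounded at the endpoints by lower-order norms. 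The condition $\xvec{L}_1=\xvec{M}_2=\xvec{0}$ is what prevents genuine cross-terms $\int\partial_t\xvec{u}\cdot\xvec{L}_1\xvec{B}$ from appearing at all.

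Second, the role of simply-connectedness. It does not enter through \eqref{equation:sKem} (the paper uses the version with the added $\|\cdot\|_{\xLtwo}$ term, which needs no topological hypothesis). Rather, it is required to justify the a~priori estimates rigorously in three dimensions: under the stated decoupled symmetric conditions one can build a Galerkin basis of eigenfunctions for the associated Stokes operator with Navier-slip conditions (as in \cite{GuoWang2016,XiaoXin2013}), and those references require a simply-connected domain. Your alternative route via local strong solutions plus weak--strong uniqueness would need the same ingredient to construct the strong solution in the first place.
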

	
	Let the control time $T_{\operatorname{ctrl}} > 0$, the states $\xvec{u}_0, \xvec{B}_0,\xvec{u}_1, \xvec{B}_1  \in \xLn{2}_{\operatorname{c}}(\Omega)$, and any $\delta > 0$ be arbitrarily fixed. Then, the proof of \Cref{theorem:main1} is completed by means of the ensuing steps.
	
	\begin{enumerate}[1)]
		\item\label{item:conclusionStep1} The physical domain $\Omega$ is extended to $\mathcal{E}$, as explained in \Cref{section:extentsions}, and the weak formulation given in \Cref{subsubsection:case_thm1} is chosen.
		\item\label{item:conclusionStep2} By \Cref{lemma:reg}, there exists $T_1 \in [0, T_{\operatorname{ctrl}}/4)$ such that a Leray--Hopf weak solution $(\xvec{u}, \xvec{B})$ to \eqref{equation:MHD_ElsaesserExt} with initial data $(\xvec{u}_0,\xvec{B}_0)$ and zero forces $\xsym{\xi} = \xsym{\eta} = \xvec{0}$ obeys $\xvec{u}(\cdot,T_1), \xvec{B}(\cdot,T_1) \in \xHn{3}(\mathcal{E})\cap\xH(\mathcal{E})$. 
		\item\label{item:conclusionStep3} By a density argument, one can select states $\overline{\xvec{u}}_1, \overline{\xvec{B}}_1 \in \xCinfty(\overline{\mathcal{E}};\mathbb{R}^N)\cap\xW(\mathcal{E})$ with
		\[
			\|\overline{{\xvec{u}}}_1-\xvec{u}_1\|_{\xLtwo(\Omega)} + \|\overline{\xvec{B}}_1-\xvec{B}_1\|_{\xLtwo(\Omega)} < \delta/2.
		\]
		\item\label{item:conclusionStep4} The arguments in \Cref{section:approxres2} are carried out with $T = T_{\operatorname{ctrl}}-T_1$, initial data $\xvec{u}(\cdot,T_1)$, $\xvec{B}(\cdot,T_1)$ and target states $\overline{\xvec{u}}_1, \overline{\xvec{B}}_1$.
		This provides controls $\xsym{\xi},\xsym{\eta}$ such that all Leray--Hopf weak solutions $(\overline{\xvec{u}}, \overline{\xvec{B}})$ to \eqref{equation:MHD_ElsaesserExt} with initial data $\xvec{u}(\cdot,T_1)$ and $\xvec{B}(\cdot,T_1)$ satisfy
		\[
			\|\overline{\xvec{u}}(\cdot,T_{\operatorname{ctrl}}-T_1) - \overline{\xvec{u}}_1\|_{\xLtwo(\mathcal{E})} + \|\overline{\xvec{B}}(\cdot,T_{\operatorname{ctrl}}-T_1) - \overline{\xvec{B}}_1\|_{\xLtwo(\mathcal{E})} < \delta/2.
		\]
		\item\label{item:conclusionStep5} At $t = T_1$, a Leray--Hopf weak solution $(\overline{\xvec{u}}, \overline{\xvec{B}})$ chosen via Step~\Rref{item:conclusionStep4} is glued to a Leray--Hopf weak solution $(\xvec{u}, \xvec{B})$ from Step~\Rref{item:conclusionStep2}. After renaming, one obtains a Leray--Hopf weak solution $(\xvec{u}, \xvec{B})$ to \eqref{equation:MHD_ElsaesserExt}, defined on the whole time interval $[0,T_{\operatorname{ctrl}}]$, which starts from the initial data $(\xvec{u}_0,\xvec{B}_0)$ and satisfies
		\[
			\|{\xvec{u}}(\cdot,T_{\operatorname{ctrl}}) - \xvec{u}_1\|_{\xLtwo(\Omega)} + \|{\xvec{B}}(\cdot,T_{\operatorname{ctrl}}) - \xvec{B}_1\|_{\xLtwo(\Omega)} < \delta.
		\]
	\end{enumerate}
	
	The previous arguments, while skipping the initial data extension and regularization steps, also yield \Cref{theorem:annulus}. In order to conclude \Cref{theorem:main}, we proceed as follows.
	
	\begin{enumerate}[1)]
		\item The physical domain $\Omega$ is extended to $\mathcal{E}$ as described in \Cref{section:extentsions}, but now the weak formulation in \Cref{subsubsection:secondcase} is considered. When \Cref{lemma:reg} cannot be applied, the extended initial data are chosen with $\xvec{u}_0, \xvec{B}_0 \in \xW(\mathcal{E})\cap\xHn{3}(\mathcal{E})$. Otherwise, in order to reach a divergence-free state, one defines $\sigma^{\pm}(\xvec{x},t) \coloneqq \beta(t) (\xdop{\nabla}\cdot\xvec{z}^{\pm}_0)(\xvec{x})$, with $\beta \in \xCinfty(\mathbb{R};\mathbb{R})$ obeying $\beta(0) = 1$ and $\beta(t) = 0$ for all $t \geq \widehat{T} \coloneqq T_{\operatorname{ctrl}}/8$. A corresponding weak solution to \eqref{equation:MHD_ElsaesserExt_caseB} on $[0,\widehat{T}]$ with zero forces $\xsym{\xi} = \xsym{\eta} = \xvec{0}$ is denoted by $(\widehat{\xvec{z}}^+,\widehat{\xvec{z}}^+)$ and it follows that $\widehat{\xvec{z}}^{\pm}(\cdot, \widehat{T}) \in \xH(\mathcal{E})$. 
		\item Any Leray--Hopf weak solution $(\xvec{z}^{+},\xvec{z}^{-})$ to \eqref{equation:MHD_ElsaesserExt_caseB} with $\sigma^{\pm} = 0$ and zero forces $\xsym{\xi} = \xsym{\eta} = \xvec{0}$ also obeys, by means of the transformation 
		\[
			(\xvec{u}, \xvec{B}) = \frac{1}{2}\left(\xvec{z}^++\xvec{z}^-, \frac{1}{\sqrt{\mu}}(\xvec{z}^+-\xvec{z}^-)\right),
		\]
		the weak form introduced for \eqref{equation:MHD_ElsaesserExt}, and vice versa.
		Therefore, either one can take $T_1 = 0$, or \Cref{lemma:reg} provides a time $T_1 \in [\widehat{T}, T_{\operatorname{ctrl}}/4)$ such that a Leray--Hopf weak solution $(\xvec{z}^{+}, \xvec{z}^-)$ to \eqref{equation:MHD_ElsaesserExt_caseB} with initial data $\widehat{\xvec{z}}^{\pm}(\widehat{T})$ and zero forces $\xsym{\xi} = \xsym{\eta} = \xvec{0}$ obeys $\xvec{z}^{\pm}(\cdot,T_1) \in \xHn{3}(\mathcal{E})\cap\xH(\mathcal{E})$. 
		\item As before, by density, one can choose regular states $\overline{\xvec{z}}^{\pm}_1 \in \xCinfty_0(\overline{\mathcal{E}};\mathbb{R}^N)\cap\xH(\mathcal{E})$ with
		\[
			\|{\overline{\xvec{z}}^{\pm}_1} - (\xvec{u}_1 \pm \sqrt{\mu}\xvec{B}_1)\|_{\xLtwo(\Omega)} < \delta/4.
		\]
		\item Now, \Cref{section:approxres2} is applied with $T = T_{\operatorname{ctrl}}-T_1$, initial data $\xvec{z}^{\pm}(\cdot,T_1)$, and target states~$\overline{\xvec{z}}^{\pm}_1$.
		As a result, there are controls $\xsym{\xi},\xsym{\eta}$ such that all corresponding Leray--Hopf weak solutions $(\overline{\xvec{z}}^+, \overline{\xvec{z}}^-)$ to \eqref{equation:MHD_ElsaesserExt_caseB} satisfy
		\[
			\|\overline{\xvec{z}}^{\pm}(\cdot,T_{\operatorname{ctrl}}-T_1) - \overline{\xvec{z}}^{\pm}_1\|_{\xLtwo(\mathcal{E})} < \delta/4.
		\]
		\item By a gluing argument, one obtains a Leray--Hopf weak solution $(\xvec{z}^+, \xvec{z}^-)$ to \eqref{equation:MHD_ElsaesserExt_caseB} on $[0,T_{\operatorname{ctrl}}]$, starting from the initial data $\xvec{z}^{\pm}_0$ and satisfying
		\[
		\|{\xvec{z}^{\pm}}(\cdot,T_{\operatorname{ctrl}}) - (\xvec{u}_1 \pm \sqrt{\mu}\xvec{B}_1)\|_{\xLtwo(\Omega)} < \delta/2.
		\]
	\end{enumerate}

	\appendix
	\gdef\thesection{\Alph{section}} 
	\makeatletter
	\renewcommand\@seccntformat[1]{\appendixname\ \csname the#1\endcsname.\hspace{0.5em}}
	\makeatother
	
	\section{Boundary layer estimates}\label{appendix:higherorderestimates}
	The estimates used in the proof of \Cref{lemma:wellpvpmctrl} are outlined; more general than there, we take now $\xvec{z}^{\pm,0} \in \xCinfty(\overline{\mathcal{E}}\times[0,T];\mathbb{R}^N)$ with $\xvec{z}^{\pm,0} \cdot \xvec{n} = 0$ along $\partial \mathcal{E}$ and $\operatorname{supp}(\xvec{z}^{\pm,0}(\xvec{x},\cdot)) \subset (0,T]$ for all $\xvec{x} \in \overline{\mathcal{E}}$. In \Cref{lemma:wellpvpmctrl}, it is $\xvec{z}^0 = \xvec{z}^{+,0} = \xvec{z}^{-,0}$. We now consider in $ \mathcal{E} \times (0,T) \times \mathbb{R}_+$ the equations (\cf~\eqref{equation:MHD_ElsaesserBlProfilevpm_contr})
	\begin{equation}\label{equation:MHD_ElsaesserBlProfilevpm}
		\partial_t \xvec{v}^{\pm} - \partial_{zz}(\lambda^{\pm}\xvec{v}^{+} + \lambda^{\mp}\xvec{v}^{-}) + \left[ (\xvec{z}^{\mp,0} \cdot \xdop{\nabla}) \xvec{v}^{\pm} + (\xvec{v}^{\mp} \cdot \xdop{\nabla})\xvec{z}^{\pm,0} \right]_{\operatorname{tan}} + \mathfrak{f}^{\pm}z\partial_z\xvec{v}^{\pm}  =  \xvec{0}
	\end{equation}
	together with the initial and boundary conditions (\cf~\eqref{equation:MHD_ElsaesserBlProfilevpm_contribc})
	\begin{equation}\label{equation:MHD_ElsaesserBlProfilevpm2}
		\begin{cases}
			\partial_z \xvec{v}^{\pm}(\xvec{x},t,0) - \left[ \partial_z\xvec{v}^{\pm}(\xvec{x},t,0) \cdot \xvec{n}(\xvec{x}) \right]\xvec{n}(\xvec{x}) = \xsym{\mathfrak{g}}^{\pm}(\xvec{x},t),  & \xvec{x} \in \overline{\mathcal{E}}, t \in (0, T),\\
			\xvec{v}^{\pm}(\xvec{x},t,0) \cdot \xvec{n}(\xvec{x}) = 0,  & \xvec{x} \in \overline{\mathcal{E}}, t \in (0, T),\\
			\xvec{v}^{\pm}(\xvec{x},t,z) \longrightarrow \xvec{0}, \mbox{ as } z \longrightarrow +\infty, & \xvec{x} \in \overline{\mathcal{E}}, t \in \mathbb{R}_+ ,\\
			\xvec{v}^{\pm}(\xvec{x},0,z) = \xvec{0}, & \xvec{x} \in \overline{\mathcal{E}}, z > 0.
		\end{cases}
	\end{equation}
	While also more general data could be chosen, here we take the cutoff $\chi_{\partial\mathcal{E}}$ defined in \eqref{equation:def_chi} and consider
	\begin{equation*}
		\mathfrak{f}^{\pm}(\xvec{x},t) = -\frac{\xvec{z}^{\mp,0}(\xvec{x},t)\cdot\xvec{n}(\xvec{x})}{\varphi_{\mathcal{E}}(\xvec{x})}, \quad \xsym{\mathfrak{g}}^{\pm}(\xvec{x},t) = 2\chi_{\partial\mathcal{E}}\xmcal{N}^{\pm}(\xvec{z}^{+,0},\xvec{z}^{-,0})(\xvec{x},t).
	\end{equation*}
	Due to the support of $\xvec{z}^{\pm,0}$, compatibility conditions up to all orders are satisfied by the initial and boundary data in \eqref{equation:MHD_ElsaesserBlProfilevpm2}. Multiplying in \eqref{equation:MHD_ElsaesserBlProfilevpm} with $\xvec{n}$, one may similarly to \cite[Section 5]{IftimieSueur2011} establish energy estimates which imply for all $(\xvec{x},t,z) \in \overline{\mathcal{E}} \times[0,T]\times\mathbb{R}_+$ that
	\begin{gather*}
		\left[\xvec{v}^{+}(\xvec{x},t,z) \pm \xvec{v}^{-}(\xvec{x},t,z)\right] \cdot \xvec{n}(\xvec{x}) =  0.\label{equation:opropg}
	\end{gather*}

	The goal consists now of showing the following lemma.
	\begin{lmm}\label{lemma:higherorder}
		For any choice of $k,m_1,m_2,m_3 \in \mathbb{N}_0$, there exists a constant
		\begin{equation}\label{equation:constant_type}
			C = C(\mathcal{E},\lambda^{\pm},k,m_1,m_2,m_3,T,\xvec{z}^{\pm,0}, \xvec{M}^{\pm}, \xvec{L}^{\pm}) > 0
		\end{equation}
		such that every smooth solution $(\xvec{v}^+, \xvec{v}^-)$ to \eqref{equation:MHD_ElsaesserBlProfilevpm} and \eqref{equation:MHD_ElsaesserBlProfilevpm2} obeys
		\begin{equation}\label{equation:hibl}
			\| \xvec{v}^{\pm}  \|_{\xWn{{m_2,\infty}}((0,T);\xHn{{k,m_1,m_3}}_{\mathcal{E}})} + \| \xvec{v}^{\pm}  \|_{\xHn{{m_2}}((0,T); \xHn{{k,m_1,m_3+1}}_{\mathcal{E}})} \leq C,
		\end{equation}
		with $C = 0$ when $\xsym{\mathfrak{g}}^{\pm} = \xvec{0}$.
	\end{lmm}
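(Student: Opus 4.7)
The plan is to exploit the fact that the parabolic part of the system decouples via the change of unknowns $\xvec{V}^{\pm} := \xvec{v}^{+} \pm \xvec{v}^{-}$. Since the diffusion matrix $\bigl(\!\begin{smallmatrix}\lambda^+ & \lambda^-\\ \lambda^- & \lambda^+\end{smallmatrix}\!\bigr)$ is diagonalized by this transformation, each $\xvec{V}^{\pm}$ satisfies a scalar-type anisotropic parabolic equation
\[
\partial_t \xvec{V}^{\pm} - \nu_{\pm}\partial_{zz}\xvec{V}^{\pm} + [(\xvec{z}^{\mp,0}\cdot\xnab)\xvec{V}^{\pm} + (\xvec{V}^{\mp}\cdot\xnab)\xvec{z}^{\pm,0}]_{\operatorname{tan}} + \mathfrak{f}^{\pm}z\partial_z\xvec{V}^{\pm} = \xvec{0},
\]
with $\nu_{+}=\nu_{1}>0$, $\nu_{-}=\nu_{2}>0$, zero initial data, smooth Neumann-type data $\xsym{\mathfrak{g}}^{\pm}$ at $z=0$, and decay as $z\to +\infty$; here $\xvec{x}\in\overline{\mathcal{E}}$ enters only as a smooth parameter.

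First I would establish the base estimate (case $k=m_1=m_2=m_3=0$) by testing each equation against $\xvec{V}^{\pm}$ and integrating over $\mathcal{E}\times\mathbb{R}_{+}$. The diffusion term yields, after integrating by parts in $z$, the dissipation $\nu_{\pm}\|\partial_{z}\xvec{V}^{\pm}\|_{\xLtwo}^{2}$ plus a boundary contribution at $z=0$ equal to a pairing with $\xsym{\mathfrak{g}}^{\pm}$, absorbable via Young's inequality combined with an $\xvec{x}$-trace estimate. The tangential transport term splits into a divergence contribution that vanishes thanks to $\xvec{z}^{\pm,0}\cdot\xvec{n}=0$ on $\partial\mathcal{E}$ and $\xvec{V}^{\pm}\cdot\xvec{n}=0$ at $z=0$, and an $\xLtwo$ term bounded by $\|\xnab\xvec{z}^{\pm,0}\|_{\xLinfty}\|\xvec{V}^{\pm}\|_{\xLtwo}^{2}$. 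The stretching term produces after integration by parts in $z$ a contribution bounded by $\|\mathfrak{f}^{\pm}\|_{\xLinfty}\|\xvec{V}^{\pm}\|_{\xLtwo}^{2}$, using that $\mathfrak{f}^{\pm}$ is smooth by Taylor expansion around $\partial\mathcal{E}$. Gr\"onwall closes the estimate.

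Next I would bootstrap to the full weighted Sobolev norms by a nested induction. Tangential derivatives $\partial_{\xvec{x}}^{\beta}$ commute with the equation up to lower-order commutators involving only smooth coefficients, reducing the induction on $|\beta|\leq m_{1}$ to the base estimate with a new smooth forcing. Time derivatives $\partial_{t}^{s}$ commute with every spatial operator; since $\xsym{\mathfrak{g}}^{\pm}$ and $\xvec{z}^{\pm,0}$ are compactly supported in $(0,T]$ and the initial data is zero, every $\partial_{t}^{s}\xvec{V}^{\pm}(\cdot,0,\cdot)$ vanishes, so the induction on $s\leq m_{2}$ starts from zero. The polynomial weight is introduced by testing against $(1+z^{2k})\xvec{V}^{\pm}$; integration by parts in the stretching term gives a weighted mass contribution bounded by $C(1+k)\|\mathfrak{f}^{\pm}\|_{\xLinfty}$ times the $(1+z^{k})$-weighted $\xLtwo$ norm, and the diffusion produces both the weighted dissipation $(1+z^{2k})|\partial_{z}\xvec{V}^{\pm}|^{2}$ and a lower-order cross term $z^{2k-1}\partial_{z}\xvec{V}^{\pm}\cdot\xvec{V}^{\pm}$ absorbed by Cauchy--Schwarz, closing the induction on $k$. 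Normal derivatives $\partial_{z}^{r}$ are handled by first subtracting off an explicit smooth lifting such as $z\operatorname{e}^{-z}\xsym{\mathfrak{g}}^{\pm}(\xvec{x},t)$ to reduce to homogeneous Neumann data, then using the commutator identity $[\partial_{z}^{r},z\partial_{z}] = r\partial_{z}^{r}$ to control the new stretching-type source at each level $r\leq m_{3}$.

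The main technical obstacle is the simultaneous handling of the polynomial $z$-weight and the stretching term $\mathfrak{f}^{\pm}z\partial_{z}\xvec{V}^{\pm}$: each normal differentiation regenerates an unweighted copy of $\partial_{z}^{r}\xvec{V}^{\pm}$ whose weighted norm must be reabsorbed into the dissipation, which is only possible if the inductions on $k$, $m_{3}$ and the compatibility of the lifted boundary data at $t=0$ are organized in the correct order. To make the formal integrations by parts fully rigorous, one may first work with a Galerkin scheme in $(\xvec{x},z)$ or truncate $\mathbb{R}_{+}$ to $(0,L)$ with an artificial Dirichlet condition at $z=L$, derive the weighted estimates uniformly in the approximation parameter, and pass to the limit; this is standard once the a priori structure above is settled, and closely parallels the arguments of \cite[Section 5]{IftimieSueur2011} and \cite{CoronMarbachSueur2020} for the Navier--Stokes boundary layer. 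The bound \eqref{equation:hibl} with $C = 0$ in the case $\xsym{\mathfrak{g}}^{\pm}=\xvec{0}$ is then immediate, since the whole linear problem becomes homogeneous with zero data.
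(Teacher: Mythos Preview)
Your overall strategy---weighted energy estimates with induction on $k,m_1,m_2,m_3$, handling the boundary term at $z=0$ via the Neumann data, and bootstrapping $z$-derivatives from the equation---is essentially the same as the paper's, and your decoupling of the diffusion via $\xvec{V}^{\pm}=\xvec{v}^{+}\pm\xvec{v}^{-}$ is exactly what the paper achieves implicitly by summing the $\pm$ estimates (compare the paper's treatment of $I_1^{+}+I_1^{-}$, which produces $(\lambda^{+}\pm\lambda^{-})\|\partial_z(\xvec{v}^{+}\pm\xvec{v}^{-})\|^{2}$). The induction scheme and the use of $\xvec{v}^{\pm}\cdot\xvec{n}=0$ to drop the tangential projection when testing are also parallel.

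One correction: the specific equation you wrote for $\xvec{V}^{\pm}$ is only valid when $\xvec{z}^{+,0}=\xvec{z}^{-,0}$ and hence $\mathfrak{f}^{+}=\mathfrak{f}^{-}$. For distinct $\xvec{z}^{\pm,0}$ (the setting in which the lemma is stated), the transport and stretching terms do \emph{not} reduce to $(\xvec{z}^{\mp,0}\cdot\xnab)\xvec{V}^{\pm}$ and $\mathfrak{f}^{\pm}z\partial_z\xvec{V}^{\pm}$; one instead gets genuine lower-order couplings such as $\tfrac{1}{2}[(\xvec{z}^{-,0}+\xvec{z}^{+,0})\cdot\xnab]\xvec{V}^{+}+\tfrac{1}{2}[(\xvec{z}^{-,0}-\xvec{z}^{+,0})\cdot\xnab]\xvec{V}^{-}$ and analogous mixed stretching terms. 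This does not damage your argument, since these extra couplings are still zeroth order in $z$ with smooth coefficients and are absorbed by the same Gr\"onwall step---but the displayed equation should be amended. A second, smaller difference: for the $\partial_z^{r}$ bootstrap the paper reads off $\partial_z^{2}\xvec{v}^{\pm}|_{z=0}$ (and then $\partial_z^{3}\xvec{v}^{\pm}|_{z=0}$, etc.) directly from the equation evaluated at $z=0$, rather than introducing an explicit lifting; both routes work, but note that a single lifting $z\operatorname{e}^{-z}\xsym{\mathfrak{g}}^{\pm}$ does not by itself make all higher Neumann data vanish, so you would still need the equation at $z=0$ at each inductive step.
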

	\begin{proof}
		The ideas and arguments for proving \Cref{lemma:higherorder} are based on \cite{IftimieSueur2011}. All constants $C > 0$ which appear during the estimates can depend on $\mathcal{E}$, $\lambda^{\pm}$, $k$, $m_1$, $m_2$, $m_3$, $T$, $\xvec{z}^{\pm,0}$, and $\xsym{\rho}^{\pm}$.
		\paragraph{Step 1. Estimates for $\partial_{\xvec{x}}^{\xsym{\alpha}}\partial_t^{\gamma}\xvec{v}^{\pm}$.}
		We take in \eqref{equation:MHD_ElsaesserBlProfilevpm} the partial derivatives $\partial_{\xvec{x}}^{\xsym{\alpha}}\partial_t^{\gamma}$ for $\gamma \in \mathbb{N}_0$ and $\xsym{\alpha} \in \mathbb{N}_0^N$. As a result, 
		\begin{equation}\label{equation:MHD_ElsaesserBlProfilevpmhigherxt}
			\begin{aligned}
				\partial_t (\partial_{\xvec{x}}^{\xsym{\alpha}}\partial_t^{\gamma}\xvec{v}^{\pm}) & = \partial_{zz} \partial_{\xvec{x}}^{\xsym{\alpha}}\partial_t^{\gamma}(\lambda^{\pm}\xvec{v}^{+} + \lambda^{\mp}\xvec{v}^{-}) - z\partial_{\xvec{x}}^{\xsym{\alpha}}\sum\limits_{l=0}^{\gamma} \binom{\gamma}{l} \partial_t^l \mathfrak{f}^{\pm}\partial_t^{\gamma-l}\partial_z\xvec{v}^{\pm}\\
				& \quad - \partial_{\xvec{x}}^{\xsym{\alpha}} \sum\limits_{l=0}^{\gamma}\binom{\gamma}{l} \left[ (\partial_t^{l}\xvec{z}^{\mp,0} \cdot \xdop{\nabla}) \partial_t^{\gamma-l}\xvec{v}^{\pm} + (\partial_t^{l}\xvec{v}^{\mp} \cdot \xdop{\nabla})\partial_t^{\gamma-l}\xvec{z}^{\pm,0} \right]_{\operatorname{tan}}.
			\end{aligned}
		\end{equation}
		Furthermore, multiplying \eqref{equation:MHD_ElsaesserBlProfilevpmhigherxt} for arbitrary $k \in \mathbb{N}$ with $(1+z^{2k})\partial_{\xvec{x}}^{\xsym{\alpha}}\partial_t^{\gamma}\xvec{v}^{\pm}$ and integrating in $(\xvec{x},z)$ over $\mathcal{E}\times\mathbb{R}_+$ yields
		\begin{equation*}\label{equation:hm12estblhigherxt}
			\begin{gathered}
				\frac{1}{2}\frac{d}{dt} \int_{\mathcal{E}}\int_{\mathbb{R}_+} 	(1+z^{2k})|\partial_{\xvec{x}}^{\xsym{\alpha}}\partial_t^{\gamma} \xvec{v}^{\pm}(\xvec{x},t,z)|^2 \, \xdx{z}  \xdx{\xvec{x}} = I_1^{\pm}(t) - I_2^{\pm}(t) - I_3^{\pm,a}(t) - I_3^{\pm,b}(t)
			\end{gathered}
		\end{equation*}
		with the right-hand side being given by
		\begin{equation}\label{equation:hm12estbldthigherxt}
			\begin{aligned}
				I_1^{\pm} & \coloneqq \int_{\mathcal{E}}\int_{\mathbb{R}_+} (1+z^{2k})\partial_{zz}	\partial_{\xvec{x}}^{\xsym{\alpha}}\partial_t^{\gamma} (\lambda^{\pm}\xvec{v}^{+} + \lambda^{\mp}\xvec{v}^{-}) \cdot \partial_{\xvec{x}}^{\xsym{\alpha}}\partial_t^{\gamma} \xvec{v}^{\pm} \, \xdx{z}  \xdx{\xvec{x}},\\
				I_2^{\pm} & \coloneqq \int_{\mathcal{E}}\int_{\mathbb{R}_+} (z+z^{2k+1}) 	\partial_{\xvec{x}}^{\xsym{\alpha}}\sum\limits_{l=0}^{\gamma} \binom{\gamma}{l} \partial_t^l \mathfrak{f}^{\pm}\partial_t^{\gamma-l}\partial_z\xvec{v}^{\pm} \cdot \partial_{\xvec{x}}^{\xsym{\alpha}}\partial_t^{\gamma} \xvec{v}^{\pm} \, \xdx{z}  \xdx{\xvec{x}},\\
				I_3^{\pm,a} & \coloneqq \int_{\mathcal{E}}\int_{\mathbb{R}_+} (1+z^{2k}) \partial_{\xvec{x}}^{\xsym{\alpha}} \sum\limits_{l=0}^{\gamma}\binom{\gamma}{l} \left[ (\partial_t^{l}\xvec{z}^{\mp,0} \cdot \xdop{\nabla}) \partial_t^{\gamma-l}\xvec{v}^{\pm}\right]_{\operatorname{tan}} \cdot \partial_{\xvec{x}}^{\xsym{\alpha}} \partial_t^{\gamma} \xvec{v}^{\pm} \, \xdx{z}  \xdx{\xvec{x}},\\
				I_3^{\pm,b} & \coloneqq \int_{\mathcal{E}}\int_{\mathbb{R}_+} (1+z^{2k}) \partial_{\xvec{x}}^{\xsym{\alpha}} \sum\limits_{l=0}^{\gamma}\binom{\gamma}{l} \left[ (\partial_t^{l}\xvec{v}^{\mp} \cdot \xdop{\nabla})\partial_t^{\gamma-l}\xvec{z}^{\pm,0} \right]_{\operatorname{tan}} \cdot \partial_{\xvec{x}}^{\xsym{\alpha}}\partial_t^{\gamma} \xvec{v}^{\pm} \, \xdx{z}  \xdx{\xvec{x}}.
			\end{aligned}
		\end{equation}
		
		We focus now on the situations where $\gamma > 0$ and $|\xsym{\alpha}| > 0$. For the terms $I^{\pm}_1$, integration by parts in $z$ leads to
		\begin{equation}\label{equation:hm12estbldthigherxtI1}
			\begin{aligned}
				I_1^{\pm} & = -\int_{\mathcal{E}}\int_{\mathbb{R}_+} (1+z^{2k}) \left(\lambda^{\pm}\partial_z \partial_{\xvec{x}}^{\xsym{\alpha}}\partial_t^{\gamma} \xvec{v}^{+} + \lambda^{\mp}\partial_z \partial_{\xvec{x}}^{\xsym{\alpha}}\partial_t^{\gamma} \xvec{v}^{-} \right) \cdot \partial_z \partial_{\xvec{x}}^{\xsym{\alpha}}\partial_t^{\gamma} \xvec{v}^{\pm} \, \xdx{z} \xdx{\xvec{x}} \\
				& \quad - 2k\int_{\mathcal{E}}\int_{\mathbb{R}_+} z^{2k-1}\left(\lambda^{\pm}\partial_z \partial_{\xvec{x}}^{\xsym{\alpha}}\partial_t^{\gamma} \xvec{v}^{+} + \lambda^{\mp}\partial_z \partial_{\xvec{x}}^{\xsym{\alpha}}\partial_t^{\gamma} \xvec{v}^{-} \right) \cdot \partial_{\xvec{x}}^{\xsym{\alpha}}\partial_t^{\gamma} \xvec{v}^{\pm} \, \xdx{z}  \xdx{\xvec{x}} \\
				& \quad -\int_{\mathcal{E}} \left(\lambda^{\pm}\partial_z \partial_{\xvec{x}}^{\xsym{\alpha}}\partial_t^{\gamma} \xvec{v}^{+} + \lambda^{\mp}\partial_z \partial_{\xvec{x}}^{\xsym{\alpha}}\partial_t^{\gamma} \xvec{v}^{-} \right)(\xvec{x},t,0) \cdot \partial_{\xvec{x}}^{\xsym{\alpha}}\partial_t^{\gamma} \xvec{v}^{\pm}(\xvec{x},t,0) \, \xdx{\xvec{x}}\\
				& = -\int_{\mathcal{E}}\int_{\mathbb{R}_+} (1+z^{2k}) \left(\lambda^{\pm}\partial_z \partial_{\xvec{x}}^{\xsym{\alpha}}\partial_t^{\gamma} \xvec{v}^{+} + \lambda^{\mp}\partial_z \partial_{\xvec{x}}^{\xsym{\alpha}}\partial_t^{\gamma} \xvec{v}^{-} \right) \cdot \partial_z \partial_{\xvec{x}}^{\xsym{\alpha}}\partial_t^{\gamma} \xvec{v}^{\pm} \, \xdx{z}  \xdx{\xvec{x}} \\
				& \quad - I_{11}^{\pm} - I_{12}^{\pm}.
			\end{aligned}
		\end{equation}
		By means of Young's inequality and the identities $2\xvec{v}^{\pm} = (\xvec{v}^+ + \xvec{v}^-) \pm (\xvec{v}^+ - \xvec{v}^-)$, one obtains
		\begin{equation}\label{equation:hm12estbldthigherxtI11}
			\begin{aligned}
				|I_{11}^{\pm}| & \leq \sum\limits_{\square\in\{+,-\}}\frac{\lambda^+ \square \lambda^-}{8} \int_{\mathcal{E}}\int_{\mathbb{R}_+} (1 + z^{2k})|\partial_z \partial_{\xvec{x}}^{\xsym{\alpha}}\partial_t^{\gamma} (\xvec{v}^{+} \square \, \xvec{v}^-)|^2 \, \xdx{z}  \xdx{\xvec{x}} 
				\\
				& \quad + C \int_{\mathcal{E}}\int_{\mathbb{R}_+} (1 + z^{2k})|\partial_{\xvec{x}}^{\xsym{\alpha}}\partial_t^{\gamma} \xvec{v}^{\pm}|^2 \, \xdx{z}  \xdx{\xvec{x}}.
			\end{aligned}
		\end{equation}
		For a constant $C_0 > 0$ which vanishes when $\xsym{\mathfrak{g}}^{\pm} = \xvec{0}$, one can infer
		\begin{multline}\label{equation:hm12estbldthigherxtI12}
			|I_{12}^{\pm}| \leq \left|\int_{\mathcal{E}} \partial_{\xvec{x}}^{\xsym{\alpha}} \partial_t^{\gamma} (\lambda^{\pm}\xsym{\mathfrak{g}}^{+} + \lambda^{\mp}\xsym{\mathfrak{g}}^{-})(\xvec{x},t) \cdot \left[\int_{\mathbb{R}_+} \partial_z \partial_{\xvec{x}}^{\xsym{\alpha}} \partial_t^{\gamma} \xvec{v}^{\pm}(\xvec{x},t,z)  \, \xdx{z}\right] \, \xdx{\xvec{x}} \right|\\
			\begin{aligned}
				& \leq \int_{\mathcal{E}} \int_{\mathbb{R}_+} \left| (1 + z^{2k})^{-\frac{1}{2}} \partial_{\xvec{x}}^{\xsym{\alpha}} \partial_t^{\gamma} (\lambda^{\pm}\xsym{\mathfrak{g}}^{+} + \lambda^{\mp}\xsym{\mathfrak{g}}^{-})(\xvec{x},t) \cdot (1 + z^{2k})^{\frac{1}{2}} \partial_z \partial_{\xvec{x}}^{\xsym{\alpha}} \partial_t^{\gamma} \xvec{v}^{\pm}(\xvec{x},t,z) \right|  \, \xdx{z}  \xdx{\xvec{x}} \\
				& \leq \sum\limits_{\square\in\{+,-\}}\frac{\lambda^+ \square \lambda^-}{8} \int_{\mathcal{E}}\int_{\mathbb{R}_+} (1 + z^{2k})|\partial_z \partial_{\xvec{x}}^{\xsym{\alpha}}\partial_t^{\gamma} (\xvec{v}^{+} \square \, \xvec{v}^-)|^2 \, \xdx{z}  \xdx{\xvec{x}} + C_0.
			\end{aligned}
		\end{multline}
		Thus, after collecting \cref{equation:hm12estbldthigherxtI1,equation:hm12estbldthigherxtI11,equation:hm12estbldthigherxtI12}, one obtains the bound
		\begin{multline}\label{equation:hm12estbldthigherxtI1c}
			\sum\limits_{\square\in\{+,-\}} \left(|I_{1}^{\square}|  + \frac{\lambda^+ \square \, \lambda^-}{4}\int_{\mathcal{E}}\int_{\mathbb{R}_+} (1+z^{2k})|\partial_z \partial_{\xvec{x}}^{\xsym{\alpha}}\partial_t^{\gamma} (\xvec{v}^{+} \square \, \xvec{v}^{-})|^2 \, \xdx{z} \xdx{\xvec{x}}\right) \\
			\leq C\sum\limits_{\square\in\{+,-\}}\|\partial_t^{\gamma}\xvec{v}^{\pm}\|_{k,|\xsym{\alpha}|,0, \mathcal{E}}^2 + C_0.
		\end{multline}
		
		Concerning $I_{2}^{\pm}$, expanding the derivatives leads to
		\begin{equation*}
			\begin{aligned}
				|I_{2}^{\pm}| & \leq \left| \sum\limits_{l=0}^{\gamma} \sum\limits_{\xvec{0} < \xsym{\kappa} \leq \xsym{\alpha}} \binom{\gamma}{l}\binom{\xsym{\alpha}}{\xsym{\kappa}} \int_{\mathcal{E}}  \int_{\mathbb{R}_+} (z+z^{2k+1}) 	  \partial_{\xvec{x}}^{\xsym{\kappa}}\partial_t^l \mathfrak{f}^{\pm} \partial_{\xvec{x}}^{\xsym{\alpha}-\xsym{\kappa}}\partial_t^{\gamma-l}\partial_z\xvec{v}^{\pm} \cdot \partial_{\xvec{x}}^{\xsym{\alpha}}\partial_t^{\gamma} \xvec{v}^{\pm}  \, \xdx{z}  \xdx{\xvec{x}}  \right| \\
				& \quad + \left| \sum\limits_{l=0}^{\gamma} \binom{\gamma}{l} \int_{\mathcal{E}}  \int_{\mathbb{R}_+} (z+z^{2k+1})  \partial_t^l \mathfrak{f}^{\pm}  \partial_{\xvec{x}}^{\xsym{\alpha}}\partial_t^{\gamma-l}\partial_z\xvec{v}^{\pm} \cdot \partial_{\xvec{x}}^{\xsym{\alpha}}\partial_t^{\gamma} \xvec{v}^{\pm}  \, \xdx{z}  \xdx{\xvec{x}} \right|,
			\end{aligned}
		\end{equation*}
		which implies that $|I_{2}^{\pm}|$ is less than or equal to
		\begin{multline}\label{equation:hm12estbldthigherxtI2}
			C \sum\limits_{l=0}^{\gamma} \sum\limits_{\xvec{0} < \xsym{\kappa} \leq \xsym{\alpha}} \int_{\mathbb{R}_+} (z+z^{2k+1}) 	   \|\partial_{\xvec{x}}^{\xsym{\kappa}}\partial_t^l \mathfrak{f}^{\pm}\|_{\xLn{6}(\mathcal{E})} \|\partial_{\xvec{x}}^{\xsym{\alpha}-\xsym{\kappa}}\partial_t^{\gamma-l}\partial_z\xvec{v}^{\pm}\|_{\xLn{3}(\mathcal{E})} \|\partial_{\xvec{x}}^{\xsym{\alpha}}\partial_t^{\gamma} \xvec{v}^{\pm}\|_{\xLn{2}(\mathcal{E})} \, \xdx{z}\\
			\begin{aligned}
				& \quad + C \sum\limits_{l=1}^{\gamma} \int_{\mathbb{R}_+} (z+z^{2k+1}) \| \partial_{\xvec{x}}^{\xsym{\alpha}}\partial_t^{\gamma-l}\partial_z\xvec{v}^{\pm}\|_{\xLtwo(\mathcal{E})}\| \partial_{\xvec{x}}^{\xsym{\alpha}}\partial_t^{\gamma} \xvec{v}^{\pm}\|_{\xLtwo(\mathcal{E})} \, \xdx{z}\\
				& \quad + C \int_{\mathbb{R}_+} (1+(2k+1)z^{2k}) \| \partial_{\xvec{x}}^{\xsym{\alpha}}\partial_t^{\gamma}\xvec{v}^{\pm}\|_{\xLn{2}(\mathcal{E})}^2 \, \xdx{z}.
			\end{aligned}
		\end{multline}
		Hence, the interpolation inequality $\|\cdot\|_{\xLn{3}(\mathcal{E})} \leq C \|\cdot\|_{\xLn{2}(\mathcal{E})}^{1/2}\|\cdot\|_{\xHone(\mathcal{E})}^{1/2}$ and \eqref{equation:hm12estbldthigherxtI2} yield for arbitrary $\ell > 0$ that
		\begin{equation}\label{equation:hm12estbldthigherxtI2b}
			\begin{aligned}
				|I_{2}^{\pm}| & \leq C \sum\limits_{l=0}^{\gamma} \sum\limits_{\xvec{0} < \xsym{\kappa} \leq \xsym{\alpha}} \int_{\mathbb{R}_+} \Big\{ \|(1+z^{2k})^{\frac{1}{2}} \partial_{\xvec{x}}^{\xsym{\alpha}}\partial_t^{\gamma} \xvec{v}^{\pm}\|_{\xLn{2}(\mathcal{E})} \|(1+z^{2k+4})^{\frac{1}{2}} \partial_{\xvec{x}}^{\xsym{\alpha}-\xsym{\kappa}}\partial_t^{\gamma-l}\partial_z\xvec{v}^{\pm}\|_{\xLn{2}(\mathcal{E})}^{\frac{1}{2}}  \\
				& \quad \times \|(1+z^{2k})^{\frac{1}{2}} \partial_{\xvec{x}}^{\xsym{\alpha}-\xsym{\kappa}}\partial_t^{\gamma-l}\partial_z\xvec{v}^{\pm}\|_{\xHone(\mathcal{E})}^{\frac{1}{2}} \Big\} \, \xdx{z} \\
				& \quad + \ell \sum\limits_{l=1}^{\gamma} \|\partial_t^{\gamma-l}\xvec{v}^{\pm}\|_{k+1,|\xsym{\alpha}|,1,\mathcal{E}}^2 +  C(\ell) \|\partial_t^{\gamma}\xvec{v}^{\pm}\|_{k,|\xsym{\alpha}|,0,\mathcal{E}}^2 + C \|\partial_t^{\gamma}\xvec{v}^{\pm}\|_{k,|\xsym{\alpha}|,0,\mathcal{E}}^2\\
				& \leq \sum\limits_{l=0}^{\gamma} \sum\limits_{\xvec{0} < \xsym{\kappa} \leq \xsym{\alpha}} C \|\partial_t^{\gamma-l} \xvec{v}^{\pm}\|_{k+2,|\xsym{\alpha}-\xsym{\kappa}|,1,\mathcal{E}}^{\frac{1}{2}}\|\partial_t^{\gamma-l} \xvec{v}^{\pm}\|_{k,|\xsym{\alpha}-\xsym{\kappa}|+1,1,\mathcal{E}}^{\frac{1}{2}}\|\partial_t^{\gamma} \xvec{v}^{\pm}\|_{k,|\xsym{\alpha}|,0,\mathcal{E}}\\
				& \quad + \ell \sum\limits_{l=1}^{\gamma}  \|\partial_t^{\gamma-l}\xvec{v}^{\pm}\|_{k+1,|\xsym{\alpha}|,1,\mathcal{E}}^2 + C(\ell) \|\partial_t^{\gamma}\xvec{v}^{\pm}\|_{k,|\xsym{\alpha}|,0,\mathcal{E}}^2.
			\end{aligned}
		\end{equation}
		In \eqref{equation:hm12estbldthigherxtI2b}, we used the elementary inequality $z + z^{2k+1} \leq (1+z^k)(1+z^{k+1})$. Then, the number $|I_{2}^{\pm}|$ is less than or equal to
		\begin{equation}\label{equation:hm12estbldthigherxtI2c}
			\begin{multlined}
				\sum\limits_{l=0}^{\gamma} \sum\limits_{\xvec{0} < \xsym{\kappa} \leq \xsym{\alpha}}\left( C(\ell)\left( \|\partial_t^{\gamma-l} \xvec{v}^{\pm}\|_{k+2,|\xsym{\alpha}-\xsym{\kappa}|,1,\mathcal{E}}^2 + \|\partial_t^{\gamma} \xvec{v}^{\pm}\|_{k,\xsym{\alpha},0,\mathcal{E}}^2\right) + \ell\|\partial_t^{\gamma-l} \xvec{v}^{\pm}\|_{k,|\xsym{\alpha}-\xsym{\kappa}|+1,1,\mathcal{E}}^2\right) \\
				+ \ell \sum\limits_{l=1}^{\gamma} \|\partial_t^{\gamma-l}\xvec{v}^{\pm}\|_{k+1,|\xsym{\alpha}|,1,\mathcal{E}}^2 +  C(\ell) \|\partial_t^{\gamma}\xvec{v}^{\pm}\|_{k,|\xsym{\alpha}|,0,\mathcal{E}}^2.
			\end{multlined}
		\end{equation}
		
		It remains treating $I_3^{\pm,a}$ and $I_3^{\pm,b}$. For a vector field  $\widetilde{\xvec{z}}$ on $\mathcal{E}$, we denote by $\xD^m(\widetilde{\xvec{z}})$ arbitrary linear combinations of components of $\widetilde{\xvec{z}}$ and derivatives of such, which are taken in $\xvec{x}$ and are of order~$\leq~m$, while the coefficients can depend on $\xvec{n}$. Given a multi-index $\widetilde{\xsym{\alpha}}$ with $|\widetilde{\xsym{\alpha}}| = \widetilde{m} \in \mathbb{N}$, the relations $\xvec{v}^{\pm} \cdot \xvec{n} = 0$ imply
		\begin{equation}\label{equation:tanexflouoc}
			\xvec{n} \cdot \partial_t^{l} \partial_{\xvec{x}}^{\widetilde{\xsym{\alpha}}} \xvec{v}^{\pm} = \xD^{\widetilde{m}-1}(\partial_t^{l}\xvec{v}^{\pm}) = \partial_t^{l} \xD^{\widetilde{m}-1}(\xvec{v}^{\pm}).
		\end{equation}
		Therefore, in view of \eqref{equation:tanexflouoc} and the definition of the tangential part $[\cdot]_{\operatorname{tan}}$, one may write
		\begin{equation*}\label{equation:hm12estbldthigherxtI3a}
			\begin{aligned}
				I_3^{\pm,a} & = \sum\limits_{l=0}^{\gamma}\binom{\gamma}{l} \int_{\mathcal{E}}\int_{\mathbb{R}_+} (1+z^{2k}) \xD^{|\xsym{\alpha}|}(\partial_t^{l}\xvec{z}^{\mp,0}) \xD^{|\xsym{\alpha}|}(\partial_t^{\gamma-l}\xvec{v}^{\pm})\xD^{|\xsym{\alpha}|}(\partial_t^{\gamma}\xvec{v}^{\pm}) \, \xdx{z}  \xdx{\xvec{x}}\\
				& \quad + \sum\limits_{l=0}^{\gamma}\binom{\gamma}{l} \int_{\mathcal{E}}\int_{\mathbb{R}_+} (1+z^{2k})\left( (\partial_t^{l}\xvec{z}^{\mp,0} \cdot \xdop{\nabla}) \partial_{\xvec{x}}^{\xsym{\alpha}} \partial_t^{\gamma-l}\xvec{v}^{\pm} \cdot \xvec{n}\right) \xD^{|\xsym{\alpha}|-1} \left(\partial_t^{\gamma} \xvec{v}^{\pm}\right) \, \xdx{z}  \xdx{\xvec{x}}\\
				& \quad + \sum\limits_{l=0}^{\gamma}\binom{\gamma}{l} \int_{\mathcal{E}}\int_{\mathbb{R}_+} (1+z^{2k}) (\partial_t^{l}\xvec{z}^{\mp,0} \cdot \xdop{\nabla}) \partial_{\xvec{x}}^{\xsym{\alpha}} \partial_t^{\gamma-l}\xvec{v}^{\pm} \cdot \partial_{\xvec{x}}^{\xsym{\alpha}} \partial_t^{\gamma} \xvec{v}^{\pm} \, \xdx{z}  \xdx{\xvec{x}},
			\end{aligned}
		\end{equation*}
		and integration by parts implies that the second line is of the same type as the first one. Thus, 
		\begin{equation}\label{equation:hm12estbldthigherxtI3aa}
			\begin{aligned}
				I_3^{\pm,a} & = \sum\limits_{l=0}^{\gamma}\binom{\gamma}{l} 	\int_{\mathcal{E}}\int_{\mathbb{R}_+} (1+z^{2k}) \xD^{|\xsym{\alpha}|}(\partial_t^{l}\xvec{z}^{\mp,0}) \xD^{|\xsym{\alpha}|}(\partial_t^{\gamma-l}\xvec{v}^{\pm})\xD^{|\xsym{\alpha}|}(\partial_t^{\gamma}\xvec{v}^{\pm}) \, \xdx{z}  \xdx{\xvec{x}}\\
				& \quad + \sum\limits_{l=1}^{\gamma}\binom{\gamma}{l} 	\int_{\mathcal{E}}\int_{\mathbb{R}_+} (1+z^{2k}) (\partial_t^{l}\xvec{z}^{\mp,0} \cdot \xdop{\nabla}) \partial_{\xvec{x}}^{\xsym{\alpha}} \partial_t^{\gamma-l}\xvec{v}^{\pm} \cdot \partial_{\xvec{x}}^{\xsym{\alpha}} \partial_t^{\gamma} \xvec{v}^{\pm} \, \xdx{z}  \xdx{\xvec{x}}\\
				& \quad + \int_{\mathcal{E}}\int_{\mathbb{R}_+} (1+z^{2k}) (\xvec{z}^{\mp,0} \cdot \xdop{\nabla}) \partial_{\xvec{x}}^{\xsym{\alpha}} \partial_t^{\gamma}\xvec{v}^{\pm} \cdot \partial_{\xvec{x}}^{\xsym{\alpha}} \partial_t^{\gamma} \xvec{v}^{\pm} \, \xdx{z}  \xdx{\xvec{x}}.
			\end{aligned}
		\end{equation}	
		Due to $\xvec{z}^{\mp,0} \cdot \xvec{n} = 0$ on $\partial \mathcal{E}$, the last integral in \eqref{equation:hm12estbldthigherxtI3aa} reads
		\begin{multline*}
			\int_{\mathcal{E}}\int_{\mathbb{R}_+} (1+z^{2k}) (\xvec{z}^{\mp,0} \cdot \xdop{\nabla}) \partial_{\xvec{x}}^{\xsym{\alpha}} \partial_t^{\gamma}\xvec{v}^{\pm} \cdot \partial_{\xvec{x}}^{\xsym{\alpha}} \partial_t^{\gamma} \xvec{v}^{\pm} \, \xdx{z}  \xdx{\xvec{x}} \\
			= -\frac{1}{2}\int_{\mathcal{E}}\int_{\mathbb{R}_+} (1+z^{2k}) |\partial_{\xvec{x}}^{\xsym{\alpha}} \partial_t^{\gamma} \xvec{v}^{\pm}|^2 (\xdiv{\xvec{z}^{\mp,0}}) \, \xdx{z} \xdx{\xvec{x}}
		\end{multline*}
		such that \eqref{equation:hm12estbldthigherxtI3aa} eventually implies the bound
		\begin{equation}\label{equation:hm12estbldthigherxtI3a2}
			\begin{gathered}
				|I_3^{\pm,a}| \leq C \sum\limits_{l=1}^{\gamma}  \|\partial_t^{\gamma-l}\xvec{v}^{\pm}\|_{k,|\xsym{\alpha}|+1,0,\mathcal{E}}^2 + C\|\partial_t^{\gamma}\xvec{v}^{\pm}\|_{k,|\xsym{\alpha}|,0,\mathcal{E}}^2.
			\end{gathered}
		\end{equation}
		When it comes to $I_3^{\pm,b}$, the corresponding estimates are less demanding compared to those for $I_3^{\pm,a}$ and one finds
		\begin{equation}\label{equation:hm12estbldthigherxtI3b1}
			\begin{aligned}
				|I_3^{\pm,a}| + |I_3^{\pm,b}| & \leq C \left( \|\partial_t^{\gamma}\xvec{v}^{+}\|_{k,|\xsym{\alpha}|,0,\mathcal{E}}^2 + \|\partial_t^{\gamma}\xvec{v}^{-}\|_{k,|\xsym{\alpha}|,0,\mathcal{E}}^2\right) \\
				& \quad + C\sum\limits_{l=0}^{\gamma-1} \left( \|\partial_t^{l}\xvec{v}^{+}\|_{k,|\xsym{\alpha}|+1,0,\mathcal{E}}^2 + \|\partial_t^{l}\xvec{v}^{-}\|_{k,|\xsym{\alpha}|+1,0,\mathcal{E}}^2\right).
			\end{aligned}
		\end{equation}
		
		In order to collect the previous estimates, for fixed $m_1, m_2 \in \mathbb{N}_0$, we sum in \cref{equation:hm12estbldthigherxtI1c}, \cref{equation:hm12estbldthigherxtI2c,equation:hm12estbldthigherxtI3a2,equation:hm12estbldthigherxtI3b1} over all $|\xsym{\alpha}| \leq m_1$ and $\gamma \leq m_2$. Moreover, we denote
		\begin{equation*}
			\begin{aligned}
				\varPhi^{\pm} &\coloneqq C(\ell)\sum\limits_{\gamma=0}^{m_2}\left( \delta_{0,m_1}^{\dagger}\|\partial_t^{\gamma} \xvec{v}^{\pm}\|_{\xHn{{k+2,\max\{m_1-1,0\},1}}_{\mathcal{E}}}^2 + \|\partial_t^{\gamma} \xvec{v}^{\pm}\|_{\xHn{{k,m_1,0}}_{\mathcal{E}}}^2\right) + \ell\sum\limits_{\gamma=0}^{m_2}\|\partial_t^{\gamma} \xvec{v}^{\pm}\|_{\xHn{{k,m_1,1}}_{\mathcal{E}}}^2 \\
				& \quad + C(\ell)\sum\limits_{\gamma=0}^{m_2}  \delta_{0,\gamma}^{\dagger}\left(\|\partial_t^{\max\{\gamma-1,0\}}\xvec{v}^{\pm}\|_{\xHn{{k+1,m_1,1}}_{\mathcal{E}}}^2 + \|\partial_t^{\max\{\gamma-1,0\}} \xvec{v}^{\pm}\|_{\xHn{{k,m_1+1,0}}_{\mathcal{E}}}^2 \right)  + C_0,
			\end{aligned}
		\end{equation*}
		with
		\[
		\delta_{a,b}^{\dagger} \coloneqq \begin{cases}
			0 & \mbox{ if } a = b,\\ 
			1 & \mbox{ otherwise}.
		\end{cases}
		\]
		As a result, one obtains the estimate
		\begin{equation}\label{equation:hm12estbldthigherxtsum}
			\sum\limits_{\square\in\{+,-\}}\left(\partial_t \sum\limits_{\gamma=0}^{m_2} \|\partial_t^{\gamma}\xvec{v}^{\square}\|_{\xHn{{k,m_1,0}}_{\mathcal{E}}}^2 + \sum\limits_{\gamma=0}^{m_2}\frac{\lambda^+ \square \, \lambda^-}{2}\|\partial_t^{\gamma}(\xvec{v}^{+}\square \, \xvec{v}^{-})\|_{\xHn{{k,m_1,1}}_{\mathcal{E}}}^2\right)
			\leq 2\left(\varPhi^+ + \varPhi^-\right).
		\end{equation}
		On the right-hand side of \eqref{equation:hm12estbldthigherxtsum}, all terms containing norms of the spaces
		\[
		\xHn{{k+2,\max\{m_1-1,0\},1}}_{\mathcal{E}}, \quad \xHn{{k+1,m_1,1}}_{\mathcal{E}}, \quad \xHn{{k,m_1+1,1}}_{\mathcal{E}}
		\]
		disappear in the respective base cases when $m_1 = 0$ or $m_2 = 0$.
		Thus, inductively with respect to~$m_1$~and~$m_2$, and by using a Gr\"onwall argument for \eqref{equation:hm12estbldthigherxtsum} with $\ell > 0$ sufficiently small, one can obtain
		\begin{equation*}\label{equation:hiblvxt}
			\partial_t^{\gamma}\xvec{v}^{\pm} \in \xLinfty((0,T);H^{{k,m_1,0}}_{\mathcal{E}}) \cap \xLtwo((0,T); \xHn{{k,m_1,1}}_{\mathcal{E}}), \quad \gamma \in \{0,\dots,m_2\}
		\end{equation*}
		and the estimate \eqref{equation:hibl} when $m_3 = 0$.
		
		\paragraph{Step 2. Estimates for $\partial_{\xvec{x}}^{\xsym{\alpha}}\partial_t^{\gamma}\partial_z^{\beta}\xvec{v}^{\pm}$.}
		From \eqref{equation:MHD_ElsaesserBlProfilevpm2} and the regularity obtained in Step~1, one can estimate the boundary values of $\partial_{\xvec{x}}^{\xsym{\alpha}}\partial_t^{\gamma}\partial_z^2 \xvec{v}^{\pm}$ at $z = 0$ in $\xLinfty((0,T);\xHn{{m}}(\mathcal{E}))$ for any $m \in \mathbb{N}$ by a constant $C > 0$ of the type stated in \eqref{equation:constant_type}. Therefore, by acting on \eqref{equation:MHD_ElsaesserBlProfilevpm2} with $\partial_z$, one obtains the $\xWn{{m_2,\infty}}((0,T);\xHn{{k,m_1,1}}_{\mathcal{E}}) \cap \xHn{{m_2}}((0,T);\xHn{{k,m_1,2}}_{\mathcal{E}})$ estimates for $\xvec{v}^{\pm}$ by analysis similar to Step~1. The boundary values of $\partial_{\xvec{x}}^{\xsym{\alpha}}\partial_t^{\gamma}\partial_z^3 \xvec{v}^{\pm}$ at $z = 0$ are then again bounded via \eqref{equation:MHD_ElsaesserBlProfilevpm2} and Step~1.  After acting with $\partial_z^2$ on \eqref{equation:MHD_ElsaesserBlProfilevpm2}, one can derive the $\xWn{{m_2,\infty}}((0,T);\xHn{{k,m_1,2}}_{\mathcal{E}}) \cap \xHn{{m_2}}((0,T);\xHn{{k,m_1,3}}_{\mathcal{E}})$ estimates for $\xvec{v}^{\pm}$. By induction over~$m_3$, the proof of \Cref{lemma:higherorder} is concluded.
	\end{proof}

	\section{Proof of \texorpdfstring{\Cref{lemma:reg}}{the regularization lemma}}\label{appendix:proofreg}
	To prove \Cref{lemma:reg}, we proceed essentially along the lines of \cite[Lemma 9]{CoronMarbachSueur2020} and \cite[Lemma 2.1]{ChavesSilva2020SmalltimeGE}, where Navier--Stokes and Boussinesq systems have been considered. That is, in \Cref{subsection:estsuffregsol} below, assuming~$N \in \{2,3\}$ and~$\xvec{M}_{1}, \xvec{M}_{2}, \xvec{L}_{1},\xvec{L}_{2} \in \xCinfty(\overline{\mathcal{E}};\mathbb{R}^{N\times N})$, we obtain \apriori~estimates that are valid for smooth solutions. To conclude \Cref{lemma:reg} from there, it seems the existing literature does not provide a suitable theory of strong solutions under general coupled Navier slip-with-friction boundary conditions for MHD. In particular, since the boundary conditions in \eqref{equation:MHD_ElsaesserExt} are generally non-symmetric, we are unaware of eigenvector bases for respectively coupled Stokes type problems, preventing us to employ usual Galerkin method arguments, \eg, as in \cite{Temam2001}, to obtain the $\xLinfty((0,T);\xHone(\mathcal{E}))$ and $\xLinfty((0,T);\xHtwo(\mathcal{E}))$ strong solutions to \eqref{equation:MHD_ElsaesserExt}. When the domain is simply-connected, $\xvec{M}_1 = \xvec{L}_2 = \xvec{M}$ with $\xvec{M}$ being positive symmetric, and $\xvec{M}_2 = \xvec{L_1} = \xvec{0}$, such a basis of eigenvectors is available in \cite{GuoWang2016}. If $\xvec{L}_1 = \xvec{M_2} = \xvec{0}$, the argument from \cite{XiaoXin2013} also allow taking symmetric matrices $\xvec{M}_1$ and $\xvec{L}_2$ with $\xvec{M}_1 \neq \xvec{L}_2$; these references apparently do not provide eigenvector bases under general symmetric boundary conditions, where $\xvec{M}_1$, $\xvec{L}_2$ are symmetric and $\nu_1\xvec{L}_1 = \nu_2 \xvec{M}_2^{\top}$. 
	
	In $2$D, \Cref{lemma:reg} follows from the estimates in \Cref{subsection:estsuffregsol} by a different approach. First, the initial data are approximated, similarly to \cite[Appendix]{Kelliher2006}, in $\xLtwo(\mathcal{E})$ by $\xW(\mathcal{E})\cap\xHtwo(\mathcal{E})$ functions satisfying the Navier slip-with-friction boundary conditions. Then, a Leray--Hopf weak solution is constructed as the limit of a sequence of $\xLinfty((0,T);\xHn{2}(\mathcal{E})\cap\xW(\mathcal{E}))$ solutions. This idea relies on the assumption $N = 2$, which guarantees that the sequence of strong solutions is defined on a fixed time interval $[0,T]$. 
	
	\subsection{Estimates for sufficiently regular solutions}\label{subsection:estsuffregsol}
	To display the estimates for $\xvec{u}$ and $\xvec{B}$ simultaneously, the symmetric notations from \Cref{subsubsection:changeofvariables} are employed. If $(\xvec{u}, \xvec{B}) \in \mathscr{X}_{T^*} \times \mathscr{X}_{T^*}$ is a Leray--Hopf weak solution to \eqref{equation:MHD_ElsaesserExt}, then the functions
	$\xvec{z}^{\pm} = \xvec{u} \pm \sqrt{\mu}\xvec{B}$ obey the energy inequality \eqref{equation:seizpm2} and a corresponding weak formulation for the Elsasser system
	\begin{equation}\label{equation:MHD_ElsaesserExtGenLem}
		\begin{cases}
			\partial_t \xvec{z}^{\pm} - \Delta (\lambda^{\pm}\xvec{z}^{+} + \lambda^{\mp} \xvec{z}^{-}) + (\xvec{z}^{\mp} \cdot \xdop{\nabla}) \xvec{z}^{\pm} + \xdop{\nabla} p^{\pm} = \xvec{0} & \mbox{ in } \mathcal{E}_{T^*},\\
			\xdop{\nabla}\cdot\xvec{z}^{\pm} = 0 & \mbox{ in } \mathcal{E}_{T^*},\\
			\xvec{z}^{\pm} \cdot \xvec{n} = 0  & \mbox{ on }  \Sigma_{T^*},\\
			(\xcurl{\xvec{z}^{\pm}})\times\xvec{n} = \xsym{\rho}^{\pm}(\xvec{z}^{+},\xvec{z}^{-}) &\mbox{ on } \Sigma_{T^*},\\
			\xvec{z}^{\pm}(\cdot, 0) = \xvec{z}_0^{\pm}  & \mbox{ in } \mathcal{E}.
		\end{cases}
	\end{equation}
	
	To begin with, \apriori~estimates for a related stationary problem are shown based on known results for the Navier--Stokes equations.
	\begin{lmm}\label{lemma:stsreg}
		Let $k \in \mathbb{N}_0$, forces $\xvec{f}^{\pm} \in \xHn{k}(\mathcal{E})$, a vector $\xvec{b}^{\pm} \in \xHn{{k+1/2}}(\partial\mathcal{E})$ tangential to $\partial\mathcal{E}$, and friction operators $\xvec{M}^{\pm}, \xvec{L}^{\pm} \in \xCinfty(\partial\mathcal{E};\mathbb{R}^{N\times N})$ be arbitrary. Then, every solution $(\xvec{Z}^+, \xvec{Z}^-, P^+, P^-)$ with $\xvec{Z}^{\pm} \in \xHn{{k+1}}(\mathcal{E})$ to the coupled Stokes type system
		\begin{equation}\label{equation:cnbStokes}
			\begin{cases}
				- \Delta (\lambda^{\pm}\xvec{Z}^{+} + \lambda^{\mp} \xvec{Z}^-) + \xdop{\nabla} P^{\pm} = \xvec{f}^{\pm} & \mbox{ in } \mathcal{E}, \\
				\, \xdiv{\xvec{Z}^{\pm}} = 0 & \mbox{ in } \mathcal{E},\\
				\xvec{Z}^{\pm} \cdot \xvec{n} = 0 & \mbox{ on } \partial \mathcal{E},\\
				\, (\xcurl{\xvec{Z}}^{\pm}) \times \xvec{n} +  \left[ \xvec{M}^{\pm}\xvec{Z}^+ + \xvec{L}^{\pm} \xvec{Z}^-\right]_{\operatorname{tan}} = \xvec{b}^{\pm} & \mbox{ on } \partial \mathcal{E}
			\end{cases}
		\end{equation}
		obeys the estimate
		\begin{multline}\label{equation:aprioristokestype}
			\sum\limits_{\square\in\{+,-\}} \left(\| \xvec{Z}^{\square} \|_{\xHn{{k+2}}(\mathcal{E})} + \| P^{\square} \|_{\xHn{{k+1}}(\mathcal{E})}\right) 
			\\ 
			\leq C \sum\limits_{\square\in\{+,-\}} \left(\| \xvec{f}^{\square} \|_{\xHn{{k}}(\mathcal{E})} + \| \xvec{b}^{\square} \|_{\xHn{{k+1/2}}(\partial\mathcal{E})} + \|\xvec{Z}^{\square}\|_{\xHn{{k+1}}(\mathcal{E})}\right).
		\end{multline}
	\end{lmm}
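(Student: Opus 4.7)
The plan is to decouple the leading-order elliptic operators in \eqref{equation:cnbStokes} by passing to sum and difference variables, then invoke known regularity for a single Stokes system with Navier slip-with-friction boundary conditions, treating the remaining coupling in the boundary data as a lower-order perturbation. Setting $\xvec{U} \coloneq \xvec{Z}^+ + \xvec{Z}^-$ and $\xvec{V} \coloneq \xvec{Z}^+ - \xvec{Z}^-$, and defining $P_U, P_V, \xvec{f}_U, \xvec{f}_V, \xvec{b}_U, \xvec{b}_V$ as the analogous sums and differences, the identities $\lambda^+ + \lambda^- = \nu_1$ and $\lambda^+ - \lambda^- = \nu_2$ turn the interior equations of \eqref{equation:cnbStokes} into two uncoupled Stokes systems
\begin{equation*}
-\nu_1 \Delta \xvec{U} + \xdop{\nabla} P_U = \xvec{f}_U, \qquad -\nu_2 \Delta \xvec{V} + \xdop{\nabla} P_V = \xvec{f}_V \quad \text{in } \mathcal{E},
\end{equation*}
subject to $\xdiv{\xvec{U}} = \xdiv{\xvec{V}} = 0$ in $\mathcal{E}$ and $\xvec{U}\cdot\xvec{n} = \xvec{V}\cdot\xvec{n} = 0$ on $\partial\mathcal{E}$.

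The friction boundary conditions, however, remain coupled. Substituting $\xvec{Z}^{\pm} = (\xvec{U} \pm \xvec{V})/2$ into $(\xcurl{\xvec{Z}^{\pm}}) \times \xvec{n} + [\xvec{M}^{\pm}\xvec{Z}^+ + \xvec{L}^{\pm}\xvec{Z}^-]_{\operatorname{tan}} = \xvec{b}^{\pm}$ and taking the sum and difference yields
\begin{align*}
(\xcurl{\xvec{U}}) \times \xvec{n} + [\widetilde{\xvec{N}}_U \xvec{U}]_{\operatorname{tan}} &= \xvec{b}_U - [\widetilde{\xvec{R}}_U \xvec{V}]_{\operatorname{tan}},\\
(\xcurl{\xvec{V}}) \times \xvec{n} + [\widetilde{\xvec{N}}_V \xvec{V}]_{\operatorname{tan}} &= \xvec{b}_V - [\widetilde{\xvec{R}}_V \xvec{U}]_{\operatorname{tan}},
\end{align*}
with smooth friction operators $\widetilde{\xvec{N}}_U, \widetilde{\xvec{N}}_V, \widetilde{\xvec{R}}_U, \widetilde{\xvec{R}}_V \in \xCinfty(\partial\mathcal{E};\mathbb{R}^{N\times N})$ being explicit linear combinations of $\xvec{M}^{\pm}$ and $\xvec{L}^{\pm}$. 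Moving the cross terms to the right-hand side, each resulting problem is a scalar Stokes system with classical Navier slip-with-friction boundary conditions and a modified boundary source.

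Invoking standard elliptic regularity for a single such Stokes problem at level $k$ (see, e.g., \cite{AmroucheSeloula2013,Shibata2007,AlBabaAmroucheEscobedo2017}) gives
\begin{equation*}
\|\xvec{U}\|_{\xHn{{k+2}}(\mathcal{E})} + \|P_U\|_{\xHn{{k+1}}(\mathcal{E})} \leq C\Big(\|\xvec{f}_U\|_{\xHn{k}(\mathcal{E})} + \|\xvec{b}_U\|_{\xHn{{k+1/2}}(\partial\mathcal{E})} + \|\xvec{V}\|_{\xHn{{k+1}}(\mathcal{E})} + \|\xvec{U}\|_{\xHn{{k+1}}(\mathcal{E})}\Big),
\end{equation*}
where the cross term $[\widetilde{\xvec{R}}_U \xvec{V}]_{\operatorname{tan}}$ has been absorbed via the trace estimate $\|[\widetilde{\xvec{R}}_U \xvec{V}]_{\operatorname{tan}}\|_{\xHn{{k+1/2}}(\partial\mathcal{E})} \leq C \|\xvec{V}\|_{\xHn{{k+1}}(\mathcal{E})}$, and symmetrically for $\xvec{V}$. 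Summing the two bounds and returning to the original variables via $\xvec{Z}^{\pm} = (\xvec{U} \pm \xvec{V})/2$ gives \eqref{equation:aprioristokestype}.

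The main obstacle is ensuring that a single-equation Stokes regularity estimate with Navier slip-with-friction boundary conditions of the stated form is indeed available at arbitrary order $k \in \mathbb{N}_0$ for general (possibly non-symmetric, non-sign-definite) smooth friction matrices, in both two- and three-dimensional smooth bounded domains, including multiply-connected ones. Once this ingredient is quoted from the literature, the decoupling is purely algebraic and the cross-coupling in the boundary conditions is by design one derivative below the leading-order curl contribution, so it fits cleanly into the lower-order term $\|\xvec{Z}^{\pm}\|_{\xHn{{k+1}}(\mathcal{E})}$ on the right-hand side of \eqref{equation:aprioristokestype}.
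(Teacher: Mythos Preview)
Your proposal is correct and follows essentially the same strategy as the paper: decouple the interior operators via $\xvec{U}=\xvec{Z}^++\xvec{Z}^-$, $\xvec{V}=\xvec{Z}^+-\xvec{Z}^-$, push the boundary coupling into a modified datum, invoke single-equation Stokes regularity, and control the perturbation by a trace estimate that lands in the lower-order term $\|\xvec{Z}^\pm\|_{\xHn{{k+1}}(\mathcal{E})}$.

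The one noteworthy difference is where the friction is placed. You keep the diagonal friction $\widetilde{\xvec{N}}_U,\widetilde{\xvec{N}}_V$ on the left and rely on single-equation Stokes regularity \emph{with} a general smooth friction matrix; this is what prompts your caveat in the last paragraph about availability of such estimates for arbitrary (non-symmetric, non-sign-definite) friction. The paper sidesteps this concern by moving \emph{all} friction contributions to the right-hand side, so that the reference problem has the pure condition $(\xcurl{\xvec{Z}^\pm})\times\xvec{n}=\widetilde{\xvec{b}}^\pm$, for which the needed $\xHn{{k+2}}$ estimate is quoted directly from \cite{Guerrero2006}. This buys a cleaner citation and removes your stated obstacle entirely, at no cost beyond absorbing one extra zeroth-order boundary term into $\widetilde{\xvec{b}}^\pm$. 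The paper also phrases the argument as an induction on $k$, but as your version shows, the hypothesis $\xvec{Z}^\pm\in\xHn{{k+1}}(\mathcal{E})$ already gives $\widetilde{\xvec{b}}^\pm\in\xHn{{k+1/2}}(\partial\mathcal{E})$ directly, so a single application at level $k$ suffices.
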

	\begin{proof}
		In the case of uncoupled boundary conditions, where $\xvec{M}^{\pm} = \xvec{L}^{\pm} = \xvec{0}$, the functions $\xvec{U} \coloneqq \xvec{Z}^+ + \xvec{Z}^-$ and $\xvec{V} \coloneqq \xvec{Z}^+ - \xvec{Z}^-$ both obey independent Stokes problems under Navier slip-with-friction boundary conditions. Thus, from \cite[Pages 90-94]{Guerrero2006}, one has for each $k \geq 0$ the estimates
		\begin{multline*}
			\| \xvec{U} \|_{\xHn{{k+2}}(\mathcal{E})} + \| P^+ + P^- \|_{\xHn{{k+1}}(\mathcal{E})}
			\\
			\leq C \left(\| \xvec{f}^+ + \xvec{f}^- \|_{\xHn{{k}}(\mathcal{E})} + \| \xvec{b}^+ + \xvec{b}^- \|_{\xHn{{k+1/2}}(\partial\mathcal{E})} + \|\xvec{U}\|_{\xHn{{k+1}}(\mathcal{E})}\right)
		\end{multline*}
		and
		\begin{multline*}
			\| \xvec{V} \|_{\xHn{{k+2}}(\mathcal{E})} + \| P^+ - P^- \|_{\xHn{{k+1}}(\mathcal{E})}
			\\
			\leq C \left(\| \xvec{f}^+ - \xvec{f}^- \|_{\xHn{{k}}(\mathcal{E})} + \| \xvec{b}^+ - \xvec{b}^- \|_{\xHn{{k+1/2}}(\partial\mathcal{E})} + \|\xvec{V}\|_{\xHn{{k+1}}(\mathcal{E})}\right),
		\end{multline*}
		which imply \eqref{equation:aprioristokestype} by means of the triangle inequality.
		For the general case, we start with $k = 0$ and observe that every solution $(\xvec{Z}^+, \xvec{Z}^-, P^+, P^-)$ to \eqref{equation:cnbStokes} satisfies
		\begin{equation}\label{equation:cnbStokes2}
			\begin{cases}
				- \Delta (\lambda^{\pm}\xvec{Z}^{+} + \lambda^{\mp} \xvec{Z}^-) + \xdop{\nabla} P^{\pm} = \xvec{f}^{\pm} & \mbox{ in } \mathcal{E},\\
				\xdiv{\xvec{Z}^{\pm}} = 0 & \mbox{ in } \mathcal{E},\\
				\xvec{Z}^{\pm} \cdot \xvec{n} = 0 & \mbox{ on } \partial \mathcal{E},\\
				(\xcurl{\xvec{Z}}^{\pm}) \times \xvec{n} = \widetilde{\xvec{b}}^{\pm} & \mbox{ on } \partial \mathcal{E},
			\end{cases}
		\end{equation}
		with
		\begin{equation*}
			\begin{aligned}
				\widetilde{\xvec{b}}^+ \coloneqq \xvec{b}^+ - \left[\xvec{M}^+ \xvec{Z}^+ + \xvec{L}^+ \xvec{Z}^-\right]_{\operatorname{tan}}, \quad \widetilde{\xvec{b}}^- \coloneqq \xvec{b}^- - \left[\xvec{M}^- \xvec{Z}^+ + \xvec{L}^- \xvec{Z}^-\right]_{\operatorname{tan}}.
			\end{aligned}
		\end{equation*}
		Since $\widetilde{\xvec{b}}^{\pm} \in \xHn{{1/2}}(\partial\mathcal{E})$, after applying to \eqref{equation:cnbStokes2} the result for uncoupled boundary conditions explained above, one finds
		\begin{multline*}
			\sum\limits_{\square\in\{+,-\}} \left(\| \xvec{Z}^{\square} \|_{\xHn{{2}}(\mathcal{E})} + \| P^{\square} \|_{\xHn{{1}}(\mathcal{E})}\right)
			\\
			\leq C \sum\limits_{\square\in\{+,-\}} \left(\| \xvec{f}^{\square} \|_{\xLtwo(\mathcal{E})} + \| \xvec{b}^{\square} \|_{\xHn{{1/2}}(\partial\mathcal{E})}  + \|\xvec{Z}^{\square}\|_{\xHone(\mathcal{E})}\right).
		\end{multline*}
		Inductively, if \eqref{equation:aprioristokestype} is true for a fixed $k \in \mathbb{N}$, one has $\widetilde{\xvec{b}}^{\pm} \in \xHn{{k+3/2}}(\partial\mathcal{E})$ and the known estimates for \eqref{equation:cnbStokes2} lead to \eqref{equation:aprioristokestype} with $k$ being replaced by $k + 1$.
	\end{proof}
	
	\begin{rmrk}\label{remark:Wkp}
		The statement of \Cref{lemma:stsreg} remains valid in $\xWn{{k,p}}(\mathcal{E})$ spaces. For instance, if $\xvec{Z}^{\pm} \in \xWn{{1,p}}(\mathcal{E})$ and $\xvec{b}^{\pm} \in \xWn{{1-\frac{1}{p},p}}(\partial \mathcal{E})$, then $\widetilde{\xvec{b}}^{\pm} \in \xWn{{1-\frac{1}{p},p}}(\partial \mathcal{E})$ in \eqref{equation:cnbStokes2} and one can apply \cite[Theorem 4.1]{AmroucheRejaiba2014} instead of \cite[Pages 90-94]{Guerrero2006}.
	\end{rmrk}
	
	In what follows, the operator $\mathbb{P}$ denotes the Leray projector in $\xLtwo(\mathcal{E})$ onto $\xH(\mathcal{E})$ and thus, for any selection $\xvec{h}^+, \xvec{h}^- \in \xH(\mathcal{E}) \cap \xHtwo(\mathcal{E})$ with $\xmcal{N}^{\pm}(\xvec{h}^+, \xvec{h}^-) = \xvec{0}$, \Cref{lemma:stsreg} provides
	\begin{equation}\label{equation:estlerayproj}
		\begin{aligned}
			\|\xvec{h}^{\pm} \|_{\xHtwo(\mathcal{E})}^2
			\leq C \left(\sum\limits_{\square\in\{+,-\}}\|\mathbb{P}\Delta(\lambda^{\pm}\xvec{h}^+ + \lambda^{\mp}\xvec{h}^-)\|_{\xLtwo(\mathcal{E})}^2 + \|\xvec{h}^{\square}\|_{\xHn{{1}}(\mathcal{E})}^2\right).
		\end{aligned}
	\end{equation}
	
	The proof of \Cref{lemma:reg} is now completed by means of the following steps.
	\paragraph{Step 1. Basic energy estimates when $t \in (0,T^*/3)$.}
	We introduce for a positive parameter $\varkappa > 0$ the quantity
	\[
	F^{\varkappa}(t) \coloneqq \frac{1}{2} \sum\limits_{\square\in\{+,-\}} \left(\frac{d}{dt} \| \xvec{z}^{\square}(\cdot, t) \|_{\xLtwo(\mathcal{E})}^2 + \varkappa(\lambda^+ \square \, \lambda^-)\| \xcurl{(\xvec{z}^{+} \square \, \xvec{z}^{-})}(\cdot, t) \|_{\xLtwo(\mathcal{E})}^2\right).
	\]
	Then, the strong energy inequality \eqref{equation:seizpm2} with $\xsym{\xi}^{\pm} = 0$ provides
	\begin{equation*}
		\begin{aligned}
			\int_0^t F^1(s) \, \xdx{s} \leq \sum\limits_{\substack{(\triangle,\circ)\in \\ \{(+,-), (-,+)\}}} \int_0^t\int_{\partial\mathcal{E}} \left(\lambda^+\xsym{\rho}^{\circ}(\xvec{z}^+,\xvec{z}^-) + \lambda^-\xsym{\rho}^{\triangle}(\xvec{z}^+,\xvec{z}^-)\right) \cdot \xvec{z}^{\circ} \, \xdx{S} \xdx{s}.
		\end{aligned}
	\end{equation*}
	Furthermore, by \eqref{equation:sKem} and trace estimates, together with $\xsym{\rho}^{\pm} \in \xCinfty(\partial\mathcal{E};\mathbb{R}^{N\times 2N})$ being fixed, one has for small $\ell > 0$ and $\square,\triangle \in \{+,-\}$ the bound
	\begin{equation*}
		\begin{aligned}
			\int_{\partial\mathcal{E}} \xsym{\rho}^{\square}(\xvec{z}^+,\xvec{z}^-) \cdot \xvec{z}^{\triangle} \, \xdx{S} & \leq C \sum\limits_{\diamond,\circ \in\{+,-\}} \|\xvec{z}^{\diamond}\|_{\xHone(\mathcal{E})} \| \xvec{z}^{\circ}\|_{\xLtwo(\mathcal{E})} \\
			& \leq \sum\limits_{\diamond,\circ \in\{+,-\}}\left(\ell \|\xcurl{\xvec{z}^{\diamond}}\|_{\xLtwo(\mathcal{E})}^2 + C(\ell) \| \xvec{z}^{\circ}\|_{\xLtwo(\mathcal{E})}^2\right).
		\end{aligned}
	\end{equation*} 
	Thus, for $\ell > 0$ sufficiently small it follows
	\begin{equation*}
		\begin{aligned}
			\int_0^t F^{1/2}(s) \, \xdx{s} \leq C(\ell) \int_0^t \left(\|\xvec{z}^{+}(\cdot,s)\|_{\xLtwo(\mathcal{E})}^2 + \|\xvec{z}^{-}(\cdot,s)\|_{\xLtwo(\mathcal{E})}^2\right) \, \xdx{s} .
		\end{aligned}
	\end{equation*}
	
	As a result, by employing \eqref{equation:sKem} similarly as in \Cref{subsubsection:energyestimates_remainder} and further utilizing Gr\"onwall's inequality, one obtains for $t \in (0,T^*)$ the energy estimate
	\begin{multline}\label{equation:beepr}
		\sum\limits_{\square\in\{+,-\}} \left(\| \xvec{z}^{\square}(\cdot, t) \|_{\xLtwo(\mathcal{E})}^2  + \frac{(\lambda^+ \square\, \lambda^-)}{2} \int_0^t \| (\xvec{z}^{+} \square \, \xvec{z}^{-})(\cdot, s) \|_{\xHone(\mathcal{E})}^2 \, \xdx{s}\right) \\
		\leq C \sum\limits_{\square\in\{+,-\}} \| \xvec{z}^{\square}_0 \|_{\xLtwo(\mathcal{E})}^2.
	\end{multline}
	Therefore, by a contradiction argument, there exists $C_1 > 0$ and a possibly small time $t_1 \in [0, T^*/3]$ for which
	\begin{equation*}
		\begin{gathered}
			\| \xvec{z}^{\pm}(\cdot, t_1) \|_{\xHone(\mathcal{E})} \leq \sqrt{\frac{3C_1}{T^*} \left(\| \xvec{z}^{+}_0 \|_{\xLtwo(\mathcal{E})}^2 + \| \xvec{z}^{-}_0 \|_{\xLtwo(\mathcal{E})}^2\right)}.
		\end{gathered}
	\end{equation*}
	
	\paragraph{Step 2. Higher order \apriori~estimates when $t \in (t_1,2T^*/3)$.} 
	We apply the Leray projector $\mathbb{P}$ in \eqref{equation:MHD_ElsaesserExtGenLem} and multiply with $\mathbb{P} \Delta \xvec{z}^{\pm}$. Subsequently, the results are added up and integrated over $\mathcal{E}$. Hereto, we denote for $\delta > 0$ the auxiliary function
	\begin{equation*}
		\begin{aligned}
			G^{\delta}(t) & \coloneqq  \frac{1}{2} \sum\limits_{\square\in\{+,-\}}  \left(\frac{d}{dt} \| \xcurl{\xvec{z}^{\square}}(\cdot, t) \|_{\xLtwo(\mathcal{E})}^2 + \delta  (\lambda^+ \square \, \lambda^-) \| \Delta (\xvec{z}^{+} \square \, \xvec{z}^{-})(\cdot, t) \|_{\xLtwo(\mathcal{E})}^2\right),
		\end{aligned}
	\end{equation*}
	and the nonlinear terms
	\[
	J^{\pm}(s) \coloneqq \int_{\mathcal{E}} \left| \mathbb{P}(\xvec{z}^{\mp}(\xvec{x},s) \cdot \xnab) \xvec{z}^{\pm}(\xvec{x},s)  \cdot \mathbb{P} \Delta\xvec{z}^{\pm}(\xvec{x},s) \right| \, \xdx{\xvec{x}}.
	\]
	\begin{lmm}\label{lemma:aux1212}
		Assume that $\xvec{M}_1$, $\xvec{L}_2$ are symmetric and $\nu_1\xvec{L}_1 = \nu_2\xvec{M}_2^{\top}$. For any small $\ell > 0$, and a constant $C = C(\ell)$ which is reciprocal to $\ell$, it holds
		\begin{equation}\label{equation:prfstrgslest2}
			\begin{multlined}
				\int_{t_1}^{t} G^{1/2}(s) \, \xdx{s} \leq \int_{t_1}^{t} J^{+}(s)  \, \xdx{s} + \int_{t_1}^{t} J^{-}(s) \, \xdx{s} \\ + \sum\limits_{\square\in\{+,-\}} \left(\ell \| \xcurl{\xvec{z}}^{\square}(\cdot,t)\|_{\xLtwo(\mathcal{E})}^2 +  C(C_1,\ell)\|\xvec{z}^{\square}_0\|_{\xLtwo(\mathcal{E})}^2\right).
			\end{multlined}
		\end{equation}
	\end{lmm}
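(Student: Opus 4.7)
The plan is to test the Elsasser system \eqref{equation:MHD_ElsaesserExtGenLem} with the divergence-free Leray projection $-\mathbb{P}\Delta\xvec{z}^{\pm}$ and sum over the two signs. Since $\partial_t\xvec{z}^{\pm}\in\xH(\mathcal{E})$, the projector acts trivially on it from the left, and integration by parts using $-\Delta=\xcurl{(\xcurl{\cdot})}$ (valid because $\xdiv{\xvec{z}^{\pm}}=0$) yields
\[
\int_{\mathcal{E}}\partial_t\xvec{z}^{\pm}\cdot(-\mathbb{P}\Delta\xvec{z}^{\pm})\,\xdx{\xvec{x}} = \tfrac{1}{2}\tfrac{d}{dt}\|\xcurl{\xvec{z}^{\pm}}\|_{\xLtwo(\mathcal{E})}^{2}+\int_{\partial\mathcal{E}}\xsym{\rho}^{\pm}(\xvec{z}^{+},\xvec{z}^{-})\cdot\partial_t\xvec{z}^{\pm}\,\xdx{S}.
\]
Self-adjointness of $\mathbb{P}$ in $\xLtwo(\mathcal{E})$ together with an elementary polarisation identity turns the diffusion contribution, after summation over the signs, into $\sum_{\square\in\{+,-\}}\frac{\lambda^{+}\square\,\lambda^{-}}{2}\|\mathbb{P}\Delta(\xvec{z}^{+}\square\,\xvec{z}^{-})\|_{\xLtwo(\mathcal{E})}^{2}$; the pressure term vanishes because $\mathbb{P}\Delta\xvec{z}^{\pm}$ is divergence-free and tangent; and the convective term contributes $\int_{\mathcal{E}}(\xvec{z}^{\mp}\cdot\xdop{\nabla})\xvec{z}^{\pm}\cdot\mathbb{P}\Delta\xvec{z}^{\pm}\,\xdx{\xvec{x}}$, which I would move to the right-hand side and bound in modulus by $J^{\pm}(s)$.

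To upgrade $\|\mathbb{P}\Delta(\xvec{z}^{+}\square\xvec{z}^{-})\|^{2}$ to the full Laplacian norm appearing in $G^{1/2}$, I would apply \eqref{equation:estlerayproj} to $(\xvec{z}^{+},\xvec{z}^{-})$ (admissible since $\xmcal{N}^{\pm}(\xvec{z}^{+},\xvec{z}^{-})=\xvec{0}$), use that the arguments $\lambda^{\pm}\xvec{z}^{+}+\lambda^{\mp}\xvec{z}^{-}$ on its right-hand side are linear combinations of $\xvec{z}^{+}\pm\xvec{z}^{-}$, and combine with $\|\Delta(\xvec{z}^{+}\square\xvec{z}^{-})\|^{2}\leq C\sum_{\pm}\|\xvec{z}^{\pm}\|_{\xHtwo(\mathcal{E})}^{2}$. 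With the absolute constants suitably chosen, this delivers a lower bound
\[
\sum_{\square}\tfrac{\lambda^{+}\square\,\lambda^{-}}{2}\|\mathbb{P}\Delta(\xvec{z}^{+}\square\xvec{z}^{-})\|^{2}\geq\sum_{\square}\tfrac{\lambda^{+}\square\,\lambda^{-}}{4}\|\Delta(\xvec{z}^{+}\square\xvec{z}^{-})\|^{2}-C\sum_{\pm}\|\xvec{z}^{\pm}\|_{\xHone(\mathcal{E})}^{2},
\]
whose lower-order $\xHone$ remainder is absorbed into $C(C_{1},\eta)\|\xvec{z}_{0}^{\pm}\|_{\xLtwo(\mathcal{E})}^{2}$ upon time-integration via the basic energy estimate \eqref{equation:beepr}.

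The delicate term is the boundary integral $\sum_{\pm}\int_{\partial\mathcal{E}}\xsym{\rho}^{\pm}\cdot\partial_t\xvec{z}^{\pm}\,\xdx{S}$, which is where the symmetry hypothesis becomes indispensable. Writing $\xsym{\rho}^{\pm}=2[\xvec{M}^{\pm}\xvec{z}^{+}+\xvec{L}^{\pm}\xvec{z}^{-}]_{\operatorname{tan}}$ and using tangentiality of $\partial_t\xvec{z}^{\pm}$, I would translate the symmetry of $\xvec{M}_{1}$ and $\xvec{L}_{2}$ together with $\nu_{1}\xvec{L}_{1}=\nu_{2}\xvec{M}_{2}^{\top}$, through the definitions of $\xvec{M}^{\pm},\xvec{L}^{\pm}$ and of $\lambda^{\pm}$, into the symmetry of the bilinear form
\[
\mathcal{B}(\xvec{a}^{+},\xvec{a}^{-};\xvec{b}^{+},\xvec{b}^{-})\coloneq\sum_{\pm}(\xvec{M}^{\pm}\xvec{a}^{+}+\xvec{L}^{\pm}\xvec{a}^{-})\cdot\xvec{b}^{\pm}
\]
on tangential fields along $\partial\mathcal{E}$. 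Symmetry of $\mathcal{B}$ identifies the boundary integral as $\frac{d}{dt}\int_{\partial\mathcal{E}}\mathcal{B}(\xvec{z}^{+},\xvec{z}^{-};\xvec{z}^{+},\xvec{z}^{-})\,\xdx{S}$, so after integration from $t_{1}$ to $t$ only endpoint values remain. A trace inequality combined with Young's inequality then gives
\[
\Bigl|\int_{\partial\mathcal{E}}\mathcal{B}(\xvec{z}^{+},\xvec{z}^{-};\xvec{z}^{+},\xvec{z}^{-})(\cdot,s)\,\xdx{S}\Bigr|\leq\eta\sum_{\pm}\|\xcurl{\xvec{z}^{\pm}}(\cdot,s)\|_{\xLtwo(\mathcal{E})}^{2}+C(\eta)\sum_{\pm}\|\xvec{z}^{\pm}(\cdot,s)\|_{\xLtwo(\mathcal{E})}^{2};
\]
the contribution at $s=t$ is absorbed into the $\eta\|\xcurl{\xvec{z}^{\pm}}(\cdot,t)\|_{\xLtwo(\mathcal{E})}^{2}$ slot of \eqref{equation:prfstrgslest2}, the contribution at $s=t_{1}$ is bounded by $C(C_{1},\eta)\|\xvec{z}_{0}^{\pm}\|_{\xLtwo(\mathcal{E})}^{2}$ through the $\xHone$-control on $\xvec{z}^{\pm}(\cdot,t_{1})$ from Step 1, and the $\xLtwo$ pieces are controlled via \eqref{equation:beepr}.

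The main obstacle is the algebraic symmetry verification: one must check that the stated conditions on $\xvec{M}_{1},\xvec{L}_{2},\xvec{L}_{1},\xvec{M}_{2}$ do produce the symmetry of $\mathcal{B}$, since this property alone is what permits the boundary integral to be reconstituted as a total time derivative and reduced to endpoint remainders. It is this structural condition that restricts the class of boundary operators allowed in \Cref{lemma:reg}; the remaining ingredients are standard \apriori~estimation together with the choice of $t_{1}$ from Step 1, which ensures that $\|\xcurl{\xvec{z}^{\pm}}(\cdot,t_{1})\|_{\xLtwo(\mathcal{E})}$ is already controlled by $\|\xvec{z}_{0}^{\pm}\|_{\xLtwo(\mathcal{E})}$.
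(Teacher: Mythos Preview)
Your approach matches the paper's: both test the Elsasser system with $-\mathbb{P}\Delta\xvec{z}^{\pm}$, integrate by parts on the time term via $-\Delta=\xcurl{(\xcurl{\cdot})}$ and the slip boundary condition to produce a friction bilinear form in $\partial_t\xvec{z}^{\pm}$, and invoke the symmetry hypothesis to rewrite this as a total time derivative so that only endpoint traces remain, which are then handled by trace estimates, \eqref{equation:sKem}, and the basic energy bound \eqref{equation:beepr}. The only cosmetic difference is that the paper performs the symmetry verification by reverting to the physical $(\xvec{u},\xvec{B})$ variables---writing the boundary term as $\int_{\partial\mathcal{E}}(\nu_1\xvec{M}_1\xvec{u}\cdot\partial_t\xvec{u}+\nu_2\xvec{L}_2\xvec{B}\cdot\partial_t\xvec{B})$ plus the cross term $\int_{\partial\mathcal{E}}(\nu_2\xvec{M}_2\xvec{u}\cdot\partial_t\xvec{B}+\nu_1\xvec{L}_1\xvec{B}\cdot\partial_t\xvec{u})$---rather than staying with your abstract Elsasser form $\mathcal{B}$.
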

	\begin{proof}
		One has to estimate several boundary integrals of the form
		\begin{equation*}
			\int_{t_1}^t \int_{\partial \mathcal{E}} \partial_t \xvec{z}^{\square} \cdot \widetilde{\xvec{M}}^{\square,\triangle} \xvec{z}^{\triangle} \, \xdx{S} \xdx{s},
		\end{equation*}
		with $\widetilde{\xvec{M}}^{\square,\triangle} \in \xCinfty(\overline{\mathcal{E}};\mathbb{R}^{N \times N})$ and $\square, \triangle \in \{+,-\}$. Thanks to the symmetry assumptions, it follows that
		\begin{multline*}\label{equation:binsa}
			\int_{t_1}^t \int_{\partial \mathcal{E}} \left(\nu_1 \xvec{M}_1 \xvec{u} \cdot \partial_t \xvec{u} + \nu_2 \xvec{L}_2 \xvec{B} \cdot \partial_t \xvec{B}\right) \, \xdx{S} \xdx{s} \\
			\begin{aligned}
				& \leq \frac{1}{2} \int_{t_1}^t \partial_t \int_{\partial \mathcal{E}} \left(\nu_1 \xvec{M}_1 \xvec{u} \cdot \xvec{u} + \nu_2 \xvec{L}_2 \xvec{B} \cdot  \xvec{B}  \right) \, \xdx{S} \xdx{s} \\
				& \leq \ell \left( \|\xvec{u}(\cdot,t)\|_{\xHone(\mathcal{E})}^2 + \|\xvec{B}(\cdot,t)\|_{\xHone(\mathcal{E})}^2\right) + C(\ell) \left( \|\xvec{u}(\cdot,t)\|_{\xLtwo(\mathcal{E})}^2 + \|\xvec{B}(\cdot,t)\|_{\xLtwo(\mathcal{E})}^2 \right) \\
				& \quad +  C\left(\|\xvec{u}(\cdot,t_1)\|_{\xHone(\mathcal{E})}^2 + \|\xvec{B}(\cdot,t_1)\|_{\xHone(\mathcal{E})}^2\right),
			\end{aligned} 
		\end{multline*}
		for arbitrary $\ell > 0$, and similarly one can treat
		\begin{equation*}
			\int_{t_1}^t \int_{\partial \mathcal{E}} \left(\nu_2 \xvec{M}_2 \xvec{u} \cdot \partial_t \xvec{B} + \nu_1 \xvec{L}_1 \xvec{B} \cdot \partial_t \xvec{u}\right) \, \xdx{S} \xdx{s} = \nu_1 	\int_{t_1}^t \partial_t \int_{\partial \mathcal{E}} \xvec{L}_1 \xvec{B} \cdot \xvec{u} \, \xdx{S} \xdx{s}.
		\end{equation*}
		Therefore, the inequality \eqref{equation:prfstrgslest2} can be inferred from \eqref{equation:sKem}, the transformations provided in \Cref{subsubsection:changeofvariables}, and the basic energy estimate \eqref{equation:beepr}.
	\end{proof}
	\begin{rmrk}
		When $\xvec{M}_1, \xvec{M}_2, \xvec{L}_1, \xvec{L}_2 \in \xCinfty(\overline{\mathcal{E}};\mathbb{R}^{N \times N})$ are arbitrary, one can resort to parallel energy estimates as used in \cite[Lemma 9]{CoronMarbachSueur2020} for the Navier--Stokes equations. Hereto, one additionally multiplies in \eqref{equation:MHD_ElsaesserExtGenLem} with $\partial_t\xvec{z}^{\pm}$, which, in combination with the estimates that arise from multiplying \eqref{equation:MHD_ElsaesserExtGenLem} with $\mathbb{P} \Delta \xvec{z}^{\pm}$, allows to absorb norms of the form $\|\xvec{z}^{\pm}\|_{\xY}$, where
		\[
		\xY \coloneqq \xHone((t_1,t);\xLtwo(\mathcal{E})) \cap \xLtwo((t_1,t);\xHn{{2}}(\mathcal{E})).
		\]
		Thus, similarly to \cite[Page 995]{Guerrero2006}, one can use interpolation arguments to draw a conclusion as in \Cref{lemma:aux1212}. 
	\end{rmrk}
	
	It remains to further bound the integrals $J^{\pm}$ in \eqref{equation:prfstrgslest2}. Applying embedding and interpolation inequalities, one obtains for any small constant $\ell > 0$ the bound
	\begin{equation}\label{equation:regi14es2}
		\begin{aligned}
			J^{\pm} & \leq \|(\xvec{z}^{\mp} \cdot \xnab) \xvec{z}^{\pm}\|_{\xLtwo(\mathcal{E})} \|\mathbb{P} \Delta\xvec{z}^{\pm}\|_{\xLtwo(\mathcal{E})} \\
			& \leq C\|\xvec{z}^{\mp}\|_{\xHone(\mathcal{E})}^{\frac{1}{2}}\|\xvec{z}^{\mp}\|_{\xHtwo(\mathcal{E})}^{\frac{1}{2}} \| \xnab \xvec{z}^{\pm}\|_{\xLtwo(\mathcal{E})} \|\mathbb{P} \Delta\xvec{z}^{\pm}\|_{\xLtwo(\mathcal{E})} \\
			& \leq C(\ell)\|\xvec{z}^{\mp}\|_{\xHone(\mathcal{E})} \|\xvec{z}^{\mp}\|_{\xHtwo(\mathcal{E})} \|\xvec{z}^{\pm}\|_{\xHone(\mathcal{E})}^2 + \ell \|\mathbb{P} \Delta\xvec{z}^{\pm}\|_{\xLtwo(\mathcal{E})}^2 \\
			& \leq C(\ell)\|\xvec{z}^{\mp}\|_{\xHone(\mathcal{E})}^2 \|\xvec{z}^{\pm}\|_{\xHone(\mathcal{E})}^4 + \ell \left(\|\xvec{z}^{\mp}\|_{\xHtwo(\mathcal{E})}^2 + \|\mathbb{P} \Delta\xvec{z}^{\pm}\|_{\xLtwo(\mathcal{E})}^2 \right).
		\end{aligned}
	\end{equation}
	Moreover, the estimate \eqref{equation:sKem} and Young's inequality with $1/3+4/6=1$ provide
	\begin{equation}\label{equation:regi14es3}
		\begin{aligned}
			\|\xvec{z}^{\mp}\|_{\xHone(\mathcal{E})}^2 \|\xvec{z}^{\pm}\|_{\xHone(\mathcal{E})}^4 \leq C\sum\limits_{\square\in\{+,-\}}\left(\|\xvec{z}^{\square}\|_{\xLtwo(\mathcal{E})} + \|\xcurl{\xvec{z}^{\square}}\|_{\xLtwo(\mathcal{E})}\right)^6,
		\end{aligned}
	\end{equation}
	while \eqref{equation:estlerayproj} allows to infer
	\begin{equation}\label{equation:regi14es4}
		\begin{aligned}
			\|\xvec{z}^{\mp}\|_{\xHtwo(\mathcal{E})}^2 \leq  C \sum\limits_{\square\in\{+,-\}} \left(\|\mathbb{P} \Delta \xvec{z}^{\square}\|_{\xLtwo(\mathcal{E})}^2 + \| \xcurl{\xvec{z}^{\square}}\|_{\xLtwo(\mathcal{E})}^2 + \| \xvec{z}^{\square}\|_{\xLtwo(\mathcal{E})}^2\right).
		\end{aligned}
	\end{equation}
	Thus, by combining \cref{equation:beepr,equation:prfstrgslest2,equation:regi14es2,equation:regi14es3,equation:regi14es4}, one obtains
	\begin{equation*}\label{equation:regi14es5}
		\begin{aligned}
			\int_{t_1}^{t} G^{1/2}(s) \, \xdx{s} & \leq  C(\ell)\sum\limits_{\square\in\{+,-\}} \int_{t_1}^{t} \left(\|\xvec{z}^{\square}(\cdot,s)\|_{\xLtwo(\mathcal{E})} + \|\xcurl{\xvec{z}^{\square}}(\cdot,s)\|_{\xLtwo(\mathcal{E})}\right)^6 \, \xdx{s} \\
			& \quad + \sum\limits_{\square\in\{+,-\}} \int_{t_1}^{t} \left( \ell \|\mathbb{P} \Delta \xvec{z}^{\square}(\cdot,s)\|_{\xLtwo(\mathcal{E})}^2 + C \| \xcurl{\xvec{z}}^{\square}(\cdot,s)\|_{\xLtwo(\mathcal{E})}^2\right) \, \xdx{s} \\
			& \quad + \sum\limits_{\square\in\{+,-\}} \left(\ell \| \xcurl{\xvec{z}}^{\square}(\cdot,t)\|_{\xLtwo(\mathcal{E})}^2 +  C(C_1,\ell)\|\xvec{z}^{\square}_0\|_{\xLtwo(\mathcal{E})}^2\right).
		\end{aligned}
	\end{equation*}
	Therefore, for sufficiently small parameters $\delta_1, \delta_2 \in (0,1)$, one arrives at
	\begin{equation}\label{equation:prfstrgslest3}
		\begin{aligned}
			\int_{t_1}^{t} F^{\delta_1}(s) + G^{\delta_2}(s) \, \xdx{s} & \leq C\sum\limits_{\square\in\{+,-\}} \int_{t_1}^{t} \left(\|\xvec{z}^{\square}(\cdot,s)\|_{\xLtwo(\mathcal{E})} + \|\xcurl{\xvec{z}^{\square}}(\cdot,s)\|_{\xLtwo(\mathcal{E})}\right)^6 \, \xdx{s} \\ 
			& \quad +  C  \sum\limits_{\square\in\{+,-\}}  \int_{t_1}^{t} \| \xcurl{\xvec{z}}^{\square}(\cdot,s)\|_{\xLtwo(\mathcal{E})}^2 \, \xdx{s} \\
			& \quad + C(C_1,\delta_1,\delta_2) \sum\limits_{\square\in\{+,-\}}  \|\xvec{z}^{\square}_0\|_{\xLtwo(\mathcal{E})}^2.
		\end{aligned}
	\end{equation}
	
	In order to apply Gr\"onwall's lemma in \eqref{equation:prfstrgslest3}, first one utilizes the elementary inequality
	\begin{multline*}
		\sum\limits_{\square\in\{+,-\}}\left(\|\xvec{z}^{\square}(\cdot,t)\|_{\xLtwo(\mathcal{E})} + \|\xcurl{\xvec{z}^{\square}}(\cdot,t)\|_{\xLtwo(\mathcal{E})}\right)^2 \\ \leq 2\sum\limits_{\square\in\{+,-\}}\left(\|\xvec{z}^{\square}(\cdot,t)\|_{\xLtwo(\mathcal{E})}^2 + \|\xcurl{\xvec{z}^{\square}}(\cdot,t)\|_{\xLtwo(\mathcal{E})}^2\right),
	\end{multline*}
	such that for $t > t_1$ and sufficiently small $c_1 > 0$ one has the estimate
	\begin{multline*}
		\sum\limits_{\square\in\{+,-\}}\left(\|\xvec{z}^{\square}(\cdot,t)\|_{\xLtwo(\mathcal{E})} + \|\xcurl{\xvec{z}^{\square}}(\cdot,t)\|_{\xLtwo(\mathcal{E})}\right)^2 + c_1 \sum\limits_{\square\in\{+,-\}} \int_{t_1}^t \|\xvec{z}^{\square}(\cdot, s)\|_{\xHtwo(\mathcal{E})}^2 \, \xdx{s}\\
		\begin{aligned}
			&  \leq \sum\limits_{\square\in\{+,-\}}\left( C(C_1,\ell) \|\xvec{z}^{\square}_0\|_{\xLtwo(\mathcal{E})}^2 + C\int_{t_1}^t\left(\|\xvec{z}^{\square}(\cdot,s)\|_{\xLtwo(\mathcal{E})} + \|\xcurl{\xvec{z}^{\square}}(\cdot,s)\|_{\xLtwo(\mathcal{E})}\right)^6 \, \xdx{s}\right).
		\end{aligned}
	\end{multline*}
	Thus, for a generic constant $C = C(C_1,\ell) > 0$, the function
	\begin{equation*}
		\begin{aligned}
			\Phi(t) & \coloneqq C \sum\limits_{\square\in\{+,-\}}\left( \|\xvec{z}^{\square}_0\|_{\xLtwo(\mathcal{E})}^2 + \int_{t_1}^t\left(\|\xvec{z}^{\square}(\cdot,s)\|_{\xLtwo(\mathcal{E})} + \|\xcurl{\xvec{z}^{\square}}(\cdot,s)\|_{\xLtwo(\mathcal{E})}\right)^6 \, \xdx{s}\right),
		\end{aligned}
	\end{equation*}
	obeys $\dot{\Phi}/\Phi^3 \leq C$, where $\dot{\Phi} = \xdrv{\Phi}{t}$. Taking $s_1 > 0$ small enough and integrating the latter differential inequality leads for $t \in [t_1, t_1 + s_1]$ to
	\[
	\Phi(t)^2 \leq \frac{C^2\left( \sum\limits_{\square\in\{+,-\}} \|\xvec{z}^{\square}_0\|_{\xLtwo(\mathcal{E})}^2 \right)^2}{1 - 2(t-t_1)C^3\left( \sum\limits_{\square\in\{+,-\}} \|\xvec{z}^{\square}_0\|_{\xLtwo(\mathcal{E})}^2 \right)^2}.
	\]
	Consequently, for some constant $C_2 > 0$ and all $t \in [t_1, t_1 + s_1]$ one has the estimate
	\begin{equation*}
		\begin{gathered}
			\sum\limits_{\square\in\{+,-\}} \left(\|\xvec{z}^{\square}(\cdot,t)\|_{\xHone(\mathcal{E})}^2 + c_1 \int_{t_1}^t \|\xvec{z}^{\square}(\cdot, s)\|_{\xHtwo(\mathcal{E})}^2  \, \xdx{s}\right)
			\leq C_2 \sum\limits_{\square\in\{+,-\}} \|\xvec{z}^{\square}_0\|_{\xLtwo(\mathcal{E})}^2.
		\end{gathered}
	\end{equation*}
	Therefore, there exists $t_2 \in (t_1,2T^*/3)$ and $C_3 > 0$ such that
	\begin{equation*}
		\begin{gathered}
			\|\xvec{z}^{\pm}(\cdot, t_2)\|_{\xHtwo(\mathcal{E})} \leq \sqrt{\frac{C_3}{s_1}}\left(\|\xvec{z}^{+}_0\|_{\xLtwo(\mathcal{E})}^2+\|\xvec{z}^{-}_0\|_{\xLtwo(\mathcal{E})}^2\right)^{\frac{1}{2}}.
		\end{gathered}
	\end{equation*}
	
	\paragraph{Step 3. Additional estimates for $\partial_t \xvec{z}^{\pm}$ when $t \in (t_2,T^*)$.}
	By taking~$\partial_t$ in \eqref{equation:MHD_ElsaesserExtGenLem}, multiplying the resulting equations with $\partial_t \xvec{z}^{\pm}$ respectively, and integrating over $\mathcal{E}$, one obtains for
	\[
	H(t) \coloneqq \frac{1}{2} \sum\limits_{\square\in\{+,-\}} \left(\frac{d}{dt} \| \partial_t \xvec{z}^{\pm}(\cdot, t) \|_{\xLtwo(\mathcal{E})}^2 + (\lambda^+ \square \, \lambda^-) \| \xcurl{(\partial_t\xvec{z}^{+}\square \, \partial_t\xvec{z}^{-})}(\cdot, t) \|_{\xLtwo(\mathcal{E})}^2\right)
	\]
	the estimate
	\begin{equation*}
		\begin{aligned}
			H & \leq - \int_{\mathcal{E}} (\partial_t \xvec{z}^{-} \cdot \xnab) \xvec{z}^{+}  \cdot \partial_t\xvec{z}^{+} \, d \mathbf{x} - \int_{\mathcal{E}} (\partial_t \xvec{z}^{+} \cdot \xnab) \xvec{z}^{-}  \cdot \partial_t\xvec{z}^{-} \, d \mathbf{x} \\
			& \quad	+ \sum\limits_{\substack{(\triangle,\circ)\in \\ \{(+,-), (-,+)\}}} \int_{\partial\mathcal{E}} \left(\lambda^+\xvec{\rho}^{\circ}(\partial_t\xvec{z}^+,\partial_t\xvec{z}^-) + \lambda^- \xvec{\rho}^{\triangle}(\partial_t\xvec{z}^+,\partial_t\xvec{z}^-) \right)\cdot \partial_t\xvec{z}^{\circ} \, \xdx{S}.
		\end{aligned}
	\end{equation*}
	Therefore, considerations similar to the previous steps lead for some constants $C_4 > 0$ and~$c_2 \in (0,1)$, a possibly small time $s_2 \in (0,T^*/3)$ and all $t \in [t_2, t_2 + s_2]$, to the bound
	\begin{equation*}
		\begin{aligned}
			\|\partial_t\xvec{z}^{\pm}(\cdot, t)\|_{\xLtwo(\mathcal{E})}^2 + c_2 \int_{t_2}^{t} \|\partial_t \xvec{z}^{\pm}(\cdot, s)\|_{\xHone(\Omega)} \, \xdx{s} \leq C_4 \sum\limits_{\square\in\{+,-\}} \|\partial_t\xvec{z}^{\square}(\cdot, t_2)\|_{\xLtwo(\mathcal{E})}^2,
		\end{aligned}
	\end{equation*}
	noting that
	\begin{equation*}
		\begin{aligned}
			\|\partial_t\xvec{z}^{\pm}(\cdot, t_2)\|_{\xLtwo(\mathcal{E})}^2 & \leq \|(\xvec{z}^{\mp}\cdot \xnab)\xvec{z}^{\pm}(\cdot, t_2)\|_{\xLtwo(\mathcal{E})}^2 + \|\xnab p^{\pm}(\cdot, t_2)\|_{\xLtwo(\mathcal{E})}^2 \\
			& \quad + \|\Delta(\lambda^{\pm}\xvec{z}^{+}  +\lambda^{\mp}\xvec{z}^{-})(\cdot, t_2)\|_{\xLtwo(\mathcal{E})}^2 \\
			&\leq C\sum\limits_{\square\in\{+,-\}} \|\xvec{z}^{\square}(\cdot, t_2)\|_{\xHtwo(\mathcal{E})}^2.
		\end{aligned}
	\end{equation*}
	Hence, there exists a time $t_3 \in [t_2,t_2+s_2]$ and a constant $C_5 > 0$ such that
	\begin{equation*}
		\begin{aligned}
			\|\partial_t\xvec{z}^{\pm}(\cdot, t_3)\|_{\xHone(\mathcal{E})}  \leq \sqrt{\frac{C_5}{s_2} \sum\limits_{\square\in\{+,-\}} \|\partial_t\xvec{z}^{\square}(\cdot, t_2)\|_{\xLtwo(\mathcal{E})}^2}.
		\end{aligned}
	\end{equation*}
	
	\paragraph{Step 4. Conclusion.}
	The proof of \Cref{lemma:reg} is concluded by shifting the time derivative and the nonlinear terms in \eqref{equation:MHD_ElsaesserExtGenLem} to the right-hand side, followed by multiple applications of \Cref{lemma:stsreg} and \Cref{remark:Wkp}. This first provides $\xLinfty([t_2,t_2+s_2];\xHtwo(\mathcal{E}))$ bounds for $\xvec{z}^{\pm}$ and subsequently $\xHn{3}(\mathcal{E})$ bounds for $\xvec{z}^{\pm}(\cdot,t_3)$.

	\section*{Acknowledgments}
	{\bf Funding:} This research was partially supported by National Key R\&D Program of China under Grant No. 2020YFA0712000, National Natural Science Foundation of China under Grant No. 12171317, Strategic Priority Research Program of Chinese Academy of Sciences under Grant No. XDA25010402, and Shanghai Municipal Education Commission under Grant No. 2021-01-07-00-02-E00087.

	\bibliographystyle{abbrv}
	\bibliography{mhd_visc_control}
\end{document}